\renewcommand{\ref}{\hyperref}
\def\K{{\mathbb K}}
\def\Z{{\mathbb Z}}
\def\R{{\mathbb R}}
\def\Q{{\mathbb Q}}
\def\NN{{\mathbb N}}
\def\C{{\mathbb C}}
\def\PP{{\mathbb P}}
\newcommand{\ba}{\boldsymbol{a}}
\newcommand{\bw}{\boldsymbol{w}}
\newcommand{\bp}{\boldsymbol{p}}
\newcommand{\bq}{\boldsymbol{q}}
\newcommand{\bu}{\boldsymbol{u}}
\newcommand{\bn}{\boldsymbol{n}}\newcommand{\bg}{\boldsymbol{g}}
\newcommand{\uz}{\underline{z}}
\newcommand{\ux}{\underline{x}}
\newcommand{\oa}{\overline{a}}
\newcommand{\ob}{\overline{b}}
\DeclareMathOperator{\Int}{Int}\DeclareMathOperator{\Aut}{Aut}
\DeclareMathOperator{\ini}{ini}
\DeclareMathOperator{\const}{const}\DeclareMathOperator{\Ini}{Ini}
\DeclareMathOperator{\val}{Val}
\DeclareMathOperator{\conv}{conv}
\DeclareMathOperator{\Tr}{Tr}
\DeclareMathOperator{\Tor}{Tor}\DeclareMathOperator{\gen}{ge}
\DeclareMathOperator{\Ima}{Image}
\DeclareMathOperator{\RB}{RB}
\DeclareMathOperator{\wid}{wd}
\DeclareMathOperator{\ord}{ord}
\DeclareMathOperator{\wt}{wt}
\DeclareMathOperator{\Con}{Con}\DeclareMathOperator{\OG}{OG}
\newtheorem{theorem}{Theorem}[section]
\newtheorem{proposition}[theorem]{Proposition}
\newtheorem{corollary}[theorem]{Corollary}
\newtheorem{lemma}[theorem]{Lemma}
\theoremstyle{definition}
\newtheorem{remark}[theorem]{Remark}
\newtheorem{example}[theorem]{Example}
\newtheorem{definition}[theorem]{Definition}
\newtheorem{Construction}{Constuction}
\numberwithin{equation}{section}
\newcommand{\eps}{{\varepsilon}}
\newsavebox{\measure@tikzpicture}
	\def\tikz@width{#1}%
	\def\tikzscale{1}\begin{lrbox}{\measure@tikzpicture}%
	\edef\tikzscale{\pgfmathresult}%
\newcommand{\todos}{\makeatletter
	\providecommand\@dotsep{5}
	\makeatother
	\listoftodos\relax}
\newcommand{\bs}[1]{\boldsymbol{#1}}
\newcommand{\mbb}[1]{\mathbb{#1}}
\newcommand\moduli[1]{\overline{\mathcal{M}}^{#1}}
\newcommand\openmoduli[1]{\mathcal{M}^{#1}}
\newcommand\modulispace{\overline{\mathcal{M}}^{\text{trop}}_{g, (n_{ta}, n_{si})}(\Delta)}
\newcommand{\ogamma}{\overline{\Gamma}}
\DeclareMathOperator{\mdim}{mdim}
\DeclarePairedDelimiterX\setc[2]{\{}{\}}{\,#1 \;\delimsize\vert\; #2\,}
\begin{document}
\title{Refined tropical invariants and characteristic numbers}

\author{Eugenii Shustin}
\address{School of Mathematical Sciences, Tel Aviv
	University, Tel Aviv 69978,
	Israel}
\email{shustin@tauex.tau.ac.il}

\author{Uriel Sinichkin}
\address{School of Mathematical Sciences, Tel Aviv
	University, Tel Aviv 69978,
	Israel}
\email{sinichkin@mail.tau.ac.il}

\thanks
{\emph{2020 Mathematics Subject Classification}
	Primary 14N10 
	Secondary 14T20
}

\date{
	\today
}

\begin{abstract}
	We prove that the G\"ottsche-Schroeter and Schroeter-Shustin refined invariants specialize at $q=1$ to the enumeration of rational, resp. elliptic complex curves on arbitrary toric surfaces matching constraints that consist of points and of points with a contact element. Furthermore, we show that the refined invariant extends to the case of any genus $g\ge2$ and either one contact constraint or points in Mikhalkin position, and it again specializes to the corresponding characteristic number at $q=1$.
	In the appendix we show the limitations of extending this count to a more general setting.
\end{abstract}

\keywords{Enumerative geometry, Characteristic numbers, Tropical geometry, Refined tropical invariants}


\maketitle



\tableofcontents


\section{Introduction}

	The refined enumerative invariants introduced by G\"ottsche and Shende \cite{GoSh} and then defined in the tropical geometry setting by Block and G\"ottsche \cite{BG}. Since then refined enumerative invariants, notably, refined tropical invariants became a subject of an intensive research that has brought a number of exciting properties an geometric phenomena allowing one to significantly advance in real and complex enumerative geometry, see, for instance, \cite{Bou,FiSt,GS,Mi1}.

In our paper we study the G\"ottsche-Schroeter refined invariants (briefly, GS-invariants) of genus zero \cite{GS} and their extension to genus one suggested in \cite{SS} aiming to define refined invariants for any positive genus and recover their algebraic-geometric counterpart.
Our main results are as follows:
\begin{itemize}\item In addition to \cite{SS}, where the GS-invariants were defined for genus $1$, we prove that
the GS-invariants can be defined for any positive genus, any toric surface, and the whole range of tropical constraints, provided that there is exactly one marked point at a vertex of any counted tropical curve (Theorem \ref{thm:refined_invariant_one_marked} in Section \ref{sec:refined_definition}).
\item For a wide range of toric surfaces, any genus, any distribution of marked points among edges and vertices of counted tropical curves, and for tropical constraints in a special position (Mikhalkin position, see assumptions (A1) and (A2) in Section \ref{scn4}), there exists a refined tropical invariant naturally extending the GS-invariants for genus $0$ and $1$ (Theorem \ref{tcn4} in Section \ref{sec:refined_definition}).
    \item The refined tropical GS-invariants from the preceding item, evaluated at the parameter equal to $1$ coincides with the (complex) characteristic numbers of the considered toric surfaces, i.e., the numbers of
        irreducible complex curves of a given degree and genus subject to the conditions: (i) to pass through a certain number of generic fixed points, (ii) to be tangent to given lines at some of the fixed points (Theorem \ref{tcn1} in Section \ref{t4.3} for genus zero, and Theorem \ref{tcn3} in Section \ref{scn4} for positive genera). We prove this relation between the tropical invariants and the characteristic numbers by establishing a direct correspondence between the counted algebraic curves and their tropicalizations (tropical curves), see Lemmas \ref{lemma:number_of_modifications} and \ref{lemma:correspondence_final_step} in Section \ref{t4.3} for genus zero, and the proof of Theorem \ref{tcn3} in Section \ref{scn4} for positive genera.
\end{itemize}
Among additional results, we mention two more statements. One is the splitting of the complicated and a bit mysterious refined weight associated with collinear cycles \cite[Formula (12)]{SS} into three summands, each of them having a clear geometric meaning (see Remark \ref{rem:summing_over_weights_get_Psi} in Section \ref{sec:refined_definition}).
Another statement is that there is no extension of GS-invariants for genera $g\ge2$ which would be invariant of the arbitrary generic position of tropical constraints and would be locally invariant, see Appendix \ref{sec:refined_non_local}.

The main body of the paper consists of Sections \ref{sec-pr}-\ref{rbi}: in Section \ref{sec-pr}, we present basic stuff on tropical curves and tropicaliztion of algebraic curves over the field of Puiseux series and introduce characteristic numbers, in Sections \ref{t4.3} and \ref{scn4}, we prove the correspondence statements and establish tropical formulas or characteristic numbers, Section \ref{rbi} and Appendix \ref{sec:refined_non_local} are devoted to the study of refined tropical invariants.

\subsection*{Acknowledgements}
The authors were supported by the ISF grant no. 501/18, the BSF grant no.  2022157, and the Bauer-Neuman Chair in Real and Complex Geometry.
The second author was also supported by ERC Starting Grant 757585, ISF grant no. 1918/23, and by the Milner Foundation.

We are grateful to Ilya Tyomkin for valueable discussion and to Gurvan Mével for pointing out a mistake in the previous version of this paper.

\section{Preliminaries}\label{sec-pr}

    In this section we set the notations used in the paper, define the problem of computing characteristic numbers, recall the necessary results from tropical geometry, and provide related auxiliary statements.

\subsection{Notations}\label{sec-nota}
{\bf(1)} We use the complex field $\C$ and the field $\K$ of locally convergent complex Puiseux series.
For an element $a=\sum_{r\ge r_0}a_rt^r\in\K^*$, $a_{r_0}\in\C^*$, denote
\begin{equation*}\val(a)=-r_0,\quad \ini(a)=a_{r_0},\quad \Ini(a)=a_{r_0}t^{r_0}\ .\end{equation*}
Let $F=\sum_{\omega\in P\cap\Z^n}a_\omega\uz^\omega\in\K[\uz]$, $\uz=(z_1,...,z_n)$, have Newton polytope $P$. It
yields a {\it tropical polynomial}
$$N(\ux)=N_F(\ux)=\max_{\omega\in P\cap\Z^n}(\langle\omega,\ux\rangle+\val(a_\omega)),\quad N:\R^n\to\R\ ,$$
and its Legendre dual, {\it valuation function} $\nu=\nu_N:P\to\R$, whose graph defines a subdivision $\Sigma_\nu$
of $P$ into linearity domains which all are convex lattice polytopes. One can write
$$F(\uz)=\sum_{\omega\in \Delta\cap\Z^n}(a^0_\omega+O(t^{>0}))t^{\nu(\omega)}\uz^\omega\ ,$$ where
$a^0_\omega\in\C$, $a^0_\omega\ne0$ for all $\omega$ vertices of the subdivision $\Sigma_\nu$. Given a face $\delta$ of
the subdivision $\Sigma_\nu$, we write
$$F^\delta(\uz)=\sum_{\omega\in\delta\cap\Z^n}a_\omega\uz^\omega,\quad\ini(F^\delta)(\uz)=
\sum_{\omega\in\delta\cap\Z^n}a^0_\omega\uz^\omega\in\C[\uz]\ ,$$
$$\Ini(F^\delta)(\uz)=\sum_{\omega\in\delta\cap\Z^n}a^0_\omega t^{\nu(\omega)}\uz^\omega
\in\K[\uz]\ .$$

\smallskip

{\bf(2)} All lattice polyhedra we consider lie in Euclidean spaces $\R^2$ with fixed integral lattices
$\Z^2\subset\R^2$. We consider these spaces as $\Z^2\otimes\R$ and denote them by $\R^2_\Z$.

For a convex lattice polygon $P\subset\R^2_\Z$, we denote
by $\Tor(P)$ the complex toric surface associated with $P$
and by $\Tor^*(P)\subset\Tor(P)$ the big torus (the dense orbit of the torus action). Next, denote by ${\mathcal L}_P$ the tautological line bundle over $\Tor(P)$, by
$|{\mathcal L}_P|$ the linear system generated by the non-zero global sections (equivalently, by the monomials
$x^iy^j$, $(i,j)\in P\cap\Z^2$). We also use the notation ${\mathcal L}_P(-Z)$ for
${\mathcal L}_P\otimes{\mathcal J}_{Z/\Tor(P)}$, where ${\mathcal J}_{Z/\Tor(P)}$ is the ideal sheaf of a zero-dimensional subscheme $Z\subset\Tor(P)$.

Let $\bn:\widehat C\to X$ be a non-constant morphism of a complete smooth irreducible curve $\widehat C$ to a toric surface $X$. We call it {\it peripherally unibranch} if, for any
toric divisor $D\subset X$, the divisor $\bn^*(D)\subset\widehat C$ is concentrated at one point.
Respectively we call the curve $C=\bn_*\widehat C$ {\it peripherally smooth},
if it is smooth at its intersection points with each toric divisor.

\smallskip
{\bf(3)}  For a vector $v\in\Z^n$, resp. a lattice segment $\sigma\subset\R^n_\Z$, we denote
its lattice length by $\|v\|_\Z$,
resp. $\|\sigma\|_\Z$. More generally, for an $m$-dimensional lattice polytope
$P\subset\R^n_\Z$, $n\ge m$, we denote by $\|P\|_\Z$ its $m$-dimensional lattice volume (i.e., the ratio
of the Euclidean $m$-dimensional volume of $P$ and the minimal
Euclidean volume of a lattice simplex inside the affine $m$-dimensional subspace of $\R^n_\Z$ spanned by $P$).

\smallskip{\bf(4)} We always take the standard basis in $\R^2_\Z$ and identify $\Lambda^2(\R^2_\Z)\simeq\R_\Z$ by
letting $$\oa\wedge\ob=\det\left(\begin{matrix}a_1&a_2\\ b_1&b_2\end{matrix}\right),
\quad\oa=(a_1,a_2),\ \ob=(b_1,b_2)\in\R^2_\Z\ .$$

    \subsection{Characteristic numbers}\label{seccn1}
	
	Let $P\subset\R^2_\Z$ be a nondegenerate convex lattice polygon, and let $\Delta$ be the multiset of primitive outer integral normals $\ba_\sigma$ to the sides $\sigma\in S(P)$ so that each $\ba_\sigma$ is repeated $\|\sigma\|_\Z$ times. Note that $|\Delta|=|\partial P\cap\Z^2|$. Fix a non negative integer ${0\le g \le |\Int(P)\cap \Z^2|}$ and a partition
	$$|\Delta|+g-1=n_{si}+2n_{ta}\quad\text{with}\quad n_{si},n_{ta}\ge0,$$
	and introduce two sequences: a sequence of points
	$$w^{si}_i\in(\K^*)^2,\quad i=1,...,n_{si},$$
	and a sequence of contact elements
	$$(w^{ta}_j,\lambda_j):\quad w^{ta}_j\in(\K^*)^2,\quad \lambda_j\in T^*_{w^{ta}_j}(\K^*)^2\setminus\{0\},\quad j=1,...,n_{ta},$$
	such that the configuration $\bw=\{w^{si}_i\}_{i=1}^{n_{si}}\cup\{(w^{ta}_j,\lambda_j)\}_{j=1}^{n_{ta}}$ is in general position.
	Denote by ${\mathcal M}_{g,(n_{si},n_{ta})}(P,\bw)$ the set of isomorphism classes of maps of marked curves $\bn:(\widehat C,\bp)\to\Tor_\K(P)$ such that
	\begin{itemize}\item $\widehat C$ is a smooth connected curve of genus $g$, and $\bn_*\widehat C\in|{\mathcal L}_P|$;
		\item $\bp=\bp_{si}\cup\bp_{ta}\subset \widehat C$ and sequence of $n_{si}+n_{ta}$ distinct points, ${\bp_{si}=\{p^{si}_i\}_{i=1}^{n_{si}}}$, $\bp_{ta}=\{p^{ta}_j\}_{j=1}^{n_{ta}}$, such that $\bn(p^{si}_i)=w^{si}_i$, $i=1,...,n_{si}$, and $\bn(p^{ta}_j)=w^{ta}_j$, $\langle\lambda_j,D\bn(T_{p^{ta}_j}\widehat C)\rangle=0$, $j=1,...,n_{ta}$.
	\end{itemize}
	
	\begin{lemma}\label{lcn1}
		The set ${\mathcal M}_{g,(n_{si},n_{ta})}(P,\bw)$ is finite. Furthermore, for each element $[\bn:(\widehat C,\bp)\to\Tor_\K(P)]\in{\mathcal M}_{g,(n_{si},n_{ta})}(P,\bw)$, the map $\bn$ is birational onto its image, and the curve $C=\bn_*\widehat C$ is smooth at all points $w^{si}_i$, $i=1,...,n_{si}$, $w^{ta}_j$, $j=1,...,n_{ta}$.
	\end{lemma}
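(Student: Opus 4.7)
The approach is a dimension count on the moduli of stable maps, combined with a Bertini-type general-position argument in $\bw$. First, I would establish that the space of reduced irreducible curves of geometric genus exactly $g$ in the linear system $|{\mathcal L}_P|$ on the toric surface $\Tor_\K(P)$ has dimension
$$\dim|{\mathcal L}_P|-(|\Int(P)\cap\Z^2|-g)=|\partial P\cap\Z^2|+g-1=|\Delta|+g-1,$$
using the standard toric Severi-variety dimension formula, which applies under the hypothesis $g\le|\Int(P)\cap\Z^2|$. The open stratum parametrized by birational maps $\bn:\widehat C\to\Tor_\K(P)$ from smooth connected genus-$g$ curves then has the same dimension.

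Next, I would impose the constraints: each simple-point condition $\bn(p^{si}_i)=w^{si}_i$ cuts out codimension $1$, while each contact condition $\bigl(\bn(p^{ta}_j)=w^{ta}_j,\ \langle\lambda_j,D\bn(T_{p^{ta}_j}\widehat C)\rangle=0\bigr)$ cuts out codimension $2$, for a total codimension $n_{si}+2n_{ta}=|\Delta|+g-1$. Dominance of the evaluation map onto $((\K^*)^2)^{n_{si}}\times\bigl((\K^*)^2\times\PP^1\bigr)^{n_{ta}}$ follows from standard first-order deformation arguments for immersed curves on the smooth toric surface (ensuring the relevant Jacobian is generically of full rank). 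A Bertini argument then implies that for $\bw$ in general position the fiber ${\mathcal M}_{g,(n_{si},n_{ta})}(P,\bw)$ is $0$-dimensional, hence finite.

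The birationality and smoothness assertions are obtained by excluding lower-dimensional degenerate strata. If $\bn$ is not birational, set $d=\deg\bn\ge2$: then $\bn_*\widehat C=d\cdot C'$ with $C'\in|{\mathcal L}_{P'}|$, $P=dP'$, and Riemann-Hurwitz gives $g(C')\le(g+d-1)/d$. Combining the Severi dimension for $C'$ in $|{\mathcal L}_{P'}|$ (where $|\partial P'\cap\Z^2|=|\Delta|/d$) with the dimension of the corresponding Hurwitz space of degree-$d$ coverings, this stratum has dimension strictly less than $|\Delta|+g-1$, so a general-position $\bw$ misses it. Similarly, requiring that $C=\bn_*\widehat C$ be singular at one of the $w^{si}_i$ or $w^{ta}_j$ imposes one additional codimension beyond the passing-through or contact condition, and the resulting locus has negative expected dimension, hence is empty in general position. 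The delicate point is verifying the dimension inequality for the non-birational stratum uniformly in $d$ and $g(C')$; this uses the identity $|\partial P'\cap\Z^2|=|\Delta|/d$, the Riemann-Hurwitz bound on $g(C')$, and the hypothesis $g\le|\Int(P)\cap\Z^2|$.
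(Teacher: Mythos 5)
Your proposal is correct and follows essentially the same route as the paper: a dimension count based on the Severi-variety bound $\dim{\mathcal V}_{P,g}\le|\Delta|+g-1$ (the paper cites Vakil's Theorem 3.1 for this), matched against the total codimension $n_{si}+2n_{ta}$ of the constraints, with general position of $\bw$ excluding the non-birational and non-immersive strata. The paper's proof is simply a terser version of yours, dispatching the birationality and smoothness claims with ``for dimension reasons'' where you spell out the Riemann--Hurwitz and excess-codimension estimates.
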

	
	\begin{proof} Note that $n_{si}+2n_{ta}=|\Delta|+g-1=c_1({\mathcal L}_P)\cdot c_1(\Tor_\K(P))+g-1$, which is the upper bound for the dimension of components of the Severi variety ${\mathcal V}_{P,g}\subset|{\mathcal L}_P|$ parameterizing irreducible genus $g$ curves in the linear system $|{\mathcal L}_P|$, see, for instance, \cite[Theorem 3.1]{Va}. It yields the finiteness of ${\mathcal M}_{g,(n_{si},n_{ta})}(P,\bw)$ and the non-vanishing of the differential $D\bn$ at $\bp$.
		Similarly, for the dimension reason, $\bn$ is birational onto its image for all elements of ${\mathcal M}_{g,(n_{si},n_{ta})}(P,\bw)$.
	\end{proof}
	
	We state the problem to compute the {\it characteristic numbers}
	$$CN_g(P,(n_{si},n_{ta}))\overset{\text{def}}{=}\#{\mathcal M}_{g,(n_{si},n_{ta})}(P,\bw),$$
	and we intend to apply the tropical enumerative geometry for this purpose.
	
	As a preparation, we specify the choice of the configuration $\bw$ as follows. Let
	\begin{align*}w^{si}_i&=(t^{a_i}(\eta^{si}_i+O(t^M)),t^{b_i}(\zeta^{si}_i+O(t^M)))\in(\K^*)^2,\\ &\qquad a_i,b_i\in\Q,\ \eta^{si}_i,\zeta^{si}_i\in\C^*,\quad i=1,...,n_{si},\end{align*}
	and
	\begin{align}
		w^{ta}_j&=(t^{c_j}(\eta^{ta}_j+O(t^M)),t^{d_j}(\zeta^{ta}_j+O(t^M)))\in(\K^*)^2,\label{ecn15}\\
		&\qquad c_j,d_j\in\Q,\ \eta^{ta}_j,\zeta^{ta}_j\in\C^*,\quad j=1,...,n_{ta},\nonumber\\
		\lambda_j&=(\lambda_{j,x}+O(t^M))\frac{dx}{x}+(\lambda_{j,y}+O(t^M))\frac{dy}{y}\in T^*_{w^{ta}_j}(\K^*)^2,\label{ecn5}\\
		&\qquad\lambda_{j,x}^{(0)},\lambda_{j,y}^{(0)}\in\C^*,\quad j=1,...,n_{ta},\nonumber
	\end{align}
	where $M\gg0$, all constants $\eta^{si}_i,\zeta^{si}_i,\eta^{ta}_j,\zeta^{ta}_j,\lambda_{j,x},\lambda_{j,y}$ are generically chosen in $\C^*$, and the sequence of points
	$$\bu=\bu_{si}\cup\bu_{ta}\subset\R^2,\quad \bu_{si}=\{u^{si}_i\}_{i=1}^{n_{si}},\ \bu_{ta}=\{u^{ta}_j\}_{j=1}^{n_{ta}},$$ $$u^{si}_i=(-a_i,-b_i),\ i=1,...,n_{si},\quad u^{ta}_j=(-c_j,-d_j),\ j=1,...,n_{ta},$$
	is in tropical general position (cf. \cite[Section 4.2]{Mi}).
	
	
	\begin{remark}\label{rcn2} In what follows, we shall use finitely many toric automorphisms of $\Tor_\K(P)$ as well as non-toric transformations related to modifications.
		Observe that a toric transformations preserves the shape (\ref{ecn5}) of the contact element and simply multiplies the vector $(\lambda_{j,x}+O(t^M),{\lambda_{j,y}+O(t^M)})$ by a unimodular integral $2\times2$ matrix. In case of a modification, we start with a point $w^{ta}_j=(t^{c_j}(\eta^{ta}_j+O(t^M)),\zeta^{ta}_j+O(t^M))$ and a contact element (\ref{ecn5}) attached to it, assuming that the point $u^{ta}_j$ lies on an edge of the embedded plane tropical curve $h_*\Gamma$ that corresponds to a segment $[0,l),(0,0)]$ of the dual subdivision of $P$. We furthermore assume that the function $\nu:P\to\R$ vanishes along the above segment being positive outside it. In such a case, the polynomial $F(x,y)$ defining the curve $\bn_*\widehat C\subset\Tor_\K(P)$ takes the form
		$$F(x,y)=(y-\zeta^{ta}_j)^l+O(t^{>0}).$$
		The modification amounts to two coordinate changes. The first change is $y=\zeta^{ta}_j+y_1$, and it results in the polynomial
		$$F_1(x,y_1):=F(x,\zeta^{ta}_j+y_1)=y_1^l+O(t^{>0}).$$
		
		The second change is $y_1=t^\alpha y_2$, $\alpha>0$, which results in a polynomial
		$$F_2(x,y_2)=t^{-l\alpha}F_1(x,t^\alpha y_2)$$
		having all coefficients nonpositively valued and at least two coefficients with valuation zero (i.e., $y^l$ and some more monomial $x^{k_1}y^{k_2}$, $0\le k_2<l$).
		After this transformation the contact element at $w^{ta}_j$ turns to be
		$$(\lambda_{j,x}+O(t^M))\frac{dx}{x}+t^\alpha(\lambda_{j,y}+O(t^M))\frac{dy_2}{\zeta^{ta}_j+t^\alpha y_2},$$
		and it specializes at $t=0$ to
		\begin{equation}\lambda_{j,x}\frac{dx}{x}.\label{ecn6}\end{equation}
	\end{remark}

\subsection{Plane tropical curves}\label{sec-tropcur}

A {\it plane tropical curve} is a pair $(\Gamma,h, \bg)$, where
\begin{itemize}\item
	$\Gamma$ is a finite
	connected metric graph, whose set $\Gamma^0$ of vertices is nonempty and does not contain
	univalent vertices,
	the set of edges $\Gamma^1$ contains a
	subset $\Gamma^1_\infty\ne\emptyset$ consisting of edges isometric to $[0,\infty)$ (called ends), while
	$\Gamma^1\setminus\Gamma^1_\infty$ consists of edges isometric to compact segments in $\R$ (called finite edges);
	
	\item $h:\Gamma\to\R^2$ is a
	continuous map such that
	$h$ is
	affine-integral on each edge of $\Gamma$ in the length coordinate, and it is nonconstant on at least one edge of
	$\Gamma$;
	furthermore, at each vertex $V$ of $\Gamma$, the balancing condition holds
	$$\sum_{E\in\Gamma^1,\ V\in E}
	\oa_V(E)=0\ ,$$ where $\oa_V(E)$ is the image under the differential $D(h\big|_E)$
	of the unit tangent vector to $E$ emanating from
	its endpoint $V$. We call $\oa_V(E)$ the {\it directing vector} of $E$ (centered at $V$).

	\item $\bg:\Gamma^0\to\Z_{\ge 0}$ is a nonnegative function assigning to each vertex $V\in \Gamma^0$ its genus, such that the genus of each bivalent vertex is positive.
\end{itemize}

For a tropical curve $\Gamma$ we denote by $\overline{\Gamma}$ its compactification obtained by adding a vertex to each end of $\Gamma$.

Define the {\it genus} of $(\Gamma, h,\bg)$ as
\begin{equation*}g=\gen(\Gamma,h,\bg):=b_1(\Gamma)+\sum_{V\in\Gamma^0}\bg(V)\ .\end{equation*}

We will sometimes, when it will not lead to a confusion, denote the tuple $ (\Gamma, h, \bg) $ by $ \Gamma $.
A \emph{collinear cycle} in $ \Gamma $ is a sub-graph consisting of 2 vertices that are connected by 2 edges, see Figure \ref{fig:central_collinear_cycle}.

The \emph{valency} of a vertex $ V\in \Gamma^0 $ is the number of edges incident to it.
A vertex $ V\in \Gamma $ is called {\it flat} or {\it collinear} if the linear span of $ {\setc{\oa_V(E)}{E\in \Gamma^1, V\in E}} $ is one dimensional.
Given a non collinear vertex $V\in\Gamma^0$, the directing vectors $\oa_V(E)$ over all edges $E$ incident to $V$, being positively rotated by $\frac{\pi}{2}$ form a nondegenerate convex lattice polygon
$P(V)$, which we call {\it dual} to $V$.
The multi-set $\Delta=\Delta(\Gamma,h)
\subset\R^2_\Z$ of vectors $\{\oa_V(E)\ne0\ : \ E\in\Gamma^1_\infty,
\ V\in\Gamma^0,\ V\in E\}$ is called the {\it degree} of $(\Gamma,h)$. It is easy to see that $\Delta$
is nonempty and {\it balanced} (i.e., the vectors of $\Delta$ sum up
to zero. The degree $\Delta$ is called {\it primitive} if it consists of primitive vectors (i.e., vectors of lattice length $1$).
Positively rotated by $\frac{\pi}{2}$, the vectors of $\Delta$
can be combined into a convex lattice polygon $P=P(\Gamma,h)$, called
the {\it Newton polygon} of $(\Gamma,h)$. In this case, we say that the degree $\Delta$ {\it induces} the polygon $P$. The degree $\Delta$ is called {\it nondegenerate} if $\dim P(\Gamma,h)=2$.

To each edge $E\in\Gamma^1$ we assign the {\it weight} $$\wt(E)=\big\|\oa_V(E)\big\|_\Z\ .$$

In the preceding notation, the image $h(\Gamma)\subset\R^2$ is a
closed rational finite one-dimensional polyhedral complex without univalent and bivalent vertices.
It can be converted into a plane tropical curve as follows. Each edge
$e\subset h(\Gamma)$ is assigned a weight $\wt(e)$ which is the sum of the weights of the (non-contracted) edges of
$\Gamma$ intersecting $h^{-1}(x)$, where $x\in e$ is a generic point. We define a parametrization $\widehat h:\Gamma_h\to h(\Gamma)$, which is a homeomorphism and is affine-integral on each edge $E$ of $\Gamma_h$ with $\oa_V(E)=\wt(e)\cdot\oa$, where $e=h(E)$, and $\oa$ is a primitive integral vector parallel to $e$. The balancing condition evidently holds, and we call the resulting tropical curve $h_*(\Gamma)=(\Gamma_h,\widehat h)$
an {\it embedded plane tropical curve}.

We recall that the graph $\Gamma_h$ is a corner locus of a tropical polynomial
$N:\R^2\to\R$ with the Newton polygon $P$. The Legendre dual function $\nu_N:P\to\R$
defines a subdivision $\Sigma$ of $P$ into convex lattice polygons, and this subdivision is completely
determined by $h_*(\Gamma)$. There is a duality reversing the incidence relation: The polygons of $\Sigma$ are in bijection with the vertices of $\Gamma_h$ so that the number of sides of a polygon in $\Sigma$ equals to the valency of the dual vertex of $\Gamma^0_h$, while the edges of $\Sigma$ are in bijection with the edges of $\Gamma_h$
so that the lattice length of an edge of $\Sigma$ equals the weight of the dual edge of $\Gamma_h$.

A {\it marked plane tropical curve} is a tuple $(\Gamma,h,\bq,\bg)$, where $ (\Gamma,h,\bg) $ is a plane tropical curve and $\bq$ is an ordered subset of
$n\ge1$ distinct points of $\Gamma$.

Two marked tropical curves $ (\Gamma, h, \bg, \bs{p}) $ and  $ (\Gamma', h', \bg', \bs{p'}) $ said to be \emph{isomorphic} if exist an isomorphism of metric graphs $ \varphi:\Gamma\to\Gamma' $ with $ h'\circ\varphi=h $ and $ \varphi(\bs{p}) = \bs{p'} $.

\begin{definition}
	Let $(\Gamma,h)$ be a plane tropical curve, and let $V\in\Gamma^0$ be a trivalent vertex, which is not incident to
	contracted edges.
	Define the Mikhalkin multiplicity of $V$ by
	$$\mu(V)=\big|\oa_V(E_1)\wedge\oa_V(E_2)\big|\ ,$$
	where $E_1,E_2\in\Gamma^1$ are some two incident to $V$ edges. Recall that by \cite[Proposition
	6.17]{Mi} (see also \cite[Lemma 3.5]{Sh05}), $\mu(V)$ equals the number of peripherally unibranch rational curves in $|{\mathcal L}_{P(V)}|$ given by polynomials with Newton triangle $P(V)$ and fixed coefficients at the vertices of $P(V)$.
\end{definition}

\begin{definition}
	Let $ (\Gamma, \bs{p}, h) $ be a tropical curve.
	For every marked point $ p_i\in \bs{p} $ denote by $ c_i \in \Gamma^1\cup \Gamma^0 $ the unique cell of $ \Gamma $ containing $ p_i $ in its relative interior.
	Additionally denote by $ \underline{\Gamma} $ the abstract graph obtained from $ \Gamma $ by forgeting the metric structure.
	We define the \emph{combinatorial type} (or simply \emph{type}) of $ (\Gamma, \bs{p}, h) $ to be the tuple $ (\underline{\Gamma}, (\wt(E))_{E\in \Gamma^1}, \bs{c}, (\oa_V(E))_{V\in E\in \Gamma^1}) $.
	We will denote the combinatorial type of $ \Gamma $ by $ [\Gamma] $.
\end{definition}

\begin{definition}\label{def:center_collinear_cycle}
	A \emph{centrally embedded collinear cycle} in $ \Gamma $ is a collinear cycle whose endpoints are trivalent and the $2$ adjacent edges to it are of the same length, see Figure \ref{fig:central_collinear_cycle}.
\end{definition}

\begin{remark}
The condition of the lengths in Definition \ref{def:center_collinear_cycle} to be equal can be seen as an example of Speyer's well-spacedness condition which is known in certain situations to be a sufficient condition for a tropical curve to be the tropicalization of an algebraic curve, see \cite{R, Sp}.
\end{remark}

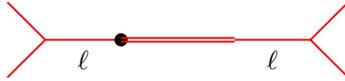
\begin{figure}
	\begin{center}
	\begin{tikzpicture}[scale=0.5]
		\tikzset{
			every path/.style = {line width=0.7pt, red},
			every node/.style = {black}
		}
		\draw (-1, -1) -- (0, 0) -- (-1, 1);
		\draw (0, 0) -- node [below] {\footnotesize $\ell$} (2, 0);
		\draw [black, fill=black] (2, 0) circle (0.15);
		\draw (2, -0.05) -- (5, -0.05);
		\draw (2, 0.05) -- (5, 0.05);
		\draw (5, 0) -- node [below] {\footnotesize $\ell$} (7,0);
		\draw (8, -1) -- (7,0) -- (8, 1);
	\end{tikzpicture}
\end{center}
\caption{An example of a centrally embedded collinear cycle, see Definition \ref{def:center_collinear_cycle}. The labels $\ell$ on the edges denote their lengths, to indicate those lengths are equal.}\label{fig:central_collinear_cycle}
\end{figure}

\begin{definition}\label{def:realizable_trop_curve}
	A tropical curve is called \emph{realizable} if the following 2 conditions hold:
	\begin{enumerate}
		\item Every collinear vertex is trivalent and it is either marked or is an endpoint of a centrally embedded collinear cycle whose second endpoint is marked.
		
		\item No collinear cycle is attached to 2 marked collinear vertices.		
	\end{enumerate}
\end{definition}

\subsection{Moduli space of tropical curves}\label{sec:moduli_space}
We fix a nondegenerate balanced multiset $ \Delta \subseteq \mbb{Z}^2\setminus \{0\} $ that induces the polygon $P$ and three non negative integers $ g, n_{ta}, n_{si} $ with $ g\le \left|\Int(P)\cap \Z^2 \right| $ and set $ n:=n_{ta}+n_{si} $.

Given a combinatorial type $ \alpha = (\Gamma, \bs{w}, \bs{c}, (\oa_V(E))) $ of degree $ \Delta $, genus $ g $ and $ n $ marked points such that the first $ n_{ta} $ of them lie on vertices, we can parameterize the realizable tropical curves of type $ \alpha $ by a polyhedron $ \openmoduli{\alpha} \subseteq \mbb{R}^{|\Gamma^1|-|\Delta|+|\bs{c}\cap \Gamma^1|+2} $.
Indeed, every such tropical curve is defined by the length of its bounded edges, the distance between vertices and neighboring marked points and a position of an arbitrarily chosen vertex (and those lengths and distances should satisfy linear equations and inequalities in order to represent a tropical curve).

The points in the closure $ {\moduli{\alpha}} \subseteq \mbb{R}^{|\Gamma^1|-|\Delta|+|\bs{c}\cap \Gamma^1|+2} $ correspond to tropical curve of combinatorial type that is attained from $ \alpha $ by possibly contracting some of the edges and moving marked points from an edge to one of its boundary vertices.
Thus, for such combinatorial types $ \beta $ we get that $ {\moduli{\beta}} $ is naturally a face of $ {\moduli{\alpha}} $.
We will say that $\beta$ is a \emph{degeneration} of $\alpha$, and $\alpha$ is a \emph{regeneration} of $\beta$.
Gluing $ {\moduli{\alpha}} $ using this identification we get the \emph{moduli space of genus $ g $ realizable tropical curves with $ n_{ta} $ marked vertices and $ n_{si} $ marked edges}:
\[ \modulispace := \lim_{\stackrel{\longrightarrow}{\alpha}} \moduli{\alpha}. \]

It is sometimes more convenient to work with curves without collinear vertices.
This is the purpose of the following definition and construction.

\begin{definition}
	A tropical curve is called \emph{cycle reduced} if it has no collinear cycles with at least one collinear endpoint.
	It is called \emph{vertex reduced} if it has no collinear vertices.
\end{definition}

\begin{Construction}
	Let $ (\Gamma, \bs{p}, h) $ be a realizable marked plane tropical curve.
	We construct a cycle reduced tropical curve $ (\Gamma', \bs{p'}, h') $ called the \emph{cycle reduction} of $ \Gamma $, by replacing every collinear cycle $ \{E_1, E_2\} $ that is attached to a collinear vertex, by a single edge $ E $ with weight $ w'(E)=w(E_1)+w(E_2) $ and which is mapped to the same image under $ h' $ as $ E_1 $ and $ E_2 $ are mapped under $ h $.
	Note that this might introduce rational bivalent vertices, so we need to \emph{stabilize}, i.e. replace pairs of adjacent edges to a rationsl bivalent vertex (those edges has to have the same weight by the balancing condition) by a unique edge of the same weight.
	The combinatorial type of $ \Gamma' $ is uniquely determined by the type of $ \Gamma $, we will call it the \emph{cycle reduction of $ [\Gamma] $}.
	
	This construction might produce a curve with collinear vertices, so we present a stronger construction.
	Suppose that $V$ is a collinear vertex of $\Gamma'$, and $E_1, E_2$ are two edges adjacent to it with $h(E_1)\subsetneq h(E_2)$ (see Figure \ref{fcn4}).
	Denote the other endpoint of $E_1$ by $V_1$ and the other endpoint of $E_2$ by $V_2$.
	We remove $E_1$ and $E_2$ from $\Gamma'$ and add another two edges:
	\begin{itemize}
		\item An edge $E'$ of weight $\wt(E_1)+\wt(E_2)$ connecting $V$ with $V_1$ and with $h(E')=h(E_1)$,
		
		\item and an edge $E''$ of weight $\wt(E_2)$ connecting $V_1$ with $V_2$ and having $h(E'')=(h(E_2)\setminus h(E_2))\cup\{V_1\}$.
	\end{itemize}
	As before, if $V$ is rational and becomes bivalent, we stabilize it.
	Applying this sequentially for all collinear vertices will produce a tropical curve $(\Gamma_{red}, \bs{p}_{red}, h_{red})$ with no collinear vertices, called the \emph{vertex reduction of $\Gamma$}.
\end{Construction}

\begin{figure}
	\setlength{\unitlength}{1mm}
	\begin{picture}(95,15)(0,0)
		\thinlines
		
		\thicklines
		{\color{blue}
		}
		{\color{red}
			\put(0,5){\line(1,1){5}}\put(0,15){\line(1,-1){5}}\put(5,10){\line(1,0){5}}
			\put(10,10){\line(0,-1){10}}\put(10,10){\line(1,1){5}}
			\put(10,10){\line(-1,1){5}}\put(10,10.5){\line(1,0){10}}\put(10,9.5){\line(1,0){9}}\put(21,9.5){\line(1,0){9}}
			\put(20,10.5){\line(0,-1){10.5}}\put(20,10.5){\line(1,1){4.5}}\put(30,9.5){\line(0,-1){9.5}}\put(30,9.5){\line(1,1){5.5}}
			\put(60,5){\line(1,1){5}}\put(60,15){\line(1,-1){5}}\put(65,10){\line(1,0){25}}\put(70,10){\line(0,-1){10}}
			\put(70,10){\line(1,1){5}}\put(70,10){\line(-1,1){5}}\put(80,10){\line(1,1){5}}
			\put(80,10){\line(0,-1){10}}\put(90,10){\line(0,-1){10}}\put(90,10){\line(1,1){5}}
		}
		\put(15,11.5){$E_1$}\put(23,5){$E_2$}\put(9,12){$V$}\put(69,12){$V$}
		\put(44,8){$\Longrightarrow$}
		
	\end{picture}
	\caption{Vertex reduction procedure}\label{fcn4}
\end{figure}
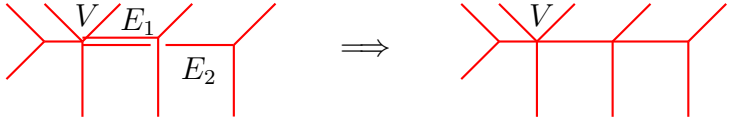

\begin{remark}\label{rem:vertex_reduction_iso}
	It is immediate that both cycle reduction and vertex reduction give isomorphisms	$ \openmoduli{[\Gamma]} \to \openmoduli{[\Gamma']} $ and $\openmoduli{[\Gamma]} \to \openmoduli{[\Gamma_{red}]}$.
\end{remark}

\begin{definition}
	A tropical curve $ (\Gamma, h, \bg, \bs{p}) $ is called \emph{regular} if every connected component of $ \Gamma\setminus \bs{p} $ is simply connected and contains exactly one end.
	
	If $(\Gamma, \bs{p}, h)$ is regular and $ K $ is a connected component of $ \Gamma\setminus\bq $, the edges of $ K $ can be oriented in a unique way for which the unbounded edge oriented towards infinity and every unmarked vertex has exactly one outgoing edge.
	We call such orientation \emph{the regular orientation on $K$}.

\end{definition}

\begin{remark}
	It is evident that any collinear cycle in a regular marked tropical curve join a marked vertex with an unmarked one.
\end{remark}

We can now compute the dimension of $ \modulispace $ and characterize its top dimensional and codimension 1 cells.

\begin{definition}
	The \emph{maximal dimension} of a combinatorial type $ \alpha $ is
	\[ \mdim \alpha := |\Delta|+b_1(\Gamma)-1+|\bs{c}\setminus \Gamma^0|. \]
\end{definition}

\begin{lemma}\label{lemma:dim_moduli_genus1}
	Let $ (\Gamma,  h, \bg, \bs{p})$ be an elliptic plane tropical curve (i.e. $\gen(\Gamma) = 1$) without a collinear cycle, and let $\alpha:=[\Gamma]$ be its combinatorial type.
	Then $\dim \openmoduli{\alpha} = \mdim \alpha - \text{ov}(\Gamma)$, where
	$$ \text{ov}(\Gamma) := \sum_{V\in \Gamma^{0}} (|\setc{E\in \Gamma^1}{V \in E}| - 3). $$
\end{lemma}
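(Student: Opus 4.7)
The plan is to compute $\dim\openmoduli{\alpha}$ directly from the ambient parametrisation of the moduli space, and then match the result with $\mdim(\alpha)-\text{ov}(\Gamma)$ via an Euler-characteristic identity on $\Gamma$.

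By the construction preceding the statement, $\openmoduli{\alpha}$ sits inside the ambient $\mbb{R}^{|\Gamma^1|-|\Delta|+|\bs{c}\cap\Gamma^1|+2}$ with coordinates given by the bounded edge lengths $\ell_E$, the distances from prescribed endpoints of marked edges to their interior marked points, and the $\mbb{R}^2$-position of a chosen base vertex $V_0$. Once these data are fixed, the position $h(V)$ of every other vertex is determined by iterating edge directions along any path from $V_0$ to $V$; the only extra condition required for the assignment $V\mapsto h(V)$ to be well-defined is that it be independent of the path, which amounts to the vector equation
\[
\sum_{E\in\gamma}\epsilon_E\,\ell_E\,\oa(E)\;=\;0\in\mbb{R}^2
\]
for every loop $\gamma$ in a basis of $H_1(\Gamma;\mbb{Z})$, where $\epsilon_E\in\{\pm1\}$ encodes the orientation of $E$ along $\gamma$.

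In the elliptic case $\gen(\Gamma)=1$, the hypothesis ``no collinear cycle'' is substantive precisely when $b_1(\Gamma)=1$, which yields a single cycle $\gamma$ and a priori two scalar equations. The key assertion is that these two equations are linearly independent in the variables $\ell_E$: this is the content of the ``no collinear cycle'' hypothesis, since it guarantees that $\{\oa(E):E\in\gamma\}$ is not contained in a single line and hence spans $\mbb{R}^2$. Thus the cycle-closure equations cut out a subspace of codimension exactly $2$, giving
\[
\dim\openmoduli{\alpha}\;=\;|\Gamma^1|-|\Delta|+|\bs{c}\cap\Gamma^1|.
\]

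To recognise this as $\mdim(\alpha)-\text{ov}(\Gamma)$, I would use the Euler identity $|\Gamma^1|=|\Gamma^0|+|\Delta|+b_1(\Gamma)-1$ (obtained by compactifying each end with a virtual vertex at infinity), which for $b_1=1$ gives $|\Gamma^1|-|\Delta|=|\Gamma^0|$. Summing valencies yields $\sum_V v(V)=2(|\Gamma^1|-|\Delta|)+|\Delta|=2|\Gamma^0|+|\Delta|$, so $\text{ov}(\Gamma)=\sum_V(v(V)-3)=|\Delta|-|\Gamma^0|$, whence $|\Gamma^0|=|\Delta|-\text{ov}(\Gamma)$. Combining with $\mdim(\alpha)=|\Delta|+|\bs{c}\setminus\Gamma^0|$ (since $\gen(\alpha)=1$) yields the claimed equality. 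The main delicate step is the identification of ``no collinear cycle'' with the full-rank condition on directing vectors: in the paper's terminology (Definition~\ref{def:center_collinear_cycle}) a collinear cycle is a $2$-edge, $2$-vertex configuration, and one must verify that any cycle of length $\geq 3$ in a realizable elliptic type contains a pair of transverse directing vectors, which is where the realizability constraint on collinear vertices (Definition~\ref{def:realizable_trop_curve}) ultimately plays a role.
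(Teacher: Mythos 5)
Your proof is correct and follows essentially the same route as the paper's: the same ambient parametrization by bounded edge lengths, marking positions, and a base vertex, the same pair of cycle-closure equations whose rank is $2$ because the unique cycle is not collinear, and the same Euler-characteristic identity (which you carry out explicitly where the paper merely calls it ``standard''). The one point worth noting is that you correctly flag the need to upgrade the hypothesis ``no collinear cycle'' (i.e.\ no bigon) to ``the directing vectors along the unique cycle span $\R^2$'' via the realizability constraints --- a step the paper's proof silently conflates.
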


\begin{proof}
	Denote by $E_1,\dots,E_n$ the edges of $\Gamma$ that form its unique cycle, and by $V_1, \dots, V_n$ the vertices on its cycle, such that $E_i\cap E_{i+1} = \{V_i\}$.
	It was mentioned in the beginning of the current subsection that $\openmoduli{\alpha}$ is a polyhedron in $\R^{|\Gamma^1|-|\Delta|+2+|\bs{p}\setminus\Gamma^0|}$, where the coordinates of this Euclidean space are the lengths of the bounded edges $\setc{\ell_{E}}{E\in \Gamma^1\setminus \Gamma^1_\infty} $, the positions of the markings, and the image of a distinguished vertex $h(V_1)$.
	The affine span of this polyhedron is the space of solutions of the system
	\[ \sum_{i=1}^n \oa_{V_i}(E_i) \ell_{E_i} = 0.  \]
	Since we assumed the cycle is not collinear, the rank of the matrix whose columns are $\{ \oa_{V_1}(E_1), \dots, \oa_{V_n}(E_n) \}$ is $2$, so we get that
	\[ \dim \openmoduli{\alpha} = |\Gamma^1| - |\Delta| + |\bs{p}\setminus \Gamma^0|, \]
	which can be seen to be equal to $\mdim \alpha - \text{ov}(\Gamma)$ by standard Euler characteristic computation.
\end{proof}

\begin{lemma}\label{lemma:dim_moduli_cells}
	Let $ (\Gamma, h, \bg, \bs{p})$ be a cycle reduced plane tropical curve, and suppose that either $\gen(\Gamma) \le 1$, or $|\bs{p}\cap \Gamma^{0}|\le 1$.
	Then for the combinatorial type $ \alpha := \left[\Gamma\right] $, we have
	\begin{equation}\label{eq:dim_moduli_polyhedron_ineq}
		\dim \openmoduli{\alpha} \le \mdim \alpha
	\end{equation}
	and additionally
	\begin{itemize}
		\item $ \dim \openmoduli{\alpha} =  \mdim \alpha $ if, and only if, $ \Gamma $ is trivalent;
		
		\item If $ \dim \openmoduli{\alpha} =  \mdim \alpha - 1 $ then one of the following holds:
		\begin{enumerate}
			\item Either all the vertices of $\Gamma$ are trivalent, except two $4$-valent vertices that are joined by a collinear cycle;
			
			\item or $\Gamma$ has one $4$-valent marked vertex and all the other vertices of $\Gamma$ are trivalent;
			
			\item or all the marked vertices of $\Gamma$ are trivalent and all the unmarked vertices of $\Gamma$ are of valency at most $4$.
		\end{enumerate}
	\end{itemize}
\end{lemma}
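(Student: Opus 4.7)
I would follow the strategy of Lemma \ref{lemma:dim_moduli_genus1}: realize $\openmoduli{\alpha}$ as a linear slice of the ambient polyhedral space $\R^{|\Gamma^1|-|\Delta|+|\bs{c}\cap\Gamma^1|+2}$ (whose coordinates are the lengths of bounded edges, the positions of edge-markings, and the image of a distinguished vertex), cut out by the cycle-closure equations: for each generator $\gamma$ of $H_1(\Gamma)$, the vector relation $\sum_{E\in\gamma}\pm\ell_E\oa(E)=0$ in $\R^2$.

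First, I would reduce the dimension count to a purely combinatorial one. Summing valencies gives $\sum_{V\in\Gamma^0}\text{val}(V)=2|\Gamma^1|-|\Delta|$, which combined with $|\Gamma^1|-|\Delta|=|\Gamma^0|+b_1(\Gamma)-1$ yields
\[
|\Gamma^0|=|\Delta|+2b_1(\Gamma)-2-\text{ov}(\Gamma).
\]
Substituting this into the ambient dimension and using the definition of $\mdim\alpha$ produces, under the standing assumption that the vertex genera vanish,
\[
(\text{ambient dim})=\mdim\alpha+2b_1(\Gamma)-\text{ov}(\Gamma).
\]
Next, let $r$ be the rank of the cycle-closure system. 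Each cycle contributes an equation in $\R^2$, so $r\le 2b_1(\Gamma)$; a collinear cycle (two edges between two vertices, forced to be parallel by balancing) contributes only a single scalar equation, namely equality of the two edge-lengths. If $c$ denotes the number of collinear cycles in a cycle basis of $H_1(\Gamma)$, then $r=2b_1(\Gamma)-c$, so
\[
\mdim\alpha-\dim\openmoduli{\alpha}=\text{ov}(\Gamma)-c.
\]

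The heart of the proof then becomes the combinatorial inequality $\text{ov}(\Gamma)\ge c$. Cycle-reducedness guarantees that both endpoints of any collinear cycle are non-collinear, so by the balancing condition each is at least $4$-valent and contributes at least $1$ to $\text{ov}(\Gamma)$; a disjoint collinear cycle therefore contributes $\ge 2$ to $\text{ov}(\Gamma)$. When two collinear cycles share an endpoint $V$, the four cycle-edges at $V$ span only two non-parallel directions, so balancing at $V$ cannot close using just these four edges, forcing $\text{val}(V)\ge 5$ and $\text{ov}(V)\ge 2$. Piecing together these local contributions yields $\text{ov}(\Gamma)\ge c$ with equality only when $c=0$, proving (\ref{eq:dim_moduli_polyhedron_ineq}) and the equality characterization: $\dim\openmoduli{\alpha}=\mdim\alpha$ iff $\text{ov}(\Gamma)=0$, i.e.\ $\Gamma$ is trivalent.

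For the codimension-one case $\text{ov}(\Gamma)-c=1$, I would check that $c=0$ produces a unique $4$-valent vertex with all others trivalent, giving case (2) if the vertex is marked and case (3) if it is unmarked (so that all marked vertices are trivalent), while $c=1$ and $\text{ov}(\Gamma)=2$ forces both endpoints of the unique collinear cycle to be exactly $4$-valent and all remaining vertices trivalent, which is case (1). The main technical obstacle will be sharpening the bound to $\text{ov}(\Gamma)\ge c+2$ when $c\ge 2$, ruling out any further possibilities; this is precisely where the hypothesis ``$\gen(\Gamma)\le 1$ or $|\bs{p}\cap\Gamma^0|\le 1$'' is needed, as it restricts the possible arrangements of multiple collinear cycles and their interactions with the marked vertices.
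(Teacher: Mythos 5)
Your reduction of the problem to the identity
\[
\mdim\alpha-\dim\openmoduli{\alpha}=\text{ov}(\Gamma)-s,\qquad s:=2b_1(\Gamma)-\operatorname{rank}(\text{cycle-closure system}),
\]
is correct (and the bookkeeping $|\Gamma^0|=|\Delta|+2b_1-2-\text{ov}$ checks out), but the proof then hinges on the claim that $s=c$, i.e.\ that the only way the cycle-closure system can drop rank is through two-edge collinear cycles. That claim is false, and the paper's own Example \ref{ex:trop_curve_3_4valent_1_codim} is a counterexample satisfying the hypotheses of the lemma: a genus-$3$ curve with $|\bs{p}\cap\Gamma^0|=0$, no collinear cycles ($c=0$), two $4$-valent vertices ($\text{ov}=2$), yet $\dim\openmoduli{\alpha}=\mdim\alpha-1$, so $s=1>c$. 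Rank can drop because of other superabundant configurations (e.g.\ a cycle of combinatorial length $\ge 3$ whose image lies in a line, which is compatible with realizability when its vertices are $4$-valent and non-collinear, or more global degeneracies among several cycles as in the example). Consequently your inequality $\text{ov}(\Gamma)\ge c$ does not imply the needed $\text{ov}(\Gamma)\ge s$, so \eqref{eq:dim_moduli_polyhedron_ineq} is not established; and your enumeration of the codimension-$\le 1$ types is incomplete -- it cannot produce case (3) of the statement, where several $4$-valent unmarked vertices coexist with $\dim\openmoduli{\alpha}=\mdim\alpha-1$ precisely because of superabundancy. The very fact that case (3) is stated with ``valency at most $4$'' rather than ``a unique $4$-valent vertex'' is a signal that $\mdim-\dim$ is not simply $\text{ov}-c$.

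Controlling $s$ is the actual content of the lemma, and the paper does not attempt a direct rank computation: it first contracts collinear cycles (which drops $\mdim$ by one and preserves $\dim\openmoduli{\alpha}$, so they can be assumed absent), then quotes \cite[Proposition 3.9]{Gathmann2007} for $g=0$ and for curves without flat vertices, uses Lemma \ref{lemma:dim_moduli_genus1} for $g=1$, and in the remaining case $|\bs{p}\cap\Gamma^0|=1$ applies the vertex-reduction construction to eliminate the unique marked flat vertex (raising $\mdim$ by one and the valency of a neighbour by one) before invoking Gathmann--Markwig again. If you want to salvage your approach you would need to prove an a priori bound of the form $s\le\text{ov}(\Gamma)$ (with a classification of equality and near-equality) under the standing hypotheses, which is essentially equivalent to redoing the Gathmann--Markwig argument; the hypothesis ``$\gen(\Gamma)\le1$ or $|\bs{p}\cap\Gamma^0|\le1$'' is needed already at that stage, not merely to rule out interactions of several collinear cycles.
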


\begin{proof}
	If $\Gamma$ contains a collinear cycle we can replace it by a single edge.
	This operation reduces $\mdim \alpha$ by one while leaving $\dim \openmoduli{\alpha}$ unchanged, so it is sufficient to prove the lemma for $\Gamma$ without a collinear cycle.
	
	If $\gen(\Gamma) = 0$ the statement were proven in \cite[Proposition 3.9]{Gathmann2007} and if $\gen(\Gamma) = 1$ the statement follows from Lemma \ref{lemma:dim_moduli_genus1}.
	Thus it remains to prove the assertion for $\Gamma$ with a unique marked vertex $V$, which will be the assumption for the rest of the proof.
	
	First suppose that $V$ is not collinear.
	By Definition \ref{def:realizable_trop_curve} this means that $\Gamma$ has no flat vertices.
	Thus the conditions of \cite[Proposition 3.9]{Gathmann2007} hold, and the assertion follows from it.
	
	Finally, suppose that $V$ is flat, and thus, by Definition \ref{def:realizable_trop_curve}, trivalent.
	Let $E_1$ and $E_2$ be the adjacent edges to $V$ with $h(E_1)\subsetneq h(E_2)$ and let $V_1$ be the endpoint of $E_1$ different from $V.$
	Consider the vertex reduction $\Gamma_{red}$ of $\Gamma$, and denote by $\alpha_{red}$ its combinatorial type.
	Then ${\mdim \alpha_{red} = \mdim \alpha + 1}$ (since it has one more marking in the interior of an edge) and $\dim \openmoduli{\alpha_{red}} = \dim \openmoduli{\alpha}$ by Remark \ref{rem:vertex_reduction_iso}.
	Moreover, the valency of the image of $V_1$ in $\Gamma_{red}$ is bigger by one than the valency of $V_1$.
	Thus, by applying \cite[Proposition 3.9]{Gathmann2007} to the vertex reduced curve $\Gamma_{red}$, we readily see that $\dim \openmoduli{\alpha} \le \mdim \alpha$ with equality exactly for trivalent curves.
	
	Now suppose that $\Gamma$ has a vertex $V_2$ of valency higher than $4$.
	If $ V_2 = V_1 $ then its image in $\Gamma_{red}$ is of valency higher than $5$ which implies, by the proof of \cite[Proposition 3.9]{Gathmann2007}, that $\dim \openmoduli{\alpha_{red}} < \mdim \alpha_{red} - 2$ and thus $\dim \openmoduli{\alpha} < \mdim \alpha - 1$.
	If $V_2 \ne V_1$, then in $\Gamma_{red}$ we have two vertices of combined valency at least $9$ that are not joined by a collinear cycle.
	Picking the line $L$ in the proof of \cite[Proposition 3.9]{Gathmann2007} to be the line thorough those vertices gives us again $\dim \openmoduli{\alpha_{red}} < \mdim \alpha_{red}-2$ and thus finishes the proof.
\end{proof}

\begin{corollary}\label{cor:dim_moduli_space}
	If either $g\le 1$ or $n_{ta}\le 1$ then the dimension of $ \modulispace $ is $ |\Delta|+g-1+n_{si} $.
\end{corollary}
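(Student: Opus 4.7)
My plan is to compute $\dim \modulispace = \max_\alpha \dim \openmoduli{\alpha}$ by proving matching upper and lower bounds, leveraging Lemma \ref{lemma:dim_moduli_cells} as the principal tool.

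\textbf{Upper bound.} For any combinatorial type $\alpha$ appearing in $\modulispace$, I would first replace $\alpha$ by its cycle reduction $\alpha'$; by Remark \ref{rem:vertex_reduction_iso} this preserves $\dim \openmoduli{\alpha}$, while $\gen(\alpha')\le \gen(\alpha)$ and $\mdim \alpha' \le \mdim \alpha$. Next, if any of the $n_{si}$ edge-markings happens to sit at a vertex of $\Gamma'$, one regenerates by sliding it into an incident edge, producing a type $\alpha''$ of which $\alpha'$ is a proper degeneration; hence $\openmoduli{\alpha'}$ is a proper face of $\openmoduli{\alpha''}$ and has strictly smaller dimension. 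So for the purpose of bounding the maximum it suffices to take $|\bs{p}\cap (\Gamma')^0|=n_{ta}$ and hence $|\bs{c}\setminus (\Gamma')^0|=n_{si}$. Under the hypothesis $g\le 1$ we have $\gen(\alpha')\le 1$; under the hypothesis $n_{ta}\le 1$ we have $|\bs{p}\cap (\Gamma')^0|\le 1$. Either way Lemma \ref{lemma:dim_moduli_cells} applies and gives
$$\dim \openmoduli{\alpha}=\dim \openmoduli{\alpha'}\;\le\;\mdim \alpha'\;=\;|\Delta|+\gen(\alpha')-1+n_{si}\;\le\;|\Delta|+g-1+n_{si}.$$

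\textbf{Lower bound.} I would construct a single trivalent, cycle-reduced, realizable marked tropical curve $(\Gamma,h,\bg,\bs{p})$ of degree $\Delta$ and genus exactly $g$, with the $n_{ta}$ vertex-markings placed at trivalent vertices and the $n_{si}$ markings placed in interiors of edges. Such curves exist by standard tropical constructions (e.g.\ pick a regular lattice subdivision of $P$ with $g$ interior lattice points used to form cycles and all the remaining interior lattice points placed on edges of the subdivision, producing a trivalent dual curve). The equality case of Lemma \ref{lemma:dim_moduli_cells} applied to this trivalent type yields
$$\dim \openmoduli{\alpha}=\mdim \alpha=|\Delta|+g-1+n_{si},$$
matching the upper bound.

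\textbf{Main obstacle.} The step that requires most care is the reduction to $|\bs{p}\cap (\Gamma')^0|=n_{ta}$: one must check that sliding a spurious $n_{si}$-marking off a vertex into an incident edge yields a bona fide \emph{realizable} regeneration (Definition \ref{def:realizable_trop_curve}) rather than violating the constraints on collinear vertices or centrally-embedded collinear cycles. This amounts to a local perturbation argument at the affected vertex, together with the observation that no collinear-cycle constraint can be created by pushing a marking into a generic edge. A secondary (but routine) task is the explicit construction of the trivalent realizable witness curve of genus exactly $g$ for the lower bound, which can be done by hand from any admissible subdivision of $P$.
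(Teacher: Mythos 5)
Your proof is correct and follows the route the paper intends: Corollary \ref{cor:dim_moduli_space} is stated as a direct consequence of Lemma \ref{lemma:dim_moduli_cells}, and your two bounds (cycle reduction plus sliding the $n_{si}$-markings off vertices for the upper bound, a trivalent genus-$g$ witness for the lower bound) supply exactly the details that deduction requires. The obstacle you flag is in fact vacuous: by condition (1) of Definition \ref{def:realizable_trop_curve} every collinear vertex must be an endpoint of a collinear cycle, so a realizable cycle-reduced curve has no collinear vertices at all, and pushing an $n_{si}$-marking into an incident edge can never violate the realizability constraints.
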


\begin{example}
	Consider the curve in Figure \ref{fig:trop_curve_3_4valent_1_codim}, let $\alpha$ denote its combinatorial type.
	It is of genus $3$, has $3$ unbounded ends, and has $3$ marking in the interior of edges.
	Thus $\mdim \alpha = |\Delta|+g-1+n_{si} = 3+3-1+3=8$.
	Since $\alpha$ has two $4$-valent vertices, we expect $\dim \openmoduli{\alpha}$ to be $\mdim \alpha-2=6$.
	But $\dim \openmoduli{\alpha}=7$ since we can move every marking in a one dimensional family, translate the whole curve in a two dimensional family, and scale it in a one dimensional family.
\end{example}

\begin{figure}
	\begin{center}
	\begin{tikzpicture}
		\tikzset{
			every path/.style = {line width=0.7pt, red},
			every node/.style = {black}
		}
		\begin{scope}[scale=3, decoration={markings,mark=at position 0.5 with {\arrow{>}}}]
			\draw [] (0.545455, 0.536455) -- (0.272727, 0.536455);
			\draw [] (0.545455, 0.536455) -- (0.727273, 0.727273);
			\draw [] (0.545455, 0.536455) -- (0.545455, 0.454545);
			\draw [] (0.545455, 0.363636) -- (0.545455, 0.272727);
			\draw [] (0.545455, 0.363636) -- (0.427919, 0.481172);
			\draw [] (0.545455, 0.363636) -- (0.727273, 0.727273);
			\draw [] (0.545455, 0.363636) -- (0.545455, 0.454545);
			\draw [] (0.727273, 0.727273) -- (0.363636, 0.554455);
			\draw [postaction={decorate}] (0.727273, 0.727273) -- node [above, very near end] {\footnotesize $v_3$} (1.000000, 1.000000);
			\draw [] (0.363636, 0.554455) -- (0.272727, 0.554455);
			\draw [] (0.363636, 0.554455) -- (0.427919, 0.481172);
			\draw [black, fill=black] (0.272727, 0.545455) circle (0.015000);
			\draw [postaction={decorate}] (0.272727, 0.545455) -- node [above, very near end] {\footnotesize $v_1$} (0.000000, 0.545455);
			\draw [postaction={decorate}] (0.545455, 0.272727) -- node [right, very near end] {\footnotesize $v_2$} (0.545455, 0.000000);
			\draw [black] (0.530455, 0.257727) -- (0.560455, 0.287727);
			\draw [black] (0.530455, 0.287727) -- (0.560455, 0.257727);
			\draw [black] (0.427919, 0.452172) -- (0.427919, 0.510172);
			\draw [black] (0.398919, 0.481172) -- (0.450919, 0.481172);
			\draw [black] (0.530455, 0.439545) -- (0.560455, 0.469545);
			\draw [black] (0.530455, 0.469545) -- (0.560455, 0.439545);
			
		\end{scope}
	\end{tikzpicture}
	\end{center}

	\caption{A tropical curve with two $4$-valent vertices and $\dim \openmoduli{\alpha}=\mdim \alpha-1$.}\label{fig:trop_curve_3_4valent_1_codim}
\end{figure}

In case $\Gamma$ is either rational or does not have marked vertices, a stronger version of Lemma \ref{lemma:dim_moduli_cells} were proven in \cite[Proposition 3.9]{Gathmann2007}.
\begin{lemma}\label{lemma:dim_moduli_genus_0}
	Let $(\Gamma, h, \bg, \bs{p})$ be a marked plane tropical curve with either $\gen(\Gamma)=0$ or $\bs{p}\cap \Gamma^{0}=\emptyset$.
	Then $\dim \openmoduli{[\Gamma]} = \mdim [\Gamma] - 1$ if, and only if, $\Gamma$ is trivalent except either a unique $4$-valent vertex or two $4$-valent vertices joined by a collinear cycle.
\end{lemma}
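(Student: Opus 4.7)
The plan is to deduce the statement from \cite[Proposition 3.9]{Gathmann2007}, which supplies a sharp version of the dimension inequality in Lemma \ref{lemma:dim_moduli_cells} for marked plane tropical curves that are either rational or contain no marked vertices. The two hypotheses of the lemma I would treat separately and then observe that the conclusions combine into the single combinatorial description given in the statement.

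First I would handle the case $\bs{p}\cap\Gamma^0=\emptyset$. Definition \ref{def:realizable_trop_curve} forces $\Gamma$ to have no collinear vertices in this situation, since both options in the definition of a realizable curve require the presence of a marked vertex, which is absent here. Thus $\Gamma$ falls directly within the scope of Gathmann--Markwig's proposition, and its sharpness yields that the codimension-$1$ cells are precisely those for which either a unique $4$-valent vertex is present, or a single collinear cycle joins two $4$-valent vertices and all other vertices of $\Gamma$ are trivalent.

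Next I would treat the case $\gen(\Gamma)=0$. Here $\Gamma$ is a tree and so has no cycles at all, but it may still carry collinear marked vertices (necessarily trivalent by realizability). My strategy would be to apply the vertex reduction of Section \ref{sec:moduli_space} iteratively, obtaining a rational tropical curve $\Gamma_{red}$ with no collinear vertices. By Remark \ref{rem:vertex_reduction_iso} the moduli dimension is preserved, while each elementary vertex reduction should raise both $\mdim$ and the total overvalency by exactly one (the former through the newly introduced edge marking, the latter through the unit increase in the valency of the unique affected neighbor). Feeding this into Gathmann--Markwig's sharp formula for $\Gamma_{red}$ transfers the equality to $\dim\openmoduli{[\Gamma]} = \mdim[\Gamma] - \mathrm{ov}(\Gamma)$ on the original curve, so codimension $1$ is equivalent to $\mathrm{ov}(\Gamma)=1$, matching the unique $4$-valent vertex description.

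The main obstacle I anticipate is the careful bookkeeping of $\mdim$, the overvalency sum, and $\dim\openmoduli{\cdot}$ under the iterated vertex reductions, and in particular ensuring that reductions at adjacent collinear vertices behave additively and do not inadvertently create new collinear configurations that require further reduction. Once this combinatorial accounting is settled, both halves of the lemma reduce immediately to the application of Gathmann--Markwig's sharp codimension-$1$ characterization.
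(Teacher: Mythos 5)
Your proposal is correct and follows essentially the same route as the paper: the paper states this lemma with no proof beyond the preceding sentence attributing it to \cite[Proposition 3.9]{Gathmann2007}, and your reduction to that proposition (using realizability to rule out flat vertices when $\bs{p}\cap\Gamma^0=\emptyset$, and vertex reduction with the bookkeeping $\mdim\mapsto\mdim+1$, $\mathrm{ov}\mapsto\mathrm{ov}+1$, $\dim\openmoduli{}$ unchanged in the rational case) is exactly the mechanism the paper itself deploys in its proof of Lemma \ref{lemma:dim_moduli_cells}. The only simplification worth noting is that a realizable genus-zero curve is a tree, so it has no collinear cycles and the vertex-reduction step is essentially forced to terminate without creating new collinear configurations, which settles the bookkeeping concern you raise.
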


\subsection{The evaluation map in codimension zero and one}\label{sec:eval_map}
We want to understand the positions of the marked points in the generic case and in degenerations of codimension 1.
For that, we introduce the evaluation map
\begin{align*}
	\text{Ev} : \modulispace & \to \mbb{R}^{2n} \\
	(\Gamma, h, \bg, \bs{p}) & \mapsto (h(p_1),\dots, h(p_n)).
\end{align*}
From now on, we set $ n_{si}+2n_{ta} = |\Delta|+g-1 $.
By Corollary \ref{cor:dim_moduli_space}, this is precisely the condition for which $ \text{Ev} $ is a map of polyhedral complexes of the same dimension.

We say that a combinatorial type $\alpha$ is \emph{enumeratively essential} if exist a combinatorial type $\beta$ with $\openmoduli{\alpha}\subseteq \moduli{\beta}$ and the image of $\openmoduli{\beta}$ under $\text{Ev}$ is of dimension $2n$.

The proof of the following lemma is immediate, and omitted for conciseness.
\begin{lemma}\label{lemma:Ev_injective}
	For a tropical curve $ (\Gamma,h, \bg, \bs{p}) $ of combinatorial type $ \alpha $, the restriction $ \text{Ev}|_{\openmoduli{\alpha}} $ is injective if, and only if, $ \ogamma\setminus\bs{p} $ does not contain neither components of positive genus nor components with more than one unbounded edge.
\end{lemma}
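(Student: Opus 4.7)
The plan is to analyze the kernel of the linearized evaluation map $d\text{Ev}$ on the tangent space of $\openmoduli{\alpha}$, decomposing the computation according to the connected components of $\ogamma\setminus\bs{p}$. Since $\openmoduli{\alpha}$ is a convex polyhedron (Section~\ref{sec:moduli_space}) and $\text{Ev}$ restricts to an affine-linear map on it, injectivity of $\text{Ev}|_{\openmoduli{\alpha}}$ is equivalent to $\ker d\text{Ev}=\{0\}$. A tangent vector corresponds to infinitesimal perturbations $(dh(V_0),(d\ell_E)_E,(ds_p)_p)$ subject to cycle-closing relations; propagating the relation $dh(V')-dh(V)=d\ell_E\,\oa_V(E)/\wt(E)$ along bounded edges makes $dh$ a well-defined, piecewise-linear $\mathbb{R}^2$-valued function on $\ogamma$, and the tangent vector lies in $\ker d\text{Ev}$ precisely when $dh(p)=0$ for every $p\in\bs{p}$.

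For $(\Leftarrow)$, assume every component $K$ of $\ogamma\setminus\bs{p}$ is simply connected with at most one unbounded end. I would prove $dh|_K\equiv 0$ by induction on $|V(K)|$, using a tree orientation on $\overline{K}$: when $\overline K$ contains an unbounded end, orient edges towards it; otherwise pick any marked boundary leaf as a pseudo-root and orient edges towards it. With this orientation every interior vertex has at least two children that are either marked leaves (carrying the constraint $dh(p)=0$) or deeper internal vertices (with $dh=0$ by the inductive hypothesis). Each such child yields a one-dimensional constraint $dh(V)\in\mathbb{R}\cdot\oa_V(E)$, and non-collinearity of $V$—guaranteed by Definition~\ref{def:realizable_trop_curve}, with collinear vertices handled via the length-equality of centrally embedded collinear cycles (Definition~\ref{def:center_collinear_cycle})—forces two of these directions to be linearly independent, so $dh(V)=0$. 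Iterating up the tree and over all components gives $\ker d\text{Ev}=0$.

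For $(\Rightarrow)$, suppose some component $K$ fails one of the conditions. If $K$ contains a cycle $C$, the balancing equation $\sum_{E\in C}(\pm) d\ell_E\,\oa_V(E)/\wt(E)=0$ cuts out a subspace of $\mathbb{R}^{|C|}$ of codimension at most two, hence positive-dimensional for any non-trivial cycle; extended by zero outside $C$, any nontrivial solution yields a $d\text{Ev}$-kernel element, since $dh$ then vanishes outside $C$ and in particular at every marked point. If instead $K$ has at least two unbounded ends, a direct degrees-of-freedom count on $\overline K$---$2$ for the position of a base vertex plus one for each bounded edge in $K$, against $2$ for each boundary marked point of $K$, with the unbounded ends contributing no constraint---shows the local linear system is underdetermined, producing a nontrivial element of $\ker d\text{Ev}$ supported in $K$.

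The hard part is the collinear-vertex case in $(\Leftarrow)$: at such a $V$ the naive induction yields only a one-dimensional constraint on $dh(V)$, since all of $V$'s incident edge directions share a single line. The missing independent constraint must come from the centrally embedded collinear cycle at $V$, whose length-equality requirement (Definition~\ref{def:center_collinear_cycle}) couples the non-cycle edge at $V$ to the non-cycle edge at the opposite, marked endpoint of the cycle; transporting the vanishing displacement at that marked endpoint across this coupling supplies the second independent direction and restores $dh(V)=0$. Making this coupling explicit is the technical crux of the argument.
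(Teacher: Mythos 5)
The paper does not actually contain a proof of this lemma (it is declared ``immediate'' and omitted), so your argument can only be judged on its own terms. Your overall setup --- linearize, use that $\text{Ev}$ is affine on $\openmoduli{\alpha}$, and run a tree-propagation induction for the ``if'' direction --- is the right approach, and you correctly isolate the delicate point there: at an unmarked collinear vertex all incident directions are parallel, and the missing transverse constraint must be imported through the equal-length condition of the centrally embedded collinear cycle. Note, however, that this coupling relates the lengths of the two edges flanking the cycle, which lie in \emph{different} components of $\Gamma\setminus\bs{p}$ (both attached to the marked endpoint of the cycle), so the computation does not literally ``decompose according to the connected components'': you need a global, two-stage induction in which the neighboring component is rigidified first, the common length is thereby frozen, and only then is the flat vertex pinned down.

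The genuine gap is in the ``only if'' direction, positive-genus case. Setting $d\ell_E=0$ outside a cycle $C$ does \emph{not} make $dh$ vanish outside $C$: any nontrivial solution of the cycle-closing equation necessarily displaces the vertices of $C$, and the edge-compatibility relations then propagate these displacements along the non-cycle edges out to the marked points, so the vector you build is in general not in $\ker d\text{Ev}$. A genuine kernel element must move each cycle vertex \emph{parallel to its outgoing non-cycle edge}. For instance, for a triangle cycle with edge directions $(1,0),(0,1),(-1,1)$ and one marked end at each vertex (end directions $(-1,-1),(2,-1),(-1,2)$), the kernel element is $dh(V_1)=(s,s)$, $dh(V_2)=(-2s,s)$, $dh(V_3)=(s,-2s)$ --- nonzero at every cycle vertex. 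Its existence rests on a hidden linear dependency among the transversality conditions (equivalently, on the deformation obtained by perturbing the coefficient of the interior lattice point of the dual subdivision), which holds when the cycle vertices are trivalent but fails in general: a cycle each of whose vertices is $4$-valent and carries two marked ends in independent directions is completely rigid, so $\text{Ev}$ is injective even though that component has genus one. The same caveat hits your degrees-of-freedom count in the two-ends case, which is underdetermined only when the internal vertices are trivalent (one $5$-valent vertex with two ends and three marked branches is again rigid). So this direction is not a soft ``extend by zero'' argument; it requires either the explicit balanced deformation above or an additional hypothesis restricting to the essentially trivalent combinatorial types for which the paper actually invokes the lemma --- as stated for arbitrary types, the ``only if'' implication is in fact false, and no proof along your lines (or any other) can close that gap without such a restriction.
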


Since we set $ n_{si}+2n_{ta} = |\Delta|+g-1 $, a calculation of Euler characteristic gives us that if $\Gamma\setminus\bs{p}$ contains a bounded component it has to contain either a component with more than one unbounded end or a component of positive genus.
Thus Lemma \ref{lemma:Ev_injective} implies the following result.
\begin{lemma}\label{lemma:generic_trop_curve_is_regular}
	The image of the non-regular curves under the map
	$$ {\text{Ev}:\modulispace\to\mbb{R}^{2n}} $$
	is of positive codimension in $ \mbb{R}^{2n} $.
\end{lemma}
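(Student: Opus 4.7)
I would stratify $\modulispace$ by combinatorial types and treat each stratum $\openmoduli{\alpha}$ separately. There are only finitely many combinatorial types for fixed $\Delta,g,n_{ta},n_{si}$, and the non-regularity condition of Definition \ref{def:regular_trop_curve} depends only on the combinatorial data, so the non-regular locus is a finite union of such $\openmoduli{\alpha}$. Since $n_{si}+2n_{ta}=|\Delta|+g-1$, Corollary \ref{cor:dim_moduli_space} gives $\dim\modulispace=2n$, so it suffices to show $\dim\text{Ev}(\openmoduli{\alpha})<2n$ for every non-regular combinatorial type $\alpha$.

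Fix such an $\alpha$. If $\dim\openmoduli{\alpha}<2n$, there is nothing to do, since $\text{Ev}$ is affine-linear on a single stratum and cannot increase dimension. Otherwise $\dim\openmoduli{\alpha}=2n$, and I would combine non-regularity with the Euler-characteristic remark quoted just above the lemma: if no component of $\ogamma\setminus\bs{p}$ had positive genus and none had more than one unbounded end, then non-regularity would force a bounded component, and the Euler-characteristic count (using $n_{si}+2n_{ta}=|\Delta|+g-1$) would then force either a positive-genus component or a multi-end component elsewhere, a contradiction. Hence $\ogamma\setminus\bs{p}$ contains either a positive-genus component or a component with several unbounded ends, so by Lemma \ref{lemma:Ev_injective} the restriction $\text{Ev}|_{\openmoduli{\alpha}}$ is not injective. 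Being affine-linear on the polyhedron $\openmoduli{\alpha}$, a non-trivial kernel forces the image to have dimension at most $\dim\openmoduli{\alpha}-1<2n$.

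Taking the union over the finitely many non-regular types, the image of the non-regular locus under $\text{Ev}$ is covered by finitely many polyhedra of dimension at most $2n-1$ in $\mbb{R}^{2n}$, which has positive codimension. The only substantive ingredient—and the step I would scrutinize most carefully—is the Euler-characteristic matching between the geometric notion of non-regularity (Definition \ref{def:regular_trop_curve}) and the hypothesis in Lemma \ref{lemma:Ev_injective} about positive-genus or multi-end components; the remaining work is dimension bookkeeping on a finite polyhedral complex under a piecewise affine-linear map.
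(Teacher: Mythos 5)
Your proposal is correct and follows the paper's own argument: the paper likewise derives the lemma from the Euler-characteristic observation that a bounded component of $\Gamma\setminus\bs{p}$ forces a positive-genus or multi-end component, and then invokes Lemma \ref{lemma:Ev_injective} together with the fact that $\text{Ev}$ is an affine map of polyhedral complexes of the same dimension $2n$. Your write-up simply makes explicit the stratification and dimension bookkeeping that the paper leaves implicit.
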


\begin{definition}\label{def:simple_vertex}
	A vertex $ V\in \Gamma^0 $ is called \emph{simple} if its image in the cycle reduction of $ \Gamma $ is either a trivalent vertex or in the interior of an edge.
\end{definition}

\begin{lemma}\label{lemma:codim1_degenerations_list}
	Let $ n_{si},n_{ta} $ be non negative integers, $ n:=n_{si}+n_{ta} $, suppose that $ n_{si}+2n_{ta} = |\Delta|+g-1 $ and that either $g\le 1$ or $n_{ta}\le 1$.
	Let $ (\Gamma, h, \bg, \bs{p}) $ be a tropical curve whose combinatorial type $ \alpha $ is enumeratively essential, and suppose $ \dim \moduli{\alpha} = |\Delta|+g-2+n_{si} $ (i.e. of codimension 1 in $ \modulispace $).
	Then one of the following holds:
	\begin{enumerate}
		\item Either $ |\bs{p}\cap\Gamma^0|=n_{ta} $, $ (\Gamma, h, \bg, \bs{p}) $ is regular, and all its vertices are rational and at most $4$-valent;
		
		\item or $ |\bs{p}\cap\Gamma^0|=n_{ta} $, $ (\Gamma, h, \bg, \bs{p}) $ is regular, all of its vertices are rational and $3$-valent except a unique vertex which is rational, $5$-valent, and contained in a collinear cycle;
		
		\item or $ |\bs{p}\cap\Gamma^0|=n_{ta} $, all the vertices of $ \Gamma $ are rational and simple except a unique rational marked vertex $ V $, and
		\begin{enumerate}
			\item either $ V $ is four valent, is not attached to collinear cycles and $ r $ of the edges adjacent to $ V $ are contained in the unique bounded component of $ \Gamma\setminus\bs{p} $ for some $ 1\le r\le 3 $,
			
			\item or $ V $ is five valent, attached to three edges in different directions and to a bounded collinear cycle in $ \Gamma\setminus\bs{p} $ while all the other components of $ \Gamma\setminus\bs{p} $ are unbounded;
			
		\end{enumerate}
		
		\item or $ |\bs{p}\cap\Gamma^0|=n_{ta} $, all the vertices of $ \Gamma $ are rational and trivalent except two marked rational four valent vertices that are joined by a collinear cycle, and all the components of $ \Gamma\setminus\bs{p} $ except this collinear cycle are unbounded;
		
		\item or $ |\bs{p}\cap\Gamma^0|=n_{ta} $, all the vertices of $ \Gamma $ are rational and trivalent except a marked vertex $V_1$ and an unmarked vertex $V_2$ which are both rational, $4$-valent, and joined by a collinear cycle, and $ \Gamma\setminus\bs{p} $ have a unique bounded component which contain either $1$ or $2$ edges incident to $V_1$;
		
		\item or $ |\bs{p}\cap\Gamma^0|=n_{ta} $, $ (\Gamma, \bs{p}, h) $ is regular, and all its vertices are rational and trivalent except $2 $ rational unmarked vertices $V_1$ and $V_2$ which are joined by $ 2 $ marked edges, $V_1$ adjacent to an additional edge and additional collinear cycle, and $V_2$ is adjacent to $2$ additional edges;
		
		\item or $ |\bs{p}\cap\Gamma^0|=n_{ta}+1 $, all the vertices of $ \Gamma $ are rational and simple, and $ \Gamma\setminus\bs{p} $ has either a unique bounded component or a bounded collinear cycle attached to the unique marked vertex other than $ p_1,\dots,p_{n_{ta}} $;
		
		\item or $ |\bs{p}\cap\Gamma^0|=n_{ta} $, all the vertices of $ \Gamma $ are rational and simple except one elliptic marked simple vertex $V$, and $ \Gamma\setminus\bs{p} $ has a unique bounded component which is attached to $V$.
	\end{enumerate}
\end{lemma}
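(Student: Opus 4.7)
The plan is to combine a dimensional identity with a case analysis guided by Lemma~\ref{lemma:dim_moduli_cells} and the regularity consequences of Lemma~\ref{lemma:generic_trop_curve_is_regular}. First I would set up the dimension decomposition. Using $\mdim \alpha = |\Delta| + \gen(\alpha) - 1 + (n - |\bs{p}\cap\Gamma^0|)$, the inequality $\dim\openmoduli{\alpha}\le \mdim\alpha$, and the codimension-one hypothesis $\dim\openmoduli{\alpha} = |\Delta|+g-2+n_{si}$, a direct computation gives
\[
  \bigl(g - \gen(\alpha)\bigr) + \bigl(|\bs{p}\cap\Gamma^0| - n_{ta}\bigr) + \bigl(\mdim\alpha - \dim\openmoduli{\alpha}\bigr) = 1.
\]
Since each summand is a non-negative integer, exactly one equals $1$. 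This splits the argument into three regimes: (A) $|\bs{p}\cap\Gamma^0| = n_{ta}$, $\gen(\alpha) = g$, $\dim\openmoduli{\alpha} = \mdim\alpha - 1$; (B) $|\bs{p}\cap\Gamma^0| = n_{ta}+1$, $\gen(\alpha) = g$, with $\Gamma$ trivalent; (C) $|\bs{p}\cap\Gamma^0| = n_{ta}$, $\gen(\alpha) = g-1$, with $\Gamma$ trivalent.

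In regime (A) I would invoke Lemma~\ref{lemma:dim_moduli_cells} (applicable because either $g\le 1$ or $n_{ta}\le 1$), after passing to the cycle reduction of $\alpha$ if necessary. The three conclusions of that lemma give the possible structural defects of the non-trivalent part of $\Gamma$. Restoring the collinear cycles and distinguishing according to whether the high-valent vertices are marked or unmarked, and according to which bounded components of $\Gamma\setminus\bs{p}$ are incident to them, yields cases (1)--(6). Throughout, the enumerative essentiality combined with Lemma~\ref{lemma:generic_trop_curve_is_regular} forces the generic regeneration to be regular, which in turn pins down the end-count conditions on $\Gamma\setminus\bs{p}$ recorded in the statement.

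In regimes (B) and (C), $\Gamma$ itself is trivalent and the codimension-one defect lies on the image side of the evaluation map. In (B), the $(n_{ta}+1)$-st marked vertex is one of the $n_{si}$ markings that has slid onto a vertex; regularity and the balancing conditions leave exactly the configurations of case (7). In (C), the missing unit of genus, being absent from $b_1(\Gamma)$, must be concentrated at a single vertex; by Definition~\ref{def:realizable_trop_curve} this elliptic vertex is forced to be marked and simple, and a regularity analysis of $\Gamma\setminus\bs{p}$ gives the unique-bounded-component description of case (8).

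The main obstacle is the case analysis in regime (A): one must verify that each of (1)--(6) actually arises as a codimension-one face of some top-dimensional stratum $\moduli{\beta}$ on which $\mathrm{Ev}$ is dominant, and that no other configurations do. The most delicate subcase is (6), where two unmarked rational vertices are joined by two parallel marked edges while one of them carries an extra collinear cycle: several alternative placements of the two marked edges and of the attached cycle are a priori compatible with the dimension count, and ruling them out requires showing that only the stated configuration survives under an enumeratively essential regeneration, where the double edge and the collinear cycle are simultaneously recombined into a genuine codimension-one face of a dominant stratum. Once these bookkeeping verifications are carried out, the eight enumerated cases exhaust all possibilities.
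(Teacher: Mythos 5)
Your plan follows essentially the same route as the paper: your identity $(g-\gen(\alpha))+(|\bs{p}\cap\Gamma^0|-n_{ta})+(\mdim\alpha-\dim\openmoduli{\alpha})=1$ is just a repackaging of the paper's dichotomy on whether $\mdim\alpha$ equals $|\Delta|+g-1+n_{si}$ or $|\Delta|+g-2+n_{si}$, and the subsequent analysis via the cycle reduction, Lemma \ref{lemma:dim_moduli_cells}, and regularity of enumeratively essential types is exactly the paper's argument. The one device the paper makes explicit that you leave implicit is the Euler-characteristic formula \eqref{eq:bounded_components_count} counting the bounded components of $\Gamma'\setminus\bs{p'}$ in terms of the genus defect, the marked-vertex excess, and the valency excesses of marked vertices; this is what actually pins down cases (3)--(8), so your ``regularity analysis'' would need to be carried out in that quantitative form.
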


\begin{proof}
	First note that since $ \alpha $ is enumeratively essential, the map $ \text{Ev}|_{\openmoduli{\alpha}} $ is injective, and thus, by Lemma \ref{lemma:Ev_injective}, all the components of $ \Gamma\setminus\bs{p} $ are of genus $ 0 $ and contain at most one unbounded edge.
	Moreover, since $ \alpha $ is a degeneration of a regular type, any bounded component of $ \Gamma\setminus\bs{p} $ has to be attached to a marked vertex.
	
	Denote by $ k $ the number of collinear cycles in $ \Gamma $ with at least one flat endpoint, by $ (\Gamma', \bs{p'}, h') $ the cycle reduction of $ (\Gamma, \bs{p}, h) $ and by $ \alpha' $ its type.
	Note that the conditions on $g$ and $n_{ta}$ imply that $k\le 1$.
	Next, denote by $ b $ the number of bounded components of $ \Gamma'\setminus\bs{p'} $.
	Denote by $ \ogamma'_1,\dots,\ogamma'_{|\Delta|+b} $ compactifications of the components of $ \Gamma'\setminus\bs{p'} $ obtained by adding a vertex at each non closed edge.
	For every marked vertex $ V\in \bs{p'}\cap\Gamma'^0 $ denote by $ r_V $ its valency.
	Since every such connected component is rational we can compute the euler characteristic of $ \ogamma' $ in two ways:
	\begin{align*}
		1-b_1(\ogamma)+k = 1- b_1(\ogamma') = \chi(\ogamma') & \\
 =  \sum_{i=1}^{|\Delta|+b}\chi(\ogamma'_i) - |\bs{p'}\setminus\Gamma'^0|
 - & \sum_{V\in \bs{p}\cap \Gamma'^0}(r_V-1) +2k \\
		= |\Delta|+b - (|\bs{p}\setminus\Gamma^0| + k) & - \sum_{V\in \bs{p}\cap \Gamma^0}(r_V-1) + 2k =\\
		= n_{si}+2n_{ta}-g+1+b+k & -|\bs{p}\setminus\Gamma^0| - \sum_{V\in \bs{p}\cap \Gamma^0}(r_V-1) = \\
		= (n_{si}-|\bs{p}\setminus\Gamma^0|) + 2(n_{ta}-|\bs{p}\cap\Gamma^0|) & - g + 1 + b + k - \sum_{V\in \bs{p}\cap \Gamma^0}(r_V-3).
	\end{align*}
	After rearranging and using the fact that $ |\bs{p}\cap\Gamma^0|+|\bs{p}\setminus\Gamma^0| = n = n_{ta}+n_{si} $, we get
	\begin{equation}\label{eq:bounded_components_count}
		b = (g-b_1(\ogamma))+(|\bs{p}\cap\Gamma^0|-n_{ta}) + \sum_{V\in \bs{p'}\cap\Gamma'^0}(r_V-3).
	\end{equation}
	
	By Lemma \ref{lemma:dim_moduli_cells}, $ \mdim \alpha $ is either $ |\Delta|+g-1+n_{si} $ or $ |\Delta|+g-2+n_{si} $.
	We start by dealing with the former case, i.e. when the first betti number of $ \Gamma $ is $ g $ and it has exactly $ n_{ta} $ marked vertices.
	In that case $ {\dim \openmoduli{\alpha'} = \dim \openmoduli{\alpha} = \mdim \alpha - 1 = \mdim \alpha' - 1} $, we will consider separately the $3$ cases in the second possibility of Lemma \ref{lemma:dim_moduli_cells}.
	
	First suppose that $\Gamma'$ has no collinear cycle, all its marked vertices are trivalent, and all the vertices of $\Gamma'$ have valency at most $4$.
	Then by \eqref{eq:bounded_components_count}, $\Gamma'$ is regular, meaning that $\Gamma$ is regular as well.
	Recall that $\Gamma$ has at most one collinear cycle.
	If it has no collinear cycles we get the first option in the statement of the Lemma.
	If it has one collinear cycle then $\Gamma'$ is either rational or has no marked vertices, so by Lemma \ref{lemma:dim_moduli_genus_0} it has a unique $4$-valent vertex.
	If the unique collinear cycle of $\Gamma$ is attached to the $4$-valent vertex of $\Gamma'$ we get option $(2)$ of the statement, otherwise we get option $(1)$ again.
	
	Next suppose that $\Gamma'$ has a unique marked $4$-valent vertex $V$ and all its other vertices are trivalent.
	By \eqref{eq:bounded_components_count} we see that $ \Gamma' \setminus \bs{p'} $ has a unique bounded component and since $ \alpha $ is a degeneration of a regular type, this bounded component has to be attached to $ V $.
	Depending on whether the preimage in $\Gamma$  of the unique bounded component of $\Gamma'\setminus \bs{p'}$ is a collinear cycle attached to $V$ or a single bounded component containing $1$, $2$, or $3$ edges incident to $V$, we get either option $(3a)$ or $(3b)$.
	
	The last option is when $ \Gamma' $ has two four valent vertices $ V_1 $ and $ V_2 $ joined by a collinear cycle.
	By \eqref{eq:bounded_components_count} the number of bounded components of $ \Gamma'\setminus\bs{p'} $ is equal to the number of markings on $ \{V_1,V_2\} $.
	If they are both marked then the collinear cycle between them contains at least two bounded components and thus, by \eqref{eq:bounded_components_count} it contain exactly two bounded components (i.e. does not contain any markings) and those are the only bounded components of $ \Gamma\setminus\bs{p} $.
	If only one of $ \{V_1, V_2\} $ are marked we clearly get option $ (5) $.
	If both $ V_1 $ and $ V_2 $ are unmarked we get either option $(1)$ if neither $V_1$ nor $V_2$ are adjacent to a collinear cycle, and option $ (6) $ otherwise.
	
	We now deal with the case $ \mdim \alpha = |\Delta|+g-2+n_{si} $. This can happen if either $ |\bs{p'}\cap\Gamma'^0|=n_{ta}-k+1 $ or if $ \gen(\Gamma') = g-k-1 $.
	In both those cases \eqref{eq:bounded_components_count} implies that $ \Gamma'\setminus\bs{p'} $ has exactly one bounded component, so we get options $ (7) $ and $ (8) $ in the classification respectively.
\end{proof}

\subsection{Tropicalization of algebraic curves over a non-Archimedean field}\label{sec-trop}
In this section we will describe \emph{tropicalization}, i.e. the process of associating a tropical curve to an algebraic curve over $\K$.
In fact, we will see that the tropicalization of an algebraic curve produces a richer structure than just tropical curve, which we will call \emph{enhanced tropical curve}.

\subsubsection{Enhanced tropical curves}\label{sec:enhanced_trop_curves}
Let $(\Gamma, h, \bg, \bp)$ be a plane tropical curve, and let $V\in \Gamma^0$ be its vertex.
A \emph{limit curve} associated to $V$ is a triple $(C_V, (p_{V,E})_{V\in E}, \varphi_V)$, where $C_V$ is a smooth projective curve over $\C$ of genus $\bg(V)$, $(p_{V,E})_E\subset C_V$ is a collection of distinct points on $C_V$ indexed by the adjacent edges to $V$, and $\varphi_V$ is a morphism $C_V\setminus \{p_{V,E}\} \to (\C^*)^2 $ with $ \ord_{p_{V,E}}(\varphi_{V, i}) = (\oa_{V,E})_i$.
For a non-flat vertex $V$, a limit curve associated to a vertex $V$ can be extended to a morphism $\varphi:C_V\to Tor_{\C}(P_V)$, where $P_V$ is the polygon dual to $V$.
In particular, if $E\in \Gamma^1$ is an edge with endpoints $V_1$ and $V_2$, then both $\varphi_{V_1}(p_{V_1,E})$ and  $\varphi_{V_2}(p_{V_2,E})$ are contained in the mutual toric boundary $Tor_{\C}(P_{V_1})\cap Tor_{\C}(P_{V_2})$.
We say that the limit curves are \emph{compatible along $E$} if $\varphi_{V_1}(p_{V_1,E}) = \varphi_{V_2}(p_{V_2,E})$.
An \emph{enhanced plane tropical curve} is a plane tropical curve with compatible (along all edges) limit curves associated to its vertices.

\subsubsection{Embedded tropical limit}\label{sec-tl}

Let $\Delta\subset\Z^2\setminus\{0\}$ be a nondegenerate, primitive, balanced multiset which induces a convex lattice polygon $P$, and let $C\in|{\mathcal L}_P|_\K$ be a reduced,
irreducible curve of genus $\gen(C)=g$, which passes through a sequence $\bu$ of $n$ distinct points in $(\K^*)^2\subset\Tor_\K(P)$ and does not hit intersection points of toric divisors. In particular, it can be given by an
equation
$$F(x,y)=\sum_{(i,j)\in P\cap\Z^2}t^{\nu'(i,j)}(a^0_{ij}+O(t^{>0}))x^iy^j\ ,$$
where $a^0_{ij}\in\C$, $(i,j)\in P\cap\Z^2$, and $a^0_{ij}\ne0$ if the coefficient
of $x^iy^j$ in $F$ does not vanish (for instance, when $(i,j)$ is a vertex of $P$).
We then define a convex, piecewise-linear function $\nu:P\to\R$, whose graph is the lower part of
the $\conv\{(i,j,\nu'(i,j)),\ (i,j)\in P\cap \Z^2\}\subset\R^3_\Z$.
Via a parameter change $t\mapsto t^r$, we can make
$\nu(P\cap\Z^2)\subset\Z$. Denote by $\Sigma_\nu$ the subdivision
of $P$ into linearity domains of $\nu$, which are convex lattice polygons $P_1,...,P_m$. We then have
\begin{equation}F(x,y)=\sum_{(i,j)\in P\cap\Z^2}t^{\nu(i,j)}(c^0_{ij}+O(t^{>0}))x^iy^j\ ,\label{e-new60}\end{equation}
where $c^0_{ij}\ne0$ for $(i,j)$ a vertex of some of the $P_1,...,P_m$. This data defines a flat family
$\Phi:{\mathfrak X}\to\C$, where ${\mathfrak X}=\Tor(\OG(\nu))$ and
$$\OG(\nu)=\{(i,j,c)\in\R^3_\Z\ :\ (i,j)\in P,\ c\ge\nu(i,j)\}$$ is the overgraph of $\nu$. The central fiber
${\mathfrak X}_0=\Phi^{-1}(0)$ splits into the union of toric surfaces $\Tor(P_k)$, $1\le k\le m$, while the other fibers are isomorphic to
$\Tor(P)$. The evaluation of the parameter $t$ turns the given curve $C$ into an inscribed family of curves
\begin{equation*}C^{(t)}\subset{\mathfrak X}_t,\quad C^{(t)}\in|{\mathcal L}_{P}|,\quad t\in(\C,0)\setminus\{0\}\ ,\end{equation*} (where $(\C,0)$ always means a sufficiently small disc in $\C$ centered at zero)
which closes up to a flat family over $(\C,0)$ with the central element
$$C^{(0)}=\bigcup_{k=1}^mC^{(0)}_k\ ,$$ where
$$C^{(0)}_k=\left\{F^{(0)}_k(x,y):=\sum_{(i,j)\in P_k\cap\Z^2}c^0_{ij}x^iy^j=0\right\}\in|{\mathcal L}_{P_k}|,
\ 1\le k\le m\ .$$
The function $\nu:P\to\R$ defines an embedded plane tropical curve $\Tr(C)$ in the sense of Section \ref{sec-tropcur}.
Its support is the closure of the valuation image of $C$.

We define the {\it embedded tropical limit} of $C$ to be the pair $\qquad\qquad$
$(\Tr(C),\{(P_k,C^{(0)}_k)\}_{k=1}^m)$, where the pairs $(P_1,C^{(0)}_1)$, ..., $(P_m,C^{(0)}_m)$ are called
{\it limit curves}, cf. \cite[Section 2]{Sh05}.

Note that whenever $P_i$ and $P_j$ share an edge, the corresponding limit curves $C^{(0)}_i$ and $C^{(0)}_j$ intersect at a point of $Tor(P_i)\cap Tor(P_j)$.
We call a plane tropical curve $\Gamma$ together with a collection of limit curves $\{(P_k,C^{(0)}_k)\}_{k=1}^m$ satisfying this compatibility condition an {\it enhanced plane tropical curve}.

\subsubsection{Parameterized tropical limit}\label{sec-ptl}
In the notation of the preceding section, let $\bn:(\widehat C,\bp)\to(C,\bu)$ be the normalization (where $\bp$ is a sequence of $n$ distinct points on $\widehat C$), or, equivalently, the family
\begin{equation}\bn_t:(\widehat C^{(t)},\bp(t))\to (C^{(t)},\bu(t))\hookrightarrow{\mathfrak X}_t,\quad t\in(\C,0)\setminus\{0\}\ ,
	\label{euc3}\end{equation} where
each $\widehat C^{(t)}$ is a smooth complex curve of genus $\gen(C)=g$ (cf. \cite[Theorem 1, page 73]{Tei} or \cite[Proposition 3.3]{ChL}).

After a suitable untwist $t\mapsto t^r$, the family
(\ref{euc3}) admits a flat extension to $0\in(\C,0)$
with the central element $\bn_0:(\widehat C^{(0)},\bp(0))\to{\mathfrak X}_0$, where $\widehat C^{(0)}$ is a connected nodal curve of arithmetic genus $g$ (see, for instance \cite[Theorem 1.4.1]{AV}), none of whose components is entirely mapped to a toric divisor, $\bp(0)$ is a sequence of $n$ distinct points of $\widehat C^{(0)}$ disjoint from the intersection points of the components of $\widehat C^{(0)}$,
and such that $(\bn_0)_*(\widehat C^{(0)},\bp(0))=(C^{(0)},\bu(0))$.

With the central fiber one can associate an enhanced plane marked
tropical curve $(\widehat\Gamma,\widehat\bq,\widehat h,\widehat\bg)$ as defined in \cite[Section 2.2.1]{Ty}. In particular (all other details can be found in \cite[Section 2]{Ty}),
\begin{itemize}\item The vertices of $\widehat\Gamma$ bijectively correspond to components of $\widehat C^{(0)}$, the finite edges of $\widehat\Gamma$ bijectively correspond to the intersection points of distinct components of $\widehat C^{(0)}$, and the infinite edges of $\widehat\Gamma$ correspond to the points of $\widehat C^{(0)}$ mapped to toric divisors $\Tor(e)$ such that $e\subset\partial P$;
	\item the genus function $\widehat\bg$ assigns zero to all edges of $\widehat\Gamma$, while for a vertex $V\in\widehat\Gamma^0$, we set $\widehat\bg(V)=\gen(\widehat C_V^{(0)})$, where $\widehat C_V^{(0)}$ is the corresponding component of $\widehat C^{(0)}$;
	\item the marked points $\widehat\bq$ are identified with the vertices of $\widehat\Gamma$ that correspond to the components of $\widehat C$ containing the points of the sequence $\bp$.
\end{itemize}
Thus, we obtain the
{\it parameterized tropical limit} (briefly, {\it PTL}) of $\quad$ $(\bn:\widehat C\to C,\bw)$ to be the pair consisting of the above enhanced plane marked tropical curve $(\widehat\Gamma,\widehat\bq,\widehat h,\widehat\bg)$ and the marked parameterized
complex curve $(\bn_0:\widehat C^{(0)}\to C^{(0)},\bw(0))$ in ${\mathfrak X}_0$. The parameterized tropical limit determines the parameterized embedded tropical limit, which consists of the stable maps $\bn:\widehat C^{(0)}_i\to \Tor(P_k)$, $k=1,...,m$, where $\widehat C^{(0)}_i$ ranges over the non-contracted components of $\widehat C^{(0)}$.

\section{Tropical formula for characteristic numbers: genus zero}\label{t4.3}

	In this section, we consider the case of $g=0$ and prove a correspondence between the tropical count and characteristic numbers in this case.

	Suppose that $g=0$ and accordingly
	\begin{equation}n=n_{si}+2n_{ta}=|\Delta|-1.\label{ecn3}\end{equation}
	We omit the totally vanishing function $\bg$ from the notation. Denote by ${\mathcal M}^{trop}_{0,(n_{si},n_{ta})}(P,\bu)$ the preimage of $\bu$ under $\text{Ev}:\modulispace \to \R^{2n}$.
	
	Using the genericity assumptions on $\bu$ we get, from Lemma \ref{lemma:dim_moduli_cells}, the following statement.
	\begin{lemma}The set ${\mathcal M}^{trop}_{0,(n_{si},n_{ta})}(P,\bu)$ is finite.
		Each curve
		$${(\Gamma,h,\bq)\in{\mathcal M}^{trop}_{0,(n_{si},n_{ta})}(P,\bu)}$$
		is trivalent and regular. Furthermore,
		\begin{itemize}\item
			$\bq_{si}\overset{\text{def}}{=}\{q^{si}_i\}_{i=1}^{n_{si}}\subset\Gamma\setminus\Gamma^0$, \item each vertex $V\in\Gamma^0\setminus\bq_{ta}$ is not flat,
			\item each flat vertex $V$ either is incident to two cooriented ends of $\Gamma$, or the $\widehat h$-images of the edges incident to $V$ differ from each other.
			
		\end{itemize}
	\end{lemma}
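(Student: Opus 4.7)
The plan is to combine a dimension count with Lemma \ref{lemma:dim_moduli_cells} and Lemma \ref{lemma:generic_trop_curve_is_regular}, exploiting throughout that $g=0$ forces $\Gamma$ to be a tree. First I would observe that under the hypothesis $n_{si}+2n_{ta}=|\Delta|-1$, Corollary \ref{cor:dim_moduli_space} gives
\[
\dim\modulispace \;=\; |\Delta|-1+n_{si} \;=\; 2(n_{si}+n_{ta}) \;=\; 2n,
\]
matching the dimension of the target $\mathbb{R}^{2n}$ of the evaluation map $\text{Ev}$. Since $\text{Ev}$ is integral affine on each open cell of the polyhedral complex $\modulispace$, the image of a cell of dimension less than $2n$ is a proper polyhedral subset of $\mathbb{R}^{2n}$, while on a top-dimensional cell $\text{Ev}$ is locally a linear bijection. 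A generic $\bu$ thus avoids the union of images of lower-dimensional cells and has a finite preimage, each element of which lies in an open cell $\openmoduli{\alpha}$ with $\dim\openmoduli{\alpha}=2n$. This yields the finiteness claim.

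Next I would fix such a type $\alpha$ and derive trivalence and the edge-interior condition on $\bq_{si}$ simultaneously. Since $\gen(\Gamma)=0$ we have $b_1(\Gamma)=0$ and $\bg\equiv 0$, so $\Gamma$ is a tree and is in particular cycle-reduced, and Lemma \ref{lemma:dim_moduli_cells} applies to give
\[
2n \;=\; \dim\openmoduli{\alpha} \;\le\; \mdim\alpha \;=\; |\Delta|-1+|\bs{c}\setminus\Gamma^0| \;\le\; |\Delta|-1+n_{si} \;=\; 2n.
\]
Both inequalities must be equalities. Equality in the first forces $\Gamma$ to be trivalent (by Lemma \ref{lemma:dim_moduli_cells}), and equality in the second forces $|\bs{c}\setminus\Gamma^0|=n_{si}$, i.e.\ every point of $\bq_{si}$ lies in the relative interior of an edge, giving the asserted inclusion $\bq_{si}\subset \Gamma\setminus\Gamma^0$. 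Regularity then follows immediately from Lemma \ref{lemma:generic_trop_curve_is_regular}, since the $\text{Ev}$-image of the non-regular locus has positive codimension in $\mathbb{R}^{2n}$ and is avoided by a generic $\bu$.

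For the last two bullets I would appeal to realizability: Definition \ref{def:realizable_trop_curve} requires every collinear vertex of $\Gamma$ to be adjacent to a collinear cycle, and by its own definition a collinear cycle is a subgraph with $b_1=1$. Since $b_1(\Gamma)=0$, $\Gamma$ contains no collinear cycles, hence no flat vertices at all. Both the claim that every $V\in\Gamma^0\setminus\bq_{ta}$ is not flat and the dichotomy stated for flat vertices therefore hold vacuously. The only genuinely subtle step in the whole argument is the generic-position reduction in the first paragraph, ruling out preimages of $\bu$ in lower-dimensional cells; once that is granted, everything else reduces to the dimension comparisons of Lemmas \ref{lemma:dim_moduli_cells} and \ref{lemma:generic_trop_curve_is_regular} together with the tree structure forced by $g=0$.
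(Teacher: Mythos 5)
Your dimension count for the first three claims is sound and is essentially what the paper intends (the paper offers no written proof beyond ``using the genericity assumptions on $\bu$ we get, from Lemma \ref{lemma:dim_moduli_cells}, the following statement''): generic $\bu$ forces the preimage into open top-dimensional cells, the chain $2n=\dim\openmoduli{\alpha}\le\mdim\alpha\le 2n$ collapses to equalities, and these give trivalence and $\bq_{si}\subset\Gamma\setminus\Gamma^0$, while regularity comes from Lemma \ref{lemma:generic_trop_curve_is_regular}.

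However, your dismissal of the last two bullets as vacuous is a genuine gap. You read Definition \ref{def:realizable_trop_curve} as forcing \emph{every} collinear vertex to be adjacent to a collinear cycle, concluded that a tree has no flat vertices, and stopped. That reading contradicts the rest of the paper: marked flat trivalent vertices do occur in genus zero and carry real content --- Remark \ref{rcn1} computes $\Aut(\Gamma,h,\bq)$ precisely by counting flat vertices $q^{ta}_j$ incident to two ends, Lemma \ref{lemma:number_of_plane_enhancements} has a dedicated case ``if $V\in\Gamma^0\cap\bq$ is a flat trivalent vertex,'' and Construction \ref{con:modified_tropical_limit}(3) treats marked flat vertices incident to two ends, one end, or only finite edges. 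The condition in Definition \ref{def:realizable_trop_curve} must be read as exempting \emph{marked} collinear vertices from the collinear-cycle requirement; only unmarked collinear vertices are forced to sit on a (centrally embedded) collinear cycle. With that reading, the second bullet does follow for free in genus zero (an unmarked flat vertex would need a collinear cycle, impossible in a tree), but the third bullet is a nonvacuous statement about marked flat vertices and needs an argument: at such a vertex two of the three incident edges, say $E_1,E_2$, point in the same direction, and $\widehat h(E_1)=\widehat h(E_2)$ can happen with not both of them ends only if both are finite of equal length; that is one extra linear condition on $\openmoduli{\alpha}$, cutting its dimension below $2n$, so its $\mathrm{Ev}$-image misses a generic $\bu$. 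You need to supply this genericity step rather than declare the bullet empty.
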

	
	Now we define the multiplicity of a curve $(\Gamma,h,\bq)\in{\mathcal M}^{trop}_{0,(n_{si},n_{ta})}(P,\bu)$ to be
	\begin{equation}\label{eq:trop_cn_def_genus0}
	CN^{trop}_0(\Gamma,h,\bq)=\frac{1}{|\Aut(\Gamma,h,\bq)|}\prod_{V\in\Gamma^0\setminus\bq}\mu(V).
	\end{equation}
	
	\begin{remark}
		In the considered case, $\Aut(\Gamma,h,\bq)\simeq(\Z/2)^k$, where $k$ is the number of flat vertices $q^{ta}_i\in\Gamma^0$ incident to two ends of $\Gamma$.
	\end{remark}
	
	\begin{theorem}\label{tcn1}
		\begin{equation}CN_0(P,(n_{si},n_{ta}))=\sum_{(\Gamma,h,\bq)\in{\mathcal M}^{trop}_{0,(n_{si},n_{ta})}(P,\bu)}CN^{trop}_0(\Gamma,h,\bq).
			\label{ecn4}\end{equation}
	\end{theorem}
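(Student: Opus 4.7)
My approach is a weighted correspondence theorem. I aim to partition $\mathcal{M}_{0,(n_{si},n_{ta})}(P,\bw)$ according to the parameterized tropical limit of each parameterized curve, and to show that for each tropical curve $(\Gamma,h,\bq)\in\mathcal{M}^{trop}_{0,(n_{si},n_{ta})}(P,\bu)$ the number of algebraic lifts equals exactly
\[
CN^{trop}_0(\Gamma,h,\bq)=\frac{1}{|\Aut(\Gamma,h,\bq)|}\prod_{V\in\Gamma^0\setminus\bq}\mu(V).
\]
Summing over $(\Gamma,h,\bq)$ then yields \eqref{ecn4}. The content splits into three tasks: (i) show that every algebraic curve in the left-hand side tropicalizes into a member of the tropical moduli, (ii) count the lifts of a given tropical curve vertex by vertex, (iii) correctly account for tropical automorphisms.

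\textbf{From algebraic to tropical.} For $[\bn:(\widehat C,\bp)\to\Tor_\K(P)]\in\mathcal{M}_{0,(n_{si},n_{ta})}(P,\bw)$, take its PTL $(\widehat\Gamma,\widehat\bq,\widehat h)$ as in Section \ref{sec-ptl}. Lemma \ref{lemma:generic_trop_curve_is_regular} together with the tropical general position of $\bu$ forces $(\widehat\Gamma,\widehat h,\widehat\bq)$ to be regular and, by Lemma \ref{lcn3}, trivalent with the $n_{si}$ simple markings sitting in the relative interior of edges. The key additional point is that each contact condition $\langle\lambda_j,D\bn(T_{p^{ta}_j}\widehat C)\rangle=0$ from \eqref{ecn5} forces the corresponding marking to land on a vertex of $\widehat\Gamma$: the tangency produces two local branches whose Newton triangle sides are parallel, so $u^{ta}_j$ must either be a flat vertex or sit at a vertex dual to a Newton polygon containing a side parallel to the contact direction. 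Thus $(\widehat\Gamma,\widehat h,\widehat\bq)\in\mathcal{M}^{trop}_{0,(n_{si},n_{ta})}(P,\bu)$.

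\textbf{Counting lifts.} Fix $(\Gamma,h,\bq)\in\mathcal{M}^{trop}_{0,(n_{si},n_{ta})}(P,\bu)$ and construct the algebraic lifts locally at each vertex. At a non-marked trivalent non-flat vertex $V$, the limit curve $C_V^{(0)}\in|{\mathcal L}_{P(V)}|$ (see Section \ref{sec-tl}) is peripherally unibranch, and by Definition \ref{dtrop1} and \cite[Prop.\ 6.17]{Mi} there are exactly $\mu(V)$ such choices compatible with the prescribed leading coefficients on the adjacent edges. At a simple marked point $q^{si}_i$ the value $w^{si}_i$ is already prescribed and gives a unique choice. At a flat vertex $q^{ta}_j$ the tangency datum behaves as explained in Remark \ref{rcn2}: applying the modification described there turns the contact element into the boundary condition \eqref{ecn6}, replaces the flat vertex by a non-flat one on a modified toric surface, and reduces the local count to Mikhalkin multiplicity on the modified polygon. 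Lemma \ref{lemma:number_of_modifications} carries out this book-keeping and shows the product of local multiplicities equals $\prod_{V\in\Gamma^0\setminus\bq}\mu(V)$. Finally, Lemma \ref{lemma:correspondence_final_step} patches the local limit curves into a global element of $|{\mathcal L}_P|_\K$ by smoothing the nodal central fiber; smoothability is automatic in the rational case because $H^1(\widehat C^{(0)},N_{\bn_0})=0$ at genus zero.

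\textbf{Automorphisms and the main obstacle.} By Remark \ref{rcn1}, $\Aut(\Gamma,h,\bq)\simeq(\Z/2)^k$ where $k$ is the number of flat vertices $q^{ta}_i\in\Gamma^0$ incident to two co-oriented ends. At each such vertex the modification produces two parallel branches which are interchangeable as abstract tropical data but give the same algebraic curve; dividing by $|\Aut|$ precisely cancels this overcount, so the lift count equals $CN^{trop}_0(\Gamma,h,\bq)$. I expect the main technical difficulty to lie in step (iii) above, namely verifying that the specialization \eqref{ecn6} of the contact element after modification is generic with respect to the modified enumerative problem. Any non-generic alignment between the residual contact coefficient $\lambda_{j,x}$ and the coefficients of the limit polynomial $F_2(x,y_2)$ would spoil the Mikhalkin count at the regenerated vertex, so one needs the genericity of $(\eta^{ta}_j,\zeta^{ta}_j,\lambda_{j,x},\lambda_{j,y})$ to propagate through possibly several successive modifications. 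Once this is confirmed, summing over the finite set $\mathcal{M}^{trop}_{0,(n_{si},n_{ta})}(P,\bu)$ gives \eqref{ecn4}.
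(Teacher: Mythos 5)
Your proposal follows essentially the same route as the paper's proof: tropicalize via the parameterized tropical limit, show the contact markings must land on vertices (Lemma \ref{lemma:genus_0_tropicalization_defined}), count local limit curves and modification data fragment by fragment, lift by the patchworking theorems of Lemma \ref{lemma:correspondence_final_step}, and divide by tropical automorphisms. The one imprecision is that the weight bookkeeping you attribute entirely to Lemma \ref{lemma:number_of_modifications} is actually split between Lemma \ref{lemma:number_of_plane_enhancements} (which gives $\mu(V)\,(\wt(E_1)\wt(E_2))^{-1}$ choices per unmarked vertex once the intersection points with the toric divisors are prescribed, not $\mu(V)$) and the modification count $\prod_{E}\wt(E)\cdot\prod_{q^{si}_i\in E}\wt(E)$, whose edge-weight factors cancel to restore $\prod_{V}\mu(V)$.
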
	
	
	\begin{remark}
		In the case of $P=T_d=\conv\{(0,0),(d,0),(0,d)\}$ (i.e., degree $d$ curves in the plane $\PP^2$), relation (\ref{ecn4}) follows from \cite[Theorem 8.4]{MR}, which yields that the right-hand side of (\ref{ecn4}) is the descendant Gromov-Witten invariant $\langle\tau_0(0)^{n_{si}}\tau_0(2)^{n_{ta}}\rangle$, and from \cite[Proposition 3.4.1 and Theorem 4.1.4(0)]{GKP}, where, in the rational case, the above descendant invariants are reduced to the characteristic numbers.
	\end{remark}

	We prove Theorem \ref{tcn1} in few steps.
	We begin by showing that the tropical part of a parameterized tropical limit of a curve in ${\mathcal M}_{0,(n_{si},n_{ta})}(P,\bw)$ is a tropical curve in ${\mathcal M}^{trop}_{0,(n_{si},n_{ta})}(P,\bu)$, thus getting a well defined map $ {\mathcal M}_{0,(n_{si},n_{ta})}(P,\bw)\to{\mathcal M}^{trop}_{0,(n_{si},n_{ta})}(P,\bu)$ (Lemma \ref{lemma:genus_0_tropicalization_defined}).
	We next count the number of plane enhancements of a trivalent plane tropical curve satisfying the point constraints (Lemma \ref{lemma:number_of_plane_enhancements}).
	Enhanced tropical curves do not correspond bijectively to algebraic curves in $\mathcal{M}_{0,(n_{si},n_{ta})}(P,\bw)$, to get a bijection we need to introduce the additional data of \emph{modifications} (Construction \ref{con:modified_tropical_limit} and Definition \ref{def:modification_data}).
	Finally we count the number of modifications in Lemma \ref{lemma:number_of_modifications} and prove they correspond bijectively to algebraic curves in $\mathcal{M}_{0,(n_{si},n_{ta})}(P,\bw)$ in Lemma \ref{lemma:correspondence_final_step}, which concludes the proof.

	\begin{lemma}\label{lemma:genus_0_tropicalization_defined}
		Let $(\Gamma, h, \bq)$ be the tropical part of a parameterized tropical limit of a curve $[\bn:(\widehat C,\bp)\to\Tor_\K(P)]\in{\mathcal M}_{0,(n_{si},n_{ta})}(P,\bw)$ (see Section \ref{sec-pr} or \cite[Section 1.3.2]{GS21}.
		Then $\bq_{ta}\subset {\Gamma}^0$.
	\end{lemma}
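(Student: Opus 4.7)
I argue by contradiction: suppose some tangent marking $q^{ta}_{j_0}\in\bq_{ta}$ lies in the relative interior of an edge $E$ of the stabilized curve $\Gamma$, and let $\oa=(a_1,a_2)\in\Z^2$ be the primitive directing vector of $E$. Because the paper's plane tropical curves carry no bivalent vertices, $\Gamma$ is obtained from $\widehat\Gamma$ by stabilization, so there is a bivalent vertex $V_{j_0}\in\widehat\Gamma^0$ with $p^{ta}_{j_0}(0)\in\widehat C^{(0)}_{V_{j_0}}$; by balancing its two incident edges are directed by $\pm\oa$ with equal weights, so $P(V_{j_0})$ is a lattice segment parallel to $\oa$. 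Consequently $\bn_0$ sends $\widehat C^{(0)}_{V_{j_0}}$ into the rational toric stratum $\Tor(P(V_{j_0}))\subset\mathfrak X_0$, which is the rail parallel to $\oa$.

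The plan is to compute the logarithmic tangent vector of $\bn_t$ at $p^{ta}_{j_0}(t)$ and plug it into the tangency constraint. Pick a local uniformizer $\tau$ on $\widehat C^{(t)}$ at $p^{ta}_{j_0}(t)$ and write
$$D\bn_t(\partial_\tau)=A_t\,\partial_x+B_t\,\partial_y.$$
In the logarithmic frame $(x\,\partial_x,\,y\,\partial_y)$ the same vector has coordinates $(A_t/x_0(t),\,B_t/y_0(t))\in\K^2$, where $(x_0(t),y_0(t))=w^{ta}_{j_0}(t)$ from (\ref{ecn15}). The central step is the leading-term identity
$$\bigl(A_t/x_0(t),\;B_t/y_0(t)\bigr)=\gamma_t\,(a_1,a_2)+\varepsilon_t,\qquad \gamma_t\in\K^*,\ \ini(\gamma_t)\ne0,\ \val(\varepsilon_t)<\val(\gamma_t).$$
This holds because at $t=0$ the logarithmic tangent of $\bn_0|_{\widehat C^{(0)}_{V_{j_0}}}$ at the generic (non-nodal, non-ramification) point $p^{ta}_{j_0}(0)$ must be a nonzero scalar multiple of $\oa$, the image lying in the $\oa$-parallel rail $\Tor(P(V_{j_0}))$; Puiseux-continuity of the family $\bn_t$ then produces the displayed expansion for $t\ne0$.

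Substituting the expansion into the tangency condition $\lambda_{j_0,x}(A_t/x_0(t))+\lambda_{j_0,y}(B_t/y_0(t))=0$, dividing by $\gamma_t$, and reading off the leading Puiseux coefficient yields
$$\lambda_{j_0,x}^{(0)}\,a_1+\lambda_{j_0,y}^{(0)}\,a_2=0.$$
Since the combinatorial types of tropical curves matching $\bu$ are finite (Lemma \ref{lcn3} together with the tropical genericity of $\bu$), only finitely many primitive lattice vectors $\oa$ can occur as directions of edges through marked points, and a generic choice of the nonzero constants $\lambda_{j_0,x}^{(0)},\lambda_{j_0,y}^{(0)}\in\C^*$ avoids every such codimension-one linear relation. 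This is the desired contradiction.

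The main obstacle I anticipate is the leading-term identity in the second paragraph: one has to control the Puiseux valuations of $A_t,B_t,x_0(t),y_0(t)$ simultaneously in a neighborhood of the bivalent vertex $V_{j_0}$ and verify that the first-order expansion of the logarithmic tangent vector is genuinely aligned with $\oa$ rather than being contaminated by a subleading correction picked up elsewhere in the parametrized family. The toric model of $\Phi:\mathfrak X\to(\C,0)$ along $\Tor(P(V_{j_0}))$ together with the normal form (\ref{ecn15}) for $w^{ta}_{j_0}(t)$ should however make this bookkeeping routine.
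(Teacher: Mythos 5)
Your proposal aims at the same contradiction as the paper — namely, that the logarithmic tangent to $C^{(t)}$ at $w^{ta}_j(t)$ converges to a nonzero multiple of the directing vector $\oa$ of the edge carrying the marking, which is incompatible with the tangency constraint for generic $\lambda^{(0)}_{j,x},\lambda^{(0)}_{j,y}\in\C^*$ — but it tries to get there without the modification, and the step you yourself flag as the "main obstacle" (the leading-term identity for the logarithmic tangent) is not a routine piece of bookkeeping: it is the entire content of the lemma, and the justification you offer for it does not go through in the cases that matter. After a toric change making the edge horizontal, the truncation of $F$ on the dual segment is $\ini(F^\sigma)=c\,(y-\zeta^{ta}_j)^{\ell}$ up to a monomial, where $\ell$ is the weight of the edge. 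If $\ell=1$, a direct computation with $F_x$, $F_y$ does give that $\tfrac{x_0}{y_0}\tfrac{dy}{dx}$ has positive $t$-order and your argument closes. But for $\ell\ge 2$ both $F_x$ and $F_y$ acquire positive order at $w^{ta}_j$, and the order of their ratio is not determined by the unmodified tropical limit: the limit of $C^{(t)}$ along the edge is the non-reduced $\ell$-fold line $y=\zeta^{ta}_j$, and the marked point is not a generic point of that edge — its $y$-coordinate agrees with $\zeta^{ta}_j$ to order $t^M$ with $M\gg0$, i.e., it sits exactly where one branch of the smoothing dips down to meet the point constraint. At such a point the tangent direction of the family need not converge to the tangent of the naive (monomial, bivalent-component) limit, so "Puiseux-continuity" is precisely the assertion that has to be proved, not a tool one can invoke.

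The paper supplies the missing control by performing the modification $y=\zeta^{ta}_j+t^{\alpha}y_2$: rationality, arithmetic genus zero, and the peripherally-unibranch conditions force the subdivision of the triangle $T=\conv\{(-k_1,0),(k_2,0),(0,\ell)\}$ to have no interior vertices and pin down the modified limit curve through the marked point as $x=a\tau^{\ell}$, $y_2=b\tau^{r}(\tau-1)^{s}$ with the marked point at $\tau=1$; the non-vanishing $\tfrac{dx}{d\tau}\big|_{\tau=1}=a\ell\ne0$, combined with the degeneration (\ref{ecn6}) of the contact element to $\lambda_{j,x}\tfrac{dx}{x}$, is exactly your leading-term identity in rigorous form. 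Without an argument of this kind your proof is incomplete. Two smaller remarks: the cell $P(V_{j_0})$ dual to the bivalent vertex is parallel to $\oa^{\perp}$, not to $\oa$, since duality rotates directing vectors by $\tfrac{\pi}{2}$; and the finiteness of the possible edge directions needs no appeal to Lemma \ref{lcn3} — they are differences of lattice points of $P$, of which there are finitely many.
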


	\begin{proof}
		Assume, towards a contradiction, that $q^{ta}_j$ lies in the interior of some edge of $\Gamma$.
		We consider the case of a finite edge, whereas the case of an end can easily be settled in the same way.
		The modification along the edge containing $q^{ta}_j$ admits the following description (cf. \cite[Section 2.2]{GS21} and Remark \ref{rcn2}).
		In the notation of Remark \ref{rcn2}, the subdivision of the Newton polygon of the polynomial $F_2(x,y_2)$ has a fragment whose pieces tile the triangle $T=\conv\{(-k_1,0),(k_2,0),(0,l)\}$ (see Figure \ref{fcn1}(a)) so that the segments $[(-k_1,0),(0,l)]$ and $[(k_2,0),(0,l)]$ are not subdivided.
		Furthermore, the limit curves corresponding to the pieces of the latter subdivision must be rational, and their union must be a curve of arithmetic genus zero being unibranch along the toric divisors $\Tor_\C([(-k_1,0),(0,l)])$ and $\Tor_\C([(k_2,0),(0,l)])$.
		It follows that the subdivision of $T$ has no vertices in the interior of $T$, thus, must be like shown in Figure \ref{fcn1}(b), and the corresponding limit curves must be unibranch along the toric divisors induced by the incline segments of the subdivision.
		The dual tropical modification is shown in Figure \ref{fcn1}(c). By construction, the coordinate $y_2$ of the point $w^{ta}_j$ is $O(t^M)$ (cf. formula (\ref{ecn15})); hence the tropicalization $u^{ta}_j$ of $w^{ta}_j$ lies on one of the vertical ends (see Figure \ref{fcn1}(c)).
		The limit curve associated with the trivalent vertex incident to the end containing $u^{ta}_j$ should have a parametrization of the form
		$$x=a\tau^l,\quad y_2=b\tau^r(\tau-1)^s,\quad s>0,\quad a,b\in\C,$$
		which however yields $\frac{dx}{d\tau}\big|_{\tau=1}=al\ne0$ contradicting the condition that the contact element (\ref{ecn6}) vanishes along the tangent to the curve.
		
		\begin{figure}
			\setlength{\unitlength}{1mm}
			\begin{picture}(140,40)(5,0)
				\thinlines
				\put(0,10){\vector(1,0){40}}\put(20,10){\vector(0,1){25}}
				\put(50,10){\vector(1,0){30}}\put(65,10){\vector(0,1){25}}
				\dashline{2}(90,35)(140,35)
				%
				
				
				\thicklines
				{\color{blue}
					\put(10,10){\line(1,0){20}}\put(10,10){\line(1,2){10}}\put(30,10){\line(-1,2){10}}
					\put(55,10){\line(1,0){20}}\put(55,10){\line(1,2){10}}
					\put(60,10){\line(1,4){5}}\put(65,10){\line(0,1){20}}\put(70,10){\line(-1,4){5}}\put(75,10){\line(-1,2){10}}
				}
				{\color{red}
					\put(92,35){\line(1,-1){10}}\put(102,25){\line(2,-1){10}}\put(112,20){\line(1,0){5}}\put(117,20){\line(2,1){10}}
					\put(127,25){\line(1,1){10}}\put(102,25){\line(0,-1){15}}\put(112,20){\line(0,-1){10}}\put(117,20){\line(0,-1){10}}
					\put(127,25){\line(0,-1){15}}
				}
				%
				%
				\put(17,0){(a)}\put(62,0){(b)}\put(112,0){(c)}
				\put(21,30){$\ell$}\put(66,30){$\ell$}\put(6,6){$-k_1$}\put(28,6){$k_2$}
				\put(101,15){$\bullet$}\put(95,15){$u^{ta}_j$}
				%
				
				
			\end{picture}
			\caption{Proof of Lemma \ref{lemma:genus_0_tropicalization_defined}.}\label{fcn1}
		\end{figure}
		
		Whenever the condition $\bq_{ta}\subset\Gamma^0$ is fulfilled, it follows from \cite[Lemma 2.21]{GMS} that each curve $C\in{\mathcal M}_{0,(n_{si},n_{ta})}(P,\bw)$ tropicalizes to a tropical curve from ${\mathcal M}^{trop}_{0,(n_{si},n_{ta})}(P,\bu)$.
	\end{proof}

	\begin{definition}
		We say that an enhanced plane tropical curve \\ $(\Gamma, h, \bq, \{\varphi_V\}_{V\in \Gamma^0})$ with $\bq\cap\Gamma^0 = \bq^{ta}$ {\it satisfies the point constraints} if:
		\begin{enumerate}
			\item For every marked edge $E$, dual to $\sigma$, with endpoints $V_1$ and $V_2$ and marking $q_j^{si}$, the mutual point of intersection with the toric boundary $\varphi_{V_1}(p_{V_1,E}) = \varphi_{V_2}(p_{V_2,E})\in \Tor_\C(\sigma)$ is the one defined by $\ini(w_j)$.
			
			\item For every marked vertex $V=q\in \bq$ corresponding to the marking $w\in (\K^*)^2$, the curve ${\varphi_V}_*(\PP^1)$ pass through the point $\ini(w)$.
			
			\item For all $q_j^{ta}$ ($j=1,\dots,n_{ta}$), connected to the vertex $V\in \Gamma^0$, the curve $({\varphi_{V}})_*(\PP^1)$ is matching the contact element $\lambda_j^{(0)}:=\lambda^{(0)}_{j,x}\frac{dx}{x} + \lambda^{(0)}_{j,y}\frac{dy}{y}$ at the point $\ini(w_j^{ta})$.
		\end{enumerate}
	\end{definition}

	\begin{lemma}\label{lemma:number_of_plane_enhancements}
		Let $(\Gamma, h, \bq)$ be a trivalent plane tropical curve.
		Then there exist
		\begin{equation*}
			\prod_{V\in \Gamma^0}\mu(V)\cdot \prod_{E\in \Gamma^1}\wt(E)^{-1} \cdot \prod_{E\in \Gamma^1, E\cap\bq\ne\emptyset}\wt(E)^{-1}
		\end{equation*}
	isomorphism classes of enhancements of $(\Gamma, h, \bq)$ that satisfy the point constraints.
	\end{lemma}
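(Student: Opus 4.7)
Since $\gen(\Gamma)=0$, the graph $\Gamma$ is a tree. My plan is to parameterize each local limit curve at the trivalent vertices, identify the local count with Mikhalkin's multiplicity, and propagate along $\Gamma$, collecting multiplicative factors from edge compatibility and point constraints.

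At each trivalent vertex $V$, after M\"obius-normalizing the source $\PP^1$ of $\varphi_V$ so that the three preimages of the toric divisors lie at $0,1,\infty$, any peripherally unibranch rational limit curve has the form
\[
\varphi_V(u)=\bigl(\alpha_V\,u^{a_{V,1}}(u-1)^{a_{V,2}},\ \beta_V\,u^{b_{V,1}}(u-1)^{b_{V,2}}\bigr),\quad(\alpha_V,\beta_V)\in(\C^*)^2,
\]
with $\oa_V(E_i)=(a_{V,i},b_{V,i})$. The boundary-intersection values $s_{V,E_i}\in\Tor_\C(\sigma_{E_i})\cong\C^*$ are monomials in $(\alpha_V,\beta_V)$, and the balancing condition forces a single multiplicative relation $\prod_i s_{V,E_i}^{\wt(E_i)}=K_V$ among them. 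By the defining property of $\mu(V)$, once the vertex coefficients of $P(V)$---equivalently, the values $s_{V,E_i}^{\wt(E_i)}$---are prescribed, there are exactly $\mu(V)$ such limit curves at $V$. A torus-character argument using $\mu(V)=|\oa_V(E_1)\wedge\oa_V(E_2)|$ then shows that these $\mu(V)$ curves distribute uniformly over the compatible root tuples $(s_{V,E_i})_i$; in particular, for any single prescribed value of $s_E$ on one adjacent edge $E$, exactly $\mu(V)/\wt(E)$ of these curves realize it.

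I would then traverse $\Gamma$ depth-first from a chosen root and collect factors. Each vertex $V$ contributes its Mikhalkin factor $\mu(V)$, accounting for the global product $\prod_V\mu(V)$. Each edge contributes a factor $\wt(E)^{-1}$ from the root-matching required to propagate $s_E$ across it; set up correctly with a uniform treatment of finite edges and ends, this yields the product $\prod_{E\in\Gamma^1}\wt(E)^{-1}$. Each marking of type $q_j^{si}$ on the interior of an edge $E$ imposes the additional equation $s_E=\ini(w_j^{si})$, selecting one of $\wt(E)$ roots and contributing an extra factor $\wt(E)^{-1}$ per marked edge. Markings of type $q_j^{ta}$ on vertices (which, by Lemma~\ref{lcn3}, are the only vertex markings) impose the contact-element conditions that, combined with parent-edge compatibility, are consistent with the already collected per-vertex Mikhalkin count.

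I expect the main obstacle to be rigorously justifying the uniform-distribution statement together with the correct bookkeeping for ends versus finite edges. Both amount to a lattice-index computation in the character lattice of $(\C^*)^2$: the evaluation $(\alpha_V,\beta_V)\mapsto(s_{V,E_i})_i$ is a homomorphism of tori whose fiber over a compatible root tuple has a specific size determined by $\mu(V)$, the weights $\wt(E_i)$, and $\gcd_i\wt(E_i)$, from which the uniformity and the correct exponent on each $\wt(E)$ factor in the assembled formula must be extracted. A secondary check is that the final count is independent of the chosen root, which follows from the multiplicativity of the local factors along the tree.
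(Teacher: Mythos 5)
Your overall strategy --- local counts at the trivalent vertices, propagated along the tree by an orientation rooted at the marked points, with the $\wt(E)^{-1}$ factors coming from root-matching across edges and an extra $\wt(E)^{-1}$ from the condition $s_E=\ini(w_i^{si})$ on each marked edge --- is the same as the paper's. For the unmarked vertices the paper simply invokes \cite[Lemma 3.5]{Sh05} to get $\mu(V)(\wt(E_1)\wt(E_2))^{-1}$ limit curves once the boundary points on the two incoming toric divisors are prescribed; your torus-character/lattice-index sketch amounts to re-proving that lemma, and while the idea is sound you leave precisely the uniform-distribution step (the heart of the matter) unproved, as you acknowledge.

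The genuine gap is at the marked vertices $q_j^{ta}\in\Gamma^0\cap\bq$. The constraints there are not boundary-point constraints at all: the limit curve must pass through the interior point $\ini(w_j^{ta})\in(\C^*)^2$ and match the contact element $\lambda_j^{(0)}$ there. Your proposal disposes of this with ``consistent with the already collected per-vertex Mikhalkin count,'' which is not an argument, and taken literally your bookkeeping (a factor $\mu(V)$ at every vertex, with no incoming-edge division at a source vertex) assigns $\mu(V)$ local choices to a marked vertex. The correct count is $1$: the paper writes the limit curve explicitly as $x=a\tau^p$, $y=b\tau^r(\tau-1)^{s-r}$ and checks that the point and tangency conditions determine $(a,b,\tau_0)$ uniquely. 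Worse, marked vertices can be flat (Lemma \ref{lcn3} explicitly allows them, e.g.\ incident to two cooriented ends); for a flat vertex $\mu(V)=0$ and the dual polygon is degenerate, so both your normal form $\varphi_V(u)=(\alpha u^{a_1}(u-1)^{a_2},\beta u^{b_1}(u-1)^{b_2})$ and the ``$\mu(V)$ curves distributed uniformly'' picture collapse; the paper treats this case separately via $x=a\tau^r(\tau-1)^s$, $y=b$ and again finds a unique solution. Since the marked points are the sources of the orientation, your tree traversal cannot even be initialized without this analysis.
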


	\begin{proof}		
		If $q^{ta}_i\in\Gamma^0\cap\bq$ is a non-flat vertex, the corresponding admissible limit curve $\varphi_V$ can be exended to a rational, peripherally unibranch curve $\PP^1\to C'\in|{\mathcal L}_{P'}|$ matching the constraint $((w^{ta}_j)^{(0)},(\lambda_j)^{(0)})$. Assuming without loss of generality that (see Figure \ref{fig:plane_enhancements})
		$$P'=\conv\{(0,p),(r,0),(s,0)\},\quad p>0,\ r\ge0,\ s>r,$$ $$(w^{ta}_j)^{(0)}=(\eta_j,\zeta_j),\quad(\lambda_j)^{(0)}=\lambda_{j,x}\frac{dx}{x}+\lambda_{j,y}\frac{dy}{y},$$
		we get that $\PP^1\to C'$ is given by
		$$x=a\tau^p,\quad y=b\tau^r(\tau-1)^{s-r},\quad a,b=\const\in\C^*,\quad\tau\in\C\subset\PP^1.$$ The conditions imposed on $C'$ amount to
		$$a\tau_0^m=\eta_j,\quad b\tau_0^r(\tau_0-1)^s=\zeta_j,\quad
		\frac{s\tau_0-r}{p(\tau_0-1)}=-\frac{\lambda_{j,x}\eta_j}{\lambda_{j,y}\zeta_j},$$ 
		which yield a unique solution $(a,b,\tau_0)$ defining the unique curve $\PP^1\to C'$.

		\begin{figure}
			\begin{tikzpicture}[scale = 0.7]
				\tikzset{
					every path/.style = {line width=0.7pt, blue},
					every node/.style = {black}
				}
				\draw[->, black] (0,0) -- (4,0);
				\draw[->, black] (0,0) -- (0,4);

				\draw [thick] (0,3) -- (1,0) -- (2,0) -- (0,3);
				\draw (0,3) node [left] {$p$};
				\draw (1,0) node [below] {$r$};
				\draw (2,0) node [below] {$s$};
			\end{tikzpicture}
			\caption{The triangle $P'$ in the proof of Lemma \ref{lemma:number_of_plane_enhancements}.}\label{fig:plane_enhancements}
		\end{figure}
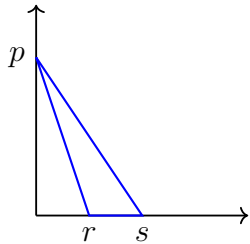
		Similarly, if $V\in \Gamma^0\cap \bq $ is a flat trivalent vertex, then we can assume that image under $h$ of the edges adjacent to $V$ are all horizontal, and so $\varphi_V: \PP^1 \dashrightarrow (\C*)^2 $ is given by
		$$ x = a\tau^r(\tau-1)^s,\quad y = b\quad a,b=\const\in\C^*,\quad r,s\in \NN \quad\tau\in\C\setminus\{0,1\}.$$
		The conditions imposed on $\varphi_V$ are
		$$a\tau_0^r(\tau_0-1)^s=\eta_j,\quad b=\zeta_j,\quad(r+s)\tau_0-r=0,$$
		which yield a unique solution $(a,b,\tau_0)$ defining the unique branched covering $\PP^1\to \PP^1\subset \PP^1\times \PP^1$.
		
		Finally, if $V\in\Gamma^0\setminus\bq$ is a trivalent vertex dual to a triangle $P'\subset P$, then $\PP^1\to C'\in|{\mathcal L}_{P'}|$ is a peripherally unibranch curve which matches points on the toric divisors dual to the incoming (with respect to the regular orientation on the connected component of $\Gamma\setminus\bp$ containing $V$) incident to $V$ edges $E_1,E_2$; these points are determined by the points of $\bw^{(0)}$ and by previously chosen admissible limit curves. By \cite[Lemma 3.5]{Sh05} there are $\mu(V)(\wt(E_1)\wt(E_2))^{-1}$ choices for such a curve.
	\end{proof}

	To get a unique lift to an algebraic curve we need to modify the tropical limit as follows.

	\begin{Construction}\label{con:modified_tropical_limit}
		Let $[\bn:(\widehat C,\bp)\to\Tor_\K(P)]\in{\mathcal M}_{0,(n_{si},n_{ta})}(P,\bw)$ be a rational curve.
		We define the \emph{modified tropical limit of $C$} to be the parameterized tropical limit of $\widehat C$, together with \emph{modification data} associated to some of its fragments as follows:
		\begin{enumerate}
			\item The modification of an unmarked edge $E$ of weight $\ell>1$ is described in \cite[Sections 3.5 and 3.6]{Sh05} or \cite[Section 2.2(2)]{GS21}, where it is shown that if we perform a toric change of coordinates that makes $E$ horizontal, there exist a unique $\zeta \in \K$ such that the coefficient of $y^{\ell-1}$ in the polynomial $F'(x,y):=F(x,y+\zeta)$ vanishes, and that this results in adding a strata isomorphic to $\Tor_\C(P_{mod})$ to the degeneration of surfaces, where $$P_{mod}:=\conv\{(-1,0),(1,0),(0,\ell)\}.$$
			We record this toric change of coordinates, the shift $\zeta$, and the limit curve $\PP^1 \to C_{mod}\subset\Tor_\C(P_{mod})$ as the modification data associated to $E$.

			\item The modification along an edge $E$ of $\Gamma$ having weight $\ell>1$ and containing a point $q^{si}_i$ is, in fact, described in \cite[Section 5.4]{Sh05} or \cite[Section 2.5.9]{IMS}. We, however, provide here another presentation, which yields a better geometric view and keeps the style accepted in the present construction.
			Namely, assuming that the $h$-image of the considered edge is horizontal (see Figure \ref{fcn2}(a)), and $w^{si}_i=(\xi^{si}_i+O(t^M)),\eta^{si}_i+O(t^M)))$, we perform the coordinate change $y=y_1+\eta^{si}_i$. Since the limit curves associated with the trivalent vertices $V_1,V_2$ are smooth at their common point (see \cite[Lemma 3.5]{Sh05}), the Newton polygons of their defining polynomials after the above coordinate change leave empty the triangle $P_{mod}=\conv\{(-1,0),(1,0),(0,\ell)\}$, Figure \ref{fcn2}(b) (cf. \cite[Figure 2.18]{IMS}). Furthermore, the new coordinates of the point $w^{si}_i$ are  $(\xi^{si}_i+O(t^M)),O(t^M)))$; hence, in view of $M\gg0$, its tropicalization $u^{si}_i$ lies on the vertical end oriented to $-\infty$. Taking into account that the union of the limit curve in the modification must be of arithmetic genus zero, we end up with the tropical modification consisting of $2$ trivalent vertices $V_1'$ and $V_2'$ joined by an horizontal edge $E'$ as shown in Figure \ref{fcn2}(c).
			Its dual subdivision is the two triangles
			$$P_{1,mod}=\conv\{(-1,0),(0,0),(0,\ell)\},\quad P_{2,mod}=\conv\{(0,0),(1,0),(0,\ell)\},$$ (Figure \ref{fcn2}(d)), while the limit curves $\PP^1\to C_{1,mod}$, $\PP^1\to C_{2,mod}$ are peripherally unibranch.
			
			The modification data associated to $E$ is the toric change of coordinates that makes $E$ horizontal, the curves $C_{1, mod}, C_{2, mod}$, and the modification data associated to the edge $E'$ in the modified curve.

			\item The last modification we consider is along the edges incident to a marked flat vertex $q^{ta}_j=V\in\Gamma^0$.
			It can be incident to either two ends of $\Gamma$, or to one end and two finite edges of $\Gamma$, or to only finite edges of $\Gamma$ (see Figures \ref{fcn2}(e,f,g)). The modification results in (cf. Remark \ref{rcn2}, formula (\ref{ecn15}))
			\begin{equation*}w^{ta}_j=(\xi^{ta}_j+O(t^M),O(t^M)),\quad\lambda_j=\lambda_{j,x}
				\frac{dx}{x},\quad\xi^{ta}_j,\lambda_{j,x}\ne0.\end{equation*}
			It follows that the modified fragments of the tropical curve look as shown in Figures \ref{fcn2}(h,i,j), respectively.

			\begin{enumerate}
				\item [(a)] In the case of two unbounded ends adjacent to $q^{ta}_j$ (see Figure \ref{fcn2}(e) for the original fragment and Figure \ref{fcn2}(h) for the modification), both the ends have weight $1$; hence, the dual triangle $P_{mod}$ of the modification is equal to $\conv\{(-1,0), (0,2), (0,0)\} $ (see Figure \ref{fcn2}(k)). The corresponding limit curve $\PP^1\to C_{mod}\in|{\mathcal L}_{P_{mod}}|$ crosses the toric divisors transversally.

				The modification data associated with the marked flat vertex $q^{ta}_j$ is the toric change of coordinates that makes the ends horizontal together with the curve $C_{mod}$.
			
				\item[(b)] In the case of one unbounded end adjacent to $q^{ta}_j $ (see Figure \ref{fcn1}(f) for the original fragment, and Figure \ref{fcn1}(i) for the modification), the end has weight $1$, while the finite edge incident to $V$ has weight $\ell-1$ ($\ell\ge2$); hence the dual subdivision is the union of the triangle $P_{1,mod}:=\conv\{ (-1,0), (0, \ell), (0,0)\}$ and the trapeze $P_{2,mod}:=\conv\{(0,0), (0,\ell), (1,1), (1,0)\}$ (see Figure \ref{fcn2}(l)). The limit curve $\PP^1\to C_{1,mod}\in|{\mathcal L}_{P_{1,mod}}|$ crosses the toric divisor $\Tor([(0,0),(0,\ell)])$ at a point $z_1$ transversally and at a point $z_2$ with multiplicity $\ell-1$.
				The limit curve associated with $P_{2,mod}$ is the union of the horizontal line $C'_{2,mod}$ through the point $z_1\in \Tor([(0,0),(0,\ell)])$ and the peripherally unibranch component $\PP^1\to C''_{2,mod}$ crossing the toric divisor $\Tor([(0,0),(0,\ell)])$ at $z_2$.

				The modification data associated with the marked flat vertex $q^{ta}_j$ is the toric change of coordinates that makes the end horizontal together with the curves $C_{1,mod}$, $C'_{2, mod} $,  and $C''_{2,mod}$, as well as the modification data associated to the bounded horizontal edge of the modified fragment.
			
				\item[(c)] We now deal with the case when all edges adjacent to $q^{ta}_j$ are bounded (see Figure \ref{fcn2}(g))
				The modification is depicted in Figure \ref{fcn2}(j), its dual subdivision consists of the tringle $P_{1,mod}:=\conv\{(-1,0), (0,0), (0,\ell)\} $, the trapeze $P_{2,mod}:=\conv \{ (0,0), (0,\ell), (1,r), (1,0)\} $ for some $0< r < \ell$, and the tringle $P_{3, mod}:=\conv\{ (0,r), (1,r), (2,0)\} $, see Figure \ref{fcn2}(m).
				The limit curve $\PP^1\to C_{1,mod}\in|{\mathcal L}_{P_{1,mod}}|$ crosses the toric divisor $\Tor([(0,0,),(0,\ell)])$ in two points $z_1,z_2$ with multiplicities $r$, $\ell-r$, respectively.
				The limit curve associated with the trapeze $P_{2,mod}$ consists of the $r$-multiple covering of the horizontal line $C'_{2,mod}$ through $z_1$ and of a peripherally unibranch component $\PP^1\to C''_{2,mod}$ through the point $z_2$.
				The limit curve $\PP^1\to C_{3,mod}\in|{\mathcal L}_{P_{3,mod}}|$ for $P_{3,mod}$ is peripherally unibranch.

				The modification data associated with the marked flat vertex $q^{ta}_j$ is the toric change of coordinates that makes the edges horizontal together with the curves $C_{1,mod}$, $C'_{2,mod}$, $C''_{2,mod}$, and $C_{3,mod}$, as well as the modification data associated to the horizontal edges of the modified fragment.
			\end{enumerate}
		\end{enumerate}
	\end{Construction}

	\begin{figure}
		\setlength{\unitlength}{1mm}
		\begin{picture}(145,175)(0,0)
			\thinlines
			\put(0,50){\vector(1,0){25}}\put(15,50){\vector(0,1){20}}
			\dashline{2}(0,110)(25,110)\dashline{2}(35,110)(75,110)\dashline{2}(85,110)(130,110)
			\dashline{2}(70,170)(100,170)
			\put(35,50){\vector(1,0){40}}\put(50,50){\vector(0,1){20}}
			\put(85,50){\vector(1,0){40}}\put(100,50){\vector(0,1){20}}
			\put(35,150){\vector(1,0){30}}\put(50,150){\vector(0,1){20}}
			\put(105,150){\vector(1,0){30}}\put(120,150){\vector(0,1){20}}
			
			\dottedline{1}(50,55)(60,55)\dottedline{1}(100,55)(110,55)
			
			\thicklines
			{\color{blue}
				\put(5,50){\line(2,3){10}}\put(5,50){\line(1,0){10}}\put(15,50){\line(0,1){15}}
				\put(40,50){\line(2,3){10}}\put(40,50){\line(1,0){20}}\put(50,50){\line(0,1){15}}
				\put(60,50){\line(0,1){5}}\put(50,65){\line(1,-1){10}}
				\put(90,50){\line(2,3){10}}\put(90,50){\line(1,0){30}}\put(100,50){\line(0,1){15}}
				\put(110,50){\line(0,1){5}}\put(100,65){\line(1,-1){10}}
				\put(110,55){\line(2,-1){10}}
				\put(40,150){\line(2,3){10}}\put(40,150){\line(1,0){20}}\put(60,150){\line(-2,3){10}}
				\put(110,150){\line(2,3){10}}\put(110,150){\line(1,0){20}}\put(130,150){\line(-2,3){10}}\put(120,150){\line(0,1){15}}
			}
			{\color{red}
				\put(5,110){\line(1,-1){10}}\put(15,100){\line(0,-1){10}}\put(15,100){\line(1,0){10}}\put(15,99){\line(1,0){10}}
				\put(40,110){\line(1,-1){10}}\put(50,100){\line(0,-1){10}}\put(50,100){\line(1,0){10}}\put(50,99){\line(1,0){9}}
				\put(61,99){\line(1,0){9}}\put(60,100){\line(0,-1){10}}\put(60,100){\line(1,1){10}}
				\put(90,110){\line(1,-1){10}}\put(100,100){\line(0,-1){10}}\put(100,100){\line(1,0){10}}\put(100,99){\line(1,0){9}}
				\put(111,99){\line(1,0){4}}\put(110,100){\line(0,-1){10}}\put(110,100){\line(1,1){10}}
				\put(115,99){\line(0,-1){9}}\put(115,99){\line(1,1){11}}
				\put(5,130){\line(1,0){10}}\put(15,130.5){\line(1,0){10}}\put(15,129.5){\line(1,0){10}}
				\put(40,130){\line(1,0){10}}\put(50,130.5){\line(1,0){10}}\put(50,129.5){\line(1,0){9}}
				\put(61,129.5){\line(1,0){9}}\put(60,130.5){\line(0,-1){6}}\put(60,130.5){\line(1,1){5}}
				\put(90,130){\line(1,0){10}}\put(100,130.5){\line(1,0){10}}\put(100,129.5){\line(1,0){9}}
				\put(111,129.5){\line(1,0){4}}\put(110,130.5){\line(0,-1){6}}\put(110,130.5){\line(1,1){5}}
				\put(115,129.5){\line(0,-1){5}}\put(115,129.5){\line(1,1){6}}
				\put(0,155){\line(1,1){5}}\put(0,165){\line(1,-1){5}}\put(5,160){\line(1,0){15}}
				\put(20,160){\line(1,1){5}}\put(20,160){\line(1,-1){5}}
				\put(75,170){\line(1,-1){5}}\put(80,165){\line(1,0){10}}\put(80,165){\line(0,-1){15}}
				\put(90,165){\line(0,-1){15}}\put(90,165){\line(1,1){5}}
			}
			\put(12,140){(a)}\put(46,140){(b)}\put(83,140){(c)}\put(116,140){(d)}
			\put(12,120){(e)}\put(51,120){(f)}\put(98,120){(g)}
			\put(12,80){(h)}\put(53,80){(i)}\put(103,80){(j)}
			\put(11,40){(k)}\put(50,40){(l)}\put(98,40){(m)}
			\put(50,165){$\ell$}\put(120,165){$\ell$}\put(50,65){$\ell$}\put(100,65){$\ell$}\put(15,64){$2$}
			\put(36,146){$-1$}\put(57,146){$1$}\put(106,146){$-1$}\put(127,146){$1$}
			\put(2,46){$-1$}\put(37,46){$-1$}\put(87,46){$-1$}\put(57,46){$1$}\put(107,46){$1$}\put(117,46){$2$}
			\put(8,155){$u^{si}_i$}\put(73,154){$u^{si}_i$}\put(8,94){$u^{ta}_j$}\put(43,94){$u^{ta}_j$}\put(93,94){$u^{ta}_j$}
			\put(4,161.5){$V_1$}\put(16,161.5){$V_2$}\put(72,166){$V_1$}\put(95,166){$V_2$}
			\put(68,112){$V_1$}\put(117,112){$V_1$}\put(124,112){$V_2$}
			\put(56.5,132){$V_1$}\put(106.5,132){$V_1$}\put(116,126){$V_2$}
			\put(9,159){$\bullet$}\put(79,154){$\bullet$}
			\put(14,129){$\bullet$}\put(49,129){$\bullet$}
			\put(99,129){$\bullet$}\put(79,154){$\bullet$}
			\put(14,94){$\bullet$}\put(49,94){$\bullet$}\put(99,94){$\bullet$}
			\put(99.5,101.5){$V_3$}\put(106,101.5){$V_4$}
			\put(116,96){$V_5$}
			
		\end{picture}
		\caption{The modifications in Construction \ref{con:modified_tropical_limit} and Definition \ref{def:modification_data}. }\label{fcn2}
	\end{figure}
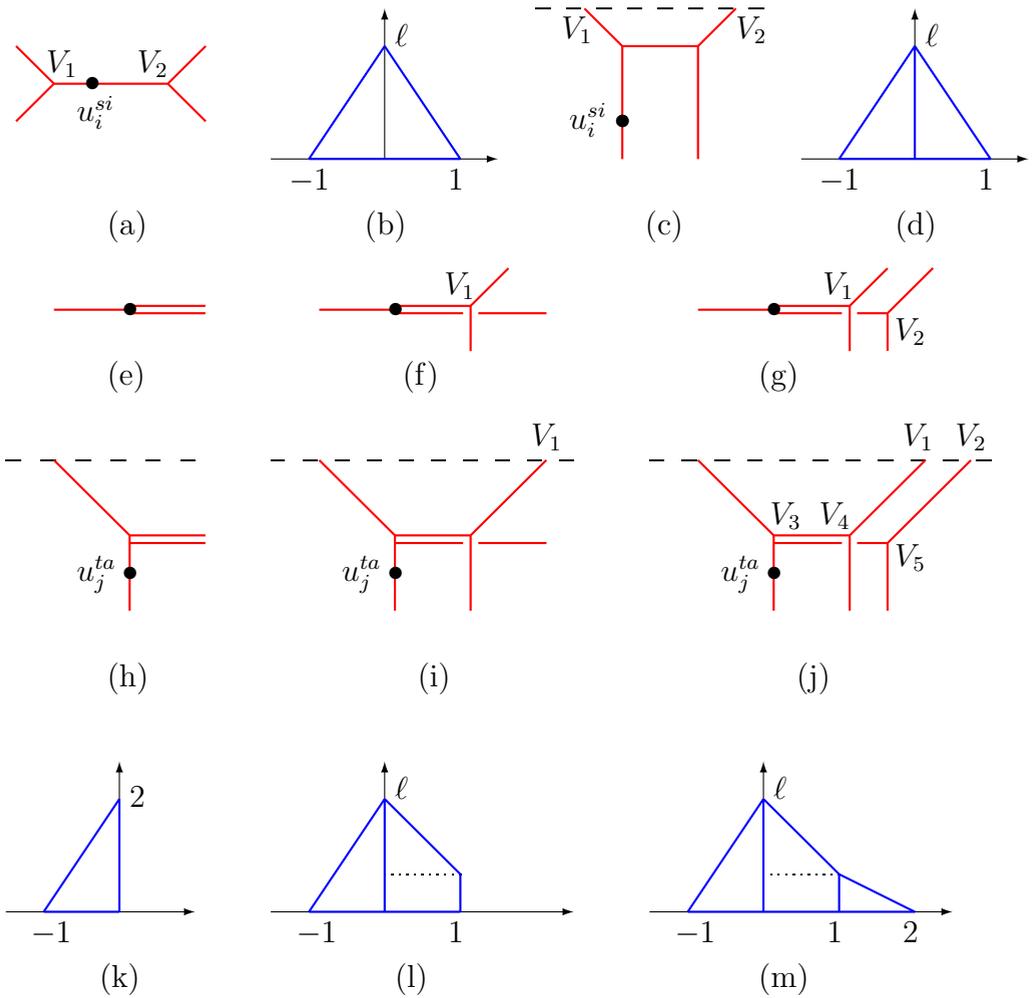
	
	\begin{remark}
		The modification data associated to a marked flat vertex together adjacent to one (respectively two, respectively three) bounded edges is equivalent to a toric change of coordinates that makes the edges horizontal together with an enhanced tropical curve whose tropical part is depicted in Figure \ref{fcn2}(h) (recpectively Figure \ref{fcn2}(i), respectively Figure \ref{fcn2}(j)) and possibly with additional modification data corresponding to the horizontal edges of the modified fragment.
	\end{remark}

	The following definition is the precise description of what limit curves can occur as a modification data of the modified tropical limit.

	\begin{definition}\label{def:modification_data}
		Let $(\Gamma, h, \bq, \{\varphi_V:\PP^1\to C_{V}\subset \Tor_\C(P_V)\}_{V\in \Gamma^0})$ be an enhanced plane tropical curve.
		Let $X\subset \Gamma$ be a fragment which is either an edge adjacent to two non flat vertices, or a flat marked vertex together with the edges incident to it.
		We define \emph{modification data associated to $X$} separately for each case as follows:
		\begin{enumerate}
			\item A modification data associated to an unmarked edge $E$ of weight $\ell$ adjacent to two non flat vertices $V_1$ and $V_2$ is a triple $(\Phi, \zeta, \varphi_{mod})$, where $\Phi$ is a toric change of coordinates that make $E$ horizontal, $\zeta\in \K$, and $\varphi_{mod}:\PP^1\to C_{mod}\subset \Tor_\C(P_{mod})$ is a rational curve, where $P_{mod}=\conv\{(-1,0),(1,0),(0,\ell)\}$, and additionally the coefficient of $y^{\ell-1}$ in the defining equation of $C_{mod}$ vanishes.
			
			Denote by $\sigma = P_{V_1}\cap P_{V_2}$ the edge of the dual subdivision corresponding to $E$.
			Both $C_{V_1} $ and $C_{V_2}$ intersect $\Tor_\C(\sigma)$ at unique point $z$.
			We say the modification data is \emph{coherent} if the second coordinate of $z$ (after applying $\Phi$) is $\zeta(0)$ and the defining equation of $C_{mod}$ restricted to $\conv\{(-1,0), (0,\ell)\}$ (respectively, $\conv\{(1,0), (0, \ell)\}$) is the local equation of the defining equation of $C_{V_1}$ (respectively, $C_{V_2}$) at the point of intersection with the toric divisor $\Tor_\C(\sigma)$ where $\sigma = P_{V_1}\cap P_{V_2}$ is the edge of the dual subdivision corresponding to $E$.
			Explicitly, in the coordinates on $\Tor_\C(P_{V_i})$ where $\Tor_\C(\sigma)$ is defined by $x=0$, the limit curve $C_{V_i}$ ($i=1,2$) intersect $\Tor_\C(\sigma)$ in a single point $z=(0,\zeta(0))$ where its defining equation is of the form $ (y-\zeta(0))^\ell + \theta_i(y) x + O(x^2)$ for some $\theta_i\in \C[y]$, so coherence means that the restriction of the defining equation of $C_{mod}$ to $\conv\{(-1, 0) (0, \ell)\}$ (respectively $\conv\{(1,0), (0, \ell)\}$) is $y^\ell + \theta_i(\zeta(0)) x^{-1}$ (respectively $y^\ell + \theta_i(\zeta(0))x$).

			\item
			A modification data associated to an edge $E$ of $\Gamma$ having weight $\ell>1$ and containing a point $q^{si}_i$ is a tuple $(\Phi, \varphi_{1,mod}, \varphi_{2,mod}, \Omega)$, where $\Phi$ is a toric change of coordinates that makes $E$ horizontal, $\varphi_{1,mod}:\PP^1\to C_{1,mod}\subset \Tor_\C(P_{1,mod})$ and $\varphi_{2,mod}:\PP^1\to C_{2,mod}\subset \Tor_\C(P_{2,mod})$ are rational peripherally unibranch curves, where $P_{1,mod}=\conv\{(-1,0),(0,\ell),(0,0)\}$ and $P_{2,mod}=\conv\{(0,0),(1,0),(0,\ell)\}$, and $\Omega$ is a modification data associated to the unmarked edge dual to $\conv\{(0,0), (0,\ell)\}$.

			This data is called \emph{coherent} if $\Omega$ is coherent, the intersection of $C_{1,mod}$ with the toric divisor $\Tor_\C(\conv\{(-1,0), (0,0)\})$ is $\xi^{si}_i$, the initial term of the first coordinate of the marked point $w^{si}_i$ after the toric coordinate change, and additionally the defining equation of $C_{1,mod}$ (respectively $C_{2,mod}$) restricted to $\conv\{(-1,0), (0,\ell)\}$ (respectively $\conv\{(0,\ell), (1, 0)\}$) is the local equation of the defining equation of $C_{V_1}$ (respectively $C_{V_2}$) at the point of intersection with the toric divisor $\Tor_\C(\sigma)$ where $\sigma = P_{V_1}\cap P_{V_2}$ is the edge of the dual subdivision corresponding to $E$.

			\item
			A modification data associated to a flat vertex $V$ marked by $q^{ta}_j$ adjacent to one bounded edge of weight $2$ whose other end is $V'$ and $2$ unbounded ends of weight $1$ is the pair $(\Phi, \varphi_{mod})$, where $\Phi$ is a toric change of coordinates that makes the edges horizontal, and $\varphi_{mod}:\PP^1\to C_{mod}\subset \Tor_\C(P_{mod})$ is a rational curve, where $P_{mod}=\conv\{(-1,0),(0,0), (0,2)\}$.

			It is coherent if the intersection of $C_{mod}$ with the toric divisor $\Tor_\C(\conv\{(-1,0), (0,0)\}$ is $\xi^{ta}_j$, the initial term of the first coordinate of the marked point $w^{ta}_j$ after the toric coordinate change, and additionally the defining equation of $C_{mod}$ restricted to $\conv\{(-1,0), (0,2)\}$ is the local equation of the defining equation of $C_{V'}$ at the point of intersection with the toric divisor $\Tor_\C(\sigma)$ where $\sigma$ is the edge of the dual subdivision corresponding to $E$.

			\item
			A modification data aassociated to a flat vertex $V$ marked by $q^{ta}_j$ adjacent to two bounded edges $E_1$ of weight $\ell$ and $E_2$ of weight $\ell-1$ whose other endpoints are $V_1$ and $V_2$ respectively and to one unbounded end is the tuple $(\Phi, \Gamma_{mod}, \Omega)$, where $\Phi$ is a toric change of coordinates that makes the edges horizontal, $\Gamma_{mod}$ is an enhanced tropical curve whose tropical part is depicted in Figure \ref{fcn2}(i), and $\Omega$ is a modification data associated to the horizontal edge of weight $\ell-1$ in $\Gamma_{mod}$.

			This data is coherent if $\Omega$ is coherent, the intersection of $C_{1, mod}$ with the toric divisor $\Tor_\C(\conv\{(-1,0), (0,0)\})$ is $\xi^{ta}_j$, the initial term of the first coordinate of the marked point $w^{ta}_j$ after the toric coordinate change, and additionally the defining equation of $C_{1, mod}$ (respectively $C'_{2,mod}$) restricted to $\conv\{(-1,0), (0,\ell)\}$ (respectively $\conv\{(0, \ell), (1, 1)\}$) is the local equation of the defining equation of $C_{V_1}$ (respectively $C_{V_2}$) at the point of intersection with the toric divisor $\Tor_\C(\sigma_1)$ (respectively $\Tor_C(\sigma_2)$) where $\sigma_i$ is the edge of the dual subdivision corresponding to $E_i$.

			\item
			A modification data associated to a flat vertex $V$ marked by $q^{ta}_j$ adjacent to three bounded edges $E_1$ of weight $\ell$, $E_2$ of weight $r$ and $E_3$ of weight $\ell-r$ whose other endpoints are $V_1$, $V_2$ and $V_3$ respectively is the tuple $(\Phi, \Gamma_{mod}, \Omega_1, \Omega_2)$, where $\Phi$ is a toric change of coordinates that makes the edges horizontal, $\Gamma_{mod}$ is an enhanced tropical curve whose tropical part is depicted in Figure \ref{fcn2}(j), and $\Omega_1$ and $\Omega_2$ are modification data associated to the horizontal edges of weight $r$ and $\ell-r$ respectively.
			
			This data is coherent if $\Omega_1$ and $\Omega_2$ are coherent, the intersection of $C_{1, mod}$ with the toric divisor $\Tor_\C(\conv\{(-1,0), (0,0)\})$ is $\xi^{ta}_j$, the initial term of the first coordinate of the marked point $w^{ta}_j$ after the toric coordinate change, and additionally the defining equation of $C_{1, mod}$ (respectively $C'_{2,mod}$) restricted to $\conv\{(-1,0), (0,\ell)\}$ (respectively $\conv\{(0, \ell), (1, r)\}$) is the local equation of the defining equation of $C_{V_1}$ (respectively $C_{V_2}$) at the point of intersection with the toric divisor $\Tor_\C(\sigma_1)$ (respectively $\Tor_C(\sigma_2)$) where $\sigma_i$ is the edge of the dual subdivision corresponding to $E_i$.

		\end{enumerate}
	\end{definition}

	\begin{remark}
		It is obvious that the modification data constructed in Construction \ref{con:modified_tropical_limit} are coherent.
	\end{remark}

	\begin{definition}
		We say that a pair of modification data associated to a fragment $X$ is \emph{equivalent} if they occur by applying Construction \ref{con:modified_tropical_limit} to the same algebraic curve with different choices of the toric change of coordinates.
	\end{definition}

	\begin{lemma}\label{lemma:number_of_modifications}
		Let $(\Gamma, h, \bq, \{C_V\}_{V\in \Gamma^0})$ be an enhanced plane tropical curve satisfying the point constraints.
		Then there exist
		$$\prod_{E\in \Gamma^1}\wt(E) \cdot \prod_{\substack{i \\ q^{si}_i \in E}}\wt(E)$$
		equivalence classes of coherent modification data associated to $\Gamma$.
	\end{lemma}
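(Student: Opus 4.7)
The plan is to decompose the modification data into local pieces, one per fragment of $\Gamma$ requiring a nontrivial modification: unmarked edges of weight at least $2$, $si$-marked edges, and flat marked vertices together with their incident edges. The data attached to distinct fragments interact only through the recursive occurrence of $\Omega$ inside an $si$-marked edge datum or a flat-vertex datum, and each such $\Omega$ refers to a bounded horizontal edge strictly internal to the corresponding modified fragment. Hence, by induction on the total number of weight-$\geq 2$ edges, the count of coherent equivalence classes factors as a product of local contributions, and it remains to compute each contribution.

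For an unmarked edge $E$ of weight $\ell$ the toric change $\Phi$ is determined up to equivalence by the direction of $E$, the initial term of $\zeta$ is fixed by coherence, and its higher-order terms are fixed by the vanishing of the coefficient of $y^{\ell-1}$; so the only free datum is the peripherally unibranch rational curve $\PP^1\to C_{mod}$ in $|{\mathcal L}_{P_{mod}}|$ with boundary behavior along both slanted sides of $P_{mod}$ prescribed by coherence, and Mikhalkin's formula (Definition \ref{dtrop1}) yields exactly $\mu(V_{mod})=\ell=\wt(E)$ such curves. For an $si$-marked edge $E$ of weight $\ell$ the peripherally unibranch curves $C_{1,mod}$ and $C_{2,mod}$ on the triangles $P_{1,mod}$ and $P_{2,mod}$, each of Mikhalkin multiplicity $\ell$, contribute $\ell^2$ choices once coherence fixes their leading behavior along the three relevant slanted sides, while the recursive $\Omega$ along the newly introduced unmarked vertical edge of weight $\ell$ is forced uniquely by coherence with $C_{1,mod}$ and $C_{2,mod}$; this gives $\wt(E)^2$, exactly the combined appearance of $E$ in $\prod_{E}\wt(E)\cdot\prod_{q^{si}_i\in E}\wt(E)$. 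For a flat marked vertex $q^{ta}_j$ one treats each of the three sub-cases of Construction \ref{con:modified_tropical_limit} separately: the trapezoidal limit components $C'_{2,mod}$ (the horizontal line through the matching point) and $C''_{2,mod}$ (a peripherally unibranch component) are uniquely determined by coherence, while the Mikhalkin counts of the triangular pieces $C_{1,mod}$ and $C_{3,mod}$, combined with the recursive modifications $\Omega$, $\Omega_1$, $\Omega_2$ along the new horizontal edges of weights $\ell$, $r$, $\ell-r$, reproduce precisely the product $\prod_{E\ni q^{ta}_j}\wt(E)$.

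The main obstacle is the combinatorial bookkeeping in the flat marked vertex sub-cases (b) and (c). One must verify that decomposing the modification datum into Mikhalkin-counted triangular pieces, uniquely determined trapezoidal pieces, and recursive modifications introduces neither overcounting nor undercounting, and that the coherence conditions on the slanted sides of the triangular pieces propagate from the leading behavior of the ambient limit curves $C_{V}$ to the nested $\Omega$, $\Omega_1$, $\Omega_2$ without leaving residual freedom. Once this bookkeeping is carried out and one observes that weight-$1$ edges contribute trivially, multiplying the local contributions yields the stated product $\prod_{E\in\Gamma^1}\wt(E)\cdot\prod_{i:\,q^{si}_i\in E}\wt(E)$.
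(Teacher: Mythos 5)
Your overall strategy --- localizing the count to the fragments listed in Definition \ref{def:modification_data} and multiplying the local contributions --- is the same as the paper's, but the local counts themselves, which are the entire content of the lemma, are either misidentified or not carried out. For an unmarked edge of weight $\ell$, the triangle $P_{mod}=\conv\{(-1,0),(1,0),(0,\ell)\}$ has lattice area $2\ell$, so the count of Definition \ref{dtrop1} (peripherally unibranch rational curves with fixed vertex coefficients, equivalently with the two points on the slanted divisors prescribed by coherence) gives $2\ell$ curves, not $\ell$. The correct count of $\ell$ is the deformation-pattern count of \cite[Lemma 3.9]{Sh05}: the vanishing of the coefficient of $y^{\ell-1}$ is a linear condition imposed on $C_{mod}$ itself (it is part of Definition \ref{def:modification_data}(1)) and cuts the $2\ell$ curves down to $\ell$; it is not merely a device for pinning down the higher-order terms of $\zeta$. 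Similarly, in the $q^{si}_i$-marked case you have the factors backwards: $C_{2,mod}$ lives on $\conv\{(0,0),(1,0),(0,\ell)\}$ and must meet $\Tor([(0,0),(0,\ell)])$ unibranchly at a prescribed point with multiplicity $\ell$ and $\Tor([(1,0),(0,\ell)])$ at a prescribed point, so by \cite[Lemma 3.5]{Sh05} there are $\mu/(\ell\cdot 1)=1$ such curves, while the nested $\Omega$ along the new vertical edge of weight $\ell$ contributes $\ell$ choices (again by the deformation-pattern count), not one. The product $\ell^2$ comes out right, but for the wrong reasons.

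The more serious gap is the flat marked vertex case, which you explicitly defer as ``bookkeeping''. This is precisely where the contact element enters the enumeration: the paper shows that $C_{1,mod}$ is given by $ax^{-1}+(y-z_1)^r(y-z_2)^{\ell-r}=0$ with $a$ fixed, and the conditions $(-1)^\ell z_1^rz_2^{\ell-r}=\const$ and $rz_2+(\ell-r)z_1=0$ (the latter encoding the vertical tangency forced by the modified contact element (\ref{ecn6})) yield exactly $\ell$ ordered pairs $(z_1,z_2)$; together with the $r(\ell-r)$ choices for the two nested modifications this gives $\ell r(\ell-r)$, and analogously $\ell(\ell-1)$ and $1$ in the other two subcases. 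Nothing in your argument produces these equations or these counts, and the subcase of two unbounded ends additionally requires the $\Z_2$-symmetry/ordered-points discussion to reconcile the single resulting equivalence class with the naive product of weights. Without these computations the lemma is asserted rather than proved.
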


	\begin{proof}
		We enumerate the possible coherent modification data associated to each fragment as in Definition \ref{def:modification_data}.

		\begin{enumerate}

			\item For an unmarked edge $E$ of $\Gamma$ of weight $\ell$, the admissible modified limit curve is $\PP^1\to C'\subset\Tor(P')$, where $$P'=\conv\{(-1,0),(1,0),(0,\ell)\}$$ (see Figure \ref{fcn2}(b)). The constraints are the fixed points on the toric divisors $\Tor([(-1,),(0,\ell)])$ and $\Tor([(1,0),(0,\ell)])$, determined by the chosen admissible limit curves of the embedded part, and the condition of the vanishing coefficient of $y^{\ell-1}$ (cf. \cite[Lemma 3.9]{Sh05} and \cite[Lemma 2.49]{IMS}). These restrictions are satisfies by exactly $\ell$ curves \cite[Lemma 3.9]{Sh05}.
		
			\item For an edge of $\Gamma$ of weight $\ell>1$ containing a point $q^{si}_i$, we should construct two admissible limit curves $C_{1,mod},C_{2,mod}$, as well as modification data for an unmarked edge of weight $\ell$, as described in item 2 of Definition \ref{def:modification_data}.
			A curve $C_{1,mod}\in|{\mathcal L}_{P_{1,mod}}|$ must match a point on the divisor $\Tor([(-1,0),(0,\ell)])$, prescribed by previously chosen admissible embedded limit curves, and the point $\xi^{si}_i\in\Tor([(-1,0),(0,0)])$ , and should cross the divisor $\Tor([(0,0),(0,\ell)])$ at one point. Its equation turns to be $(y-\tau)^\ell+ax^{-1}=0$ with a fixed $a$ and a fixed $\tau^\ell$, which yields $\ell$ options for the $C_{1,mod}$. In turn, the curve $C_{2,mod}\in|{\mathcal L}_{P_{2,mod}}|$ is uniquely determined by the prescribed points on the divisors $\Tor([(0,0),(0,\ell)])$ and $Tor([(1,0),(0,\ell)])$.
			At last, the modification data of the unmarked edge have $\ell$ options as shown above.
		
			\item Let $q^{ta}_j=V\in\Gamma^0$ be a marked flat vertex of $(\Gamma,h,\bq)$. Among the three cases dealt with in Definition \ref{def:modification_data}(3)-(5), we consider the one with all edges incident to $q^{ta}_j$ finite; the other two admit the same treatment, we only provide final answers. The tropical modification in the considered case and the dual subdivision are shown in Figure \ref{fcn2}(j,m). The admissible modified limit curve $$C_{1,mod}\in|{\mathcal L}_{P_{1,mod}}|,\quad P_{1,mod}=\conv\{(-1,0),(0,0),(0,\ell)\},$$
			matches the fixed point on the divisor $\Tor([(-1,0),(0,\ell)])$, passes through the point $\eta_j\in\Tor([(-1,0),(0,0)])$ having there a vertical tangent (see Remark \ref{rcn2} and Step 1 of the proof), and crosses the divisor $\Tor([(0,0),(0,\ell)])$ at two points $z_1,z_2$ with multiplicities $r,\ell-r$, respectively. Hence, it is given by an equation $ax^{-1}+(y-z_1)^r(y-z_2)^{\ell-r}=0$, where $a$ is fixed, and the other restrictions reduce to
			$$(-1)^\ell z_1^rz_2^{\ell-r}=\const\in\C^*,\quad rz_2+(\ell-r)z_1=0,$$
			which yields $\ell$ solutions $(z_1,z_2)$.
			Note that in case $\ell$ even and ${r=\ell/2}$, we obtain $\ell/2$ curves; however, since they glue up to different fragments, we count here curves with ordered points $z_1,z_2$.
			The other limit curves in the modified fragment are then determined uniquely.
			The two additional modification data of the unmarked edges in the modified fragment can be chosen in $r(\ell-r)$ ways.
			So, in total, we obtain $\ell r(\ell-r)$ admissible combinations of admissible modified limit curves in this situation.
			Similarly, in the case of one end incident to $q^{ta}_j$ (Figure \ref{fcn2}(f,i)) we get $\ell(\ell-1)$ combinations, and in the case of two ends incident to $q^{ta}_j$ (Figure \ref{fcn2}(e,h)) we get a unique combination.
			Note that the latter answer is twice less than what is predicted by the preceding answers, and this is a consequence of the $\Z_2$-symmetry of the fragments shown in Figure \ref{fcn2}(e,h).
		\end{enumerate}
	\end{proof}

	\begin{lemma}\label{lemma:correspondence_final_step}
		Let $(\Gamma, h, \bq, \{ \varphi_V \}_{V\in \Gamma^0})$ be an enhanced plane tropical curve satisfying the point constraints.
		For any coherent modification data associated to $\Gamma$ exist a unique curve  $[\bn:(\widehat C,\bp)\to\Tor_\K(P)]\in{\mathcal M}_{0,(n_{si},n_{ta})}(P,\bw)$ whose modified tropical limit (in the sense of Construction \ref{con:modified_tropical_limit}) has this modification data.
	\end{lemma}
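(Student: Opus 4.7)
The plan is to reconstruct the Puiseux polynomial $F(x,y)$ defining $\bn_*\widehat C$ coefficient by coefficient from the enhanced tropical curve together with its modification data, and then to invoke a Newton-iteration / implicit function argument to lift the leading $t$-order data to a full curve in $\mathcal{M}_{0,(n_{si},n_{ta})}(P,\bw)$. Uniqueness will then follow because every stage of this reconstruction is forced by the coherence conditions.

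First I would note that each limit curve $\varphi_V$ prescribes the initial coefficients $c^0_{ij}$ in \eqref{e-new60} at the lattice points of $P_V\cap\Z^2$, and the edge-compatibility built into the enhanced tropical curve (Section \ref{sec:enhanced_trop_curves}) guarantees agreement on shared faces of the dual subdivision whenever the corresponding edge of $\Gamma$ has weight one. For an edge of weight $\ell>1$ the two adjacent limit curves agree only modulo a next-order correction, and the coherence conditions in Definition \ref{def:modification_data} are precisely the statement that the modification data pins down this correction uniquely: the shift $\zeta$ for an unmarked edge, the triple $(C_{1,mod},C_{2,mod},\Omega)$ for a marked edge, and the analogous data attached to a flat marked vertex.

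Next I would induct on the total depth of nested modifications. At each step I would peel off the outermost modified fragment: apply the recorded toric change $\Phi$ and the shift $y\mapsto y+\zeta$, record the coefficients prescribed by $\varphi_{mod}$ (or by the enhanced sub-curve $\Gamma_{mod}$) along the new lattice segments introduced by the modification, and pass to a tropical curve of strictly smaller modification depth; the inner modifications have strictly smaller edge weights, so the induction terminates. In the base case all edges have weight one and there are no flat marked vertices, so the leading coefficients of $F$ are entirely determined at every lattice point of $P$, every constraint from $\bw$ sits either at a trivalent non-flat vertex or at an interior point of a weight-one edge, and the arguments in \cite[Lemmas 3.5, 3.9 and Section 5]{Sh05} (cf.\ \cite[Section 2.5]{IMS}) yield a unique rational lift $\bn:\widehat C\to\Tor_\K(P)$ through $\bw$.

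The hard part will be the bookkeeping for the flat-marked-vertex fragments in cases (3)--(5) of Definition \ref{def:modification_data}, where the modification data itself contains further modification data attached to the horizontal edges of the modified fragment, and one must check that peeling the outermost layer really does decrease the weights being tracked and that the coherence relations remain consistent after each shift and toric change. At every level I would translate the coherence conditions into a system of polynomial equations in the remaining free coefficients and check that this system has a unique solution, so that each coherent modification data produces exactly one curve in $\mathcal{M}_{0,(n_{si},n_{ta})}(P,\bw)$; in the opposite direction, the modified tropical limit from Construction \ref{con:modified_tropical_limit} of any such algebraic curve is coherent by its very construction. Combined with Lemmas \ref{lemma:number_of_plane_enhancements} and \ref{lemma:number_of_modifications}, this produces the bijection required to complete the proof of Theorem \ref{tcn1}.
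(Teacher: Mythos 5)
Your plan and the paper's proof share the same overall goal (deform the collection of limit curves back to a curve over $\K$, with uniqueness coming from some transversality), but the mechanism you propose --- a coefficient\-/by\-/coefficient reconstruction of $F$ followed by ``a Newton-iteration / implicit function argument'' --- skips the step that is actually the content of the paper's proof. In the degenerate situation at hand (a reducible central fiber $\widetilde{\mathfrak X}_0$ of a modified threefold, with the central curve spread over toric components including the exceptional ones introduced by the modifications), the naive implicit function theorem does not apply; one must invoke a patchworking/correspondence theorem and verify its hypotheses. The paper does exactly this, using \cite[Theorems 2.15 and 3.1]{ST1} with \cite[Theorem 2.4]{Sh06a}, and the proof consists of checking the conditions T1--T3 of that theorem. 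Your plan never engages with these: saying that the coherence conditions form ``a system of polynomial equations with a unique solution'' only pins down the leading-order data, and does not by itself show that each leading-order solution lifts, uniquely, to all orders in $t$.

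Two concrete missing ingredients. First, condition T1: on an exceptional component $\widetilde{\mathfrak X}_{0,i}\simeq\Tor(\conv\{(-1,0),(1,0),(0,\ell)\})$ coming from an unmarked edge of weight $\ell>1$, the sections of ${\mathcal L}_0$ that actually extend to the family form the codimension-one subspace $\Ima(\rho_*{\mathcal L}\otimes k(0))\big|_{\widetilde{\mathfrak X}_{0,i}}\subset H^0(\widetilde{\mathfrak X}_{0,i},{\mathcal L}_0|_{\widetilde{\mathfrak X}_{0,i}})$, and one must identify which linear condition cuts it out. The paper shows (by an indirect argument, comparing with the known tropical count of nodal curves) that it is the vanishing of the coefficient of $y^{\ell-1}$ --- this is the reason the shift $\zeta$ appears in the modification data at all, and it is a constraint on which modified limit curves are admissible, not something that follows from coherence alone. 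Second, conditions T2--T3 (unobstructedness): one needs $H^1(\PP^1,\lambda^*{\mathcal L}_0\otimes{\mathcal J})=0$ for each component, relative to the points inherited from the preceding components and from $\bw^{(0)}$, plus a transversality of $H^0$ with $\Ima(\rho_*{\mathcal L}\otimes k(0))$; the paper organizes this via an acyclic orientation on the components of $\widehat C_0$ induced by the marked points and the modification structure. Without these verifications the claimed existence and uniqueness of the lift is asserted rather than proved. Your inductive peeling of nested modification fragments is a sensible way to organize the combinatorics, and your citations to \cite[Section 5]{Sh05} and \cite[Section 2.5]{IMS} point to places where analogous transversality arguments are carried out for nodal curves, but for this lemma you would still need to adapt and verify them for the modified fragments attached to contact constraints, which is where the paper's proof does its work.
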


	\begin{proof}
		We will use \cite[Theorems 2.15 and 3.1]{ST1} combined with \cite[Theorem 2.4]{Sh06a}\footnote{In principle, one can derive the required deformation statement from \cite[Theorem 6.2]{Ty} or \cite[Theorem 78]{Ni}.}.

Here we work with parameterized curves, while \cite[Theorem 2.4]{Sh06a} and \cite[Theorem 2.15]{ST1} are stated for embedded curves. However, both settings can naturally be translated into each other. Indeed, the modifications performed in the construction of the modified tropical limit can be interpreted as the family of parameterized curves
$$\begin{tikzcd}[column sep=normal]
(\widehat C,\bw) \ar[r, hook] \ar[d] & (\widetilde{\mathfrak X},\bw) \ar[d, "\rho"]\\ 
(\C,0) \ar[equal]{r} & (\C,0) 
\end{tikzcd}$$
such that the three-fold $\widetilde{\mathfrak X}$ is birational to ${\mathfrak X}$, the fibers $\widetilde{\mathfrak X}_t\simeq{\mathfrak X}_t$ are isomorphic for $t\ne0$, the central surface $\widetilde{\mathfrak X}_0$ contains extra components coming from toric surfaces associated with the polygons that appear in modifications (cf. Figure \ref{fcn2}(b,d,k,l,m)). In turn, the central embedded curve $C_0\subset\widetilde{\mathfrak X}_0$ is the union of limit curves (including the modified ones) which are immersed rational curves, and hence the parametrization ${\widehat C_0\to C_0}$ is easily recovered.

Consider the line bundle ${\mathcal L}$ on $\widetilde{\mathfrak X}$, or, equivalently, a family of line bundles ${\mathcal L}_t\to\widetilde{\mathfrak X}_t$, $t\in(\C,0)$, such that for $t\ne0$, ${\mathcal L}_t\to\widetilde{\mathfrak X}_t\simeq\Tor(\Delta)$ is the tautological line bundle associated with $\Delta$. It can happen that $h^0(\widetilde{\mathfrak X}_0,{\mathcal L}_0)>h^0(\widetilde{\mathfrak X}_t,{\mathcal L}_t)$, $t\ne0$, and the difference is the number of modifications performed along edges of weight $>1$ that do not contain marked points. Following \cite[Section 2.5]{ST1}, we introduce the line bundle $\rho_*{\mathcal L}\to(\C,0)$ and notice that the natural map $\rho_*{\mathcal L}\otimes k(t)\to H^0(\widetilde{\mathfrak X}_t,{\mathcal L}_t)$ is an isomorphism for $t\ne0$ and is injective for $t=0$. The difference between $H^0(\widetilde{\mathfrak X}_0,{\mathcal L}_0)$ and $\Ima(\rho_*{\mathcal L}\otimes k(0))$ comes from the components $\Tor(\conv\{(-1,0),(1,0),(0,\ell)\})\simeq\widetilde{\mathfrak X}_{0,i}\subset\widetilde{\mathfrak X}_0$ coming in modifications along unmarked edges of weight $\ell>1$. More precisely, ${\Ima(\rho_*{\mathcal L}\otimes k(0))\big|_{\widetilde{\mathfrak X}_{0,i}}}$ has codimension $1$ in $H^0(\widetilde{\mathfrak X}_{0,i},{\mathcal L}_0|_{\widetilde{\mathfrak X}_{0,i}})$ and is determined by a linear relation on the coefficients of the monomials $1,y,...,y^{\ell-1}$. We claim that the relation is just the vanishing of the coefficient of $y^{\ell-1}$. Indeed, we choose an admissible modified limit curve in any of the $\ell$ orbits of the action by shifts $y\mapsto y+a$ (see \cite[Lemma 3.9]{Sh06}). However, we know that there is only one choice in each orbit what we know from the tropical count of nodal curves in toric surfaces subject to usual point constraints \cite{Mi0,Sh06} and from \cite[Theorem 2.18]{ST1} which would yield a separate curve in the above count for any choice of the modified limit curve in  $\Ima(\rho_*{\mathcal L}\otimes k(0))\big|_{\widetilde{\mathfrak X}_{0,i}}$. That is, the linear equation of $\Ima(\rho_*{\mathcal L}\otimes k(0))\big|_{\widetilde{\mathfrak X}_{0,i}}$ is as claimed. In particular, by the construction of modified limit curves we get condition T1 required in \cite[Theorem 2.15]{ST1}.

The remaining conditions T2 and T3 of \cite[Theorem 2.15]{ST1} mean the vanishing of obstructions to the existence of the deformation of $C_0$ into a family of rational curves $C_t\subset\widetilde{\mathfrak X}_t$, $t\in(\C,0)$, matching the constraint $\bw$. Let $(\widehat\Gamma,\widehat\bq)$ be the marked tropical curve obtained from $(\Gamma,\bq)$ after performing all modifications. There is a natural projection $(\widehat\Gamma,\widehat\bq)\to(\Gamma,\bq)$. The components of the complement to the set of vertices and the set of marked points $\bq$ in $\Gamma$ can be oriented so that each marked point is a source, for each unmarked trivalent vertex, two incident edges are incoming and the third incident edge is outgoing. The we perform modifications and each time define an orientation of the modified fragment assuming that the marked points (if any) are sources, the edges incident to the previously existed vertices are oriented towards the modified fragment, and for the four-valent vertex like in Figure \ref{fcn2}(i,j), the two parallel incident edges are outgoing (while the two other incident edges are incoming). This orientation has no oriented cycles, and hence induces a linear order on the components of $\widehat C_0$. Then conditions T2 and T3 reduce to the sequence of the following transversality statements:
\begin{itemize}\item
If $\lambda:\PP^1\to\widetilde{\mathfrak X}_{0,i}$ is a component of an admissible collection of limit curves (with $\widetilde{\mathfrak X}_{0,i}$ being a component of $\widetilde{\mathfrak X}_0$), then
$$H^1(\PP^1,\lambda^*{\mathcal L}_0\big|_{\widetilde{\mathfrak X}_{0,i}}\otimes {\mathcal J})=0,$$ 
where ${\mathcal J}$ is the ideal sheaf of the points belonging to the preceding components of $\widehat C_0$ and of the constraints $\bw^{(0)}$ that may appear on the considered component of $C_0$.
\item The transversal intersection of $H^0((\PP^1,\lambda^*{\mathcal L}_0\big|_{\widetilde{\mathfrak X}_{0,i}})$ and \linebreak ${\Ima(\rho_*{\mathcal L}\otimes k(0)\big|_{\widetilde{\mathfrak X}_{0,i}})}$ in $H^0(\widetilde{\mathfrak X}_{0,i},{\mathcal L}_0\big|_{\widetilde{\mathfrak X}_{0,i}})$.
\end{itemize} Both the transversality statements follow from the construction of limit curves which we subsequently obtain as simple solutions of certain algebraic equations.

\end{proof}

\section{Tropical formula for characteristic numbers: arbitrary genus}\label{scn4}

In this section we consider the case of arbitrary genus $g$.
We will prove a correspondence theorem between the tropical count and characteristic numbers under the assumptions of h-tranversal polygon and points in Mikhalkin position (see assumptions (A1) and (A2) below).

Suppose that $g$ is arbitrary nonnegative number and accordingly
\begin{equation*}n=n_{si}+2n_{ta}=|\Delta|+g-1.\end{equation*}

\begin{lemma}\label{lemma:pos_genus_bivalent_width_larger_1}
	Let $(\Gamma,h,\bq, \bg)$ be the tropical part of the parameterized tropical limit of a curve $[\bn:\widehat C\to\Tor_\K(P),\bp]\in{\mathcal M}_{g,(n_{si},n_{ta})}(P,\bw)$, and let $V\in \Gamma^{(0)}$ be a marked bivalent vertex of positive genus $\bg(V)>0$.
	Let $V'$ be the closest neighboring vertex to $V$, and let $E\in \Gamma^{(1)}$ be the edge connecting $V$ and $V'$.
	Then $\wt(E)>1$ and the Mikhalkin multiplicity $\mu(V')$ (i.e. the lattice area of the corresponding cell in the dual subdivision) satisfy $\mu(V')>\wt(E)$.
\end{lemma}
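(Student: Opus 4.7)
The plan is to extract both inequalities from the geometric genus $\bg(V)>0$ of the component $\widehat C^{(0)}_V\subset\widehat C^{(0)}$ corresponding to $V$ under the parameterized tropical limit. By the balancing condition at the bivalent (hence flat) vertex $V$, the two incident edges have opposite primitive directing vectors and common weight $\ell:=\wt(E)$; after a toric change of coordinates I may assume both edges are horizontal, so that the dual edge $\sigma$ of the Newton subdivision is vertical of lattice length $\ell$ and $P_{V'}$ lies on one of its two sides.

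For $\wt(E)>1$, the component $\widehat C^{(0)}_V$, being attached to a flat vertex of $\widehat\Gamma$, maps into the one-dimensional toric stratum $\Tor(\sigma)\cong\PP^1$. The two edges at $V$ correspond to the two nodes of $\widehat C^{(0)}$ lying over the torus-fixed points of $\Tor(\sigma)$, and since each of them is a single edge, the induced map $\widehat C^{(0)}_V\to\PP^1$ is totally ramified over both torus-fixed points with common ramification order $\ell$, and therefore has degree $\ell$. If $\ell=1$, this map is an isomorphism, forcing $\bg(V)=0$ and contradicting the hypothesis. Hence $\wt(E)=\ell\ge 2$.

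For $\mu(V')>\wt(E)$, I reduce (by genericity of $\bw$ or by an elementary lattice-geometric comparison) to the case where $V'$ is trivalent, so that $P_{V'}$ is a triangle which in the above coordinates can be written as $P_{V'}=\conv\{(0,0),(0,\ell),(p,q)\}$ with $p\ge 1$. A direct computation gives $\mu(V')=\ell p$, so the desired inequality is equivalent to $p\ge 2$. Suppose for contradiction $p=1$. The rational peripherally unibranch limit curve $C_{V'}\subset\Tor_\C(P_{V'})$ has, by peripheral unibranchness on $\sigma$, defining polynomial of the form $c(y-\alpha)^\ell+bxy^q$ with $\alpha,b,c\in\C^*$, and hence at the node $z$ over $\sigma$ displays the smooth branch $x=O(y^\ell)$. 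Performing the modification of the tropical limit along $E$ (cf.\ Remark~\ref{rcn2} and Construction~\ref{con:modified_tropical_limit}) replaces $\widehat C^{(0)}_V$ by a curve $C^{\mathrm{mod}}_V$ in a newly inserted toric surface $\Tor_\C(P_{\mathrm{mod}})$, whose Newton polygon is assembled from $\sigma$ and the resolution pieces on its two sides. When $p=1$ the $V'$-side contributes lattice width one and no new interior lattice points, and combining with the analogous analysis on the $V''$-side shows that $P_{\mathrm{mod}}$ has no interior lattice points at all. By the toric adjunction bound, $C^{\mathrm{mod}}_V$ then has geometric genus zero, contradicting $\bg(V)>0$. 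Hence $p\ge 2$ and $\mu(V')=\ell p\ge 2\ell>\wt(E)$.

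The principal obstacle will be the precise verification that, when $p=1$, the resolution on the $V'$-side contributes no new interior lattice points to $P_{\mathrm{mod}}$. This requires carefully combining the explicit local form $x=O(y^\ell)$ of $C_{V'}$ at $z$ with the modification recipe of Construction~\ref{con:modified_tropical_limit}, and tracking how $p$ and $q$ determine the shape of the resolution polygon; the analogous analysis on the $V''$-side is symmetric and completes the argument.
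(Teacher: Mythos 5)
Your first inequality is proved exactly as in the paper (weight one makes $x$ an isomorphism $C_V\to\PP^1$), and your opening moves for the second are also the paper's: $\mu(V')=\wt(E)$ forces $P_{V'}=\conv\{(0,0),(0,\ell),(1,q)\}$, so $C_{V'}$ is $c(y-\alpha)^\ell+bxy^q$ and meets $\Tor_\C(\sigma)$ in a smooth branch with $x=O((y-\alpha)^\ell)$. The gap is in your concluding step. The modification along the horizontal edges at $V$ does not replace $C_V$ by a single irreducible curve with one Newton polygon; it produces a fragment of the tropical curve (Figure \ref{fig:mod_bivalent_vertex}) and a union of limit curves over a \emph{subdivided} polygon $T=\conv\{(-k_1,0),(k_2,0),(0,\ell)\}$, and $\bg(V)$ is distributed among the genera of the pieces and the cycles of that fragment. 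Your key claim that the assembled polygon has no interior lattice points is false: for any $k_1,k_2\ge1$ the points $(0,1),\dots,(0,\ell-1)$ lie in the interior of $T$, and $\ell\ge2$ by the first part, so there is at least one. Moreover the lemma imposes no hypothesis on the $V''$ side, so the ``analogous analysis'' there is not available. Even granting the cleanest version of your adjunction bound applied to $T$, you would only get $\bg(V)\le\ell-1$, which is no contradiction.

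The paper closes the argument by working with the pieces of the subdivision rather than with the assembled polygon: the marked point lands on a vertical end of weight $1$ (dual to the unit segment $[(-1,0),(0,0)]$), so the component carrying the marking is rational by the first part of the lemma; and the trivalent vertex $\tilde V$ adjacent to that end is dual to the sliver $\conv\{(-1,0),(0,0),(0,\ell)\}$, which genuinely has no interior lattice points, so its limit curve is rational too. It is these local cells, pinned down by the hypothesis $\mu(V')=\wt(E)$ through exactly the smooth branch you computed, that kill the genus. To salvage your phrasing you would have to apply the lattice-point bound cell by cell to the subdivision of $T$ and additionally exclude interior vertices of that subdivision (which would create cycles in the fragment and hence genus); applying it once to $T$ itself cannot work.
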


\begin{proof}
	Assume without loss of generality that $E$ is horizontal.
	If ${\wt(E)=1}$, then the $x$ coordinate of the limit curve $\varphi_V: C_V \dashrightarrow (\C^*)^2$ induces an isomorphism between $C_V$ and $\PP^1$, in contradiction with $\bg(V)>0$.

	Now, assume towards a contradiction that $\mu(V') = \wt(E)$, and suppose that $V=q^{ta}_j$.
	Performing a modification $y' = y - \zeta_j^{ta}$ we get necessarily the fragment depicted in Figure \ref{fig:mod_bivalent_vertex}.
	From our assumptions it follows that the marked vertical end is of weight $1$, meaning that the marked bivalent vertex in the modified fragment has genus $0$.
	Furthermore, the trivalent vertex $\tilde{V}$ corresponds to the triangle $\conv\{(-1,0), (0,0), (0,\wt(E))\}$ in the dual subdivision.
	Since this triangle has no interior points, $\bg({\tilde{V}}) = 0$, which is a contradiction.
\end{proof}

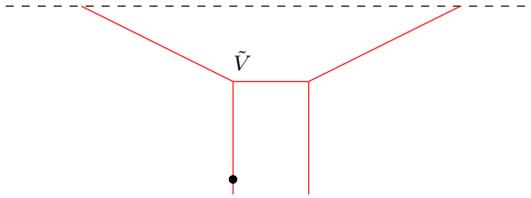
\begin{figure}
	\begin{tikzpicture}
		\draw [dashed] (-1,0) -- (6,0);
		\draw [red] (0,0) -- (2, -1);
		\draw [red] (2, -1) -- (2, -2.5);
		\draw [red] (2, -1) -- node [black, above, very near start] {\tiny $\tilde{V}$} (3, -1);
		\draw [red] (3, -1) -- (5, 0);
		\draw [red] (3, -1) -- (3, -2.5);
		\draw [black, fill=black] (2, -2.3) circle (0.05);
	\end{tikzpicture}
	\caption{A modification of a marked bivalent vertex as appearing in the proof of Lemma \ref{lemma:pos_genus_bivalent_width_larger_1}.}\label{fig:mod_bivalent_vertex}
\end{figure}

\begin{lemma}\label{lemma:flat_collinear_cycle_in_tropicalization_is_well_spaced}
	Let $(\Gamma,h,\bq, \bg)$ be the tropical part of the parameterized tropical limit of a curve $[\bn:\widehat C\to\Tor_\K(P),\bp]\in{\mathcal M}_{g,(n_{si},n_{ta})}(P,\bw)$, and let $V_1, V_2$ be $2$ collinear vertices joined by a collinear cycle.
	Then this collinear cycle is centrally embedded (see Definition \ref{def:center_collinear_cycle}) and exactly one of its endpoints is marked.
\end{lemma}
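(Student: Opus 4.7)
The plan is to reduce to a cleaner local picture by a toric coordinate change and a modification that resolves the collinearity of the cycle, and then read off both assertions from the geometry of the modified tropical curve. After a toric transformation I may assume the directing vectors of the two cycle edges $E_1,E_2$ are horizontal, so that $h(V_1)$ and $h(V_2)$ lie on a common horizontal line $y=\beta$. Let $\zeta\in\K$ be the element with $\val(\zeta)=-\beta$ whose initial term $\ini(\zeta)$ equals the $y$-coordinate at which the limit curves at $V_1$ and $V_2$ meet the horizontal toric divisors dual to $E_1$ and $E_2$ (this common value exists because the two limit curves must intersect each of these divisors at a single $\K$-point, compatibly with the cycle structure). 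Performing the modification $y=\zeta+y_1$ in the style of Remark \ref{rcn2} produces an algebraic curve whose tropicalization agrees with $(\Gamma,h,\bq,\bg)$ away from the cycle and whose local picture near the cycle is analogous to the fragments classified in Construction \ref{con:modified_tropical_limit}.

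To establish that the cycle is centrally embedded I would proceed in two steps. First, a Newton-polygon analysis of the defining polynomials of the limit curves at $V_1$ and $V_2$ after the modification shows that any additional edge at $V_i$ in the collinear direction would either create an extra cycle in the modified tropical curve or force an inadmissible lattice configuration in the dual subdivision; hence both $V_1$ and $V_2$ are trivalent. Second, I would invoke Speyer-style well-spacedness for the modified (now non-degenerate) cycle: the existence of two compatible nodes in the stable central fiber of $\widehat{C}$ forces the vertical coordinates of the modified images of $V_1$ and $V_2$ to coincide. Unwinding the coordinate change, each such vertical coordinate is the length of the unique edge adjacent to $V_i$ in the collinear direction multiplied by a common factor determined by the weights, so equality of vertical coordinates is precisely equality of the adjacent edge lengths, which is the centrally embedded condition of Definition \ref{def:center_collinear_cycle}.

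For the marking count I would treat the two extreme cases separately. If neither $V_1$ nor $V_2$ were marked, the shift $\zeta$ in the modification is not pinned down by any point or tangency constraint from $\bw$; I would then exhibit a one-parameter family of $\K$-valued shifts producing distinct algebraic lifts with the same tropicalization, contradicting the fact that the parameterized tropical limit comes from a single curve in the finite set $\mathcal M_{g,(n_{si},n_{ta})}(P,\bw)$ (Lemma \ref{lcn1}). If both were marked, the tangency data at the two marked points would both be forced to degenerate to the horizontal form (\ref{ecn6}), which imposes two independent conditions on the prescribed vectors $\{(\lambda_{j,x},\lambda_{j,y})\}$ and is excluded by the general position of $\bw$. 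The main obstacle I expect is turning the well-spacedness heuristic into an explicit identity between the two edge lengths; this should come from carefully tracking which coefficients of the modified polynomial acquire strictly positive valuation and noting that the cycle's existence forces a specific linear dependence among them.
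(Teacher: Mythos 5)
Your treatment of the centrally-embedded condition is essentially the paper's: make the cycle horizontal, perform the modification $y=\zeta+y_1$, and read off the equal-length condition from the fact that the modified fragment is a genuine, non-degenerate cycle of a plane tropical curve and therefore must close up in $\R^2$ --- this is exactly the well-spacedness phenomenon the paper alludes to after Definition \ref{def:center_collinear_cycle}, and the paper's own proof (``we get the fragment depicted in Figure \ref{fcn5}(d)\dots it is clear that the edges adjacent to the collinear cycle have to be of the same length'') is no more detailed than yours on this point.

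The genuine gap is in your argument for the marking count. In the case where neither endpoint is marked, exhibiting a one-parameter family of shifts ``producing distinct algebraic lifts with the same tropicalization'' is not a contradiction: the tropicalization map is far from injective (Lemma \ref{lemma:number_of_modifications} counts exactly how many lifts a given enhanced tropical curve admits), so multiple lifts are entirely compatible with the finiteness of ${\mathcal M}_{g,(n_{si},n_{ta})}(P,\bw)$; moreover the shift in a modification is uniquely determined by the curve (it is the unique $\zeta$ killing the coefficient of $y^{\ell-1}$), so it is not a parameter you are free to vary within the constrained family. In the case where both endpoints are marked, the two contact elements degenerate to the form (\ref{ecn6}) at two \emph{distinct} points with independent data $\lambda_j,\lambda_{j'}$, and you do not explain why this violates general position --- the actual obstruction is that the two marked points would be forced onto the same horizontal line, a genericity failure of the point configuration $\bu$, not of the contact elements. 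The paper disposes of both cases in one line, purely tropically: for $\bu$ in tropical general position the tropicalization is regular (Lemma \ref{lemma:generic_trop_curve_is_regular}), and a collinear cycle with no marked endpoint would lie in a component of $\Gamma\setminus\bq$ of positive genus, while one with two marked endpoints would produce bounded components of $\Gamma\setminus\bq$ containing no end, both excluded by Definition \ref{def:regular_trop_curve}. You should replace your algebraic argument for the marking count with this regularity argument, or at least recast it as a dimension count on the tropical side.
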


\begin{proof}
	The fact that exactly one of the endpoints is marked follows from the fact that $\Gamma$ is regular.
	Now, let $V_1, V_2$ be collinear vertices joined by a collinear cycle, assume that the edges of the cycle are horizontal, and suppose that $V_1=q_j^{ta}$ is marked.
	Performing the modification $y' = y - \zeta_j^{ta}$ we get the fragment depicted in Figure \ref{fig:high_genus_corr_theorem}(c).
	It is clear that the edges adjacent to the collinear cycle have to be of the same length.
\end{proof}

\begin{lemma}\label{lemma:coll_cylce_in_tropicalization_width_larger_than_1}
	Let $(\Gamma,h,\bq, \bg)$ be the tropical part of the parameterized tropical limit of a curve $[\bn:\widehat C\to\Tor_\K(P),\bp]\in{\mathcal M}_{g,(n_{si},n_{ta})}(P,\bw)$, and let $V_1\in \Gamma^{(0)}$ be a marked collinear vertex joined to $4$-valent vertex $V_2$ by a collinear cycle with edges $E_1, E_2$.
	Then $\mu(V_2)> \wt(E_1)+\wt(E_2)$, where we denote by $\mu(V_2)$ the lattice area of the triangle in the dual subdivision corresponding to $V_2$.
\end{lemma}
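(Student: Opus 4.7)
I argue by contradiction, assuming $\mu(V_2)=\wt(E_1)+\wt(E_2)$. Because $V_2$ is four-valent with two parallel incident edges of weights $\wt(E_1)$ and $\wt(E_2)$, its dual polygon $P_{V_2}$ is a convex lattice quadrilateral whose two parallel sides have lattice lengths $\wt(E_1)$ and $\wt(E_2)$ separated by some lattice height $h\ge 1$. A shoelace computation yields $\|P_{V_2}\|_\Z = h\,(\wt(E_1)+\wt(E_2))$, so the assumption forces $h=1$; hence $P_{V_2}$ is unimodularly equivalent to the minimal trapezoid $\conv\{(0,0),(\wt(E_1),0),(k+\wt(E_2),1),(k,1)\}$ for some $k\in\Z$, which in particular has no interior lattice points.

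After a toric change of coordinates I may assume that $E_1,E_2$ are horizontal and $V_1=q^{ta}_j$. By Lemma~\ref{lemma:flat_collinear_cycle_in_tropicalization_is_well_spaced} the cycle $\{E_1,E_2\}$ is centrally embedded, so the flat limit curve $C_{V_1}$ lies along $y=\zeta^{ta}_j$, forcing both intersections of $C_{V_2}$ with the horizontal toric divisors of $\Tor_\C(P_{V_2})$ to lie on $y=\zeta^{ta}_j$. I perform the modification $y=y'+\zeta^{ta}_j$ in the style of Construction~\ref{con:modified_tropical_limit}(3). The lattice-height-one condition on $P_{V_2}$ means that the defining polynomial of $C_{V_2}$ in the new coordinates is supported only on the two horizontal edges of the trapezoid; imposing unibranch intersection at $y'=0$ with multiplicities $\wt(E_1)$ and $\wt(E_2)$ respectively forces it into the explicit shape
\[
F(x,y') = c_0\,(x-x_1)^{\wt(E_1)} + c_1\, x^k (x-x_2)^{\wt(E_2)}\,y',
\]
where $x_1,x_2$ are prescribed by the previously fixed adjacent limit curves.

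This leaves only a single free projective parameter $c_0/c_1$. However, $C_{V_2}$ must also meet the two non-horizontal toric divisors of $\Tor_\C(P_{V_2})$ at points dictated by the two non-cycle edges incident to $V_2$, giving two further independent point conditions. Substituting the explicit form of $F$ into these conditions and eliminating $c_0/c_1$ yields a nontrivial algebraic relation on the tropically generic data $\bw$, which cannot hold, producing the desired contradiction. The main obstacle will be verifying that these two remaining non-horizontal conditions are not redundant modulo the previously fixed data --- that is, an independence statement for the restrictions of $F$ to the two slanted edges of the minimal trapezoid. This should follow from a direct linear-algebra computation, in the same spirit as the dual-subdivision analysis used in the proofs of Lemmas~\ref{lemma:pos_genus_bivalent_width_larger_1} and~\ref{lemma:flat_collinear_cycle_in_tropicalization_is_well_spaced}.
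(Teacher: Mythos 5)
There is a genuine gap, and it starts at the very first geometric step. Since $E_1$ and $E_2$ form a collinear cycle joining $V_2$ to $V_1$, both edges emanate from $V_2$ \emph{in the same direction} (towards $h(V_1)$), so the two directing vectors $\oa_{V_2}(E_1),\oa_{V_2}(E_2)$ are positive multiples of one primitive vector. After rotating by $\pi/2$ and assembling the dual polygon, these two vectors merge into a \emph{single} side of lattice length $\wt(E_1)+\wt(E_2)$: the dual polygon of the $4$-valent vertex $V_2$ is a \emph{triangle} (as the statement of the lemma itself says), not a quadrilateral with two parallel sides of lengths $\wt(E_1)$ and $\wt(E_2)$. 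Your shoelace computation, the ``height one trapezoid'' normal form, the two separate ``horizontal toric divisors,'' and the explicit shape $F(x,y')=c_0(x-x_1)^{\wt(E_1)}+c_1x^k(x-x_2)^{\wt(E_2)}y'$ are all built on this incorrect picture, so the argument does not get off the ground. In the correct picture, $C_{V_2}$ meets the single toric divisor dual to the long side at two points, with multiplicities $\wt(E_1)$ and $\wt(E_2)$, and the quantity to be bounded is the lattice height $\nu=\mu(V_2)/(\wt(E_1)+\wt(E_2))$ of the triangle over that side.

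Beyond the setup, the concluding step is also not sound as a strategy: you propose to derive a contradiction from an overdetermined system of generic conditions, but such $4$-valent vertices with an attached collinear cycle \emph{do} occur for tropicalizations of curves in ${\mathcal M}_{g,(n_{si},n_{ta})}(P,\bw)$ --- the lemma asserts only the lower bound $\nu>1$, not non-existence --- so a pure genericity/dimension count cannot distinguish $\nu=1$ from $\nu>1$ without using structure that is only visible after the modification. This is exactly what the paper does, and much more cheaply: performing $y'=y-\zeta_j^{ta}$ at the marked flat vertex, the vertex $V_2$ gives rise to two trivalent vertices $\tilde V,\tilde V'$ of the modified curve that are mapped to the same point and share the dual triangle $\conv\{(0,0),(\nu,0),(0,\wt(E_1)+\wt(E_2))\}$; since both limit curves $\overline{\varphi_{\tilde V}(C_{\tilde V})}$ and $\overline{\varphi_{\tilde V'}(C_{\tilde V'})}$ must meet the toric divisor $\Tor_\C(\conv\{(0,0),(\nu,0)\})$, the lattice length $\nu$ of that edge is at least $2$. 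You correctly sensed that the modification of Construction \ref{con:modified_tropical_limit}(3) is the relevant tool, but the contradiction should be extracted from the combinatorics of the modified dual subdivision, not from an unverified independence claim about point conditions.
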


\begin{proof}
	Denote by $q_j^{ta}$ the marking on $V_1$.
	We assume without loss of generality that $E_1, E_2$ are horizontal.
	Denote by $\nu:= \frac{\mu(V_2)}{\wt(E_1)+\wt(E_2)}$.
	We perform the modification $y' = y - \zeta_j^{ta}$.
	Then the modified fragment is as depicted in Figure \ref{fig:mod_collinear_cycle_noncentral}.
	The two vertices $\tilde{V}$ and $\tilde{V}'$ are mapped to the same point, and the triangle in the dual subdivision that corresponds to both of them is $P_{mod}:=\conv\{(0,0), (\nu,0), (0, \wt(E_1)+\wt(E_2)\}$.
	Since both $\overline{\varphi_{\tilde{V}}(C_{\tilde{V}})}$ and $\overline{\varphi_{\tilde{V}'}(C_{\tilde{V}'})}$ intersect $\Tor_\C(\conv\{(0,0), (\nu, 0)\})\subset \Tor_\C(P_{mod})$, we get that $\nu>1$.
\end{proof}

\begin{figure}
	\begin{tikzpicture}
		\def\linesoffset{0.03}
		\draw [dashed] (-1,0) -- (5,0);
		\draw [red] (0,0) -- (2, -1);
		\draw [red] (2, -1) -- (2, -2.5);
		\draw [red] (2, -1 - \linesoffset) -- (3 + \linesoffset,-1 - \linesoffset);
		\draw [red] (2, -1 + \linesoffset) -- (3 - \linesoffset, -1 + \linesoffset);
		\draw [red] (3  -\linesoffset, -1 + \linesoffset) -- (3 - \linesoffset, -2.5);
		\draw [red] (3 + \linesoffset, -1 - \linesoffset) -- (3 + \linesoffset, -2.5);
		\draw [red] (3  - \linesoffset, -1 + \linesoffset) -- node [ above, very near start, black] {\tiny $\tilde{V}$} (4 - 1.41*\linesoffset, 0);
		\draw [red] (3 + \linesoffset, -1 - \linesoffset) -- node [right, very near start, black] {\tiny $\tilde{V}'$} (4 + 1.41*\linesoffset, 0);
		
		\draw [black, fill=black] (2, -2.3) circle (0.05);		
	\end{tikzpicture}
	\caption{A modification of a collinear cycle as appearing in the proof of Lemma \ref{lemma:coll_cylce_in_tropicalization_width_larger_than_1}.}\label{fig:mod_collinear_cycle_noncentral}
\end{figure}
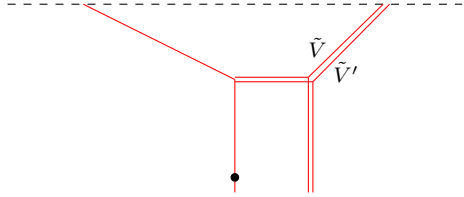

We proceed further under the two additional assumptions
\begin{enumerate}\item[(A1)] the polygon $P$ is $h$-transversal, which in terms of $\Delta$ means that each vector $\ba\in\Delta$ satisfies
	$$\text{either}\ \ba=(\pm1,0),\quad\text{or}\ \ba=(k,\pm1),\ k\in\Z.$$
	The corresponding class of toric surfaces includes the most interesting cases: the plane, the plane blown-up at $1$, $2$, or $3$ points, the quadric, all Hirzebruch surfaces etc. For such a polygon $P$, we introduce its width $\wid(P)$, the length of its horizontal projection.
	\item[(A2)] The sequence of points $\bu=\{u_1,...,u_n\}$ (where $n=n_{si}+n_{ta}$) is as follows:
	$$u_i=(M_i,\eps M_i),\quad\eps=\const\ne0,\ |\eps|\ll1,\quad 0<M_i\ll M_{i+1}\ \forall\ i=1,...,n-1.$$
	Such a configuration sometimes is called a {\it configuration in Mikhalkin position} as it was introduced in \cite[Section 8.5]{Mi}. Note that the configuration $\bu$ satisfying (A2) is in tropical general position whenever $|\eps|>0$ is small enough (cf. \cite[Section 4.2]{Mi}), and the linear functional $\varphi(x,y)=x+\eps y$ is injective on the set $P\cap\Z^2$.
\end{enumerate}

\begin{definition}\label{def:floors}
	Let $(\Gamma, h, \bq, \bg)$ be a plane tropical curve of degree $\Delta$ and genus $g$ passing through points $\bu$ in Mikhalkin position (i.e. assumptions (A1) and (A2) hold).
	An edge $E\in \Gamma^{(1)}$ is called horizontal if so is $h(E)$.
	A connected component of the union of the non horizontal edges is called a \emph{floor}.
\end{definition}

\begin{lemma}\label{lemma:trop_curve_through_mikh_pos}
	Let $(\Gamma, h, \bq)$ be a plane tropical curve of degree $\Delta$ and genus at most $g$ passing through points $\bu$ in Mikhalkin position (i.e. assumptions (A1) and (A2) hold), and having $\bq_{ta}\subseteq \Gamma^{(0)}$.
	Then the following hold:
	\begin{enumerate}
		\item
		The genus of $(\Gamma, h, \bq)$ is exactly $g$.
		
		\item
		All the vectors $D\left(h\big|_E\right)=(a_x,a_y)$, where $E\in\Gamma^1$, satisfy \linebreak ${a_y\in\{0,\pm1\}}$.
		
		\item
		Each floor is isometric to $\R$, contains exactly one marked point, and is combined of edges of weight $1$.
		
		\item
		Every horizontal edge contains exactly one marked point, either in the interior, or at an end.
		
		\item
		$h^{-1}(\{y=\eps x\})=\bq$.
	\end{enumerate}
\end{lemma}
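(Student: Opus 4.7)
The proof follows Mikhalkin's floor-decomposition strategy, adapted here to $h$-transversal toric surfaces with mixed edge and vertex markings. I would organize the argument in five steps, one for each assertion.

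For item (1), the genus is forced by dimension counting. By Corollary \ref{cor:dim_moduli_space}, the moduli space of realizable tropical curves of genus $g' \le g$ with the given marking profile has dimension at most $|\Delta|+g'-1+n_{si}$, which equals $2n = 2n_{si}+2n_{ta} = \dim \R^{2n}$ only when $g'=g$ (using the relation $n_{si}+2n_{ta}=|\Delta|+g-1$ from (\ref{ecn18})). For a generic configuration $\bu$, the fiber of $\text{Ev}$ over $\bu$ therefore contains no curves of strictly smaller genus.

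For item (2), I would use a vertical-flux argument based on (A1). That assumption forces every end of $\Gamma$ to have $a_y \in \{-1,0,+1\}$ and, when nonzero, to be of weight $1$; let $f^\uparrow$ and $f^\downarrow$ denote the total numbers of upward and downward ends. For any horizontal line $L_c$ avoiding the vertices of $h(\Gamma)$, the signed intersection identity reads
\[ \sum_{E \cap L_c \neq \emptyset} a_y(E)\cdot \wt(E) = f^\uparrow - f^\downarrow = 0, \]
and its absolute value is bounded above by $f^\uparrow + f^\downarrow$. Suppose for contradiction that some edge $E$ has $|a_y(E)|\ge 2$. Then at each endpoint of $E$, balancing forces another incident edge to again have $|a_y|\ge 1$; propagating this along a path of edges until one reaches an end yields, by (A1), a contradiction with the flux bound above.

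Once (2) is established, each non-horizontal edge has weight $1$ and $a_y=\pm 1$, so each floor is homeomorphic to $\R$ and projects bijectively onto its image on the $y$-axis. The Mikhalkin position of $\bu$ is now decisive: within a single floor, traveling from height $\eps M_i$ to $\eps M_{i+1}$ shifts the $x$-coordinate by at most $\wid(P)\cdot|\eps|(M_{i+1}-M_i)$, which is negligible compared with the horizontal gap $M_{i+1}-M_i$ between consecutive $u_i$ whenever $|\eps|$ is small. Hence any single floor contains at most one marked point. Counting ends (each floor uses one up-end and one down-end, so there are exactly $f^\uparrow$ floors) against the dimension count forces each floor and each horizontal edge to absorb exactly one marked point, yielding (3) and (4). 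Item (5) follows immediately, since $\{y=\eps x\}$ meets each floor in a single point --- necessarily its unique marking --- and each horizontal edge in its unique marking.

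The main obstacle is item (2): ruling out high-slope edges requires combining (A1), vertex balancing, and the flux identity into a propagation argument where the $h$-transversality of $P$ is the key ingredient. Once (2) is in place, items (3)--(5) follow from standard floor-decomposition reasoning specialized to Mikhalkin position together with the dimension estimates already available from Corollary \ref{cor:dim_moduli_space}.
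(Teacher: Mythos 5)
Your overall architecture (a vertical--flux/propagation argument for item (2), then floor--decomposition counting for (3)--(5)) is the right shape, but the pivotal step --- item (2) --- has a genuine gap. The flux identity you invoke is the signed one, $\sum a_y(E) = f^\uparrow - f^\downarrow = 0$, which is vacuous: it holds for every balanced curve and says nothing about individual edges. The informative identity is the unsigned one, $\sum |a_y(E)| = f^\uparrow$ over the edges crossing any generic horizontal line, but even that does not forbid an edge with $|a_y|\ge 2$ on its own (a line may simply cross fewer floors). More seriously, your propagation step only asserts that balancing forces \emph{some} incident edge at each endpoint to have $|a_y|\ge 1$; following such edges to an end merely produces an end with $|a_y|\ge 1$, which is perfectly consistent with (A1). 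To reach a contradiction you must propagate the property $|a_y|\ge 2$ all the way to an unbounded end, and for that you need the observation the paper makes first: since $|\eps|\ll 1$ and the curve is regular, no vertex has two incoming edges (with respect to the regular orientation on $\Gamma\setminus\bq$) whose $y$-components have opposite signs. With that in hand, at each unmarked vertex the two incoming vertical fluxes cannot cancel, so the outgoing edge satisfies $|a_y(E_{i+1})| \ge |a_y(E_i)| \ge 2$, and the induction terminates at an end of vertical flux at least $2$, contradicting (A1). This use of $|\eps|\ll 1$ is exactly the ingredient missing from your write-up; without it the propagation does not close.

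Two further points. For item (1) you cite Corollary \ref{cor:dim_moduli_space}, whose hypothesis ($g\le 1$ or $n_{ta}\le 1$) need not hold here; the statement you actually need is Lemma \ref{lemma:generic_trop_curve_is_regular} (equivalently, Lemma \ref{lemma:Ev_injective} plus the Euler-characteristic count), which has no such restriction --- genericity of the Mikhalkin configuration then gives both regularity and $\gen(\Gamma)=g$, and regularity is also needed to define the orientation used above. For item (3), knowing $a_y\in\{0,\pm 1\}$ does not yet make a floor a copy of $\R$: you must rule out branching inside a floor, i.e.\ a vertex with two incoming non-horizontal edges; the paper does this with the same no-cancellation observation (such a vertex would emit an edge with $|a_y|\ge 2$). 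The statements ``exactly one marked point per floor'' and item (4) then follow from regularity (each component of $\Gamma\setminus\bq$ contains exactly one end), not from a separate dimension count. Your Mikhalkin-position estimate showing that a floor can meet the line $\{y=\eps x\}$ near at most one $u_i$ is correct and is the right input for item (5).
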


\begin{proof}
	Since points in Mikhalkin position are generic, by Lemma \ref{lemma:generic_trop_curve_is_regular}, the curve $(\Gamma, h, \bq)$ is regular and $\gen(\Gamma)=g$.
	
	From the condition $|\varepsilon|\ll 1$ we can deduce that there is no vertex $V\in \Gamma^{(0)}$ with two incoming (relative to the regular orientation on $\Gamma\setminus\bq$) edges $E_1,E_2$ such that the $y$ coordinate of $ D(h|_{E_1}) $ is positive, while the $y$ coordinate of $ D(h|_{E_2})$ is negative.
	Suppose, towards a contradiction, that there is an edge $E\in \Gamma^{(1)}$ with $D(h|_{E})=(a_x, a_y)$ where $|a_y|>1$.
	Let $E_1=E, E_2, \ldots, E_k$ be the path following the regular orientation from $E$ towards an end, i.e. $E_{i+1}$ is the unique edge pointing away from the endpoint of $E_i$ (for every $1\le i < k$), and $E_k$ is an end.
	Then by induction on $i$ we get that the $y$ coordinate of $ D(h|_{E_i})$ is larger, in absolute value, than $1$, which contradicts our assumption on $\Delta$.
	This proves $(2)$.
	
	An existence of a vertex with $2$ incoming non horizontal edges would imply, by the balancing condition, an existence of an edge $E$ with $y$ coordinate of $ D(h|_E)$ bigger, in absolute value, than $1$, which readily implies $(3)$.
	
	Finally, assume, towards a contradiction, that $E\in \Gamma^{(1)}$ is a horizontal edge without a marking.
	Then the regular orientation on $E$ is pointing away from one of its ends, but it would necessarily mean that the floor containing this end has to contain at least $2$ markings.
	Thus every horizontal edge has at least one marking, and regularity implies it has exactly one marking.
	
	Item $(5)$ follows immediately from the previous items and the condition $|\varepsilon|\ll 1$.
\end{proof}

\begin{lemma}\label{lemma:pos_genus_tropicalization_mikh_poss}
	Under assumptions (A1) and (A2), let $(\Gamma,h,\bq, \bg)$ be the tropical part of the parameterized tropical limit of a curve \linebreak ${[\bn:\widehat C\to\Tor_\K(P),\bp]\in{\mathcal M}_{g,(n_{si},n_{ta})}(P,\bw)}$.
	Then $\bg(V)=0$ for all vertices $V\in\Gamma^0$, and the curve $(\Gamma,h,\bq)$ is realizable (see Definition \ref{def:realizable_trop_curve}) trivalent, regular plane tropical curve of degree $\Delta$ and genus $g$ such that $h(\bq)=\bu$.
	In particular, all its collinear cycles are centrally embedded (see Definition \ref{def:center_collinear_cycle}).
\end{lemma}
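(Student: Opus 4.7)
The plan is to combine the rigid structural output of Lemma \ref{lemma:trop_curve_through_mikh_pos} with the observation that, under the direction bound $a_y\in\{0,\pm1\}$ and the floor structure, every dual polygon $P_V$ at a non-collinear vertex is confined to a horizontal strip of width $1$, which kills all interior lattice points and hence all vertex genera. First I would verify $\bq_{ta}\subseteq\Gamma^0$ by transplanting verbatim the local argument of Lemma \ref{lemma:genus_0_tropicalization_defined}: the incompatibility between the contact element (\ref{ecn6}) and the parametrization of the limit curve arising from the modification $y=\zeta^{ta}_j+y_1$, $y_1=t^\alpha y_2$, is purely local and genus-independent. Granted this, Lemma \ref{lemma:trop_curve_through_mikh_pos} immediately supplies regularity, the equality $\gen(\Gamma,h,\bg)=g$, the bound $a_y\in\{0,\pm1\}$, the floor structure (each floor is a string of weight-$1$ edges carrying one marking), the one-marking-per-horizontal-edge rule, and $h(\bq)=\bu$.

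The crux is the vanishing $\bg(V)=0$ at every vertex. Since each floor is isometric to $\R$, no vertex carries more than two non-horizontal incident edges; by balancing these two have opposite $y$-directions. At a non-collinear vertex $V$, the two non-horizontal edges contribute dual edges $(-1,a_1)$ and $(1,a_2)$ of lattice length $1$, while each horizontal incident edge of weight $w$ contributes a vertical dual edge $(0,\pm w)$. Convexity of $P_V$ admits at most one left-horizontal and one right-horizontal edge at $V$, so $V$ has valency at most $4$ and the $x$-coordinates of $P_V$ range exactly over $[-1,0]$; hence $P_V$ contains no interior lattice point, and the Baker genus bound forces the limit curve attached to $V$ to be rational, giving $\bg(V)=0$. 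At a collinear vertex the dual cell is one-dimensional, so the limit curve lies inside a toric divisor and is automatically rational, again giving $\bg(V)=0$. The marked-bivalent sub-case is also directly excluded by Lemma \ref{lemma:pos_genus_bivalent_width_larger_1} combined with the fact that non-horizontal edges have weight $1$ (so the edge $E$ there must be horizontal, forcing $\mu(V')=\wt(E)$ via the same triangle computation and contradicting the strict inequality).

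With all $\bg(V)=0$, the identity $b_1(\Gamma)=g$ together with the genericity of $\bu$ (which by (A2) is in tropical general position) forces $(\Gamma,h,\bq)$ to lie in a top-dimensional cell of $\modulispace$, hence every non-flat vertex is trivalent and the only $4$-valent configurations allowed are the endpoints of collinear cycles as in Definition \ref{def:realizable_trop_curve}. Realizability is then exactly the content of Lemmas \ref{lemma:flat_collinear_cycle_in_tropicalization_is_well_spaced} and \ref{lemma:coll_cylce_in_tropicalization_width_larger_than_1}, and the ``in particular'' assertion on centrally embedded collinear cycles is the conclusion of Lemma \ref{lemma:flat_collinear_cycle_in_tropicalization_is_well_spaced}. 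The main obstacle is this last step: Lemma \ref{lemma:dim_moduli_cells} is stated only for $g\le 1$ or $|\bs{p}\cap\Gamma^0|\le 1$, so upgrading it to arbitrary genus with arbitrary marked-vertex distribution requires a direct Euler-characteristic count using the floor structure and the one-marking-per-floor/horizontal rule to rule out non-trivalent configurations outside collinear cycles.
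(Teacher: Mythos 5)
Your outline matches the paper's skeleton at the endpoints: both begin by transplanting the local argument of Lemma \ref{lemma:genus_0_tropicalization_defined} to get $\bq_{ta}\subseteq\Gamma^0$, both lean on Lemma \ref{lemma:trop_curve_through_mikh_pos} for the floor structure, and both finish with Lemmas \ref{lemma:flat_collinear_cycle_in_tropicalization_is_well_spaced} and \ref{lemma:coll_cylce_in_tropicalization_width_larger_than_1}. The genuine gap is in your central step, the vanishing of $\bg(V)$. Recall that $\bg(V)$ is the genus of the component $\widehat C_V^{(0)}$ of the central fiber, i.e.\ of the \emph{source} of the limit map $\varphi_V$; the absence of interior lattice points in the dual polygon $P_V$ only bounds the genus of the \emph{image}, and a priori $\varphi_V$ may be a multiple cover with positive-genus source. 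At a non-flat vertex this can be patched (a non-horizontal side of $P_V$ has lattice length one by Lemma \ref{lemma:trop_curve_through_mikh_pos}(3), which forces the covering degree to be one), but you do not say so. At a collinear vertex the argument fails outright: ``the limit curve lies inside a toric divisor and is automatically rational'' is false, since the limit curve at a flat vertex is a branched cover of a horizontal line $\{y=\mathrm{const}\}$, and branched covers of $\PP^1$ can have any genus — case (8) of Lemma \ref{lemma:codim1_degenerations_list} records precisely the codimension-one wall where a marked elliptic simple vertex appears, so no local toric statement can exclude it. What actually kills positive genus away from bivalent vertices is global: genericity of $\bu$ forces regularity (Lemma \ref{lemma:generic_trop_curve_is_regular}), and the Euler-characteristic count as in \eqref{eq:bounded_components_count} then pins $b_1(\Gamma)$ at the maximal value once all marked vertices are trivalent, leaving room for vertex genus only at marked \emph{bivalent} vertices. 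This is why the paper's proof collapses the $k$ marked bivalent vertices into edge markings, applies Lemma \ref{lemma:trop_curve_through_mikh_pos} to the collapsed curve of genus at most $g-k$ to force equality, and only then invokes Lemma \ref{lemma:pos_genus_bivalent_width_larger_1} together with $\mu(V')=\wt(E_i)$ to rule out the bivalent case. Your proof needs this global reduction; the dual-polygon observation cannot replace it.

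On the positive side, your closing caveat is well taken: the paper cites Lemma \ref{lemma:dim_moduli_cells} outside its stated hypotheses ($g\le1$ or $n_{ta}\le1$), and under Mikhalkin position the honest route to trivalence is the direct count you sketch — at most two non-horizontal weight-one edges and at most one horizontal edge on each side at every vertex, with genericity disposing of the residual non-flat four-valent configurations.
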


\begin{proof}
	As shown in Lemma \ref{lemma:genus_0_tropicalization_defined}, a point $q\in\bq_{ta}$ cannot be neither outside $\Gamma^0$, nor a rational bivalent vertex.
	Suppose $\Gamma$ has $k$ bivalent vertices $V_1, \dots, V_k$ with corresponding markings $q(V_1), \dots, q(V_k)\in \bq_{ta}$.
	Construct a plane tropical curve $(\Gamma', h', \bq', \bg')$ by, for every $1\le i \le k$, removing $V_i$, replacing the $2$ edges adjacent to $V_i$ by a unique edge with the same image under $h'$, and setting $q(V_i)\in \bq'_{si}$.
	Then $(\Gamma', h', \bq')$ is a plane tropical curve of degree $\Delta$ and genus at most $g-k$ passing through the same points in Mikhalkin position with $|\bq'_{si}|+2|\bq'_{ta}|=|\bq_{si}|+2|\bq_{ta}|-k$.
	Thus $(\Gamma', h', \bq')$ satisfy the assumptions of Lemma \ref{lemma:trop_curve_through_mikh_pos}, so $\gen(\Gamma')=g-k$, meaning that $\bg'(V)=0$ for all $V\in \Gamma^{(0)}$.
	In addition, all the non horizontal edges of $\Gamma'$ are of weight $1$ and so $V_1,\dots,V_k$ lie on horizontal edges by Lemma \ref{lemma:pos_genus_bivalent_width_larger_1}.
	Moreover, since the $y$ coordinate of $D(h|_E)$ is in $\{0, \pm 1\}$ for all $E\in \Gamma^{(1)}$, if $E_i\in \Gamma'^{(1)}$ is the horizontal edge containing $V_i$ and $V'$ is the closest endpoint of $E_i$ to $V_i$, $\mu(V')=\wt(E_i)$ in contradiction to the conclusion of Lemma \ref{lemma:pos_genus_bivalent_width_larger_1}.
	Thus $k=0$, meaning there is no bivalent vertices in $\Gamma$, and $\bg(V)=0$ for all $V\in \Gamma^{(0)}$.
	The remaining of the statement of the lemma follows from Lemmas \ref{lemma:coll_cylce_in_tropicalization_width_larger_than_1} and \ref{lemma:dim_moduli_cells}.
\end{proof}

It follows from Lemma \ref{lemma:pos_genus_tropicalization_mikh_poss} that the set ${\mathcal M}^{trop}_{g,(n_{si},n_{ta})}(P,\bu)$ of plane tropical curves of genus $g$, degree $\Delta$, and passing through $\bu$ with $\bq_{ta}\subseteq \Gamma^{(0)}$ is finite. For each curve $(\Gamma,h,\bq)\in{\mathcal M}^{trop}_{g,(n_{si},n_{ta})}(P,\bu)$, we define

\begin{equation}\label{eq:def_trop_cn}
	CN^{trop}_g(\Gamma,h,\bq)=\frac{1}{|\Aut(\Gamma,h,\bq,\bg)|}
\prod_{\renewcommand{\arraystretch}{0.6}
	\begin{array}{c}
		\scriptstyle{V\in\Gamma^0\setminus\bq}\\
		\scriptstyle{\mu(V)\ne 0}\end{array}}\mu(V)
\cdot\prod_{\{E_1, E_2\}\in \mathcal{C}}\frac{\wt(E_1)\wt(E_2)}{\wt(E_1)+\wt(E_2)}
\end{equation}
where $\mathcal{C}$ denotes the set of pairs of edges that form collinear cycle (which by Lemma \ref{lemma:pos_genus_tropicalization_mikh_poss} has to be centrally embedded).
Note that the vertices we exclude in the first product of \eqref{eq:def_trop_cn} (i.e. those with Mikhalkin multiplicity $0$) are the unmarked endpoints of centrally embedded collinear cycles.

\begin{remark}
	It is clear that if $g=0$ then the second product in \eqref{eq:def_trop_cn} is empty, and the formula \eqref{eq:def_trop_cn} coincides with the definition of the tropical count $CN^{trop}_0(\Gamma,h,\bq)$ as given in \eqref{eq:trop_cn_def_genus0}.
\end{remark}

\begin{theorem}\label{tcn3}
	Under the assumptions (A1) and (A2), we have
	\begin{equation}CN_g(P,(n_{si},n_{ta}))=\sum_{(\Gamma,h,\bq,\bq)\in{\mathcal M}^{trop}_{g,(n_{si},n_{ta})}(P,\bu)}CN^{trop}_g(\Gamma,h,\bq,\bg).
		\label{ecn23}\end{equation}
\end{theorem}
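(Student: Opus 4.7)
My plan follows the same four-step strategy as the proof of Theorem \ref{tcn1}, with extra care devoted to the two genuinely new features that appear in arbitrary genus under Mikhalkin position: the floor decomposition of Lemma \ref{lemma:trop_curve_through_mikh_pos}, and the centrally embedded collinear cycles identified in Lemmas \ref{lemma:flat_collinear_cycle_in_tropicalization_is_well_spaced} and \ref{lemma:coll_cylce_in_tropicalization_width_larger_than_1}. First, Lemma \ref{lemma:pos_genus_tropicalization_mikh_poss} already shows that the tropicalization of any curve in $\mathcal{M}_{g,(n_{si},n_{ta})}(P,\bw)$ lies in $\mathcal{M}^{trop}_{g,(n_{si},n_{ta})}(P,\bu)$, with $\bg\equiv 0$ and whose only cycles are centrally embedded collinear cycles. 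Thus the problem reduces to proving that, for each fixed $(\Gamma,h,\bq,\bg)\in\mathcal{M}^{trop}_{g,(n_{si},n_{ta})}(P,\bu)$, the number of algebraic curves tropicalizing to it equals the summand $CN^{trop}_g(\Gamma,h,\bq,\bg)$ of (\ref{ecn23}).

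Second, I would enumerate plane enhancements matching the point constraints, in direct analogy with Lemma \ref{lemma:number_of_plane_enhancements}. Trivalent vertices away from every collinear cycle contribute the usual Mikhalkin factors $\mu(V)/\prod\wt(E)$. The essential novelty is a centrally embedded collinear cycle $\{E_1,E_2\}$ of weights $w_1,w_2$ joining a marked flat vertex $V_1=q^{ta}_j$ to an unmarked flat vertex $V_2$: since $\mu(V_1)=\mu(V_2)=0$ these vertices contribute nothing to the explicit $\mu(V)$ product in (\ref{eq:def_trop_cn}) and must instead be handled as a single compound fragment, whose contribution will appear at the modification step.

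Third, I would enlarge Construction \ref{con:modified_tropical_limit} by a modification carried out at the marked endpoint of each centrally embedded collinear cycle. The shift $y'=y-\zeta^{ta}_j$ produces a fragment as in Figure \ref{fig:mod_collinear_cycle_noncentral}; the central embedding is precisely what forces the two trivalent vertices $\tilde V,\tilde V'$ of the modification to map to the same point of $\R^2$, so they glue into a single $4$-valent vertex dual to a triangle of area $\mu(V_2)$, as already observed in the proof of Lemma \ref{lemma:coll_cylce_in_tropicalization_width_larger_than_1}. The resulting horizontal edges of weights $w_1,w_2$ in the modified fragment are then treated by the unmarked-edge and marked-flat-vertex cases of Construction \ref{con:modified_tropical_limit}. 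A careful bookkeeping in the spirit of Lemma \ref{lemma:number_of_modifications}, with a symmetry reduction from the $\Z/2$-automorphism of the cycle when $w_1=w_2$, yields the effective multiplicity $\frac{w_1 w_2}{w_1+w_2}$ per centrally embedded collinear cycle, matching the second factor in (\ref{eq:def_trop_cn}).

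Fourth, I would lift each coherent modification datum to a unique curve in $\mathcal{M}_{g,(n_{si},n_{ta})}(P,\bw)$ by the deformation argument of Lemma \ref{lemma:correspondence_final_step}, combining \cite[Theorems 2.15 and 3.1]{ST1} with \cite[Theorem 2.4]{Sh06a}; the new modification at a collinear cycle is designed precisely so that its lift produces a parameterized curve whose tropical limit is the given cycle. The main obstacle, and the reason for restricting to Mikhalkin position, will be the centrally embedded collinear cycle: without the central embedding condition (which is the tropical incarnation of Speyer's well-spacedness condition) the relevant $H^1$-obstructions in the deformation argument do not vanish, and the delicate matching of automorphism and ramification factors that produces the ratio $\frac{w_1 w_2}{w_1+w_2}$ rather than, for instance, $w_1 w_2$ or $w_1+w_2$ is the technical heart of the argument.
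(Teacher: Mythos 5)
Your plan follows the paper's proof of Theorem \ref{tcn3} essentially verbatim: reduce to the genus-zero correspondence of Theorem \ref{tcn1} via Lemma \ref{lemma:pos_genus_tropicalization_mikh_poss}, analyze the single new fragment (a horizontal centrally embedded collinear cycle) through the modification $y'=y-\zeta^{ta}_j$, count its admissible modified limit curves to obtain the factor $\wt(E_1)\wt(E_2)/(\wt(E_1)+\wt(E_2))$ with the $\Z/2$-symmetry reduction when the two weights coincide, and lift each coherent modification datum uniquely by the deformation argument of Lemma \ref{lemma:correspondence_final_step}. One small slip worth noting: the modified fragment of a \emph{centrally embedded} cycle is the one shown in Figure \ref{fcn5}(d) (both endpoints trivalent and flat, with the marked point on a vertical end), not the picture of Figure \ref{fig:mod_collinear_cycle_noncentral}, which belongs to the excluded non-central case of Lemma \ref{lemma:coll_cylce_in_tropicalization_width_larger_than_1}; this does not affect your subsequent counting.
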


\begin{proof}
	As in the proof of Theorem \ref{tcn1} we apply the techniques of tropical and algebraic modifications to recover the tropical limits $(\widehat\Gamma,\widehat h,\widehat\bq,\widehat\bg)$ and the corresponding collections of limit curves induced by a curve \linebreak ${[\bn:\widehat C\to\Tor_\K(P),\bp]\in{\mathcal M}_{g,(n_{si},n_{ta})}(P,\bw)}$. Comparing with the rational case, we have to analyze just one type of fragments: a horizontal centrally embedded collinear cycle with edges $E_1$, $E_2$ of weights $r, \ell-r$ (see Figure \ref{fig:high_genus_corr_theorem}(a)).
	Without loss of generality we assume that $h(E)$ does not intersect with other edges of $h_*\Gamma$. The corresponding fragment of the dual subdivision $\Sigma$ of $P$ looks as shown in Figure \ref{fig:high_genus_corr_theorem}(b) (cf. Lemma \ref{lemma:trop_curve_through_mikh_pos}(2)). We can suppose that $d_j=0$ in formula (\ref{ecn15}), that is, ${w^{ta}_j=(t^{c_j}(\eta_j+O(t^{>0})),\zeta_j+O(t^{>0}))}$, $\eta_j,\zeta_j\in\C^*$. Then the truncation of the polynomial (\ref{e-new60}) defining the image of the given curve in $\Tor_\K(P)$ is $(a(y-\zeta_j)^\ell+O(t^{>0})$, $a\in\C^*$.
	The tropical modification of that fragment along the edge $E$ can be constructed similarly to item $(3)$ of Construction \ref{con:modified_tropical_limit}: the result is presented in Figure \ref{fig:high_genus_corr_theorem}(c) (the dual subdivision is shown in Figure \ref{fig:high_genus_corr_theorem}(d)). Note that the length of the cycle is uniquely determined by the fact that the distances from the vertices of the cycle to the neighboring non-flat trivalent vertices are equal to each other. For the given weights $r\le \ell-r$ of the horizontal edges in the modification, the modified limit curve associated with the triangle $\conv\{(-1,0),(0,0),(0,\ell)\}$ is as described in item (3) of Construction \ref{con:modified_tropical_limit}, and item (3) in Definition \ref{def:modification_data}.
	In particular, the choice of such a curve matching the imposed conditions can be done in $\ell$ ways if $r<\ell-r$, or in $\ell/2$ ways if $r=\ell/2$. The modified limit curve associated with the triangle $\conv\{(0,0),(1,0),(0,\ell)\}$ is then defined uniquely by the previous choice.
	The further modification along the horizontal edges of the fragment shown in Figure \ref{fig:high_genus_corr_theorem}(c) yields two more modified limit curves constructed in item (1) of Construction \ref{con:modified_tropical_limit} (see the details in \cite[Sections 3.5 and 3.6]{Sh05} or \cite[Section 2.2(2)]{GS21}). This pair of curves can be chosen in $r(\ell-r)$ ways.
	In total we get $r\cdot \ell \cdot (\ell-r)\cdot \frac{1}{|Aut(E_1, E_2)}$ options, which is precisely the product of weights of the collinear cycle and the neighboring  trivalent vertices.
	It follows that the number of admissible collections of limit curves which can be associated with a tropical curve
	$(\Gamma,h,\bq,\bg)\in{\mathcal M}^{trop}_{g,(n_{si},n_{ta})}(P,\bu)$ equals $CN^{trop}_g(\Gamma,h,\bq,\bg)$.

To complete the proof of Theorem \ref{tcn3}, we need to show that each admissible collection of limit curves admits a unique deformation in a family of curves of genus $g$ in $\Tor(\Delta)$. To this end we apply Lemma \ref{lemma:correspondence_final_step}, which, in fact, does not depend on the genus of the considered curves. The only new element is the flat cycle and its (partial) modification as shown in Figure \ref{fig:high_genus_corr_theorem}, and it can be treated in the same way as the modifications considered in the proof of Theorem \ref{tcn1}.
\end{proof}

\begin{figure}
	\begin{tikzpicture}
		\tikzset{
			vector/.style={->, >=latex},
			dashed line/.style={dashed, line width=0.4pt},
			dotted line/.style={dotted, line width=0.4pt},
			thick line/.style={line width=1pt},
		}

		\begin{scope}[xshift=1cm,yshift=2cm,scale=1]
			\draw[red, thick] (-2,0)--(-1,0);
			\draw[red, thick] (0,0)--(1,0);
			\draw[red, thick] (-1,0.05)--(0,0.05);
			\draw[red, thick] (-1,-0.05)--(0,-0.05);
			\draw[red, thick] (-2,0)--(-3,0.7);
			\draw[red, thick] (-2,0)--(-3,-0.7);
			\draw[red, thick] (1,0)--(2,0.7);
			\draw[red, thick] (1,0)--(2,-0.7);
			\filldraw[black] (-1,0) circle (2pt);
			\node at (-1.5,-0) [below] {$V_1$};
			\node at (0.5,-0) [below] {$V_2$};
			\node at (-1,0) [above]{$q_j^{ta}$};
			\node at (-0.5,-1) {(a)};
		\end{scope}

		\begin{scope}[xshift=6.5cm,yshift=2cm,scale=1]
			\draw[->] (-1.5,0) -- (1.5,0) node[right] {$x$};
			\draw[->] (0,-0.5) -- (0,2) node[above] {$y$};
			
			\draw (-1,0) node[below] {$-1$};
			\draw (1,0) node[below] {$1$};
			\draw [dashed] (-1,0) -- (-1, 1.5);
			\draw [dashed] (1,0) -- (1, 1);
			
			\draw[blue, thick] (-1,1.5) -- (0,1.5) -- (1,1) -- (0,0) -- cycle;
			
			\node[above right] at (0,1.5) {$\ell$};
			
			\node[below] at (0,-0.4) {(b)};
		\end{scope}

		\begin{scope}[xshift=-1cm,yshift=0cm]
			\draw[dashed line] (0,0) -- (3.5,0);
			
			\draw [thick, red] (1,-1.5) -- (1,-0.5) -- (0.5,0);
			\draw [thick, red] (2.5,-1.5) -- (2.5,-0.5) -- (3,0);
			\draw [thick, red] (1,-0.45) -- node [above, black] {$r$} (2.5,-0.45);
			\draw [thick, red] (1,-0.55) -- node [below, black] {$\ell-r$} (2.5,-0.55);

			\node at (0.3,0.3) {$V_1$};
			\node at (2.8,0.3) {$V_2$};

			\filldraw[black] (1,-1) circle (2pt);
			\node at (1,-1) [left] {$q_j^{ta}$};

			\node at (1.6,-2) {(c)};
		\end{scope}

		\begin{scope}[xshift=6.5cm,yshift=-1.5cm]
			\draw[vector] (-1.5,0) -- (1.5,0);
			\draw[vector] (0,0) -- (0,2);
			\node at (-1.2,-0.3) {$-1$};
			\node at (1,-0.3) {$1$};

			\draw [thick, blue] (-1,0) -- (1,0);
			\draw [thick, blue] (-1,0) -- (0,1.5);
			\draw [thick, blue] (1,0) -- (0,1.5);
			\draw [thick, blue] (0,0) -- (0,1.5);
			
			\node at (0.3,1.8) {$\ell$};

			\node at (0,-1) {(d)};
		\end{scope}
		
		\end{tikzpicture}
	\caption{Proof of Theorem \ref{tcn3}}\label{fig:high_genus_corr_theorem}
\end{figure}
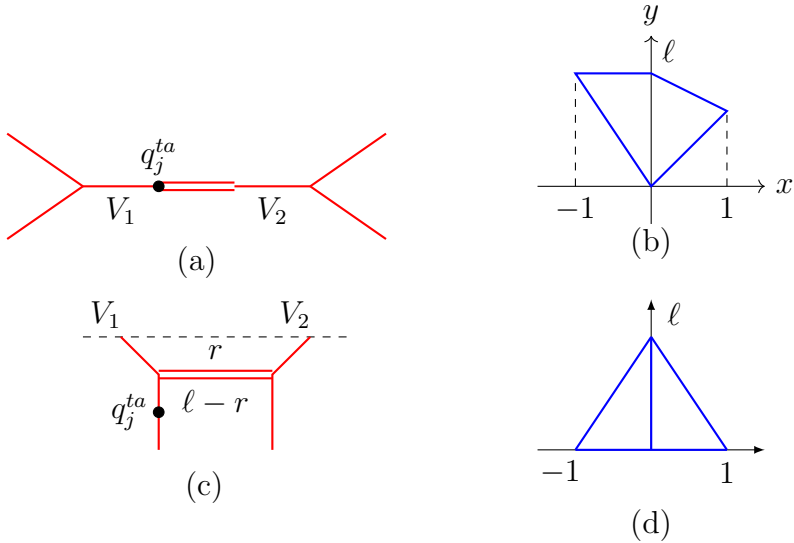

\section{Refined tropical invariants}\label{rbi}

In this section we define the refined tropical invariants, and prove their invariance under the change of the configuration of points in the following situations: either $g\in\{0,1\}$, or $n_{ta}\in\{0,1\}$, or the points are in Mikhalkin position (see assumptions (A1) and (A2) in Section \ref{scn4}).
We also show that the refined tropical invariants we define specialize to the characteristic numbers in these cases when the parameter tends to $1$ (see Corollary \ref{ccn1}).

\subsection{Definition of the refined count}\label{sec:refined_definition}

For an integer $ \mu \in \mbb{Z} $ set
\begin{equation*}
	[\mu]^{-}_y = \frac{y^{\mu/2} - y^{-\mu/2}}{y^{1/2}-y^{-1/2}} \; , \; [\mu]^{+}_y = \frac{y^{\mu/2} + y^{-\mu/2}}{y^{1/2}+y^{-1/2}}
\end{equation*}
where $ y $ is a formal parameter.

\begin{definition}\label{def:separated_curve}
	A realizable marked plane tropical curve $ (\Gamma, h, \bg, \bs{p}) $ is called \emph{simple} if all of its vertices are rational and simple (see Definition \ref{def:simple_vertex}) and of valency at most $4$ (so no two collinear cycles meet).
\end{definition}

\begin{definition}\label{def:refined_broccoli_weight}
	Let $ (\Gamma, h, \bp) $ be a simple tropical curve of genus $ g $ with $ n_{ta} $ marked vertices and $ n_{si} $ markings that lie on the relative interiors of edges.
	Denote by $ \mathcal{C}_{\text{central}}(\Gamma) $ the set of pairs of edges  of $ \Gamma $ that form a centrally embedded collinear cycle (see Definition \ref{def:center_collinear_cycle}) and by $ \mathcal{C}_{\text{non-central}}(\Gamma) $ the set of triplets  $(E_1, E_2, V)$ where $E_1, E_2$ form a non centally embedded collinear cycle, and $V$ is the unique $4$-valent vertex adjacent to this collinear cycle.
	Define the \emph{refined 
weight of $ \Gamma $} as
	\begin{align}\label{eq:refined_weight_def}
		\text{RB}_y(\Gamma) = & \frac{1}{\left| \Aut(\Gamma)\right|}\cdot \prod_{V \in \Gamma^0\cap \bs{p}}[\mu(V)]_{y}^{+}\cdot \prod_{\substack{V\in \Gamma^0\setminus\bs{p} \\ \val(V)=3 \\ \mu(V)\ne 0}}[\mu(V)]_{y}^{-} \times \nonumber \\
		& \times  \prod_{(E_1,E_2)\in \mathcal{C}_{\text{central}}(\Gamma)}\varphi^{(0)}_y(\wt(E_1),\wt(E_2)) \times  \nonumber \\
		& \times \prod_{(V, E_1, E_2)\in \mathcal{C}_{\text{non-central}}(\Gamma)}\varphi^{(1)}_y\left(\wt(E_1), \wt(E_2), \frac{\mu(V)}{\wt(E_1)+\wt(E_2)}\right)
	\end{align}
	where $ \mu(V) $ denotes the Mikhalkin weight of the vertex $ V $ (i.e. the lattice area of the corresponding cell of the dual subdivision), and the functions $ \varphi^{(0)} $ and $ \varphi^{(1)} $ are defined as follows:
	
	\begin{equation}\label{eq:def_varphi_0}
		\varphi^{(0)}_y(w_1,w_2) = \frac{[w_1]^{-}_y[w_2]^{-}_y}{[w_1+w_2]^{-}_y},
	\end{equation}
	
	\begin{equation}\label{eq:def_varphi_1}
		\varphi^{(1)}_y(w_1,w_2,\nu) = [w_1\nu]^{-}_y [w_2\nu]^{-}_y - \frac{[w_1]^{-}_y [w_2]^{-}_y}{[w_1+w_2]^{-}_y}[(w_1+w_2)\nu]^{-}_y.
	\end{equation}
\end{definition}

\begin{remark}
	The condition $\mu(V)\ne 0$ in the second product in \eqref{eq:refined_weight_def} is needed to omit the unmarked vertices of centrally embedded collinear cycles.
\end{remark}

\begin{remark}
	Note that any collinear cycle contains a marked vertex with Mikhalkin weight $0$, so a centrally embedded collinear cycle contributes
	\[ [0]^{+}_y \cdot \varphi^{(0)}_y(w(E_1), w(E_2))= \frac{2}{y^{1/2}+y^{-1/2}}\varphi^{(0)}_y(w(E_1), w(E_2)) \]
	and similarly for a non centrally embedded collinear cycle (with $\varphi^{(0)}$ replaced by $\varphi^{(1)}$).
\end{remark}

\begin{remark}\label{rem:summing_over_weights_get_Psi}
	If $g=1$ we recover the refined 
weight as defined in \cite[Section 3.3]{SS}.
	Indeed, if we denote (the factor $\frac{1}{2}$ comes from dividing by the automorphisms of the curve)
	\begin{equation*}
		\Phi^{(0)}_y(m) := \frac{1}{2}\sum_{k=1}^{m-1}[0]^{+}_y\varphi^{(0)}_y(k, m-k) = \frac{(m-1)[m+1]_y^- - (m+1)[m-1]_y^-}{(y^{1/2}+y^{-1/2})(y^{1/2}-y^{-1/2})^2[m]_y^-},
	\end{equation*}
	and
	\begin{align*}
		\Phi^{(1)}_y(m,\nu) := & \frac{1}{2}\sum_{k=1}^{m-1}[0]^{+}_y\varphi_y^{(1)}(k, m-k, \nu) =\\
		= & 2\frac{[\nu]_{y^m}^{-}[m-1]_{y}^{-} - [m-1]_{y^\nu}^{-} - (m-1)[\nu-1]_{y^m}^{-}}{(y^{1/2}+y^{-1/2})(y^{1/2}-y^{-1/2})^2},
	\end{align*}
	then
	\begin{align*}
		\Psi^{(2)}_y\left(m,\nu_1, \nu_2\right) = [m\nu_1]^{-}_y [m\nu_2]^{-}_y\Phi^{(0)}_y(m) &+ [m\nu_1]^{-}_y\Phi^{(1)}_y(m, \nu_2) +\\
		&+ [m\nu_2]^{-}_y\Phi^{(1)}_y(m, \nu_1),
	\end{align*}
	where $ \Psi^{(2)}_y(m,\nu_1,\nu_2) $ is the weight associated to a collinear cycle as defined in \cite[Section 3.3]{SS}.
\end{remark}

We now can state the main results of this section

\begin{theorem}\label{thm:refined_invariant_one_marked}
	If either $ g\in\{0, 1\} $ or $ n_{ta}\in\{0,1\} $, then
	\begin{equation}\label{eq:refined_for_points}
		RB_y(\Delta, g, (n_{ta}, n_{si}), \bu) := \sum_{\begin{matrix} (\Gamma, \bs{p}, h)\in \modulispace \\ h(\bs{p}) = \bu	\end{matrix}} RB_y(\Gamma)
	\end{equation}
	does not depend on the choice of a generic $ \bu\in \R^{2n} $.
\end{theorem}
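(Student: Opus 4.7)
The plan is to prove that $RB_y$ is locally constant as a function of $\bu$ on the open dense subset of configurations in tropical general position, and then to verify that its value does not change when $\bu$ crosses a codimension-one wall. By Corollary \ref{cor:dim_moduli_space}, under the assumption $n_{si}+2n_{ta}=|\Delta|+g-1$, the evaluation map $\text{Ev}\colon\modulispace\to\R^{2n}$ is a map of polyhedral complexes of the same dimension. Consequently, for $\bu$ outside the image of the codimension-one skeleton, the fibre $\text{Ev}^{-1}(\bu)$ is a finite set of regular tropical curves whose combinatorial types are locally constant, and hence $RB_y$ is locally constant in each chamber of the complement of the wall complex. It therefore suffices to check invariance across each type of codimension-one wall.

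The walls are the images of the codimension-one cells of $\modulispace$, and by our hypothesis that either $g\le 1$ or $n_{ta}\le 1$, Lemma \ref{lemma:codim1_degenerations_list} classifies them into eight types. For each wall type I will enumerate the regenerations into top-dimensional cells on either side and verify that the sum of refined weights of the regenerations on one side equals that on the other. Cases (1) and (3a), standard four-valent vertex moves with no collinear cycle nearby, reduce directly to the refined Block--G\"ottsche identity for a four-valent vertex. Cases (7) and (8), in which a marked point moves onto a vertex, are handled by an identity expressing $[\mu]_y^{+}$ as the appropriate combination of products of $[\cdot]_y^{-}$ coming from the adjacent trivalent regenerations; this is precisely the identity motivating the use of $[\cdot]_y^{+}$ at marked vertices in Definition~\ref{def:refined_broccoli_weight}.

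The main obstacle, and the combinatorially new content, is concentrated in cases (2), (3b), (4), (5), (6), which involve collinear cycles. Here a wall-crossing typically exchanges a centrally embedded collinear cycle for a non-centrally embedded one (or vice versa), changing the cycle contribution from $\varphi^{(0)}_y$ to $\varphi^{(1)}_y$. The definition of $\varphi^{(1)}_y$ in \eqref{eq:def_varphi_1} is engineered so that the identity
\[
\varphi^{(1)}_y(w_1,w_2,\nu)+[(w_1+w_2)\nu]_y^{-}\,\varphi^{(0)}_y(w_1,w_2)\;=\;[w_1\nu]_y^{-}\,[w_2\nu]_y^{-}
\]
holds; this is exactly the wall-crossing balance between the three ways of regenerating a four-valent marked vertex attached to a collinear cycle (one giving a non-central cycle, one giving a central cycle with an extra trivalent vertex of Mikhalkin weight $(w_1+w_2)\nu$, and one in which the cycle is split and the marked vertex absorbs a trivalent vertex on each side). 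The most intricate bookkeeping occurs in cases (5) and (6), where the positions of the marked points and the identification of which regeneration lies on which side of the wall must be tracked carefully; the plan is to reduce each wall-crossing to a finite list of polynomial identities in the $y$-numbers $[\cdot]_y^{\pm}$ and verify them directly from the definitions. The hypothesis $g\le 1$ or $n_{ta}\le 1$ enters in an essential way here: by Lemma \ref{lemma:codim1_degenerations_list} it prevents simultaneous interaction of two collinear cycles, or of a collinear cycle with a bivalent vertex of positive genus, in a single wall-crossing, which is precisely the failure mode exhibited in Appendix~A at higher genera with many marked vertices.
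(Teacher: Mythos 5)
Your proposal follows essentially the same route as the paper: deform the point configuration across the walls given by the codimension-one types of Lemma \ref{lemma:codim1_degenerations_list}, enumerate the regenerations on each side, and reduce the balance at each wall to identities in the $y$-numbers $[\cdot]_y^{\pm}$; in particular you correctly isolate the key relation $\varphi^{(1)}_y(w_1,w_2,\nu)+[(w_1+w_2)\nu]_y^{-}\,\varphi^{(0)}_y(w_1,w_2)=[w_1\nu]_y^{-}[w_2\nu]_y^{-}$, which is exactly what equation \eqref{eq:type4_case1_inv} of the paper becomes after cancellation. The one place your plan understates the work is case (8) (the elliptic marked vertex): there the regenerations come in families — collinear cycles with all possible splittings of the edge weights on one side, and trivalent triangles indexed by the interior lattice points of the dual triangle on the other — so the required balance is the summed identity of Remark \ref{rem:summing_over_weights_get_Psi} combined with \cite[Lemma 4.1]{SS}, not a pointwise decomposition of $[\mu]_y^{+}$ as in case (7).
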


\begin{remark}
	Note that by Lemma $\ref{lemma:dim_moduli_cells}$ all the tropical curves appearing in \eqref{eq:refined_for_points} are simple, so the sum is well defined.
\end{remark}

The second result deals with a general genus and value of $n_{ta}$, but for points in Mikhalkin position.

\begin{theorem}\label{tcn4}
	Let $\Delta\subset\Z^2\setminus\{0\}$ be a nondegenerate primitive balanced multiset, $P\subset\R^2$ the induced lattice polygon. Suppose that $P$ is $h$-transversal (assumption (A1) in Section \ref{scn4}). For a fixed $g\ge 0$ and $n_{si}\ge0$, $n_{ta}>0$ such that $n_{si}+2n_{ta}=|\Delta|+g-1$ take a configuration of points $\bu=\bu_{si}\cup\bu_{ta}$ of $n_{si}+n_{ta}$ points in Mikhalkin position (assumption (A2) in Section \ref{scn4}) on the line $\{y=\eps x\}$, where $0<|\eps|\ll1$, $|\bu_{si}|=n_{si}$, $|\bu_{ta}|=n_{ta}$.	
	Then $\RB_y(P,g,(n_{si},n_{ta}),\bu)$ does not depend neither on the choice of $\eps\ne0$, $|\eps|\ll1$, nor on the choice of the configuration $\bu$ in Mikhalkin position on the line $\{y=\eps x\}$, nor on the replacement of $P$ by the polygon $\overline P$ obtained from $P$ by a reflection with respect to a vertical line.
\end{theorem}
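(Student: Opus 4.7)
The plan is to prove invariance by a wall-crossing argument within the space of generic point configurations, exploiting the floor decomposition from Lemma \ref{lemma:trop_curve_through_mikh_pos}. In Mikhalkin position with $|\eps| \ll 1$, every tropical curve matching $\bu$ consists of horizontal edges and floors isometric to $\R$, each carrying exactly one marked point, with all non-horizontal edges of weight $1$. This severely restricts the local combinatorics in arbitrary genus and is what makes the problem tractable beyond the cases treated in Theorem \ref{thm:refined_invariant_one_marked}.

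To prove independence of the configuration $\bu$ on a fixed line $\{y=\eps x\}$, I would connect any two admissible configurations by a generic path and analyze the codimension-$1$ walls of $\modulispace$ crossed by this path, using the classification in Lemma \ref{lemma:codim1_degenerations_list}. For wall types $(1)$--$(6)$, which involve no elliptic vertices and at most one (centrally embedded or resolvable) collinear cycle, the required wall-crossing identities for $\RB_y$ reduce, after the cycle reduction of Remark \ref{rem:vertex_reduction_iso}, to identities already established in the rational and genus-$1$ settings (cf.\ Theorem \ref{thm:refined_invariant_one_marked} and \cite{GS,SS}). The new wall types $(7)$ and $(8)$, involving respectively an elliptic simple marked vertex and a collinear cycle attached to an additional marked vertex, are where the functions $\varphi^{(0)}_y,\varphi^{(1)}_y$ from \eqref{eq:def_varphi_0}--\eqref{eq:def_varphi_1} come into play. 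The correction term $-[w_1]^{-}_y[w_2]^{-}_y\,[(w_1+w_2)\nu]^{-}_y/[w_1+w_2]^{-}_y$ in $\varphi^{(1)}_y$ is tailored precisely so that the transition between a non-centrally embedded collinear cycle at a $4$-valent vertex and its centrally embedded resolution is balanced at such a wall.

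For independence of $\eps$ within the admissible range, I would observe that deforming $\eps$ preserves the combinatorial types of all counted curves (no walls are crossed as long as $|\eps|$ stays sufficiently small and the ordering $0 < M_1 \ll \dots \ll M_n$ is maintained), so $\RB_y$ is locally constant in $\eps$; a symmetric argument handles the opposite sign. For the reflection symmetry, the reflection $P \mapsto \overline{P}$ induces a natural bijection between the tropical curves counted by $\RB_y(P,g,(n_{si},n_{ta}),\bu)$ and those counted by $\RB_y(\overline{P},g,(n_{si},n_{ta}),\overline{\bu})$, where $\overline{\bu}$ is the corresponding reflected configuration. This bijection preserves edge weights, Mikhalkin weights of vertices, valencies and the combinatorial structure of collinear cycles, hence preserves the refined weight; combined with the invariance under change of configuration already established, it yields invariance under reflection of $P$.

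The main obstacle will be verifying the wall-crossing identities at walls of types $(7)$ and $(8)$. Concretely, one must establish an identity of the schematic form
\[
\varphi^{(1)}_y(w_1,w_2,\nu) \;=\; \sum_{\text{resolutions}} (\text{refined weight contributions}),
\]
where the sum runs over the top-dimensional cells of $\modulispace$ adjacent to the wall, with careful attention to the automorphism factors $|\Aut(\Gamma)|$ (which jump when a symmetric configuration breaks) and to floors traversing the wall. Together with the analogous identity governing the appearance or disappearance of an elliptic marked vertex at a type-$(8)$ wall, this should follow from elementary manipulations of $[\cdot]^{\pm}_y$, but constitutes the technical heart of the argument and is the reason the definitions of $\varphi^{(0)}_y$ and $\varphi^{(1)}_y$ take the specific form given in \eqref{eq:def_varphi_0}--\eqref{eq:def_varphi_1}.
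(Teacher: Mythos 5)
The central difficulty you have missed is that your strategy---connecting two admissible configurations by a generic path in $\R^{2n}$ and verifying wall-crossing invariance at every codimension-one cell classified in Lemma \ref{lemma:codim1_degenerations_list}---cannot work when $g\ge2$ and $n_{ta}\ge2$. First, Lemma \ref{lemma:codim1_degenerations_list} is only proved under the hypothesis that either $g\le1$ or $n_{ta}\le1$, so the list of walls you propose to check is not even available in the regime of Theorem \ref{tcn4}. Second, and decisively, Proposition \ref{prop:no_local_refined_high_genus} in Appendix A exhibits a codimension-one degeneration (a collinear cycle joining two marked vertices, one of which carries a further collinear cycle) at which \emph{no} local refined weight, in particular not $\RB_y$, can satisfy the wall-crossing identity: after dividing by $[(w_1+w_2)\nu_2]^-_y$ one side is independent of $\nu_2$ while the other is not. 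A generic path between two Mikhalkin configurations will in general leave the line $\{y=\eps x\}$ and cross such walls, where $\RB_y$ genuinely jumps; the theorem is only true because the admissible deformations are confined to configurations in Mikhalkin position on that line.

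The paper's proof exploits exactly this confinement. After fixing the cone of admissible $(M_1,\dots,M_n)$ (within which no wall is crossed), independence of the configuration is reduced to a single elementary move: sliding one point $u_i^{(\tau)}=(\tau,\eps\tau)$ along the line past its neighbour $u_{i+1}$, with all other points fixed. The floor structure forced by Lemma \ref{lemma:trop_curve_through_mikh_pos} then ensures that the only degenerations encountered are (i) an ordinary four-valent vertex inside a tree component, already handled; (ii) a collision of $q_i$ and $q_{i+1}$ at a vertex, which is necessarily rational four-valent or elliptic trivalent, so the local model is rational or elliptic and Theorem \ref{thm:refined_invariant_one_marked} applies; or (iii) a collision on a horizontal edge, resolved by direct inspection of the bifurcation in Figure \ref{fcn6}, where the weights of the edges carrying the marked points do not change. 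None of the problematic high-genus walls of Appendix A occur. Finally, the reflection statement is not deduced from configuration-invariance (the reflected configuration cannot be reached within Mikhalkin position without crossing bad walls) but is proved by an explicit weight-preserving bijection that shears the non-horizontal edges of each floor by the combinatorially determined amount $(d-c)\cdot b$. To repair your argument you would need to abandon the global wall-crossing plan and adopt this line-confined deformation scheme.
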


\begin{remark}
	By Lemma \ref{lemma:trop_curve_through_mikh_pos}, the tropical curves appearing in the sum in Theorem \ref{tcn4} are simple, so the sum is well defined.
\end{remark}

\begin{remark}
	For an h-transversal polygon $P$ and points in Mikhalkin position, $RB_y(P,0,(n_{si},n_{ta}),\bu)$ can be computed, and in fact even defined, using floor diagrams, see \cite{BJP}.
	Similar approach works in higher genus as well, and was implemented in \cite{Mevel}.
	It is an elementary verification that the refined weight defined in \cite{Mevel} coincides with the one defined in this paper.
\end{remark}

By taking the limit $y\to 1$ we get the weight of the tropical curves as defined in \eqref{eq:trop_cn_def_genus0} and \eqref{eq:def_trop_cn}, and so we immediately get the following corollary of Theorems \ref{thm:refined_invariant_one_marked} and \ref{tcn3}.

\begin{corollary}\label{ccn1}
	\begin{enumerate}
		\item
		For any nondegenerate lattice polygon $P$ and genus $g=0$, we have
		\begin{equation}CN_0(P,(n_{si},n_{ta}))=\RB_y(P,0,(n_{si},n_{ta}))\big|_{y=1}.\label{ecn26}\end{equation}

		\item
		Under the assumption (A1) and $g=1$, we have
		\begin{equation}CN_1(P,(n_{si},n_{ta}))=\RB_y(P,1,(n_{si},n_{ta}))\big|_{y=1}.\label{ecn24}\end{equation}

		\item Under assumptions (A1) and (A2), we have
		\begin{equation}
			CN_g(P,(n_{si},n_{ta}))=\RB_y(P,g,(n_{si},n_{ta}), \bu)\big|_{y=1}.\label{eq:refined_equal_characteristic_general_genus}
		\end{equation}
	\end{enumerate}
\end{corollary}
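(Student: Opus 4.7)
The plan is to observe that each factor in the refined weight $\RB_y(\Gamma)$ admits a well-defined limit as $y \to 1$ which reproduces the classical tropical multiplicity $CN^{trop}_g(\Gamma,h,\bq)$ of \eqref{eq:def_trop_cn}, provided the only cycles of $\Gamma$ are centrally embedded collinear ones. A direct L'H\^opital computation gives $\lim_{y\to 1}[\mu]_y^- = \mu$ and $\lim_{y\to 1}[\mu]_y^+ = 1$, from which
\[
\lim_{y\to 1}\varphi^{(0)}_y(w_1, w_2) = \frac{w_1 w_2}{w_1 + w_2},
\]
which is exactly the contribution of a centrally embedded collinear cycle in \eqref{eq:def_trop_cn}. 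Together with $[0]^+_y\big|_{y=1} = 1$ (so marked flat endpoints contribute trivially, matching their exclusion from the product in \eqref{eq:def_trop_cn} via the condition $\mu(V) \ne 0$), this yields the pointwise identity $\RB_y(\Gamma)\big|_{y=1} = CN^{trop}_g(\Gamma,h,\bq)$ for every $\Gamma$ with only centrally embedded collinear cycles.

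For part (1), I would invoke Lemma \ref{lcn3}: in genus zero every tropical curve in ${\mathcal M}^{trop}_{0,(n_{si},n_{ta})}(P,\bu)$ is trivalent and, being rational, cycle-free. The pointwise identity above then specializes to $\RB_y(\Gamma)\big|_{y=1} = CN^{trop}_0(\Gamma,h,\bq)$ with no collinear-cycle contribution, and summing over the finite set of contributing curves and invoking Theorem \ref{tcn1} gives \eqref{ecn26}.

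For part (3), Lemma \ref{lemma:pos_genus_tropicalization_mikh_poss} ensures that under assumptions (A1) and (A2) every collinear cycle of a contributing tropical curve is centrally embedded, so the $\varphi^{(1)}_y$-factors never appear in $\RB_y(\Gamma)$. The pointwise identity of the first paragraph therefore applies, and Theorem \ref{tcn3} delivers \eqref{eq:refined_equal_characteristic_general_genus}.

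Part (2) is where the invariance statement does the real work. I would apply Theorem \ref{thm:refined_invariant_one_marked}, which is valid for $g = 1$, to replace an arbitrary generic configuration by one in Mikhalkin position without affecting $\RB_y(P, 1, (n_{si}, n_{ta}))$; the argument of part (3) then applies, since Lemmas \ref{lemma:flat_collinear_cycle_in_tropicalization_is_well_spaced} and \ref{lemma:coll_cylce_in_tropicalization_width_larger_than_1} are stated without genus restriction and force the same classification of collinear cycles. The step I expect to require the most care is verifying that the classification of tropicalizations under (A1)+(A2) (carried out for $g\ge 2$ in Section \ref{scn4}) indeed applies verbatim in genus one, so that the sum on the tropical side matches the algebraic correspondence established in \cite{SS}; this is a bookkeeping check rather than a new idea.
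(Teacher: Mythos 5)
Your proposal is correct and is essentially the paper's own argument, which consists of the single observation that taking $y\to 1$ in $\RB_y(\Gamma)$ reproduces the classical weight of \eqref{eq:def_trop_cn} term by term, combined with Theorems \ref{tcn1}, \ref{tcn3} and, for genus one, the invariance of Theorem \ref{thm:refined_invariant_one_marked} to pass to a configuration in Mikhalkin position. Your explicit limits $[\mu]_y^-\to\mu$, $[\mu]_y^+\to 1$, $\varphi^{(0)}_y(w_1,w_2)\to w_1w_2/(w_1+w_2)$ and the three-part case analysis are just the worked-out version of that sentence, and your cautionary remark about genus one is harmless since Section \ref{scn4} is stated for arbitrary nonnegative genus.
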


\begin{remark}\label{rcn3}
	Relation \eqref{ecn24} holds without assumption (A1). The proof, however, requires some extra heavy calculations, and we decided to restrict ourselves to the situation stated above, since it covers all interesting toric surfaces.
\end{remark}

The rest of the section is devoted to the proof of Theorem \ref{thm:refined_invariant_one_marked}. 
Theorem \ref{tcn4} will be proved in Section \ref{pt6.7}.
For the proof of Theorem \ref{thm:refined_invariant_one_marked} we consider $ 2 $ point configurations $ \bu^{(1)}, \bu^{(2)}\in \R^{2n} $ whose preimage under the evaluation map $ \text{Ev}: \modulispace \to \mbb{R}^{2n} $ is zero dimensional and lies in the interior of maximal dimensional cells.
We then connect $ \bu^{(1)} $ and $ \bu^{(2)} $ by a path that do not cross the images (under $ \text{Ev} $) of cells of co-dimension larger than $ 1 $.
The combinatorial types of curves passing through $ \bu $ change only when $ \bu $ cross the images of cells of co-dimension $ 1 $, so we need to prove that $ RB_z(\Delta, g, (n_{ta}, n_{si}), \bu) $ remain constant when $ \bu $ cross those walls.
We can formulate it precisely in the following lemma.
\begin{lemma}\label{lemma:local_refined_invariance}
	Assume that either $ g\in\{0,1\} $ or $ n_{ta}\in\{0,1\} $.
	Let $ \alpha $ be a combinatorial type of co-dimension $ 1 $, i.e. $ \dim \moduli{\alpha} = |\Delta|+g-2+n_{si} $.
	Denote by $ H_{0} $ the hyperplane in $ \mbb{R}^{2n} $ that contains $ \text{Ev}\left(\moduli{\alpha} \right) $, by $ H_{+}, H_{-} $ the two half spaces with common boundary $ H_{0} $, and by $ \mathcal{A}_{+} $ (respectively $ \mathcal{A}_{-} $) the set of regenerations $ \beta $ of $\alpha$ with $  \moduli{\beta}\subset \moduli{e} $ and $ \text{Ev}\left(\moduli{\beta} \right) \subseteq H_{+}$ (respectively $ \text{Ev}\left(\moduli{\beta} \right) \subseteq H_{-}$).
	Then
	\begin{equation}\label{eq:local_refined_invariance}
		\sum_{\beta\in \mathcal{A}_{+}} RB_y(\beta) = \sum_{\beta \in \mathcal{A}_{-}} RB_y(\beta)
	\end{equation}
	where we denote by $ RB_y(\beta) $ the refined weight of any curve with combinatorial type $ \beta $ (this is well defined since the refined weight depends only on the combinatorial type).
\end{lemma}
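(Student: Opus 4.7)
The plan is to proceed by a case-by-case analysis based on the classification of codimension-one cells in $\modulispace$ provided by Lemma \ref{lemma:codim1_degenerations_list}. Any two generic configurations $\bu^{(1)},\bu^{(2)} \in \mathbb{R}^{2n}$ can be joined by a path transverse to the polyhedral complex $\text{Ev}(\modulispace)$ that meets it only in the interiors of codimension-one cells, so the invariance of $\RB_y(\Delta,g,(n_{ta},n_{si}),\bu)$ reduces to the wall-crossing identity (\ref{eq:local_refined_invariance}) for every wall $\alpha$ of type (1) through (8). For each $\alpha$ one enumerates the regenerations on the two sides of the wall, observes that they differ only in a bounded local fragment of the tropical curve, and checks the identity by expressing both sides through the explicit formulas for $[\mu]^{\pm}_y$, $\varphi^{(0)}_y$ and $\varphi^{(1)}_y$ given in Definition \ref{def:refined_broccoli_weight}.

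The cases (1), (3a), (7) and (8), in which no collinear cycle appears in the codimension-one configuration and the degeneration concentrates at a single four-valent vertex, a marked vertex attached to a bounded component, a simple vertex with an extra marking, or an elliptic marked vertex, are handled by the quantum Pl\"ucker identity among the Mikhalkin weights of the three trivalent resolutions of a four-valent vertex. These identities are already used in \cite{GS} for $g=0$ and in \cite{SS} for $g=1$; the argument depends only on the local fragment and applies verbatim. The only bookkeeping is to track which resolved vertex is marked (contributing $[\mu]^{+}_y$) and which is unmarked (contributing $[\mu]^{-}_y$). The wall of type (6), involving two parallel marked edges joining two unmarked trivalent vertices, is a quantum analogue of the Block--G\"ottsche parallel-edge identity and is dispatched by the same strategy, combined with the $[\mu]^{+}_y/[\mu]^{-}_y$ alternation.

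The substantial content of the proof lies in the collinear-cycle walls (2), (3b), (4) and (5). The strategy is uniform: on one side of the wall the cycle is centrally embedded and contributes a factor $\varphi^{(0)}_y(w_1,w_2)$, while on the other the cycle is non-centrally embedded and contributes $\varphi^{(1)}_y(w_1,w_2,\nu)$, with one higher-valent vertex on one side replaced by two trivalent vertices on the other. The desired equality reduces to the identity
\[
\varphi^{(1)}_y(w_1,w_2,\nu) + \frac{[w_1]^{-}_y[w_2]^{-}_y}{[w_1+w_2]^{-}_y}\,[(w_1+w_2)\nu]^{-}_y = [w_1\nu]^{-}_y[w_2\nu]^{-}_y,
\]
which is precisely the definition (\ref{eq:def_varphi_1}); this is why $\varphi^{(1)}_y$ was engineered in Definition \ref{def:refined_broccoli_weight}. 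I expect the main obstacle to be case (4), where two marked four-valent vertices are joined by a cycle and the wall admits several inequivalent regenerations indexed by how the total cycle weight is split among the adjacent cells; the verification there requires coupling the quantum Pl\"ucker relation at each vertex with the above $\varphi^{(1)}$-identity and then summing over admissible weight splittings, a calculation that is mechanical but must be arranged carefully so that the symmetrization coming from $|\Aut(\Gamma)|$ matches on both sides. Once all eight local identities are established, summing over $\beta\in \mathcal{A}_{\pm}$ completes the proof of both (\ref{eq:local_refined_invariance}) and, via the path argument, of Theorem \ref{thm:refined_invariant_one_marked}.
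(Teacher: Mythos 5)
Your overall strategy coincides with the paper's: reduce the invariance to a wall-crossing identity for each codimension-one type classified in Lemma \ref{lemma:codim1_degenerations_list}, enumerate the regenerations on the two sides of the wall, and verify the identity on the local fragment using Definition \ref{def:refined_broccoli_weight}. However, your assignment of methods to the individual cases is wrong in a way that would block execution. The concrete failure is case (8), the elliptic marked vertex, which you group with the cases ``handled by the quantum Pl\"ucker identity among the three trivalent resolutions of a four-valent vertex.'' The regenerations of that wall are not three trivalent resolutions: they form two families --- collinear cycles attached to the marked vertex, indexed by all splittings $k+(w_i-k)$ of the weight of an adjacent edge, and genus-one local pictures indexed by the interior lattice points of the dual triangle, each giving a subdivision into three triangles. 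The identity to be proved equates $\sum_{x}[\mu_0(x)]^-_y\left([\mu_1(x)]^+_y[\mu_2(x)]^-_y-[\mu_1(x)]^-_y[\mu_2(x)]^+_y\right)$ over interior lattice points with a difference of the functions $\Phi^{(1)}_y$ of Remark \ref{rem:summing_over_weights_get_Psi}, and the paper needs \cite[Lemma 4.1]{SS} to close it; no three-term Pl\"ucker relation does this.

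There are further misassignments. You single out case (4) as the main obstacle and describe a sum over weight splittings there, but case (4) is essentially trivial: it has four regenerations pairing off two-by-two with equal refined weights on opposite sides of the wall. Conversely you underestimate cases (2) and (3b): the required identity is not the defining relation \eqref{eq:def_varphi_1} of $\varphi^{(1)}_y$ but a four-term relation among $\varphi^{(1)}_y(w_1,w_1',\,\cdot\,)$ evaluated at three different wedge products together with two triple products of brackets, coming from the five regenerations of a $5$-valent vertex carrying a collinear cycle; your reduction to the definition of $\varphi^{(1)}_y$ is correct only for case (5) with one bounded adjacent edge. Case (6) is not a plain parallel-edge identity either, since its codimension-one fragment carries an extra collinear cycle and the identity again involves $\varphi^{(1)}_y$. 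Finally, in case (1) several four-valent vertices can occur simultaneously in a single codimension-one type, so one needs the displacement-vector bookkeeping of the paper to decide, for each such vertex independently, which resolution lands in $\mathcal{A}_{+}$ versus $\mathcal{A}_{-}$; this is more than a verbatim citation of the genus-zero argument.
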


We now will describe an outline for the proof of Lemma \ref{lemma:local_refined_invariance}.
For every codimension $1$ combinatorial type $\alpha$ as classified in Lemma \ref{lemma:codim1_degenerations_list}, we enumerate all its regenerations $\beta$.
We then check for each such regeneration $\beta$ if its image under $\text{Ev}$ lies inside $H_{+}$ or $H_{-}$, i.e. if $\beta\in\mathcal{A}_{+}$ of $\beta\in\mathcal{A}_{-}$ in the notation of Lemma \ref{lemma:local_refined_invariance}.
Finally we sum the refined weights of combinatorial types in $\mathcal{A}_{+}$ and the refined weights of the combinatorial types in $\mathcal{A}_{-}$ and compare them.
We note that in all cases there is a small fragment which varies between the different regenerations while the rest of the curve remains the same (and identical to the corresponding part in $\alpha$), so we only need to compare the contributions to the refined weight of this small fragment.
This is the content of Sections \ref{sec:refined_invariance_proof_regular_4valent}-\ref{sec:refined_invariance_proof_smaller_genus}.

In the proof of Lemma \ref{lemma:local_refined_invariance} we will use the following evident properties of the symbols $ [\mu]^-_y, [\mu]^+_y $.

\begin{lemma}\label{lemma:algebra_of_quantum_weights}
	For integers $ a,b\in \mbb{Z} $ the following hold:
	\begin{enumerate}
		\item $[-a]_y^+ = [a]_y^+$.
		
		\item $[-a]_y^- = -[a]_y^-$.
		
		\item $ [a]^-_y[b]^-_y = \frac{y^{1/2}+y^{-1/2}}{(y^{1/2}-y^{-1/2})^2}\cdot\left( [a+b]^+_y - [a-b]^+_y \right).  $
		
		\item $ [a]^-_y[b]^+_y = \frac{1}{y^{1/2}+y^{-1/2}}\cdot\left( [a+b]^-_y + [a-b]^-_y \right).  $
		
		\item $ [a]^+_y[b]^+_y = \frac{1}{y^{1/2}+y^{-1/2}}\cdot\left( [a+b]^+_y + [a-b]^+_y \right).  $
		
		\item $ [a]^-_y[b]^+_y \pm [a]^+_y[b]^-_y = \frac{2}{y^{1/2}+y^{-1/2}}[a\pm b]^-_y. $
		
		\item Let $ \langle a_k \rangle_{k=1}^n\subset \mbb{Z} $ is a finite arithmetic progression of integers with difference $ d $. Suppose that $ \langle a_k \rangle_{k=1}^n $ is symmetric around $ 0 $ (i.e. $ a_i=-a_{n+1-i} $ for all $ i=1,\dots,n $) then
		\[ \sum_{k=1}^n[a_k]^{-}_{y} = 0 \]
		and
		\[ \sum_{k=1}^n[a_k]^+_y = \frac{2}{y^{1/2}+y^{-1/2}}[n]^-_{y^{d/2}}. \]
	\end{enumerate}
\end{lemma}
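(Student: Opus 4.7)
\smallskip

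The plan is to verify all seven identities by the same underlying mechanism: substitute $z := y^{1/2}$, so that
\[
[\mu]_y^- = \frac{z^\mu - z^{-\mu}}{z - z^{-1}}, \qquad [\mu]_y^+ = \frac{z^\mu + z^{-\mu}}{z + z^{-1}},
\]
and then each assertion becomes an elementary manipulation of Laurent monomials in $z$. The only nontrivial piece is item (7), for which I will use a finite geometric series. No enumerative content is involved.

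Items (1) and (2) are immediate from $z^{-a} \pm z^{a} = \pm(z^a \pm z^{-a})$. For (3), expand the product in the numerator and regroup:
\[
(z^a - z^{-a})(z^b - z^{-b}) = (z^{a+b} + z^{-(a+b)}) - (z^{a-b} + z^{-(a-b)}),
\]
then divide by $(z-z^{-1})^2$ and rewrite each bracketed term as $(z+z^{-1})[a\pm b]_y^+$, producing the stated factor $(y^{1/2}+y^{-1/2})/(y^{1/2}-y^{-1/2})^2$. The analogous expansion
\[
(z^a - z^{-a})(z^b + z^{-b}) = (z^{a+b} - z^{-(a+b)}) + (z^{a-b} - z^{-(a-b)})
\]
yields (4), and the same trick with two $+$'s gives (5). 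Identity (6) is an immediate consequence of (4) combined with (1)--(2), since swapping $a$ and $b$ in (4) produces the second summand and the signs align.

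The only item requiring slightly more work is (7). Writing $a_k = d(k - (n+1)/2)$ for the symmetric arithmetic progression and setting $w := y^{d/2}$, we compute
\[
\sum_{k=1}^n z^{a_k} = \sum_{k=1}^n w^{k - (n+1)/2} = \frac{w^{n/2} - w^{-n/2}}{w^{1/2} - w^{-1/2}} = [n]_{y^{d/2}}^-,
\]
by summing the finite geometric series and clearing denominators with $w^{-1/2}$. By the symmetry $a_{n+1-k} = -a_k$, the sum $\sum_k z^{-a_k}$ equals the same expression. Dividing by $z-z^{-1}$, respectively $z+z^{-1}$, and using the difference, respectively sum, of these two equal quantities yields both claims of (7). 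The main (and in fact only) obstacle is bookkeeping of half-integer exponents of $y$; aside from this, the proof is a direct calculation.
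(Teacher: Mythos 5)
Your verification is correct, and it is exactly the direct computation the paper has in mind: the lemma is stated there without proof as a list of ``evident properties,'' so the substitution $z=y^{1/2}$ followed by expanding Laurent monomials (and a finite geometric series for item (7)) is the intended argument. The only cosmetic caveat is the degenerate case $d=0$ in (7), where the geometric-series step should be read as the limit $[n]^-_{y^0}=n$, which still matches the stated formula.
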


\subsection{Regular curve with four-valent vertices}\label{sec:refined_invariance_proof_regular_4valent}
We begin with case $(1)$ in Lemma \ref{lemma:codim1_degenerations_list}.

\begin{lemma}\label{lemma:invariance_regular_4_valent}
	Let $\alpha$ be a combinatorial type of a regular tropical curve all of whose vertices are at most $4$-valent, and all its marked vertices are trivalent.
	Suppose that $\alpha$ is of co-dimension $1$, i.e. \linebreak ${\dim \openmoduli{\alpha} = |\Delta|+g-2+n_{si}}$.
	Then Lemma \ref{lemma:local_refined_invariance} holds for $\alpha$.
\end{lemma}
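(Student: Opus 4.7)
The plan is to exploit the codimension-one structure of $\alpha$. By Lemma \ref{lemma:dim_moduli_cells}, the hypotheses force $\alpha$ to contain a unique unmarked vertex $V$ of valency four, with all other vertices trivalent and $\Gamma$ having no collinear cycles. Let $E_1,\dots,E_4$ be the edges incident to $V$, with primitive outgoing directions $v_1,\dots,v_4$ and weights $w_1,\dots,w_4$, so that the balancing condition reads $\sum_i w_iv_i=0$.

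First I would enumerate the regenerations. Each $\beta$ is obtained by splitting $V$ into two trivalent vertices $V',V''$ joined by a new bounded edge, together with a partition $\{E_i,E_j\}\mid\{E_k,E_l\}$ of the four incident edges that records which pair attaches to each new vertex. There are three such partitions, yielding three candidate regenerations whose new-vertex Mikhalkin weights are $w_iw_j|v_i\wedge v_j|$ and $w_kw_l|v_k\wedge v_l|$. Next, examining the tangent direction along which the length of the new bounded edge grows in local affine coordinates on $\modulispace$ and its image under $\text{Ev}$, I would verify that one partition populates one side of $H_0$ while the other two populate the opposite side, matching the pattern familiar from the classical Mikhalkin wall-crossing analysis (see e.g.\ \cite[Proposition 3.9]{Gathmann2007}).

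Once the sides are identified, \eqref{eq:local_refined_invariance} reduces, after cancelling the common factors contributed by all vertices and collinear cycles of $\alpha$ away from $V$, to a single quantum identity of the form
\[
[\mu']_y^-\,[\mu'']_y^- \;=\; [\mu_1]_y^-\,[\mu_2]_y^- + [\nu_1]_y^-\,[\nu_2]_y^-.
\]
This is the $y$-refinement of the classical Pl\"ucker-type relation
\[
|v_{i_1}\wedge v_{i_2}|\cdot|v_{i_3}\wedge v_{i_4}| \;=\; |v_{j_1}\wedge v_{j_2}|\cdot|v_{j_3}\wedge v_{j_4}| + |v_{k_1}\wedge v_{k_2}|\cdot|v_{k_3}\wedge v_{k_4}|
\]
that follows from $\sum_i w_iv_i=0$. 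To promote it to the refined level I would apply Lemma \ref{lemma:algebra_of_quantum_weights}(3) to rewrite each $[a]_y^-[b]_y^-$ as a universal scalar multiple of $[a+b]_y^+ - [|a-b|]_y^+$, after which the three pairs of sums and differences interleave so that the $[\cdot]_y^+$-symbols telescope.

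The main obstacle will be the bookkeeping of the wall-side assignment in step two: first, distinguishing the ``majority'' partition in the generic three-pairing scenario; and second, handling the degenerate sub-case in which two of $v_1,\dots,v_4$ are parallel, so that one candidate partition produces a flat trivalent vertex and must be discarded as non-admissible in case (1). In that sub-case only two regenerations survive and the identity collapses to $[\mu_1]_y^-[\mu_2]_y^- = [\nu_1]_y^-[\nu_2]_y^-$, which follows directly from the degenerate form $|v_{i_1}\wedge v_{i_2}|\cdot|v_{i_3}\wedge v_{i_4}| = |v_{j_1}\wedge v_{j_2}|\cdot|v_{j_3}\wedge v_{j_4}|$ of the Pl\"ucker relation via the same $[\cdot]_y^+$-reformulation.
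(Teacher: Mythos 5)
Your opening reduction is where the argument breaks: you claim that Lemma \ref{lemma:dim_moduli_cells} forces $\alpha$ to have a \emph{unique} unmarked $4$-valent vertex. That is false in positive genus. The relevant option in Lemma \ref{lemma:dim_moduli_cells} only says that all unmarked vertices have valency at most $4$, and Example \ref{ex:trop_curve_3_4valent_1_codim} exhibits a genus-$3$ type with \emph{two} $4$-valent vertices whose cell nevertheless has codimension $1$ (the overvalence no longer computes the codimension once $b_1(\Gamma)>0$). Consequently the set of regenerations is not three combinatorial types but a product $\prod_{i=1}^k\{E_{i,1},E_{i,2},E_{i,3}\}$ over all $k$ four-valent vertices, and the wall-crossing identity you write down only handles $k=1$. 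The paper's proof is organized precisely around this: it constructs displacement vectors $\delta(E)$ edge by edge, assigns a local sign $\sigma(E_{i,j})$ at each $4$-valent vertex, shows (using regularity, so that the local deformations are independent) that $\tau(\mathcal{A}_{\pm})=\prod_i\{E_{i,j}:\sigma(E_{i,j})=\pm1\}$, and then deduces \eqref{eq:local_refined_invariance} from the factorization
\[
\sum_{\beta\in\mathcal{A}_{+}}\RB_y(\beta)-\sum_{\beta\in\mathcal{A}_{-}}\RB_y(\beta)
= W\Bigl(\prod_{i}\sum_{\sigma(E_{i,j})=1}\RB_y(E_{i,j})-\prod_{i}\sum_{\sigma(E_{i,j})=-1}\RB_y(E_{i,j})\Bigr),
\]
which vanishes because the local identity $\sum_{j}\sigma(E_{i,j})\RB_y(E_{i,j})=0$ holds separately at each vertex. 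Your proposal contains no analogue of this product structure, and the naive "one side gets one regeneration, the other side gets two" picture does not even describe the sets $\mathcal{A}_{\pm}$ correctly when $k\ge2$.

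Two smaller points. First, the single-vertex quantum identity you plan to verify via Lemma \ref{lemma:algebra_of_quantum_weights}(3) is correct and is exactly what the paper imports from \cite[Lemma 3.3]{IM}; reproving it is harmless but not the content of this lemma. Second, the hypothesis does not exclude collinear cycles elsewhere in $\Gamma$ (case (1) of Lemma \ref{lemma:codim1_degenerations_list} explicitly allows a collinear cycle away from the $4$-valent vertices); their contribution must be absorbed into the common factor $W$ rather than assumed absent.
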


\begin{proof}
	Let $(\Gamma, \bs{p}, h)$ be a tropical curve with $[\Gamma]=\alpha$ and let $V_1, \dots, V_k$ be the $4$-valent vertices of $\Gamma$ which are not part of a collinear cycle.
	Let $\bs{x}\in \text{Ev}(\openmoduli{\alpha})$, and let $\bs{w}\in\R^{2n}$ be a vector with $ \bs{x}+\varepsilon\bs{w}\in H^+$ for small enough positive $\varepsilon$.
	We need to show that
	\[ \sum_{\Gamma'\in Ev^{-1}(\bs{x}+\varepsilon\cdot \bs{w})}\RB_y(\Gamma') = \sum_{\Gamma'\in Ev^{-1}(\bs{x}-\varepsilon\cdot \bs{w})}\RB_y(\Gamma'). \]
	
	Let $V_i$ be a $4$-valent vertex of $\Gamma$ with edges $E_{i,1}, E_{i,2}, E_{i,3}$, and $E_{i,4}$.
	Suppose that $\beta$ is a proper regeneration of $\alpha$.
	Since $\beta$ is maximal dimensional, it is trivalent.
	This means that there are exactly two vertices in $\beta$ that degenerate to $V_i$.
	One of those vertices is adjacent to the edge of $\beta$ with limit equal to $E_{i,4}$, and this edge is adjacent to two other edges, one that contracts in $\alpha$, and the other becomes in $\alpha$ one of $\{E_{i,1}, E_{i,2}, E_{i,3}\}$.
	We thus get a well-defined map
	\begin{align*}
		\tau: \mathcal{A}_{+}\cup \mathcal{A}_{-} \to \prod_{i=1}^k\{E_{i,1}, E_{i,2}, E_{i,3}\}.
	\end{align*}
	It is easy to see that this map is injective, since the knowledge of the local deformations of each $4$-valent vertex is enough to determine the combinatorial type $\beta$.
	
	Moreover, we can determine $\RB_y(\beta)$ directly from $\tau(\beta)$ as follows.
	Denote by $W$ the contribution to $\RB_y(\Gamma)$ of all the collinear cycles and vertices except $V_1,\dots,V_k$.
	For $1\le i \le k$ define
	\begin{align*}
		\RB_y(E_{i,1}) & = \left[\left| a_{V_i}(E_{i,1})\wedge a_{V_i}(E_{i,4})\right| \right]_y^{-}\cdot \left[\left| a_{V_i}(E_{i,2})\wedge a_{V_i}(E_{i,3})\right|\right]_y^{-}\\
		\RB_y(E_{i,2}) & = \left[\left| a_{V_i}(E_{i,2})\wedge a_{V_i}(E_{i,4})\right|\right]_y^{-}\cdot \left[\left| a_{V_i}(E_{i,1})\wedge a_{V_i}(E_{i,3})\right|\right]_y^{-}\\
		\RB_y(E_{i,3}) & = \left[\left| a_{V_i}(E_{i,3})\wedge a_{V_i}(E_{i,4})\right|\right]_y^{-}\cdot \left[\left| a_{V_i}(E_{i,1})\wedge a_{V_i}(E_{i,2})\right|\right]_y^{-}.	
	\end{align*}
	Then
	\begin{equation*}
		\RB_y(\beta) = W\cdot \prod_{i=1}^k \RB_y(\tau(\beta)_i).
	\end{equation*}
	
	For $E\in \Gamma^1$ define $\mathcal{L}(E):=\nicefrac{\R^2}{a_V(E)\cdot \R}$ where $V$ is an endpoint of $E$.
	We define \emph{displacement vectors} $\delta(E)\in \mathcal{L}(E)$ for all edges $E\in \Gamma^1$.
	Those vectors will describe where the edges move as we move $\bs{x}$ along the line $\bs{x}+\bs{w}\cdot\R$.
	We will define $\delta(E)$ inductively.
	If $p_i$ is a marking in the $E$ (either in the interior of $E$ or a marked endpoint of $E$), we define $\delta(E)$ as the image of $w_i$ in $\mathcal{L}(E)$.
	If $V$ is a trivalent vertex with edges $E_1,E_2,E_3$ s.t. $\delta(E_1)$ and $\delta(E_2)$ are already defined, we define $\delta(E_3)$ to be the image in $\mathcal{L}(E_3)$ of the intersection point of the lines $\delta(E_1) + a_V(E_1)\cdot \R$ and $\delta(E_2) + a_V(E_2)\cdot \R$.
	If $V$ is a $4$-valent vertex with edges $E_1,E_2,E_3,E_4$, s.t. $\delta(E_i)$ is already defined for $1\le i\le 3$, we define $\delta'$ to be the intersection point of the lines $\delta(E_1)+a_V(E_1)\cdot\R$ and $\delta(E_2)+a_V(E_2)\cdot\R$, and then define $\delta(E_4)$ as the image in $\mathcal{L}(E_4)$ of the intersection point of the lines $ \delta'+(a_V(E_1)+a_V(E_2))\cdot \R$ and $\delta(E_3)+a_V(E_3)\cdot \R$.
	A simple computation shows that the balancing condition implies that $a_V(E_4)\wedge \delta(E_4) = \sum_{i=1}^3\delta(E_i)\wedge a_V(E_i)$, so the definition of $\delta(E_4)$ does not depend on the choice of order among $E_1,E_2$, and $E_3$.
	
	We will now assign to each $E_{i,j}$ ($1\le i\le k$, $1\le j\le 3$) a sign $\sigma(E_{i,j})$ that will indicate if the corresponding deformation of $V_i$ occur for regenerations in $\mathcal{A}_{+}$ or for regenerations in $\mathcal{A}_{-}$.
	Let $\{E_{i,1}, E_{i,2}, E_{i,3}\}\setminus\{E_{i,j}\} = \{E,E'\}$.
	Denote by $\delta'$ the intersection point of the lines $ \delta(E)+a_{V_i}(E)\cdot \R$ and $\delta(E') + a_{V_i}(E')\cdot \R$.
	We set $\sigma(E_{i,j})$ to be the sign of $a_{V_i}(E_{i,j})\wedge \left( \delta'-\delta(E_{i,j}) \right)$.
	It is defined in this way so that $\sigma(E_{i,j})$ is positive exactly when the local deformation of $V_i$ that correspond to $E_{i,j}$ can occur with displacements $\delta(E_{i,1}), \delta(E_{i,2})$, and $\delta(E_{i,3})$.
	
	Since everything is local, and $\Gamma$ is regular, we get that $\tau(\mathcal{A}_{+})$ (correspondingly, $ \tau(\mathcal{A}_{-})$) is exactly $\prod_{i=1}^k\setc{E_{i,j}}{\sigma(E_{i,j})=1}$ (correspondingly, $\prod_{i=1}^k\setc{E_{i,j}}{\sigma(E_{i,j})=-1}$).
	
	The final piece of the proof is the fact that locally the refined 
weight is invariant, i.e. for all $1\le i\le k$, $\sum_{j=1}^3 \sigma(E_{i,j})\cdot \RB_y(E_{i,j})=0$.
	This is well known, see for example \cite[Lemma 3.3]{IM}.
	
	Putting everything together we get that (here $W$ is the contribution to $\RB_y(\Gamma)$ of all the collinear cycles and vertices except $V_1,\dots, V_k$)
	\begin{align*}
		& \sum_{\beta\in\mathcal{A}_{+}}\RB_y(\beta) - \sum_{\beta\in \mathcal{A}_{-}}\RB_y(\beta) = \\
		= & \sum_{\beta\in \mathcal{A}_{+}} W\cdot \prod_{i=1}^k \RB_y(\tau(\beta)_i) - \sum_{\beta\in \mathcal{A}_{-}} W\cdot \prod_{i=1}^k \RB_y(\tau(\beta)_i) = \\
		= & W\cdot \left( \prod_{i=1}^k \sum_{\substack{j \\\sigma(E_{i,j})=1}}\RB_y(E_{i,j}) - \prod_{i=1}^k \sum_{\substack{j \\ \sigma(E_{i,j})=-1}}\RB_y(E_{i,j}) \right) =0
	\end{align*}
\end{proof}

\subsection{Unmarked $5$-valent vertex}\label{sec:refined_invariance_5valent}
Let $\alpha$ be a combinatorial type of a tropical curve as in case $(2)$ of Lemma \ref{lemma:codim1_degenerations_list}.
We denote by $v_1$ the primitive vector in the direction of the unique collinear cycle in the codimension $1$ fragment and by $w_1$ and $w_1'$ the weights of the edges in the collinear cycle.
By $v_2, v_3$, and $v_4$ we denote the vectors in the direction of the other edges adjacent to the unique $5$ valent vertex of $\alpha$.
For simplicity of exposition we add some assumptions on the values of $v_1,\dots,v_4$ and $w_1,w_1'$.
Namely, we assume $ v_1\wedge v_2$, $v_2 \wedge v_3$, $v_3 \wedge v_4$, $v_4 \wedge v_1$, $v_3 \wedge v_1$, and $ v_4 \wedge v_2$ are all positive.
This correspond to the vectors $v_1$, $v_2$, $v_3$, and $v_4$ all to appear in order when traversing them counter-clockwise, as well as the angles between $v_3$ to $v_1$ and between $v_4$ to $v_2$ to be less than $\pi$ (see Figure \ref{fig:degen_unmarked_4valent_r1}(a)).
Some choices for the signs of the above wedge products will lead to a symmetric picture, while others would lead to different proper regenerations of $\alpha$, but the analysis and final result stays the same.

The regenerations in this case depicted in Figure \ref{fig:degen_unmarked_4valent_r1}(b)-(f).
The conditions on $v_1,\dots,v_4$ and $w_1,w_1'$ we have assumed ensure that the combinatorial type that is depicted in Figure \ref{fig:degen_unmarked_4valent_r1}(b) is in $\mathcal{A}_{-}$ (see the notations of Lemma \ref{lemma:local_refined_invariance}) and all the other regenerations are in $\mathcal{A}_{+}$.
It is thus enough to show, after cancelation by $\frac{2}{y^{1/2}+y^{-1/2}}$, that
\begin{align*}
	& \varphi^{(1)}_y(w_1, w_1', v_1\wedge v_2)\cdot \left[ v_3 \wedge v_4  \right]^{-}_y  = \\
	= & \left[ w_1 v_4\wedge v_1 \right]^{-}_y\cdot \left[(w_1v_1+v_4)\wedge v_2 \right]_y^{-}\cdot \left[ w_1' v_3\wedge v_1 \right]^{-}_y  + \\
	& + \varphi^{(1)}_y(w_1, w_1', v_4\wedge v_1)\cdot \left[ v_2 \wedge v_3 \right]_y^{-}  +\\
	& + \left[ w_1' v_4\wedge v_1 \right]^{-}_y\cdot \left[(w_1'v_1+v_4)\wedge v_2 \right]_y^{-}\cdot \left[ w_1 v_3\wedge v_1 \right]^{-}_y + \\
	& + \varphi^{(1)}_y(w_1, w_1', v_3\wedge v_1)\cdot \left[ v_4 \wedge v_2 \right]_y^{-}.
\end{align*}
This is an elementary verification using Lemma \ref{lemma:algebra_of_quantum_weights} and the balancing condition $ (w_1+w_1')v_1+v_2+v_3+v_4=0$.

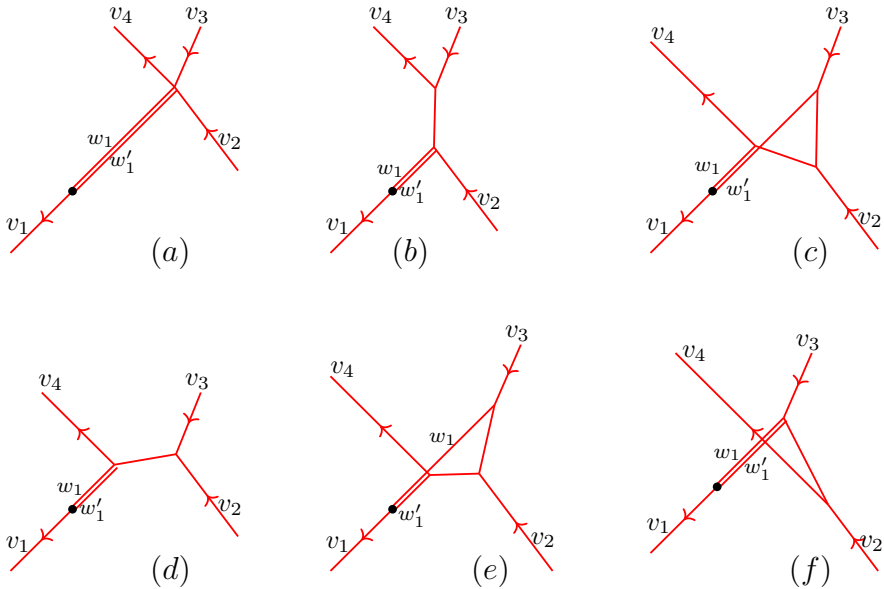
\begin{figure}
	\begin{center}
	\begin{scaletikzpicturetowidth}{\textwidth}
	\begin{tikzpicture}
		\tikzset{every path/.style={line width=0.7pt}}
		\begin{scope}[scale=\tikzscale, xshift=0, yshift=-0, decoration={markings,mark=at position 0.5 with {\arrow{>}}}]
			\draw [red, postaction={decorate}] (0.272727, 0.272727) -- node [above, very near end, black] {\footnotesize $v_1$} (-0.000000, -0.000000);
			\draw [red, postaction={decorate}] (1.000000, 0.363636) -- node [above, very near start, black] {\footnotesize $v_2$} (0.720909, 0.733637);
			\draw [red, postaction={decorate}] (0.836364, 1.000000) -- node [above, very near start, black] {\footnotesize $v_3$} (0.720909, 0.733637);
			\draw [red, postaction={decorate}] (0.720909, 0.733637) -- node [above, very near end, black] {\footnotesize $v_4$} (0.454545, 1.000000);
			\draw [red] (0.733637, 0.720909) -- node [left, black] {\tiny $w_1$} (0.279091, 0.266363);
			\draw [red] (0.720909, 0.733637) -- node [below, black] {\tiny $w_1'$} (0.266363, 0.279091);
			\draw [black, fill=black] (0.272727, 0.272727) circle (0.015000);
			
		\end{scope}
		\begin{scope}[scale=\tikzscale, xshift=20, yshift=-0]
			\draw (0,0) node {$ (a) $};
		\end{scope}
		\begin{scope}[scale=\tikzscale, xshift=40, yshift=-0, decoration={markings,mark=at position 0.5 with {\arrow{>}}}]
			\draw [red, postaction={decorate}] (0.272727, 0.272727) -- node [above, very near end, black] {\footnotesize $v_1$} (0.000000, 0.000000);
			\draw [red, postaction={decorate}] (0.733733, 0.097370) -- node [above, very near start, black] {\footnotesize $v_2$} (0.454642, 0.467370);
			\draw [red, postaction={decorate}] (0.570097, 1.000000) -- node [above, very near start, black] {\footnotesize $v_3$} (0.461006, 0.727273);
			\draw [red, postaction={decorate}] (0.461006, 0.727273) -- node [above, very near end, black] {\footnotesize $v_4$} (0.188279, 1.000000);
			\draw [red] (0.454642, 0.467370) -- (0.461006, 0.727273);
			\draw [red] (0.467370, 0.454642) -- node [left, black] {\tiny $w_1$} (0.279091, 0.266363);
			\draw [red] (0.454642, 0.467370) -- node [below, black] {\tiny $w_1'$} (0.266363, 0.279091);
			\draw [black, fill=black] (0.272727, 0.272727) circle (0.015000);
			
		\end{scope}
		\begin{scope}[scale=\tikzscale, xshift=50, yshift=-0]
			\draw (0,0) node {$ (b) $};
		\end{scope}
		\begin{scope}[scale=\tikzscale, xshift=80, yshift=-0, decoration={markings,mark=at position 0.5 with {\arrow{>}}}]
			\draw [red, postaction={decorate}] (0.272727, 0.272727) -- node [above, very near end, black] {\footnotesize $v_1$} (0.000000, 0.000000);
			\draw [red, postaction={decorate}] (1.000000, 0.016395) -- node [above, very near start, black] {\footnotesize $v_2$} (0.727273, 0.380031);
			\draw [red, postaction={decorate}] (0.836364, 1.000000) -- node [above, very near start, black] {\footnotesize $v_3$} (0.733637, 0.720909);
			\draw [red, postaction={decorate}] (0.460477, 0.473205) -- node [above, very near end, black] {\footnotesize $v_4$} (-0.000000, 0.933683);
			\draw [red] (0.460477, 0.473205) -- (0.727273, 0.380031);
			\draw [red] (0.460477, 0.473205) -- node [left, black] {\tiny $w_1$} (0.266363, 0.279091);
			\draw [red] (0.727273, 0.380031) -- (0.733637, 0.720909);
			\draw [red] (0.733637, 0.720909) -- node [below, near end, black] {\tiny $w_1'$} (0.279091, 0.266363);
			\draw [black, fill=black] (0.272727, 0.272727) circle (0.015000);
			
		\end{scope}
		\begin{scope}[scale=\tikzscale, xshift=100, yshift=-0]
			\draw (0,0) node {$ (c) $};
		\end{scope}
	
		\begin{scope}[scale=\tikzscale, xshift=00, yshift=-40, decoration={markings,mark=at position 0.5 with {\arrow{>}}}]
			\draw [red, postaction={decorate}] (0.272727, 0.272727) -- node [above, very near end, black] {\footnotesize $v_1$} (0.000000, 0.000000);
			\draw [red, postaction={decorate}] (1.000000, 0.152356) -- node [above, very near start, black] {\footnotesize $v_2$} (0.727273, 0.515993);
			\draw [red, postaction={decorate}] (0.836364, 0.788720) -- node [above, very near start, black] {\footnotesize $v_3$} (0.727273, 0.515993);
			\draw [red, postaction={decorate}] (0.456809, 0.469537) -- node [above, very near end, black] {\footnotesize $v_4$} (0.137625, 0.788720);
			\draw [red] (0.456809, 0.469537) -- (0.727273, 0.515993);
			\draw [red] (0.469537, 0.456809) -- node [left, black] {\tiny $w_1$} (0.279091, 0.266363);
			\draw [red] (0.456809, 0.469537) -- node [below, black] {\tiny $w_1'$} (0.266363, 0.279091);
			\draw [black, fill=black] (0.272727, 0.272727) circle (0.015000);
			
		\end{scope}
		\begin{scope}[scale=\tikzscale, xshift=20, yshift=-40]
			\draw (0,0) node {$ (d) $};
		\end{scope}\begin{scope}[scale=\tikzscale, xshift=40, yshift=-40, decoration={markings,mark=at position 0.5 with {\arrow{>}}}]
			\draw [red, postaction={decorate}] (0.272727, 0.272727) -- node [above, very near end, black] {\footnotesize $v_1$} (0.000000, 0.000000);
			\draw [red, postaction={decorate}] (0.975703, 0.000000) -- node [above, very near start, black] {\footnotesize $v_2$} (0.653017, 0.430248);
			\draw [red, postaction={decorate}] (0.836364, 1.000000) -- node [above, very near start, black] {\footnotesize $v_3$} (0.720909, 0.733637);
			\draw [red, postaction={decorate}] (0.436612, 0.423884) -- node [above, very near end, black] {\footnotesize $v_4$} (0.000000, 0.860497);
			\draw [red] (0.720909, 0.733637) -- (0.653017, 0.430248);
			\draw [red] (0.720909, 0.733637) -- node [above, black] {\tiny $w_1$} (0.266363, 0.279091);
			\draw [red] (0.653017, 0.430248) -- (0.436612, 0.423884);
			\draw [red] (0.436612, 0.423884) -- node [below, black] {\tiny $w_1'$} (0.279091, 0.266363);
			\draw [black, fill=black] (0.272727, 0.272727) circle (0.015000);
			
		\end{scope}
		\begin{scope}[scale=\tikzscale, xshift=60, yshift=-40]
			\draw (0,0) node {$ (e) $};
		\end{scope}
		\begin{scope}[scale=\tikzscale, xshift=80, yshift=-40, decoration={markings,mark=at position 0.5 with {\arrow{>}}}]
			\draw [red, postaction={decorate}] (0.292683, 0.371843) -- node [above, very near end, black] {\footnotesize $v_1$} (0.000000, 0.079160);
			\draw [red, postaction={decorate}] (1.000000, 0.000000) -- node [above, very near start, black] {\footnotesize $v_2$} (0.780488, 0.292683);
			\draw [red, postaction={decorate}] (0.708573, 0.963342) -- node [above, very near start, black] {\footnotesize $v_3$} (0.585136, 0.677023);
			\draw [red, postaction={decorate}] (0.780488, 0.292683) -- node [above, very near end, black] {\footnotesize $v_4$} (0.109828, 0.963342);
			\draw [red] (0.585136, 0.677023) -- (0.780488, 0.292683);
			\draw [red] (0.597864, 0.664295) -- node [left, black] {\tiny $w_1$} (0.299047, 0.365479);
			\draw [red] (0.585136, 0.677023) -- node [right, near end, black] {\tiny $w_1'$} (0.286319, 0.378207);
			\draw [black, fill=black] (0.292683, 0.371843) circle (0.015000);
			
		\end{scope}
		\begin{scope}[scale=\tikzscale, xshift=100, yshift=-40]
			\draw (0,0) node {$ (f) $};
		\end{scope}
	\end{tikzpicture}
	\end{scaletikzpicturetowidth}
	\end{center}
	
	\caption{Case $ (2) $ of Lemma \ref{lemma:codim1_degenerations_list} and its possible regenerations.}\label{fig:degen_unmarked_4valent_r1}
\end{figure}

\subsection{Marked four valent vertex}\label{sec:refined_invariance_proof_marked_4valent}
We will now deal with case $(3)$ of Lemma \ref{lemma:codim1_degenerations_list}.
Case $(3a)$ with $r\in \{1,2\}$ where dealt with in \cite[Sections 4.3, 4.4]{SS}, we will not repeat it here.

\subsubsection{Case $(3a)$ with $r=3$}
The regenerations in this case are very similar to the regenerations in case $(3a)$ with $r=1$ (see \cite[Section 4.3]{SS}).
Let $E_1,E_2,E_3$ be the edges adjacent to the marked $4$-valent vertex which are contained in the bounded component of $\Gamma\setminus \bs{p}$, and let $E_4$ be the edge adjacent to the $4$-valent vertex which is contained in an unbounded component of $\Gamma\setminus\bs{p}$.
For $i =1,\dots,4$ denote by $ v_i$ the vector in the direction of $E_i$ with lattice length equal to the weight of $E_i$.
Assume for simplicity that $v_1\wedge v_2$, $v_2\wedge v_3$, $v_3\wedge v_4$, $v_4\wedge v_1$, $v_3\wedge v_1$, and $v_4 \wedge v_2$ are all positive, see Figure \ref{fig:degen_marked_4valent_a_r3}.
Other assumptions lead to similar analysis.
Note that the arrows in Figure \ref{fig:degen_marked_4valent_a_r3} denote the regular orientation, not the orientation of the vectors $v_1,v_2,v_3,v_4$, which are always point away from the $4$-valent vertex.

We assert that under those assumptions there exist $3$ regenerations of $\alpha$ depicted in Figure \ref{fig:degen_marked_4valent_a_r3}(b)-(d), and that the regeneration that contains a vertex adjacent to edges with directions $v_1$ and $v_2$ (Figure \ref{fig:degen_marked_4valent_a_r3}(d)) is in $\mathcal{A}_{-}$, while the other two regenerations are in $\mathcal{A}_{+}$.
Denote by $\beta_1$ (respectively $\beta_2$, respectively $\beta_3$) the regeneration depicted in Figure \ref{fig:degen_marked_4valent_a_r3}(b) (respectively Figure \ref{fig:degen_marked_4valent_a_r3}(c), respectively Figure \ref{fig:degen_marked_4valent_a_r3}(d)), and let $\Gamma_i$ be a curve of type $\beta_i$.
For a curve $\Gamma$ whose combinatorial type is a regeneration of $\alpha$ and for $j\in \{1,2,3\} $, denote by $V_j(\Gamma)$ the vertex in $\Gamma$ that degenerates to the $4$-valent vertex in $\alpha$ and is adjacent to the edge with direction $v_j$, and by $\delta_j(\Gamma)$ the displacement vector of $V_j(\Gamma)$ relative to the position of the $4$-valent vertex in $\alpha$.
The bounded component of $\alpha$ imposes linear condition on the vectors $\delta_j(\Gamma)$ of the form $\sum_{j=1}^3 a_j\cdot (v_j\wedge \delta_j(\Gamma)) = 0$ with $a_1,a_2,a_3\in \R$, that holds for every $\Gamma$ with combinatorial type which is a regeneraetion of $\alpha$.
Picking $\Gamma$ with $v_3\wedge \delta_3(\Gamma)=0$ and traversing the cycle in the bounded component that connects $E_1$ and $E_2$ we get that $a_1$ and $a_2$ have the same sign.
Similarly $a_2$ and $a_3$ also have the same sign, so we can assume $a_1,a_2,a_3>0$.
Now obviously $\delta_j(\Gamma_i)=\delta_k(\Gamma_i)$ whenever $j$ and $k$ are different from $i$, and additionally $v_1\wedge (\delta_1(\Gamma_1)-\delta_2(\Gamma_1)) < 0$, $v_2\wedge (\delta_2(\Gamma_2)-\delta_1(\Gamma_2)) < 0$, and $v_3\wedge (\delta_3(\Gamma_3)-\delta_1(\Gamma_3)) > 0$.
Combining this together gives us $\left(\sum_{j=1}^3 a_jv_j\right)\wedge \delta_2(\Gamma_1) > 0$, $\left(\sum_{j=1}^3 a_jv_j\right)\wedge \delta_1(\Gamma_2) > 0$, and $\left(\sum_{j=1}^3 a_jv_j\right)\wedge \delta_1(\Gamma_3) < 0$, which proves the assertion.

Thus the local invariance of the refined count in this case follows from the same formula as in case $(3a)$ with $r=1$, i.e.
\begin{equation*}
	\left[ v_1\wedge v_2 \right]^{+}_y \cdot \left[ v_3 \wedge v_4 \right]^{-}_y =  \left[ v_3\wedge v_1 \right]^{+}_y \cdot \left[ v_4 \wedge v_2 \right]^{-}_y + \left[ v_2\wedge v_3 \right]^{+}_y \cdot \left[ v_4 \wedge v_1 \right]^{-}_y,
\end{equation*}
which follows readily from the balancing condition $v_1+v_2+v_3+v_4 =0 $ and Lemma \ref{lemma:algebra_of_quantum_weights}.

\begin{figure}
	\begin{center}
	\begin{scaletikzpicturetowidth}{\textwidth}
	\begin{tikzpicture}
		\begin{scope}[scale=\tikzscale, xshift=0, yshift=-0, decoration={markings,mark=at position 0.5 with {\arrow{>}}}]
			\draw [red, thick] (0.000000, 0.000000) -- node [black, left, very near start] {\footnotesize $v_1$} (0.500000, 0.500000);
			\draw [red, thick] (0.875000, 0.000000) -- node [black, right, very near start] {\footnotesize $v_2$} (0.500000, 0.500000);
			\draw [red, thick] (0.700000, 1.000000) -- node [right, very near start, black] {\footnotesize $v_3$} (0.500000, 0.500000);
			\draw [thick, red, postaction={decorate}] (0.500000, 0.500000) -- node [above, very near end, black] {\footnotesize $v_4$} (0.000000, 0.500000);
			\draw [black, fill=black] (0.500000, 0.500000) circle (0.015000);
			
		\end{scope}
		\begin{scope}[scale=\tikzscale, xshift=15, yshift=-5]
			\draw (0,0) node {$ (a) $};
		\end{scope}
		\begin{scope}[scale=\tikzscale, xshift=40, yshift=5, decoration={markings,mark=at position 0.5 with {\arrow{>}}}]
			\draw [thick, red, postaction={decorate}] (0.000000, 0.000000) -- node [left, very near start, black] {\footnotesize $v_1$} (0.272727, 0.272727);
			\draw [thick, red, postaction={decorate}] (0.727273, 0.363636) -- node [right, very near end, black] {\footnotesize $v_2$} (1.000000, 0.000000);
			\draw [thick, red, postaction={decorate}] (0.727273, 0.363636) -- node [right, very near end, black] {\footnotesize $v_3$} (0.836364, 0.636364);
			\draw [thick, red, postaction={decorate}] (0.272727, 0.272727) -- node [above, very near end, black] {\footnotesize $v_4$} (0.000000, 0.272727);
			\draw [thick, red] (0.727273, 0.363636) -- (0.272727, 0.272727);
			\draw [black, fill=black] (0.727273, 0.363636) circle (0.015000);
			
		\end{scope}
		\begin{scope}[scale=\tikzscale, xshift=50, yshift=-5]
			\draw (0,0) node {$ (b) $};
		\end{scope}
		\begin{scope}[scale=\tikzscale, xshift=85, yshift=-0, decoration={markings,mark=at position 0.5 with {\arrow{>}}}]
			\draw [thick, red, postaction={decorate}] (0.386364, 0.727273) -- node [above, very near end, black] {\footnotesize $v_1$} (0.000000, 0.340909);
			\draw [thick, red, postaction={decorate}] (0.477273, 0.000000) -- node [right, very near start, black] {\footnotesize $v_2$} (0.272727, 0.272727);
			\draw [thick, red, postaction={decorate}] (0.386364, 0.727273) -- node [right, very near end, black] {\footnotesize $v_3$} (0.495455, 1.000000);
			\draw [thick, red, postaction={decorate}] (0.272727, 0.272727) -- node [below, very near end, black] {\footnotesize $v_4$} (0.000000, 0.272727);
			\draw [thick, red] (0.386364, 0.727273) -- (0.272727, 0.272727);
			\draw [black, fill=black] (0.386364, 0.727273) circle (0.015000);
			
		\end{scope}
		\begin{scope}[scale=\tikzscale, xshift=90, yshift=-5]
			\draw (0,0) node {$ (c) $};
		\end{scope}

		\begin{scope}[scale=\tikzscale, xshift=120, yshift=-0, decoration={markings,mark=at position 0.5 with {\arrow{>}}}]
			\draw [thick, red, postaction={decorate}] (0.454545, 0.272727) -- node [black, left, very near end] {\footnotesize $v_1$} (0.181818, 0.000000);
			\draw [thick, red, postaction={decorate}] (0.454545, 0.272727) -- node [black, right, very near end] {\footnotesize $v_2$} (0.659091, 0.000000);
			\draw [thick, red, postaction={decorate}] (0.381818, 1.000000) -- node [black, right, very near start] {\footnotesize $v_3$} (0.272727, 0.727273);
			\draw [thick, red, postaction={decorate}] (0.272727, 0.727273) -- node [black, above, very near end] {\footnotesize $v_4$} (0.000000, 0.727273);
			\draw [thick, red] (0.454545, 0.272727) -- (0.272727, 0.727273);
			\draw [black, fill=black] (0.454545, 0.272727) circle (0.015000);
			
		\end{scope}
		\begin{scope}[scale=\tikzscale, xshift=130, yshift=-5]
			\draw (0,0) node {$ (d) $};
		\end{scope}
	\end{tikzpicture}
	\end{scaletikzpicturetowidth}
	\end{center}
	\caption{Case $ (3a) $ of Lemma \ref{lemma:codim1_degenerations_list} with $ r=3 $ and its possible regenerations.}\label{fig:degen_marked_4valent_a_r3}
\end{figure}
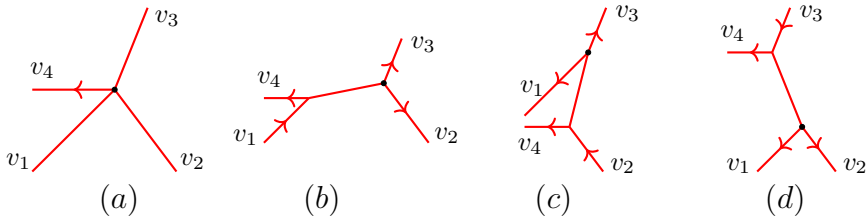

\subsubsection{Case $(3b)$}
This case is similar to the case $(2)$ of Lemma \ref{lemma:codim1_degenerations_list}.

We denote by $v_1$ the primitive vector in the direction of the unique collinear cycle in the codimension $1$ fragment and by $w_1$ and $w_1'$ the weights of the edges in the collinear cycle.
We denote by $E_1, E_2, E_3$ the edges adjacent to the $5$-valent vertex which are not part of a collinear cycle, and by $v_1, v_2, v_3$ vectors in their respective directions whose lattice lengths are equal to the weight of the corresponding edge.

As before, we assume $ v_1\wedge v_2$, $v_2 \wedge v_3$, $v_3 \wedge v_4$, $v_4 \wedge v_1$, $v_3 \wedge v_1$, and $ v_4 \wedge v_2$ are all positive.
The regenerations in this case depicted in Figure \ref{fig:degen_marked_4valent_b}(b)-(f).
The conditions on $v_1,\dots,v_4$ and $w_1,w_1'$ we have assumed ensure that the combinatorial type that is depicted in Figure \ref{fig:degen_marked_4valent_b}(b) is in $\mathcal{A}_{-}$ (see the notations of Lemma \ref{lemma:local_refined_invariance}) and all the other regenerations are in $\mathcal{A}_{+}$.
It is thus enough to show, after cancelation of $\frac{2}{y^{1/2}+y^{-1/2}}$, that
\begin{align*}
	& \varphi^{(1)}_y(w_1, w_1', v_1\wedge v_2)\cdot \left[ v_3 \wedge v_4  \right]^{+}_y  = \\
	= & \left[ w_1 v_4\wedge v_1 \right]^{-}_y\cdot \left[(w_1v_1+v_4)\wedge v_2 \right]_y^{+}\cdot \left[ w_1' v_3\wedge v_1 \right]^{-}_y  + \\
	& + \varphi^{(1)}_y(w_1, w_1', v_4\wedge v_1)\cdot \left[ v_2 \wedge v_3 \right]_y^{+}  +\\
	& + \left[ w_1' v_4\wedge v_1 \right]^{-}_y\cdot \left[(w_1'v_1+v_4)\wedge v_2 \right]_y^{+}\cdot \left[ w_1 v_3\wedge v_1 \right]^{-}_y + \\
	& + \varphi^{(1)}_y(w_1, w_1', v_3\wedge v_1)\cdot \left[ v_4 \wedge v_2 \right]_y^{+}.
\end{align*}
This is an elementary verification using Lemma \ref{lemma:algebra_of_quantum_weights} and the balancing condition $ (w_1+w_1')v_1+v_2+v_3+v_4=0$.

\begin{figure}
	\begin{center}
	\begin{scaletikzpicturetowidth}{\textwidth}
	\begin{tikzpicture}
		\tikzset{every path/.style = {line width=0.7pt, red}, every node/.style = {black}}
		\begin{scope}[scale=\tikzscale, xshift=0, yshift=-0, decoration={markings,mark=at position 0.5 with {\arrow{>}}}]
			\draw [postaction={decorate}] (0.272727, 0.272727) -- node [above, very near end, black] {\footnotesize $v_1$} (-0.000000, -0.000000);
			\draw [postaction={decorate}] (0.720909, 0.733637) -- node [above, very near end] {\footnotesize $v_2$} (1.000000, 0.363636);
			\draw [postaction={decorate}] (0.720909, 0.733637) -- node [above, very near end] {\footnotesize $v_3$} (0.836364, 1.000000);
			\draw [postaction={decorate}] (0.720909, 0.733637) -- node [above, very near end] {\footnotesize $v_4$} (0.454545, 1.000000);
			\draw [black, fill=black] (0.723454, 0.731091) circle (0.015000);
			\draw [] (0.733637, 0.720909) -- node [above left=-0.1, scale=0.8] {\tiny $w_1$} (0.279091, 0.266363);
			\draw [] (0.720909, 0.733637) -- node [below right=-0.1, scale=0.8] {\tiny $w'_1$} (0.266363, 0.279091);
			\draw [black, fill=black] (0.272727, 0.272727) circle (0.015000);
			
		\end{scope}
		\begin{scope}[scale=\tikzscale, xshift=20, yshift=-0]
			\draw (0,0) node {$ (a) $};
		\end{scope}
		\begin{scope}[scale=\tikzscale, xshift=40, yshift=-0, decoration={markings,mark=at position 0.5 with {\arrow{>}}}]
			\draw [postaction={decorate}] (0.272727, 0.272727) -- node [above, very near end] {\footnotesize $v_1$} (-0.000000, -0.000000);
			\draw [postaction={decorate}] (0.454642, 0.467370) -- node [above, very near end] {\footnotesize $v_2$} (0.733733, 0.097370);
			\draw [postaction={decorate}] (0.461006, 0.727273) -- node [above, very near end] {\footnotesize $v_3$} (0.570097, 1.000000);
			\draw [postaction={decorate}] (0.461006, 0.727273) -- node [above, very near end] {\footnotesize $v_4$} (0.188279, 1.000000);
			\draw [] (0.461006, 0.727273) -- (0.454642, 0.467370);
			\draw [black, fill=black] (0.461006, 0.727273) circle (0.015000);
			\draw [] (0.467370, 0.454642) -- node [above left=-0.1, scale=0.8] {\tiny $w_1$} (0.279091, 0.266363);
			\draw [] (0.454642, 0.467370) -- node [below right=-0.1, scale=0.8] {\tiny $w'_1$} (0.266363, 0.279091);
			\draw [black, fill=black] (0.272727, 0.272727) circle (0.015000);
			
		\end{scope}
		\begin{scope}[scale=\tikzscale, xshift=50, yshift=-0]
			\draw (0,0) node {$ (b) $};
		\end{scope}
		\begin{scope}[scale=\tikzscale, xshift=80, yshift=-0, decoration={markings,mark=at position 0.5 with {\arrow{>}}}]
			\draw [postaction={decorate}] (0.292683, 0.371843) -- node [above, very near end] {\footnotesize $v_1$} (-0.000000, 0.079160);
			\draw [postaction={decorate}] (0.780488, 0.292683) -- node [above, very near end] {\footnotesize $v_2$} (1.000000, 0.000000);
			\draw [postaction={decorate}] (0.585136, 0.677023) -- node [above, very near end] {\footnotesize $v_3$} (0.708573, 0.963342);
			\draw [postaction={decorate}] (0.780488, 0.292683) -- node [above, very near end] {\footnotesize $v_4$} (0.109828, 0.963342);
			\draw [] (0.780488, 0.292683) -- (0.585136, 0.677023);
			\draw [black, fill=black] (0.780488, 0.292683) circle (0.015000);
			\draw [] (0.597864, 0.664295) -- node [above left=-0.1, scale=0.8] {\tiny $w_1$} (0.299047, 0.365479);
			\draw [] (0.585136, 0.677023) -- node [below right=-0.1, scale=0.8] {\tiny $w'_1$} (0.286319, 0.378207);
			\draw [black, fill=black] (0.292683, 0.371843) circle (0.015000);
			
		\end{scope}
		\begin{scope}[scale=\tikzscale, xshift=95, yshift=-0]
			\draw (0,0) node {$ (c) $};
		\end{scope}
		\begin{scope}[scale=\tikzscale, xshift=120, yshift=-0, decoration={markings,mark=at position 0.5 with {\arrow{>}}}]
			\draw [postaction={decorate}] (0.272727, 0.272727) -- node [above, very near end] {\footnotesize $v_1$} (0.000000, 0.000000);
			\draw [postaction={decorate}] (0.727273, 0.380031) -- node [above, very near end] {\footnotesize $v_2$} (1.000000, 0.016395);
			\draw [postaction={decorate}] (0.720909, 0.733637) -- node [above, very near end] {\footnotesize $v_3$} (0.836364, 1.000000);
			\draw [postaction={decorate}] (0.473205, 0.460477) -- node [above, very near end] {\footnotesize $v_4$} (0.000000, 0.933683);
			\draw [] (0.727273, 0.380031) -- (0.720909, 0.733637);
			\draw [] (0.727273, 0.380031) -- (0.473205, 0.460477);
			\draw [black, fill=black] (0.727273, 0.380031) circle (0.015000);
			\draw [] (0.720909, 0.733637) -- node [left, scale=0.8, pos=0.2] {\tiny $w_1$} (0.266363, 0.279091);
			\draw [] (0.473205, 0.460477) -- node [below right=-0.1, scale=0.8] {\tiny $w'_1$} (0.279091, 0.266363);
			\draw [black, fill=black] (0.272727, 0.272727) circle (0.015000);
			
		\end{scope}
		\begin{scope}[scale=\tikzscale, xshift=135, yshift=-0]
			\draw (0,0) node {$ (d) $};
		\end{scope}
	
	\begin{scope}[scale=\tikzscale, xshift=0, yshift=-40, decoration={markings,mark=at position 0.5 with {\arrow{>}}}]
			\draw [postaction={decorate}] (0.272727, 0.272727) -- node [above, very near end] {\footnotesize $v_1$} (0.000000, 0.000000);
			\draw [postaction={decorate}] (0.653017, 0.430248) -- node [above, very near end] {\footnotesize $v_2$} (0.975703, 0.000000);
			\draw [postaction={decorate}] (0.720909, 0.733637) -- node [above, very near end] {\footnotesize $v_3$} (0.836364, 1.000000);
			\draw [postaction={decorate}] (0.436612, 0.423884) -- node [above, very near end] {\footnotesize $v_4$} (0.000000, 0.860497);
			\draw [] (0.653017, 0.430248) -- (0.720909, 0.733637);
			\draw [] (0.653017, 0.430248) -- (0.436612, 0.423884);
			\draw [black, fill=black] (0.653017, 0.430248) circle (0.015000);
			\draw [] (0.720909, 0.733637) -- node [above left=-0.1, scale=0.8, pos=0.25] {\tiny $w_1'$} (0.266363, 0.279091);
			\draw [] (0.436612, 0.423884) -- node [below right=-0.1, scale=0.8] {\tiny $w_1$} (0.279091, 0.266363);
			\draw [black, fill=black] (0.272727, 0.272727) circle (0.015000);
			
		\end{scope}
		\begin{scope}[scale=\tikzscale, xshift=15, yshift=-40]
			\draw (0,0) node {$ (e) $};
		\end{scope}
		\begin{scope}[scale=\tikzscale, xshift=40, yshift=-40, decoration={markings,mark=at position 0.5 with {\arrow{>}}}]
			\draw [postaction={decorate}] (0.272727, 0.272727) -- node [above, very near end] {\footnotesize $v_1$} (-0.000000, -0.000000);
			\draw [postaction={decorate}] (0.727273, 0.515993) -- node [above, very near end] {\footnotesize $v_2$} (1.000000, 0.152356);
			\draw [postaction={decorate}] (0.727273, 0.515993) -- node [above, very near end] {\footnotesize $v_3$} (0.836364, 0.788720);
			\draw [postaction={decorate}] (0.456809, 0.469537) -- node [above, very near end] {\footnotesize $v_4$} (0.137625, 0.788720);
			\draw [] (0.727273, 0.515993) -- (0.456809, 0.469537);
			\draw [black, fill=black] (0.727273, 0.515993) circle (0.015000);
			\draw [] (0.469537, 0.456809) -- node [above left=-0.1, scale=0.8] {\tiny $w_1$} (0.279091, 0.266363);
			\draw [] (0.456809, 0.469537) -- node [below right=-0.1, scale=0.8] {\tiny $w'_1$} (0.266363, 0.279091);
			\draw [black, fill=black] (0.272727, 0.272727) circle (0.015000);
			
		\end{scope}
		\begin{scope}[scale=\tikzscale, xshift=55, yshift=-40]
			\draw (0,0) node {$ (f) $};
		\end{scope}
	\end{tikzpicture}
	\end{scaletikzpicturetowidth}
	\end{center}
	\caption{Case $(3b)$ of Lemma \ref{lemma:codim1_degenerations_list} and its possible degenerations.}\label{fig:degen_marked_4valent_b}
\end{figure}

\subsection{Collinear cycle with 2 marked endpoints}\label{sec:refined_invariance_proof_collinear_2marked}
Case $ (4)$ of Lemma \ref{lemma:codim1_degenerations_list} was dealt with in \cite[Section 4.5]{SS}.
In this case $\alpha $ has 4 regenerations depicted in Figure \ref{fig:degen_collinear_2marked}(b)-(d).
It is clear that the regenerations depicted in Figures \ref{fig:degen_collinear_2marked}(b) and \ref{fig:degen_collinear_2marked}(c) are contained in $\mathcal{A}_{+}$, while the regenerations that are depicted in Figures \ref{fig:degen_collinear_2marked}(d) and \ref{fig:degen_collinear_2marked}(e) are contained in $\mathcal{A}_{-}$.
Moreover, the refined weights of the regenerations depicted in Figures \ref{fig:degen_collinear_2marked}(b) and \ref{fig:degen_collinear_2marked}(d) are equal, and similarly for the regenerations depicted in Figures \ref{fig:degen_collinear_2marked}(c) and \ref{fig:degen_collinear_2marked}(e).
We thus get that \eqref{eq:local_refined_invariance} holds.

\begin{figure}
	\begin{center}
	\begin{scaletikzpicturetowidth}{\textwidth}
	\begin{tikzpicture}
		\tikzset{
			every path/.style = {line width=0.7pt, red},
			every node/.style = {black}
		}
		\begin{scope}[scale=\tikzscale, xshift=0, yshift=-0, decoration={markings,mark=at position 0.5 with {\arrow{>}}}]
			\draw [postaction={decorate}] (0.263727, 0.727273) -- node [above, very near end] {\footnotesize $v_1$} (0.545455, 1.000000);
			\draw [postaction={decorate}] (0.263727, 0.727273) -- node [above, very near end] {\footnotesize $v_2$} (-0.000000, 1.000000);
			\draw [black, fill=black] (0.268227, 0.727273) circle (0.015000);
			\draw [] (0.281727, 0.727273) -- node [right=-0.1, scale=0.8] {\tiny $w'_1$} (0.281727, 0.272727);
			\draw [] (0.263727, 0.727273) -- node [left=-0.1, scale=0.8] {\tiny $w_1$} (0.263727, 0.272727);
			\draw [black, fill=black] (0.268227, 0.272727) circle (0.015000);
			\draw [postaction={decorate}] (0.263727, 0.272727) -- node [above, very near end] {\footnotesize $v_3$} (0.000000, 0.000000);
			\draw [postaction={decorate}] (0.263727, 0.272727) -- node [above, very near end] {\footnotesize $v_4$} (0.545455, 0.000000);
			
		\end{scope}
		\begin{scope}[scale=\tikzscale, xshift=10, yshift=-0]
			\draw (0,0) node {$ (a) $};
		\end{scope}
		\begin{scope}[scale=\tikzscale, xshift=40, yshift=-0, decoration={markings,mark=at position 0.5 with {\arrow{>}}}]
			\draw [] (0.487335, 0.727273) -- (0.272727, 0.619969);
			\draw [postaction={decorate}] (0.487335, 0.727273) -- node [above, very near end] {\footnotesize $v_1$} (0.760062, 1.000000);
			\draw [] (0.487335, 0.727273) -- node [right=-0.1, scale=0.8] {\tiny $w'_1$} (0.487335, 0.272727);
			\draw [postaction={decorate}] (0.272727, 0.619969) -- node [above, very near end] {\footnotesize $v_2$} (0.000000, 0.892696);
			\draw [black, fill=black] (0.272727, 0.619969) circle (0.015000);
			\draw [] (0.272727, 0.619969) -- node [left=-0.1, scale=0.8] {\tiny $w_1$} (0.272727, 0.380031);
			\draw [] (0.272727, 0.380031) -- (0.487335, 0.272727);
			\draw [postaction={decorate}] (0.272727, 0.380031) -- node [above, very near end] {\footnotesize $v_3$} (0.000000, 0.107304);
			\draw [black, fill=black] (0.487335, 0.272727) circle (0.015000);
			\draw [postaction={decorate}] (0.487335, 0.272727) -- node [above, very near end] {\footnotesize $v_4$} (0.760062, 0.000000);
			
		\end{scope}
		\begin{scope}[scale=\tikzscale, xshift=50, yshift=-0]
			\draw (0,0) node {$ (b) $};
		\end{scope}
		\begin{scope}[scale=\tikzscale, xshift=80, yshift=-0, decoration={markings,mark=at position 0.5 with {\arrow{>}}}]
			\draw [] (0.487335, 0.619969) -- (0.272727, 0.727273);
			\draw [postaction={decorate}] (0.487335, 0.619969) -- node [above, very near end] {\footnotesize $v_1$} (0.760062, 0.892696);
			\draw [] (0.487335, 0.619969) -- node [right=-0.1, scale=0.8] {\tiny $w_1$} (0.487335, 0.380031);
			\draw [postaction={decorate}] (0.272727, 0.727273) -- node [above, very near end] {\footnotesize $v_2$} (0.000000, 1.000000);
			\draw [black, fill=black] (0.272727, 0.727273) circle (0.015000);
			\draw [] (0.272727, 0.727273) -- node [left=-0.1, scale=0.8] {\tiny $w'_1$} (0.272727, 0.272727);
			\draw [] (0.487335, 0.380031) -- (0.272727, 0.272727);
			\draw [black, fill=black] (0.487335, 0.380031) circle (0.015000);
			\draw [postaction={decorate}] (0.487335, 0.380031) -- node [above, very near end] {\footnotesize $v_4$} (0.760062, 0.107304);
			\draw [postaction={decorate}] (0.272727, 0.272727) -- node [above, very near end] {\footnotesize $v_3$} (0.000000, 0.000000);
			
		\end{scope}
		\begin{scope}[scale=\tikzscale, xshift=90, yshift=-0]
			\draw (0,0) node {$ (c) $};
		\end{scope}
		\begin{scope}[scale=\tikzscale, xshift=0, yshift=-40, decoration={markings,mark=at position 0.5 with {\arrow{>}}}]			
			\draw [] (0.487335, 0.619969) -- (0.272727, 0.727273);
			\draw [postaction={decorate}] (0.487335, 0.619969) -- node [above, very near end] {\footnotesize $v_1$} (0.760062, 0.892696);
			\draw [black, fill=black] (0.487335, 0.619969) circle (0.015000);
			\draw [] (0.487335, 0.619969) -- node [right=-0.1, scale=0.8] {\tiny $w_1$} (0.487335, 0.380031);
			\draw [postaction={decorate}] (0.272727, 0.727273) -- node [above, very near end] {\footnotesize $v_2$} (0.000000, 1.000000);
			\draw [] (0.272727, 0.727273) -- node [left=-0.1, scale=0.8] {\tiny $w'_1$} (0.272727, 0.272727);
			\draw [] (0.487335, 0.380031) -- (0.272727, 0.272727);
			\draw [postaction={decorate}] (0.487335, 0.380031) -- node [above, very near end] {\footnotesize $v_4$} (0.760062, 0.107304);
			\draw [black, fill=black] (0.272727, 0.272727) circle (0.015000);
			\draw [postaction={decorate}] (0.272727, 0.272727) -- node [above, very near end] {\footnotesize $v_3$} (0.000000, 0.000000);
		\end{scope}
		\begin{scope}[scale=\tikzscale, xshift=10, yshift=-40]
			\draw (0,0) node {$ (d) $};
		\end{scope}
		
		\begin{scope}[scale=\tikzscale, xshift=40, yshift=-40, decoration={markings,mark=at position 0.5 with {\arrow{>}}}]
			\draw [] (0.487335, 0.727273) -- (0.272727, 0.619969);
			\draw [postaction={decorate}] (0.487335, 0.727273) -- node [above, very near end] {\footnotesize $v_1$} (0.760062, 1.000000);
			\draw [black, fill=black] (0.487335, 0.727273) circle (0.015000);
			\draw [] (0.487335, 0.727273) -- node [right=-0.1, scale=0.8] {\tiny $w'_1$} (0.487335, 0.272727);
			\draw [postaction={decorate}] (0.272727, 0.619969) -- node [above, very near end] {\footnotesize $v_2$} (0.000000, 0.892696);
			\draw [] (0.272727, 0.619969) -- node [left=-0.1, scale=0.8] {\tiny $w_1$} (0.272727, 0.380031);
			\draw [] (0.272727, 0.380031) -- (0.487335, 0.272727);
			\draw [black, fill=black] (0.272727, 0.380031) circle (0.015000);
			\draw [postaction={decorate}] (0.272727, 0.380031) -- node [above, very near end] {\footnotesize $v_3$} (0.000000, 0.107304);
			\draw [postaction={decorate}] (0.487335, 0.272727) -- node [above, very near end] {\footnotesize $v_4$} (0.760062, 0.000000);			
		\end{scope}
		\begin{scope}[scale=\tikzscale, xshift=50, yshift=-40]
			\draw (0,0) node {$ (e) $};
		\end{scope}
	\end{tikzpicture}
	\end{scaletikzpicturetowidth}
	\end{center}
	\caption{The degeneration as in case $(4)$ of Lemma \ref{lemma:codim1_degenerations_list} and its regenerations.}\label{fig:degen_collinear_2marked}
\end{figure}
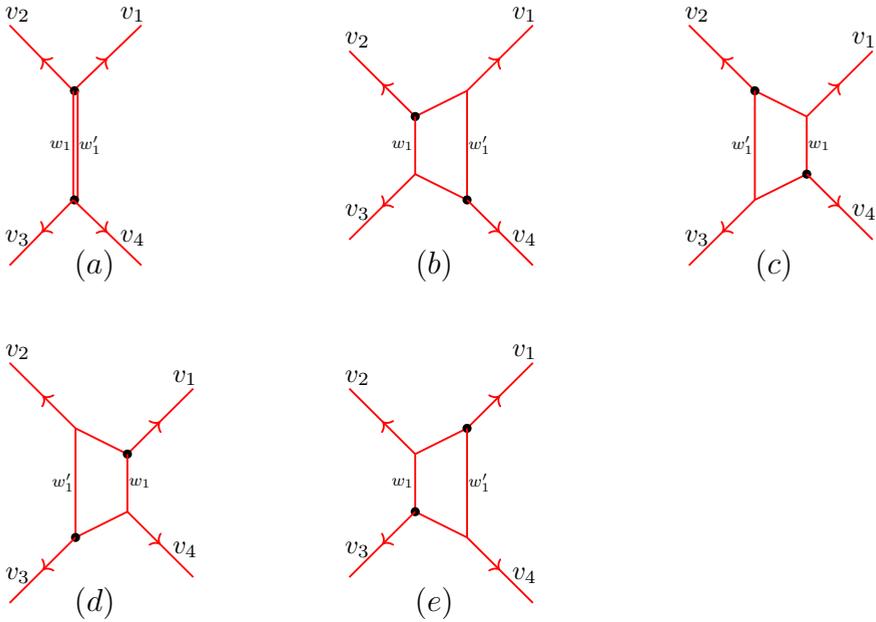

\subsection{Collinear cycle with one marked endpoint}\label{sec:refined_invariance_proof_1marked}
Let $\alpha$ be a combinatorial type as in case $(5)$ Lemma \ref{lemma:codim1_degenerations_list}, and let $V_1$, respectively $V_2$, be the marked, respectively unmarked, vertices of the codimension $1$ fragment in $\alpha$.
We need to consider the $2$ options of the number of edges adjacent to $V_1$ that are contained in the bounded component of $\Gamma\setminus \bs{p}$.

\subsubsection{One edge adjacent to $V_1$ is in the bounded component}\label{sec:type4_inv_case1}
If $V_1$ has one adjacent edge in the bounded component of $\Gamma\setminus\bs{p}$, then $\alpha$ has $4$ regenerations depicted in Figure \ref{fig:degen_collinear_1marked_1bounded_no_collinear}(b-e).
Let $E_1$ be the edge adjacent to $V_1$ that is contained in the bounded component of $\Gamma\setminus\bs{p}$ and let $E_3$ be the edge adjacent to $V_2$ that lie in the bounded component of $\Gamma\setminus(\bs{p}\cup{V_2})$ (i.e. such that the regular orientation of $E_3$ is pointing towards $V_2$).
Denote by $v_0$ the primitive vector in the direction of the codimension $1$ collinear cycle pointing from $V_1$ to $V_2$ and by $v_i$ (for $i=1,3$) the vector with lattice length equal to $w(E_i)$, pointing in the direction of $E_i$ outwards from the codimension $1$ component.
Also denote by $w_1,w_2$ the weights of the edges in the codimension $1$ fragment and $\mu_1:=v_1\wedge v_0$, $\mu_3:=v_3\wedge v_0$.

It is clear that if we move the marked vertex of the codimension $1$ fragment above the line that contains $E_1$ we get the regenerations depicted in Figure \ref{fig:degen_collinear_1marked_1bounded_no_collinear}(b,e), while if we move it above this line we get the regenerations in Figure \ref{fig:degen_collinear_1marked_1bounded_no_collinear}(c,d).
It is thus sufficient to show that
\begin{align}\label{eq:type4_case1_inv}
	& [(w_1+w_2)\mu_1]^{-}_y [0]^{+}_y \left( \varphi^{(1)}_y(w_1, w_2,\mu_3) + [(w_1+w_2)\mu_3]^{-}_y \varphi^{(0)}_y(w_1, w_2) \right) = \\
	= & [w_1 \mu_3]^{-}_y [w_2\mu_3]^{-}_y \left( [w_1 \mu_1]^{-}_y[w_2 \mu_1]^{+}_y +[w_1 \mu_1]^{+}_y [w_2 \mu_1]^{-}_y \right),\nonumber
\end{align}
which is immediate from Lemma \ref{lemma:algebra_of_quantum_weights} and the definition of $\varphi^{(0)}_y, \varphi^{(1)}_y$.

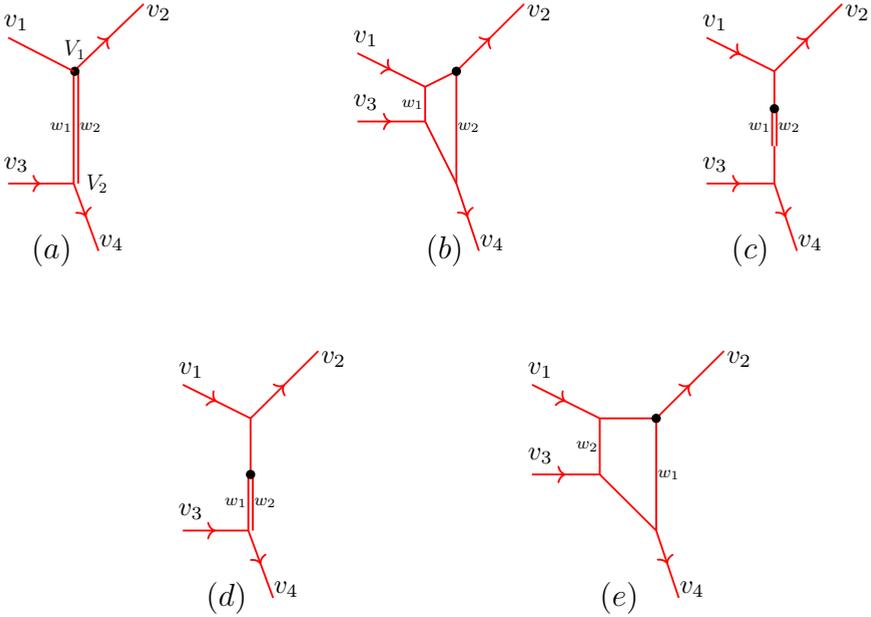
\begin{figure}
	\begin{center}
	\begin{scaletikzpicturetowidth}{\textwidth}
	\begin{tikzpicture}[scale=1.3]
		\tikzset{
			every path/.style = {line width=0.7pt, red},
			every node/.style = {black}
		}
		\begin{scope}[scale=\tikzscale, xshift=0, yshift=-0, decoration={markings,mark=at position 0.5 with {\arrow{>}}}]
			\draw [] (0.281727, 0.727273) -- node [right=-0.1, scale=0.8] {\tiny $w_2$} (0.281727, 0.272727);
			\draw [] (0.263727, 0.727273) -- node [left=-0.1, scale=0.8] {\tiny $w_1$} (0.263727, 0.272727);
			\draw [postaction={decorate}] (0.263727, 0.727273) -- node [right, very near end] {\footnotesize $v_2$} (0.545455, 1.000000);
			\draw [black, fill=black] (0.268227, 0.727273) circle (0.015000) node [above=0.06, scale=0.7] {$V_1$};
			\draw [] (-0.000000, 0.863636) -- node [above, very near start] {\footnotesize $v_1$} (0.263727, 0.727273);
			\draw [postaction={decorate}] (0.263727, 0.272727) -- node [right=-0.1, very near end] {\footnotesize $v_4$} (0.363636, 0.000000);
			\draw [postaction={decorate}] (0.000000, 0.272727) -- node [above, very near start] {\footnotesize $v_3$} (0.263727, 0.272727) node [right=0.06, scale=0.7] {$V_2$};
			
		\end{scope}
		\begin{scope}[scale=\tikzscale, xshift=5, yshift=-0]
			\draw (0,0) node {$ (a) $};
		\end{scope}
		\begin{scope}[scale=\tikzscale, xshift=40, yshift=-0, decoration={markings,mark=at position 0.5 with {\arrow{>}}}]
			\draw [] (0.272727, 0.664456) -- (0.398361, 0.727273);
			\draw [] (0.272727, 0.664456) -- node [left=-0.1, scale=0.8] {\tiny $w_1$} (0.272727, 0.523994);
			\draw [postaction={decorate}] (0.000000, 0.800820) -- node [above, very near start] {\footnotesize $v_1$} (0.272727, 0.664456);
			\draw [] (0.398361, 0.727273) -- node [right=-0.1, scale=0.8] {\tiny $w_2$} (0.398361, 0.272727);
			\draw [postaction={decorate}] (0.398361, 0.727273) -- node [right, very near end] {\footnotesize $v_2$} (0.671088, 1.000000);
			\draw [black, fill=black] (0.398361, 0.727273) circle (0.015000);
			\draw [] (0.272727, 0.523994) -- (0.398361, 0.272727);
			\draw [postaction={decorate}] (0.000000, 0.523994) -- node [above, very near start] {\footnotesize $v_3$} (0.272727, 0.523994);
			\draw [postaction={decorate}] (0.398361, 0.272727) -- node [right=-0.1, very near end] {\footnotesize $v_4$} (0.489270, 0.000000);
			
		\end{scope}
		\begin{scope}[scale=\tikzscale, xshift=50, yshift=-0]
			\draw (0,0) node {$ (b) $};
		\end{scope}
		\begin{scope}[scale=\tikzscale, xshift=80, yshift=-0, decoration={markings,mark=at position 0.5 with {\arrow{>}}}]
			\draw [] (0.272727, 0.575758) -- (0.272727, 0.727273);
			\draw [] (0.281727, 0.575758) -- node [right=-0.1, scale=0.8] {\tiny $w_2$} (0.281727, 0.424242);
			\draw [] (0.263727, 0.575758) -- node [left=-0.1, scale=0.8] {\tiny $w_1$} (0.263727, 0.424242);
			\draw [black, fill=black] (0.272727, 0.575758) circle (0.015000);
			\draw [postaction={decorate}] (0.272727, 0.727273) -- node [right, very near end] {\footnotesize $v_2$} (0.545455, 1.000000);
			\draw [postaction={decorate}] (0.000000, 0.863636) -- node [above, very near start] {\footnotesize $v_1$} (0.272727, 0.727273);
			\draw [] (0.272727, 0.424242) -- (0.272727, 0.272727);
			\draw [postaction={decorate}] (0.272727, 0.272727) -- node [right=-0.1, very near end] {\footnotesize $v_4$} (0.363636, 0.000000);
			\draw [postaction={decorate}] (0.000000, 0.272727) -- node [above, very near start] {\footnotesize $v_3$} (0.272727, 0.272727);
			
		\end{scope}
		\begin{scope}[scale=\tikzscale, xshift=85, yshift=-0]
			\draw (0,0) node {$ (c) $};
		\end{scope}
		
		\begin{scope}[scale=\tikzscale, xshift=20, yshift=-40, decoration={markings,mark=at position 0.5 with {\arrow{>}}}]
			\draw [] (0.272727, 0.500000) -- (0.272727, 0.727273);
			\draw [] (0.281727, 0.500000) -- node [right=-0.1, scale=0.8] {\tiny $w_2$} (0.281727, 0.272727);
			\draw [] (0.263727, 0.500000) -- node [left=-0.1, scale=0.8] {\tiny $w_1$} (0.263727, 0.272727);
			\draw [black, fill=black] (0.272727, 0.500000) circle (0.015000);
			\draw [postaction={decorate}] (0.272727, 0.727273) -- node [right, very near end] {\footnotesize $v_2$} (0.545455, 1.000000);
			\draw [postaction={decorate}] (0.000000, 0.863636) -- node [above, very near start] {\footnotesize $v_1$} (0.272727, 0.727273);
			\draw [postaction={decorate}] (0.263727, 0.272727) -- node [right=-0.1, very near end] {\footnotesize $v_4$} (0.363636, 0.000000);
			\draw [postaction={decorate}] (0.000000, 0.272727) -- node [above, very near start] {\footnotesize $v_3$} (0.263727, 0.272727);
			
		\end{scope}
		\begin{scope}[scale=\tikzscale, xshift=25, yshift=-40]
			\draw (0,0) node {$ (d) $};
		\end{scope}
		\begin{scope}[scale=\tikzscale, xshift=60, yshift=-40, decoration={markings,mark=at position 0.5 with {\arrow{>}}}]
			\draw [] (0.500000, 0.727273) -- (0.272727, 0.727273);
			\draw [] (0.500000, 0.727273) -- node [right=-0.1, scale=0.8] {\tiny $w_1$} (0.500000, 0.272727);
			\draw [postaction={decorate}] (0.500000, 0.727273) -- node [right, very near end] {\footnotesize $v_2$} (0.772727, 1.000000);
			\draw [black, fill=black] (0.500000, 0.727273) circle (0.015000);
			\draw [] (0.272727, 0.727273) -- node [left=-0.1, scale=0.8] {\tiny $w_2$} (0.272727, 0.500000);
			\draw [postaction={decorate}] (0.000000, 0.863636) -- node [above, very near start] {\footnotesize $v_1$} (0.272727, 0.727273);
			\draw [] (0.500000, 0.272727) -- (0.272727, 0.500000);
			\draw [postaction={decorate}] (0.500000, 0.272727) -- node [right=-0.1, very near end] {\footnotesize $v_4$} (0.590909, 0.000000);
			\draw [postaction={decorate}] (0.000000, 0.500000) -- node [above, very near start] {\footnotesize $v_3$} (0.272727, 0.500000);
			
		\end{scope}
		\begin{scope}[scale=\tikzscale, xshift=70, yshift=-40]
			\draw (0,0) node {$ (e) $};
		\end{scope}
	\end{tikzpicture}
	\end{scaletikzpicturetowidth}
	\end{center}
	
	\caption{The degeneration as in case $(5)$ of Lemma \ref{lemma:codim1_degenerations_list} when $V_1$ is adjacent to one edge in the bounded component of $\Gamma\setminus\bs{p}$.}\label{fig:degen_collinear_1marked_1bounded_no_collinear}
\end{figure}

\subsubsection{Two edges adjacent to $V_1$ are in the bounded component}

There are $4$ regenerations in this case, they are depicted in Figure \ref{fig:degen_collinear_1marked_2bounded_no_collinear}(b)-(e).
It is easy to see that the regenerations depicted in Figure \ref{fig:degen_collinear_1marked_2bounded_no_collinear}(b)-(c) are contained in $\mathcal{A}_{+}$, while the regenerations depicted in Figure \ref{fig:degen_collinear_1marked_2bounded_no_collinear}(d)-(e) are contained in $\mathcal{A}_{-}$.
It is also immediate that the refined weight of the curves depicted in Figures \ref{fig:degen_collinear_1marked_2bounded_no_collinear}(b) and \ref{fig:degen_collinear_1marked_2bounded_no_collinear}(e) are equal, and similarly for the curves depicted in Figures \ref{fig:degen_collinear_1marked_2bounded_no_collinear}(c) and \ref{fig:degen_collinear_1marked_2bounded_no_collinear}(d).
We thus get that \eqref{eq:local_refined_invariance} holds.
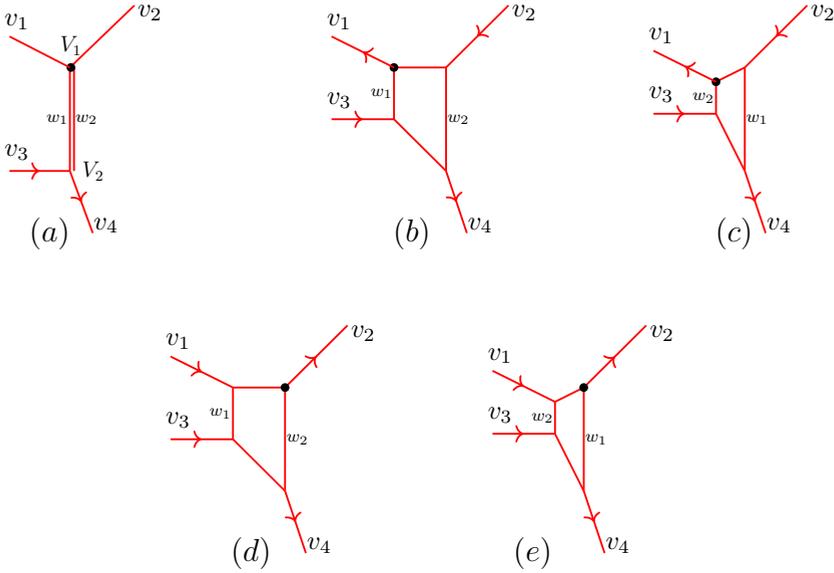
\begin{figure}
	\begin{center}
	\begin{scaletikzpicturetowidth}{\textwidth}
	\begin{tikzpicture}
		\tikzset{
			every path/.style = {line width=0.7pt, red},
			every node/.style = {black}
		}
		\begin{scope}[scale=\tikzscale, xshift=0, yshift=-0, decoration={markings,mark=at position 0.5 with {\arrow{>}}}]
			\draw [] (0.281727, 0.727273) -- node [right=-0.1, scale=0.8] {\tiny $w_2$} (0.281727, 0.272727);
			\draw [] (0.263727, 0.727273) -- node [left=-0.1, scale=0.8] {\tiny $w_1$} (0.263727, 0.272727);
			\draw [] (0.545455, 1.000000) -- node [right, very near start] {\footnotesize $v_2$} (0.263727, 0.727273);
			\draw [black, fill=black] (0.268227, 0.727273) circle (0.015000) node [above=0.06, scale=0.7] {$V_1$};
			\draw [] (-0.000000, 0.863636) -- node [above, very near start] {\footnotesize $v_1$} (0.263727, 0.727273);
			\draw [postaction={decorate}] (0.263727, 0.272727) -- node [right=-0.1, very near end] {\footnotesize $v_4$} (0.363636, 0.000000);
			\draw [postaction={decorate}] (0.000000, 0.272727) -- node [above, very near start] {\footnotesize $v_3$} (0.263727, 0.272727) node [right=0.06, scale=0.7] {$V_2$};
			
		\end{scope}
		
		\begin{scope}[scale=\tikzscale, xshift=5, yshift=-0]
			\draw (0,0) node {$ (a) $};
		\end{scope}
		
		\begin{scope}[scale=\tikzscale, xshift=40, yshift=-0, decoration={markings,mark=at position 0.5 with {\arrow{>}}}]
			\draw [] (0.500000, 0.727273) -- (0.272727, 0.727273);
			\draw [] (0.500000, 0.727273) -- node [right=-0.1, scale=0.8] {\tiny $w_2$} (0.500000, 0.272727);
			\draw [postaction={decorate}] (0.772727, 1.000000) -- node [right, very near start] {\footnotesize $v_2$} (0.500000, 0.727273);
			\draw [] (0.272727, 0.727273) -- node [left=-0.1, scale=0.8] {\tiny $w_1$} (0.272727, 0.500000);
			\draw [black, fill=black] (0.272727, 0.727273) circle (0.015000);
			\draw [postaction={decorate}] (0.272727, 0.727273) -- node [above, very near end] {\footnotesize $v_1$} (0.000000, 0.863636);
			\draw [] (0.500000, 0.272727) -- (0.272727, 0.500000);
			\draw [postaction={decorate}] (0.500000, 0.272727) -- node [right=-0.1, very near end] {\footnotesize $v_4$} (0.590909, 0.000000);
			\draw [postaction={decorate}] (0.000000, 0.500000) -- node [above, very near start] {\footnotesize $v_3$} (0.272727, 0.500000);
			
		\end{scope}
		
		\begin{scope}[scale=\tikzscale, xshift=50, yshift=-0]
			\draw (0,0) node {$ (b) $};
		\end{scope}
		
		\begin{scope}[scale=\tikzscale, xshift=80, yshift=-0, decoration={markings,mark=at position 0.5 with {\arrow{>}}}]
			\draw [] (0.272727, 0.664456) -- (0.398361, 0.727273);
			\draw [] (0.272727, 0.664456) -- node [left=-0.1, scale=0.8] {\tiny $w_2$} (0.272727, 0.523994);
			\draw [black, fill=black] (0.272727, 0.664456) circle (0.015000);
			\draw [postaction={decorate}] (0.272727, 0.664456) -- node [above, very near end] {\footnotesize $v_1$} (0.000000, 0.800820);
			\draw [] (0.398361, 0.727273) -- node [right=-0.1, scale=0.8] {\tiny $w_1$} (0.398361, 0.272727);
			\draw [postaction={decorate}] (0.671088, 1.000000) -- node [right, very near start] {\footnotesize $v_2$} (0.398361, 0.727273);
			\draw [] (0.272727, 0.523994) -- (0.398361, 0.272727);
			\draw [postaction={decorate}] (0.000000, 0.523994) -- node [above, very near start] {\footnotesize $v_3$} (0.272727, 0.523994);
			\draw [postaction={decorate}] (0.398361, 0.272727) -- node [right=-0.1, very near end] {\footnotesize $v_4$} (0.489270, 0.000000);
			
		\end{scope}
		
		\begin{scope}[scale=\tikzscale, xshift=90, yshift=-0]
			\draw (0,0) node {$ (c) $};
		\end{scope}
		
		\begin{scope}[scale=\tikzscale, xshift=20, yshift=-40, decoration={markings,mark=at position 0.5 with {\arrow{>}}}]
			\draw [] (0.500000, 0.727273) -- (0.272727, 0.727273);
			\draw [] (0.500000, 0.727273) -- node [right=-0.1, scale=0.8] {\tiny $w_2$} (0.500000, 0.272727);
			\draw [postaction={decorate}] (0.500000, 0.727273) -- node [right, very near end] {\footnotesize $v_2$} (0.772727, 1.000000);
			\draw [black, fill=black] (0.500000, 0.727273) circle (0.015000);
			\draw [] (0.272727, 0.727273) -- node [left=-0.1, scale=0.8] {\tiny $w_1$} (0.272727, 0.500000);
			\draw [postaction={decorate}] (0.000000, 0.863636) -- node [above, very near start] {\footnotesize $v_1$} (0.272727, 0.727273);
			\draw [] (0.500000, 0.272727) -- (0.272727, 0.500000);
			\draw [postaction={decorate}] (0.500000, 0.272727) -- node [right=-0.1, very near end] {\footnotesize $v_4$} (0.590909, 0.000000);
			\draw [postaction={decorate}] (0.000000, 0.500000) -- node [above, very near start] {\footnotesize $v_3$} (0.272727, 0.500000);
			
		\end{scope}
	
		\begin{scope}[scale=\tikzscale, xshift=30, yshift=-40]
			\draw (0,0) node {$ (d) $};
		\end{scope}
		
		\begin{scope}[scale=\tikzscale, xshift=60, yshift=-40, decoration={markings,mark=at position 0.5 with {\arrow{>}}}]
			\draw [] (0.272727, 0.664456) -- (0.398361, 0.727273);
			\draw [] (0.272727, 0.664456) -- node [left=-0.1, scale=0.8] {\tiny $w_2$} (0.272727, 0.523994);
			\draw [postaction={decorate}] (0.000000, 0.800820) -- node [above, very near start] {\footnotesize $v_1$} (0.272727, 0.664456);
			\draw [] (0.398361, 0.727273) -- node [right=-0.1, scale=0.8] {\tiny $w_1$} (0.398361, 0.272727);
			\draw [postaction={decorate}] (0.398361, 0.727273) -- node [right, very near end] {\footnotesize $v_2$} (0.671088, 1.000000);
			\draw [black, fill=black] (0.398361, 0.727273) circle (0.015000);
			\draw [] (0.272727, 0.523994) -- (0.398361, 0.272727);
			\draw [postaction={decorate}] (0.000000, 0.523994) -- node [above, very near start] {\footnotesize $v_3$} (0.272727, 0.523994);
			\draw [postaction={decorate}] (0.398361, 0.272727) -- node [right=-0.1, very near end] {\footnotesize $v_4$} (0.489270, 0.000000);
			
		\end{scope}
	
		\begin{scope}[scale=\tikzscale, xshift=65, yshift=-40]
			\draw (0,0) node {$ (e) $};
		\end{scope}
	\end{tikzpicture}
	\end{scaletikzpicturetowidth}
	\end{center}
	
	\caption{The degeneration as in case $(5)$ of Lemma \ref{lemma:codim1_degenerations_list} when $V_1$ is adjacent to two edges in the bounded component of $\Gamma\setminus\bs{p}$.}\label{fig:degen_collinear_1marked_2bounded_no_collinear}
\end{figure}

\subsection{Collinear cycle with no marked endpoint}\label{sec:refined_invariance_proof_no_marked}
Suppose that $\alpha$ is as in case $(6) $ of Lemma \ref{lemma:codim1_degenerations_list}.
We depict the degeneration in Figure \ref{fig:degen_collinear_2unmarked_1_collinear}(a), and its regenerations in Figure \ref{fig:degen_collinear_2unmarked_1_collinear}(b-e).
The regenerations depicted in Figure \ref{fig:degen_collinear_2unmarked_1_collinear}(b, e) are contained in $\mathcal{A}_{+}$, while the regenerations depicted in Figure \ref{fig:degen_collinear_2unmarked_1_collinear}(c,d) are contained in $\mathcal{A}_{-}$.
Denote by $v_0$ the primitive vector parallel to the codimension $1$ fragment, by $v_1$ the primitive vector parallel to the additional collinear cycle, and by $\mu:=v_0\wedge v_1$.
Writing the refined weights and canceling out equal terms gives us
\begin{align}\label{eq:type5_collinear_invariance}
	& \varphi^{(1)}_y(w_1', w_2', w_2\mu)\cdot \left[ w_1(w_1'+w_2')\mu \right]^{-}_y + \left[w_1 w_1' \mu\right]^{-}_y\cdot \left[ w_2 w_2' \mu \right]^{-}_y \cdot \left[ (w_1w_2' - w_1'w_2)\mu\right]^{-}_y =\\
	= & \varphi^{(1)}_y(w_1', w_2', w_1\mu)\cdot \left[w_2(w_1'+w_2')\mu\right]^{-}_y + \left[w_1w_2'\mu\right]^{-}_y\cdot \left[w_1'w_2\mu\right]^{-}_y \cdot \left[(w_2w_2'-w_1w_1')\mu\right]^{-}_y , \nonumber
\end{align}
which is straightforward using Lemma \ref{lemma:algebra_of_quantum_weights} and the definition of $\varphi^{(1)}$.

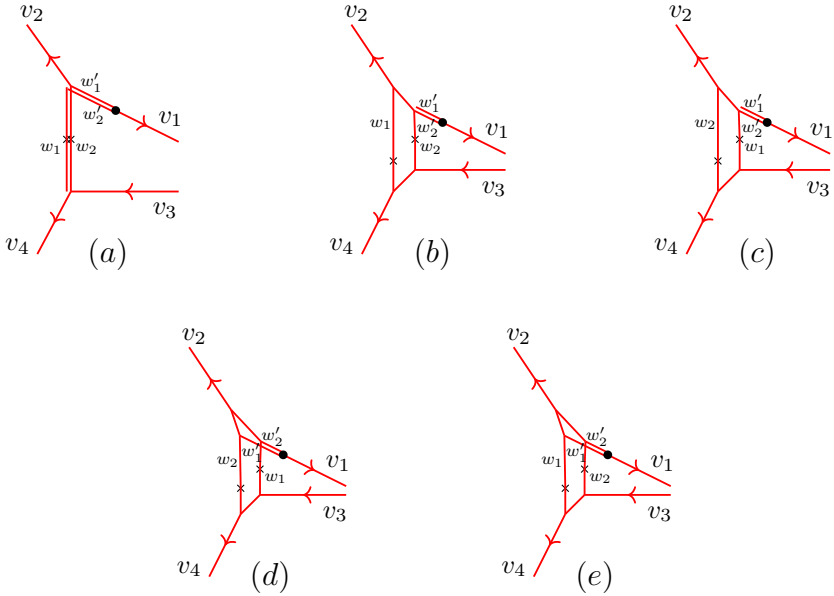
\begin{figure}
	\begin{center}
	\begin{scaletikzpicturetowidth}{\textwidth}
	\begin{tikzpicture}
		\tikzset{
			every path/.style = {line width=0.7pt, red},
			every node/.style = {black}
		}
		\begin{scope}[scale=\tikzscale, xshift=0, yshift=-0, decoration={markings,mark=at position 0.5 with {\arrow{>}}}]
			\draw [postaction={decorate}] (0.385097, 0.625633) -- node [above, very near end] {\footnotesize $v_1$} (0.657824, 0.489270);
			\draw [postaction={decorate}] (0.190818, 0.732835) -- node [above, very near end] {\footnotesize $v_2$} (-0.000000, 1.000000);
			\draw [postaction={decorate}] (0.657824, 0.272727) -- node [below, very near start] {\footnotesize $v_3$} (0.190818, 0.272727);
			\draw [postaction={decorate}] (0.190818, 0.272727) -- node [left, very near end] {\footnotesize $v_4$} (0.045455, -0.000000);
			\draw [black, thin] (0.157818, 0.485000) -- (0.187818, 0.515000);
			\draw [black, thin] (0.157818, 0.515000) -- (0.187818, 0.485000);
			\draw [black, thin] (0.175818, 0.485000) -- (0.205818, 0.515000);
			\draw [black, thin] (0.175818, 0.515000) -- (0.205818, 0.485000);
			\draw [] (0.172818, 0.500000) -- node [below left=-0.08, scale=0.8, pos=0] {\tiny $w_1$} (0.172818, 0.727273);
			\draw [] (0.172818, 0.500000) -- (0.172818, 0.272727);
			\draw [] (0.190818, 0.500000) -- node [below right=-0.08, scale=0.8, pos=0] {\tiny $w_2$} (0.190818, 0.732835);
			\draw [] (0.190818, 0.500000) -- (0.190818, 0.272727);
			\draw [] (0.177793, 0.719223) -- node [above, scale=0.8] {\tiny $w'_1$} (0.381072, 0.617583);
			\draw [] (0.190818, 0.732835) -- node [below, scale=0.8] {\tiny $w'_2$} (0.389122, 0.633683);
			\draw [black, fill=black] (0.385097, 0.625633) circle (0.015000);
			
		\end{scope}
		\begin{scope}[scale=\tikzscale, xshift=10, yshift=-0]
			\draw (0,0) node {$ (a) $};
		\end{scope}
		\begin{scope}[scale=\tikzscale, xshift=40, yshift=-0, decoration={markings,mark=at position 0.5 with {\arrow{>}}}]
			\draw [postaction={decorate}] (0.395036, 0.573594) -- node [above, very near end] {\footnotesize $v_1$} (0.667763, 0.437231);
			\draw [postaction={decorate}] (0.181818, 0.727273) -- node [above, very near end] {\footnotesize $v_2$} (0.000000, 1.000000);
			\draw [postaction={decorate}] (0.667763, 0.366867) -- node [below, very near start] {\footnotesize $v_3$} (0.275958, 0.366867);
			\draw [postaction={decorate}] (0.181818, 0.272727) -- node [left, very near end] {\footnotesize $v_4$} (0.045455, -0.000000);
			\draw [black, thin] (0.166818, 0.390861) -- (0.196818, 0.420861);
			\draw [black, thin] (0.166818, 0.420861) -- (0.196818, 0.390861);
			\draw [black, thin] (0.260958, 0.485000) -- (0.290958, 0.515000);
			\draw [black, thin] (0.260958, 0.515000) -- (0.290958, 0.485000);
			\draw [] (0.181818, 0.405861) -- node [left=-0.1, scale=0.8] {\tiny $w_1$} (0.181818, 0.727273);
			\draw [] (0.181818, 0.405861) -- (0.181818, 0.272727);
			\draw [] (0.275958, 0.500000) -- node [below right=-0.08, scale=0.8, pos=0] {\tiny $w_2$} (0.271933, 0.625083);
			\draw [] (0.275958, 0.500000) -- (0.275958, 0.366867);
			\draw [] (0.181818, 0.727273) -- (0.271933, 0.625083);
			\draw [] (0.279983, 0.641183) -- node [above=-0.1, scale=0.8] {\tiny $w'_1$} (0.399061, 0.581644);
			\draw [] (0.271933, 0.625083) -- node [below=-0.1, scale=0.8] {\tiny $w'_2$} (0.391011, 0.565544);
			\draw [] (0.181818, 0.272727) -- (0.275958, 0.366867);
			\draw [black, fill=black] (0.395036, 0.573594) circle (0.015000);
			
		\end{scope}
		\begin{scope}[scale=\tikzscale, xshift=50, yshift=-0]
			\draw (0,0) node {$ (b) $};
		\end{scope}
		\begin{scope}[scale=\tikzscale, xshift=80, yshift=-0, decoration={markings,mark=at position 0.5 with {\arrow{>}}}]
			\draw [postaction={decorate}] (0.395036, 0.573594) -- node [above, very near end] {\footnotesize $v_1$} (0.667763, 0.437231);
			\draw [postaction={decorate}] (0.181818, 0.727273) -- node [above, very near end] {\footnotesize $v_2$} (-0.000000, 1.000000);
			\draw [postaction={decorate}] (0.667763, 0.366867) -- node [below, very near start] {\footnotesize $v_3$} (0.275958, 0.366867);
			\draw [postaction={decorate}] (0.181818, 0.272727) -- node [left, very near end] {\footnotesize $v_4$} (0.045455, 0.000000);
			\draw [black, thin] (0.260958, 0.485000) -- (0.290958, 0.515000);
			\draw [black, thin] (0.260958, 0.515000) -- (0.290958, 0.485000);
			\draw [black, thin] (0.166818, 0.390861) -- (0.196818, 0.420861);
			\draw [black, thin] (0.166818, 0.420861) -- (0.196818, 0.390861);
			\draw [] (0.275958, 0.500000) -- node [below right=-0.08, scale=0.8, pos=0] {\tiny $w_1$} (0.271933, 0.625083);
			\draw [] (0.275958, 0.500000) -- (0.275958, 0.366867);
			\draw [] (0.181818, 0.405861) -- node [left=-0.1, scale=0.8] {\tiny $w_2$} (0.181818, 0.727273);
			\draw [] (0.181818, 0.405861) -- (0.181818, 0.272727);
			\draw [] (0.181818, 0.727273) -- (0.271933, 0.625083);
			\draw [] (0.279983, 0.641183) -- node [above=-0.1, scale=0.8] {\tiny $w'_1$} (0.399061, 0.581644);
			\draw [] (0.271933, 0.625083) -- node [below=-0.1, scale=0.8] {\tiny $w'_2$} (0.391011, 0.565544);
			\draw [] (0.181818, 0.272727) -- (0.275958, 0.366867);
			\draw [black, fill=black] (0.395036, 0.573594) circle (0.015000);
			
		\end{scope}
		\begin{scope}[scale=\tikzscale, xshift=90, yshift=-0]
			\draw (0,0) node {$ (c) $};
		\end{scope}
	
		\begin{scope}[scale=\tikzscale, xshift=20, yshift=-40, decoration={markings,mark=at position 0.5 with {\arrow{>}}}]
			\draw [postaction={decorate}] (0.407660, 0.530835) -- node [above, very near end] {\footnotesize $v_1$} (0.680387, 0.394472);
			\draw [postaction={decorate}] (0.181818, 0.727273) -- node [above, very near end] {\footnotesize $v_2$} (0.000000, 1.000000);
			\draw [postaction={decorate}] (0.680387, 0.356244) -- node [below, very near start] {\footnotesize $v_3$} (0.307093, 0.356244);
			\draw [postaction={decorate}] (0.223576, 0.272727) -- node [left, very near end] {\footnotesize $v_4$} (0.087213, 0.000000);
			\draw [black, thin] (0.292093, 0.453681) -- (0.322093, 0.483681);
			\draw [black, thin] (0.292093, 0.483681) -- (0.322093, 0.453681);
			\draw [black, thin] (0.208576, 0.370165) -- (0.238576, 0.400165);
			\draw [black, thin] (0.208576, 0.400165) -- (0.238576, 0.370165);
			\draw [] (0.307093, 0.468681) -- node [below right=-0.08, pos=0, scale=0.8] {\tiny $w_1$} (0.311118, 0.589169);
			\draw [] (0.307093, 0.468681) -- (0.307093, 0.356244);
			\draw [] (0.223576, 0.385165) -- node [left=-0.1, scale=0.8] {\tiny $w_2$} (0.219552, 0.614827);
			\draw [] (0.223576, 0.385165) -- (0.223576, 0.272727);
			\draw [] (0.181818, 0.727273) -- (0.219552, 0.614827);
			\draw [] (0.181818, 0.727273) -- (0.311118, 0.589169);
			\draw [] (0.219552, 0.614827) -- node [below=-0.1, scale=0.8, pos=0.3] {\tiny $w'_1$} (0.403635, 0.522785);
			\draw [] (0.311118, 0.589169) -- node [above=-0.1, scale=0.8] {\tiny $w'_2$} (0.411685, 0.538885);
			\draw [] (0.223576, 0.272727) -- (0.307093, 0.356244);
			\draw [black, fill=black] (0.407660, 0.530835) circle (0.015000);
			
		\end{scope}
		\begin{scope}[scale=\tikzscale, xshift=30, yshift=-40]
			\draw (0,0) node {$ (d) $};
		\end{scope}
	
		\begin{scope}[scale=\tikzscale, xshift=60, yshift=-40, decoration={markings,mark=at position 0.5 with {\arrow{>}}}]
			\draw [postaction={decorate}] (0.407660, 0.530835) -- node [above, very near end] {\footnotesize $v_1$} (0.680387, 0.394472);
			\draw [postaction={decorate}] (0.181818, 0.727273) -- node [above, very near end] {\footnotesize $v_2$} (-0.000000, 1.000000);
			\draw [postaction={decorate}] (0.680387, 0.356244) -- node [below, very near start] {\footnotesize $v_3$} (0.307093, 0.356244);
			\draw [postaction={decorate}] (0.223576, 0.272727) -- node [left, very near end] {\footnotesize $v_4$} (0.087213, 0.000000);
			\draw [black, thin] (0.208576, 0.370165) -- (0.238576, 0.400165);
			\draw [black, thin] (0.208576, 0.400165) -- (0.238576, 0.370165);
			\draw [black, thin] (0.292093, 0.453681) -- (0.322093, 0.483681);
			\draw [black, thin] (0.292093, 0.483681) -- (0.322093, 0.453681);
			\draw [] (0.223576, 0.385165) -- node [left=-0.1, scale=0.8] {\tiny $w_1$} (0.219552, 0.614827);
			\draw [] (0.223576, 0.385165) -- (0.223576, 0.272727);
			\draw [] (0.307093, 0.468681) -- node [below right=-0.08, scale=0.8, pos=0] {\tiny $w_2$} (0.311118, 0.589169);
			\draw [] (0.307093, 0.468681) -- (0.307093, 0.356244);
			\draw [] (0.181818, 0.727273) -- (0.219552, 0.614827);
			\draw [] (0.181818, 0.727273) -- (0.311118, 0.589169);
			\draw [] (0.219552, 0.614827) -- node [below=-0.1, scale=0.8, pos=0.3] {\tiny $w'_1$} (0.403635, 0.522785);
			\draw [] (0.311118, 0.589169) -- node [above=-0.1, scale=0.8] {\tiny $w'_2$} (0.411685, 0.538885);
			\draw [] (0.223576, 0.272727) -- (0.307093, 0.356244);
			\draw [black, fill=black] (0.407660, 0.530835) circle (0.015000);
			
		\end{scope}
		\begin{scope}[scale=\tikzscale, xshift=70, yshift=-40]
			\draw (0,0) node {$ (e) $};
		\end{scope}
	\end{tikzpicture}
	\end{scaletikzpicturetowidth}
	\end{center}
	\caption{(a) The degeneration as in case (6) of Lemma \ref{lemma:codim1_degenerations_list}. (b-e) Its regenerations.}\label{fig:degen_collinear_2unmarked_1_collinear}
\end{figure}

\subsection{Additional marked point}\label{sec:refined_invariance_proof_additional_marked}
The case $ (7)$ of Lemma \ref{lemma:codim1_degenerations_list} was dealt with in \cite[Section 4.1]{SS}.
In this case $\alpha $ has 2 regenerations depicted in Figure \ref{fig:degen_extra_marked_vert}(b)-(c), they have the same refined weight, and exactly one of them is contained in $\mathcal{A}_{+}$ while the other is in $\mathcal{A}_{-}$.
If we replace the bounded component in $\alpha$ by a bounded collinear cycle we get the same anaysis.

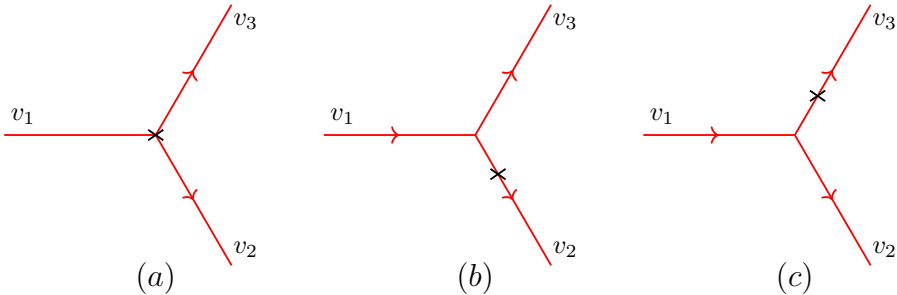
\begin{figure}
	\begin{tikzpicture}
		\tikzset{
			every path/.style = {line width=0.7pt, red},
			every node/.style = {black}
		}
		\begin{scope}[scale=1, decoration={markings,mark=at position 0.5 with {\arrow{>}}}]
			\draw [] (-2,0) -- node [very near start, above] {\footnotesize $ v_1 $} (0, 0);
			\draw [postaction={decorate}] (0,0) -- node [very near end, right] {\footnotesize $ v_2$} (1, -1.73);
			\draw [postaction={decorate}] (0,0) -- node [very near end, right] {\footnotesize $ v_3 $} (1, 1.73);
			
			\draw [black] (-0.1, -0.07) -- (0.1, 0.07);
			\draw [black] (-0.1, 0.07) -- (0.1, -0.07);
			
			\draw (0, -1.9) node { $(a)$};
		\end{scope}
		
		\begin{scope}[xshift=120, scale=1, decoration={markings,mark=at position 0.5 with {\arrow{>}}}]
			\draw [postaction={decorate}] (-2,0) -- node [very near start, above] {\footnotesize $ v_1 $} (0, 0);
			\draw [postaction={decorate}] (0,0) -- node [very near end, right] {\footnotesize $ v_2$} (1, -1.73);
			\draw [postaction={decorate}] (0,0) -- node [very near end, right] {\footnotesize $ v_3 $} (1, 1.73);
			
			\draw [black] (0.2, -0.59) -- (0.4, -0.45);
			\draw [black] (0.2, -0.45) -- (0.4, -0.59);
			
			\draw (0, -1.9) node { $(b)$};
		\end{scope}
		
		\begin{scope}[xshift=240, scale=1, decoration={markings,mark=at position 0.5 with {\arrow{>}}}]
			\draw [postaction={decorate}] (-2,0) -- node [very near start, above] {\footnotesize $ v_1 $} (0, 0);
			\draw [postaction={decorate}] (0,0) -- node [very near end, right] {\footnotesize $ v_2$} (1, -1.73);
			\draw [postaction={decorate}] (0,0) -- node [very near end, right] {\footnotesize $ v_3 $} (1, 1.73);
			
			\draw [black] (0.2, 0.59) -- (0.4, 0.45);
			\draw [black] (0.2, 0.45) -- (0.4, 0.59);
			
			\draw (0, -1.9) node { $(c)$};
		\end{scope}
	\end{tikzpicture}
	\caption{The degeneration as in case (7) of Lemma \ref{lemma:codim1_degenerations_list} and its regenerations.}\label{fig:degen_extra_marked_vert}
\end{figure}

\subsection{Elliptic vertex}\label{sec:refined_invariance_proof_smaller_genus}
Suppose $\alpha$ has an elliptic verex $V$ as in case $(8)$ of Lemma \ref{lemma:codim1_degenerations_list}.
$V$ has either one or two adjacent edges contained in the unique bounded component.
The regenerations in those cases are the same locally and differ only by the regular orientation, so it is enough to consider the former case.
The degeneration $\alpha$ and possible regenerations in this case are depicted in Figure \ref{fig:degen_genus_reduction_1edge}.
The regenerations in Figure \ref{fig:degen_genus_reduction_1edge}  should all be understood as families of regenerations, where in the regenerations depicted in Figure \ref{fig:degen_genus_reduction_1edge}(b,c) we can vary the weights of the edges in the collinear cycle, and in the regenerations depicted in Figure \ref{fig:degen_genus_reduction_1edge}(d,e) we can vary the subdivision of the triangle dual to $V$ into $3$ triangles (see Figure \ref{fig:degen_genus_reduction_triangle_split}).

Denote the edges adjacent to the contracted cycle that are not contained in the bounded component of $\Gamma\setminus\bs{p}$ by $E_1$ and $E_2$.
For $i\in \{1,2\}$ denote by $v_i$ the primitive vector in the direction of $E_i$, by $w_i$ the weight of $E_i$, and denote by $\nu:=|v_1\wedge v_2|$.
It is clear that the regenerations depicted in Figure \ref{fig:degen_genus_reduction_1edge}(b,d) are contained in $\mathcal{A}_{+}$ and the regenerations depicted in Figure \ref{fig:degen_genus_reduction_1edge}(c,e) are contained in $\mathcal{A}_{-}$.
We thus need to show that
\begin{align*}
	& \frac{1}{2} \sum_{k=1}^{w_1}[0]^{+}_y\varphi^{(1)}_y(k, w_1 - k, w_2\nu) + \sum_{x\in \Int(\mathcal{D}(V))\cap \mathbb{Z}^2}([\mu_0(x)]^{-}_y\cdot [\mu_1(x)]^{+}_y \cdot [\mu_2(x)]^{-}_y) =\\
	= &  \frac{1}{2}\sum_{k=1}^{w_1}[0]^{+}_y\varphi^{(1)}_y(k, w_2 - k, w_1\nu) + \sum_{x\in \Int(\mathcal{D}(V))\cap \mathbb{Z}^2}([\mu_0(x)]^{-}_y\cdot [\mu_1(x)]^{-}_y \cdot [\mu_2(x)]^{+}_y)
\end{align*}
where $\mathcal{D}(V)$ is the dual triangle to $V$, $\mu_i(x)$ (for $i=1,2$) is the lattice area of the triangle in the subdivision induced by $x$ dual to the vertex adjacent to $E_i$, and $\mu_0(x)$ is the lattice area of the triangle dual to the vertex not adjacent to neither $E_1$ nor $E_2$ (see Figure \ref{fig:degen_genus_reduction_triangle_split}).
After rearranging and using Remark \ref{rem:summing_over_weights_get_Psi} we are left to show that

\begin{gather*}
	\sum_{x \in \Int(\mathcal{D}(V))} [\mu_0(x)]^{-}_y\left( [\mu_1(x)]^{+}_y[\mu_2(x)]^{-}_y - [\mu_1(x)]^{-}_y[\mu_2(x)]^{+}_y \right) =\\
	= \Phi^{(1)}_y(w_2, w_1\nu) - \Phi^{(1)}_y(w_1, w_2\nu) = \\
	= 2\frac{[w_1\nu]_{y^{w_2}}^{-}[w_2-1]_{y}^{-} - [w_2-1]_{y^{w_1\nu}}^{-} - (w_2-1)[w_1\nu-1]_{y^{w_2}}^{-}}{(y^{1/2}+y^{-1/2})(y^{1/2}-y^{-1/2})^2} -\\
	- 2\frac{[w_2\nu]_{y^{w_1}}^{-}[w_1-1]_{y}^{-} - [w_1-1]_{y^{w_2\nu}}^{-} - (w_1-1)[w_2\nu-1]_{y^{w_1}}^{-}}{(y^{1/2}+y^{-1/2})(y^{1/2}-y^{-1/2})^2}.
\end{gather*}

By \cite[Lemma 4.1]{SS},

\begin{align*}
	\sum_{x \in \Int(\mathcal{D}(V))} [\mu_0(x)]^{-}_y\left( [\mu_1(x)]^{+}_y[\mu_2(x)]^{-}_y - [\mu_1(x)]^{-}_y[\mu_2(x)]^{+}_y \right) = \\
	2 \frac{w_2 [w_1\nu -1]_{y^{w_2}}^{-} + [w_2-1]_{y^{w_1\nu}}^{-} - w_1 [w_2\nu -1]_{y^{w_1}}^{-} - [w_1-1]_{y^{w_2\nu}}^{-} }{(y^{1/2}-y^{-1/2})^2(y^{1/2}+y^{-1/2})}
\end{align*}

so it is enough to show that
\begin{equation*}
	[w_1\nu]_{y^{w_2}}^{-}[w_2-1]_y^{-} + [w_1\nu - 1]_{y^{w_2}}^{-} - [w_2\nu]_{y^{w_1}}^{-}[w_1-1]_y^{-} - [w_2\nu - 1]_{y^{w_1}}^{-} = 0
\end{equation*}
which is straightforward.

\begin{figure}
	\begin{center}
	\begin{scaletikzpicturetowidth}{\textwidth}
	\begin{tikzpicture}
		\tikzset{
			every path/.style = {line width=0.7pt, red},
			every node/.style = {black}
		}
		
		\begin{scope}[scale=\tikzscale, decoration={markings,mark=at position 0.5 with {\arrow{>}}}]
			\draw [] (-2,0) -- node [very near start, above] {\footnotesize $ v_3 $} (0, 0);
			\draw [postaction={decorate}] (0,0) -- node [very near end, right] {\footnotesize $ v_1$} (1, -1.73);
			\draw [postaction={decorate}] (0,0) -- node [very near end, right] {\footnotesize $ v_2 $} (1, 1.73);
			
			\draw [black, fill=black] (0,0) circle (0.05);
			
			\draw (0, -1.9) node { $(a)$};
		\end{scope}
		
		\begin{scope}[xshift=140, scale=\tikzscale, decoration={markings,mark=at position 0.5 with {\arrow{>}}}]
			\draw [postaction={decorate}] (-2,0) -- node [very near start, above] {\footnotesize $ v_3 $} (0, 0);
			\draw [postaction={decorate}] (0.5, -0.865) -- node [very near end, right] {\footnotesize $ v_1$} (1, -1.73);
			\draw [postaction={decorate}] (0,0) -- node [very near end, right] {\footnotesize $ v_2 $} (1, 1.73);
			
			\draw [black, fill=black] (0.5, -0.865) circle (0.05);
			\draw [] (0.04325, 0.025) -- (0.54325, -0.84);
			\draw [] (-0.04325, -0.025) -- (0.45675, -0.89);
			
			\draw (0, -1.9) node { $(b)$};
		\end{scope}
		
		\begin{scope}[xshift=280, scale=\tikzscale, decoration={markings,mark=at position 0.5 with {\arrow{>}}}]
			\draw [postaction={decorate}] (-2,0) -- node [very near start, above] {\footnotesize $ v_3 $} (0, 0);
			\draw [postaction={decorate}] (0,0) -- node [very near end, right] {\footnotesize $ v_1$} (1, -1.73);
			\draw [postaction={decorate}] (0.5, 0.865) -- node [very near end, right] {\footnotesize $ v_2 $} (1, 1.73);
			
			\draw [black, fill=black] (0.5, 0.865) circle (0.05);
			\draw [] (0.04325, -0.025) -- (0.54325, 0.84);
			\draw [] (-0.04325, 0.025) -- (0.45675, 0.89);
			
			\draw (0, -1.9) node { $(c)$};
		\end{scope}
		
		\begin{scope}[xshift=70, yshift=-140, scale=\tikzscale, decoration={markings,mark=at position 0.5 with {\arrow{>}}}]
			\draw [postaction={decorate}] (-2,0) -- node [very near start, above] {\footnotesize $ v_3 $} (-0.8, 0);
			\draw [postaction={decorate}] (0.5, -0.865) -- node [very near end, right] {\footnotesize $ v_1$} (1, -1.73);
			\draw [postaction={decorate}] (0.5, 0.865) -- node [very near end, right] {\footnotesize $ v_2 $} (1, 1.73);
			
			\draw [] (-0.8, 0) -- (0.5, -0.865) -- (0.5, 0.865) -- (-0.8, 0);
			
			\draw [black, fill=black] (0.5, -0.865) circle (0.05);
			
			\draw (0, -1.9) node { $(d)$};
		\end{scope}
		
		\begin{scope}[xshift=210, yshift=-140, scale=\tikzscale, decoration={markings,mark=at position 0.5 with {\arrow{>}}}]
			\draw [postaction={decorate}] (-2,0) -- node [very near start, above] {\footnotesize $ v_3 $} (-0.8, 0);
			\draw [postaction={decorate}] (0.5, -0.865) -- node [very near end, right] {\footnotesize $ v_1$} (1, -1.73);
			\draw [postaction={decorate}] (0.5, 0.865) -- node [very near end, right] {\footnotesize $ v_2 $} (1, 1.73);
			
			\draw [black, fill=black] (0.5, 0.865) circle (0.05);
			\draw [] (-0.8, 0) -- (0.5, -0.865) -- (0.5, 0.865) -- (-0.8, 0);
			
			\draw (0, -1.9) node { $(e)$};
		\end{scope}
		
	\end{tikzpicture}
\end{scaletikzpicturetowidth}
\end{center}
	\caption{The degeneration as in the case $(8)$ of Lemma \ref{lemma:codim1_degenerations_list} for no bounded collinear cycle and $1$ edge of the bounded component of $\Gamma\setminus\bs{p}$ adjacent to the image of the contracted cycle.}\label{fig:degen_genus_reduction_1edge}
\end{figure}
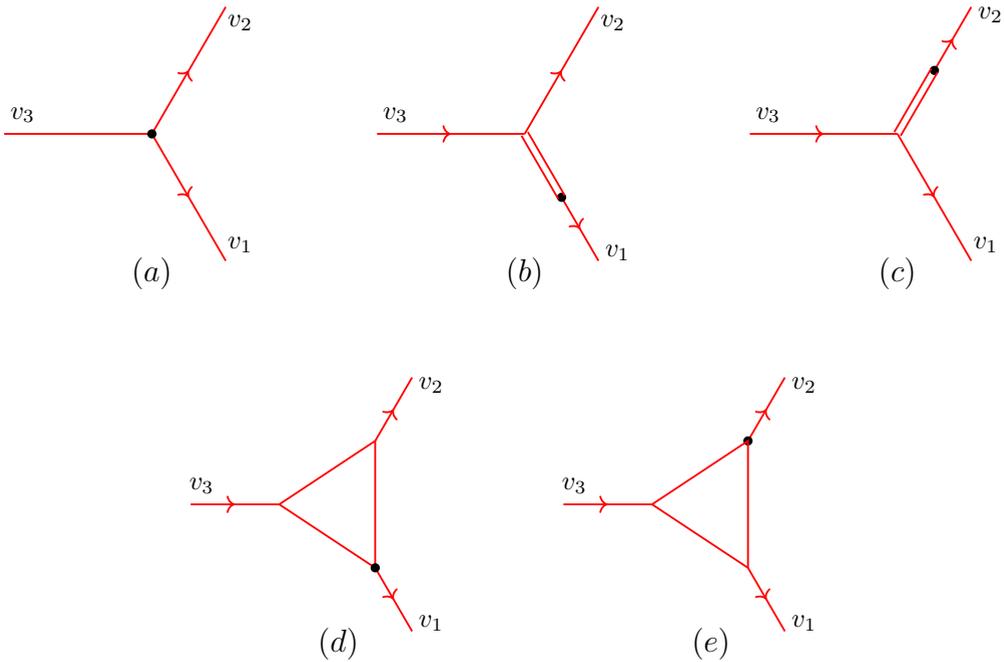

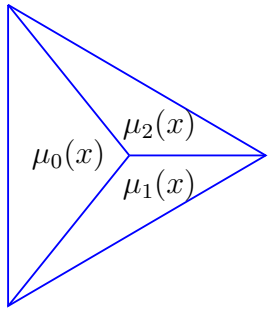
\begin{figure}
	\begin{tikzpicture}[scale=2]
		\tikzset{
			every path/.style = {line width=0.7pt, blue},
			every node/.style = {black}
		}
		\draw (0,1) -- (0,-1) -- (1.7, 0) -- (0, 1);
		\draw (0,1) -- (0.8,0);
		\draw (0, -1) -- (0.8,0);
		\draw (0.8,0) -- (1.7, 0);
		
		\draw (1, -0.2) node {$\mu_1(x)$};
		\draw (1, 0.2) node {$\mu_2(x)$};
		\draw (0.4, 0) node {$\mu_0(x)$};
	\end{tikzpicture}
	
	\caption{A separation of the triangle dual to $V$ into $3$ triangles.}\label{fig:degen_genus_reduction_triangle_split}
\end{figure}

\subsection{Refined invariance for point conditions in Mikhalkin position}\label{pt6.7}
\begin{proof}[Proof of Theorem \ref{tcn4}]
	{\it Step 1.} To begin with, we shall precise the choice of the parameters $\eps$ and $M_1,...,M_n$ (where $n=n_{si}+n_{ta}$).
	
	Denote by $A\subset\Z^2$ the set of all nonzero vectors $z_1-z_2$, $z_1,z_2\in P\cap\Z^2$, rotated by $\frac{\pi}{2}$.
	
	First, we fix $\eps_0>0$ which is less than the slope of any line directed by a non-horizontal vector $\ba\in A$.
	Then, for any $\eps\ne0$, $|\eps|<\eps_0$, the linear functional $x+\eps y$ is injective on $P\cap\Z^2$ and defines on this set the order
	$$(i,j)\prec(i',j')\ \Longleftrightarrow\ \begin{cases}i<i',\ \text{or}&\\ i=i',\ j<j',&\end{cases}\quad\text{if}\ \eps>0,$$
	$$(i,j)\prec(i',j')\ \Longleftrightarrow\ \begin{cases}i<i',\ \text{or}&\\ i=i',\ j>j',&\end{cases}\quad\text{if}\ \eps<0.$$
	
	Now fix any $M_1>0$ and choose $M_2,...,M_n$ inductively. Suppose, we have chosen $M_1,...,M_k$, $1\le k<n$. Consider the set $S_k$ of trivalent plane tropical curves such that
	\begin{itemize}\item they intersect the line $\{y=\eps x\}$ only at the points $u_i=(M_i,\eps M_i)$, $i=1,...,k$;
		\item the directing vector of an edge containing a point $u_i$ $1\le i\le k$, belongs to $A$.
	\end{itemize}
	Clearly, $S_k$ is finite, and there is a closed disk $D_k$ containing all the vertices of all the curves from $S_k$. Choose $M'_{k+1}>M_k$ so that any line directed by a vector from $A$ and passing through the point $(M'_{k+1},\eps M'_{k+1})$ is disjoint from $D_k$. Then set $M_{k+1}$ to be any number $\ge2M'_{k+1}$.
	
	It follows that the sequences $(M_1,...,M_n)$ constructed in the above procedure fill in a full-dimensional polyhedral cone $\Con_\eps\subset\R^n$. Observe that $\RB_y(P,g,(n_{si},n_{ta}),\bu)$ does not change when $\eps$ varies in the punctured disk $0<|\eps|<\eps_0$ and the sequence $(M_1,...,M_n)$ varies in the cone $\Con_\eps$.
	
	\smallskip{\it Step 2.}
	It is obvious that a simultanious change of the sign of $\eps$ and replacement of $P$ by $\overline P$ does not affect the refined count, and we get $\RB_y(P,g,(n_{si},n_{ta}),\bu) = \RB_y(\overline{P}, g, (n_{si}, n_{ta}), -\bu)$.
	Indeed, reflection around the $x$-axis gives a multiplicity preserving bijection between the curves counted in the lefthand side and the curves counted in the righthand side.

	We will now show that $\RB_y(P,g,(n_{si},n_{ta}),\bu) = \RB_y(\overline{P}, g, (n_{si}, n_{ta}), \bu)$ by constructing a multiplicity preserving bijection between the curves counted in the lefthand side and the curves counted in the righthand side.
	Let $\Gamma$ be a tropical curve passing through $\bu$, let $E$ be a non-horizontal edge of $\Gamma$, let $V\in \Gamma^{(0)}$ be a vertex incident to $E$, and denote $\oa_V(E)=(a,b)$ where $a\in \Z$ and $b\in \{\pm 1\}$ (see Lemma \ref{lemma:trop_curve_through_mikh_pos}).
	Suppose that the non horizontal ends of the floor containing $E$ have directions $(c,1)$ and $(d, -1)$ for some $c,d\in \Z$.
	Define $u'_{V,E} = (a + (d-c)\cdot b, b)$.
	Taking $\alpha$ to be the combinatorial type of tropical curves which is identical to $[\Gamma]$ except the direction $\oa_V(E)$ is replaced by $u'_{V,E}$, the condition that $|\eps|<<1$ ensure that there exist a (necessarily unique) tropical curve $\Gamma'$ passing through $\bu$ of type $\alpha$.
	It is immediate that the refined multiplicity of $\Gamma'$ is equal to the refined multiplicity of $\Gamma$, since the Mikhalkin multiplicities of all the vertices is the same, and by construction if $\Gamma$ has degree induced by $P$ then $\Gamma'$ has degree induced by $\overline{P}$.
	
	\smallskip{\it Step 3.}
	Now we chose $\eps$ and $\bu$ as specified in Step 1. The required invariance amounts to the following statement: if $u_i\in\bu_{si}$, $u_{i+1}\in\bu_{ta}$, and $\bu'$ is obtained from $\bu$ by setting $u_i\in\bu'_{ta}$, $u_{i+1}\in\bu'_{si}$, then
	$$\RB_y(P,g,(n_{si},n_{ta}),\bu)=\RB_y(P,g,(n_{si},n_{ta}),\bu').$$
	
	Equivalently, we shall prove the constancy of $\RB_y(P,g,(n_{si},n_{ta}),\bu^{(\tau)})$ along the family of configurations
	$\bu^{(\tau)}$, $M_i\le \tau\le M'_{i+1}$, where \linebreak ${M_{i+1}\ll M'_{i+1}\ll M_{i+2}}$, and $\bu^{(\tau)}\setminus\{u_i^{(\tau)}\}=
	\bu\setminus\{u_i\}$, while $u_i^{(\tau)}=(\tau,\eps\tau)$.
	
	(a) First, we analyze the behavior of $\RB_y(P,g,(n_{si},n_{ta}),\bu^{(\tau)})$ when
	\begin{equation}\tau\in(M_i,M_{i+1})\quad\text{or}\quad\tau\in(M_{i+1},M'_{i+1}).\label{ecn34}\end{equation}
	By the balancing condition, for every $\tau$ in either of the intervals (\ref{ecn34}), each curve $(\Gamma^{(\tau)},h^{(\tau)},\bq^{(\tau)},\bg^{(\tau)})\in{\mathcal M}^{trop}_{g,(n_{si},n_{ta})}(P,\bu^{(\tau)})$, and every vertex $V\in \Gamma^{(\tau)}$, there exist an edge adjacent to $V$ that points away from the line $(1,\eps)\cdot \R$.
	Thus $\Gamma^{(\tau)}\setminus \bq^{(\tau)}$ does not contain bounded component and so, from usual Euler characteristic computation, it is the union of trees such that any tree contains exactly one unbounded edge.
	Suppose that $q_i$ and $q_{i+1}$ do not belong to the closure of the same connected component of $\Gamma\setminus\bq$. Since the configuration $\bu^{(\tau)}\setminus\{u_{i+1}\}$ is in Mikhalkin position for all values of $\tau\in[M_i,M'_{i+1}]$, under the assumption made, the combinatorial type of $(\Gamma^{(\tau)},\bq^{(\tau)})$ stays unchanged in each of the intervals (\ref{ecn34}), which yields the constancy of $\RB_y(\Gamma^{(\tau)},h^{(\tau)},\bq^{(\tau)},\bg^{(\tau)})$, and hence the constancy of $\RB_y(P,g,(n_{si},n_{ta}),\bu^{(\tau)})$. In case $q_i,q_{i+1}$ belong to the closure of one component $K_1$ or a pair of components $K_1,K_2$ of $\Gamma\setminus\bq$, the curves $(\Gamma^{(\tau)},h^{(\tau)},\bq^{(\tau)},\bg^{(\tau)})\in{\mathcal M}^{trop}_{g,(n_{si},n_{ta})}(P,\bu^{(\tau)})$ may undergo degenerations of codimension one. In the considered situation, the only possible degeneration is the appearance of a four-valent vertex in $K_1$ or $K_2$; the constancy of the refined invariant in the passage through such a degeneration is well-known (see, for example, \cite[Section 4.2]{SS}).
	
	(b) Now, consider the passage of $\tau$ through the critical value $\tau_0=M_{i+1}$. Let $\qquad$ $(\Gamma^{(\tau_0)},h^{(\tau_0)},\bq^{(\tau_0)},\bg^{(\tau_0)})\in{\mathcal M}^{trop}_{g,(n_{si},n_{ta})}(P,\bu^{(\tau_0)})$.
	
	(b1) Suppose that $q^{(\tau_0)}_i=q^{(\tau_0)}_{i+1}$. If $q^{(\tau_0)}_i=q^{(\tau_0)}_{i+1}$ is a vertex of $\Gamma^{(\tau_0)}$, then the limit curve associated with this vertex (in the sense of Section \ref{sec-tl}) matches two generic points in the torus $(\C^*)^2$ and a generic contact element at one of these points. It follows that the considered vertex is either a rational four-valent vertex, or an elliptic trivalent vertex.
	This means that the passage through $\tau_0$ can be locally modelled by rational and elliptic curves, and then the required invariance follows from Theorem \ref{thm:refined_invariant_one_marked}. If $q^{(\tau_0)}_i=q^{(\tau_0)}_{i+1}$ is an interior point of a horizontal edge of $\Gamma^{(\tau_0)}$ (see Figure \ref{fcn6}(a)), then for the curve
	$(\widehat\Gamma^{(\tau_0)},\widehat h^{(\tau_0)},\widehat\bq^{(\tau_0)},\widehat\bg^{(\tau_0)})$, the points $\widehat q^{(\tau_0)}_i$ and $\widehat q^{(\tau_0)}_{i+1}$ differ from each other, and the corresponding fragment of $\widehat\Gamma^{(\tau_0)}$ looks as shown in Figure \ref{fcn6}(b). In this fragment, the edges incident to $u_{i+1}$ can be extended to the right in the ways shown in Figures \ref{fcn2}(e,f,g) or \ref{fig:high_genus_corr_theorem}(a); respectively, the edge incident to $u^{(\tau_0)}_i$ can be unbounded or finite. We consider here the most interesting extended fragment as shown in Figure \ref{fcn6}(c). It is the central element in the bifurcation exhibited in Figure \ref{fcn6}, where the weights of the edges incident to the marked points stay unchanged, and hence $\RB_y(\Gamma^{(\tau)},h^{(\tau)},\bq^{(\tau)},\bg^{(\tau)})$ remains constant as $\tau$ varies in a neighborhood of $\tau_0$. The same phenomenon occurs for other extensions of the fragment in Figure \ref{fcn6}(b).
	
	(b2) Suppose that $q^{(\tau_0)}_i\ne q^{(\tau_0)}_{i+1}$. Then the combinatorial type of \linebreak $(\Gamma^{(\tau)},h^{(\tau)},\bq^{(\tau)},\bq^{(\tau)})$ remains unchanged when $\tau$ varies in a neighborhood of $\tau_0$ (cf. item (a)), and hence $\RB_y(\Gamma^{(\tau)},h^{(\tau)},\bq^{(\tau)},\bq^{(\tau)})$ is constant in this variation.
	
	The proof is completed.
\end{proof}

\begin{remark}
	There are two other approaches that can be used to provide an alternative proof of Theorem \ref{tcn4}.
	The first is to compute $RB_y(P,g,(n_{si},n_{ta}),\bu)$ by means of floor diagrams and to prove this computation is invariant under the changes of the parameters as in the Theorem \ref{tcn4}, this approach was implemented in \cite{Mevel}.
	Another alternative is to note that locally the degenerations of codimension one that occur in the proof of Theorem \ref{tcn4} are the same as the ones that occur in the proof of Theorem \ref{thm:refined_invariant_one_marked}, so in fact one can deduce Theorem \ref{tcn4} from Theorem \ref{thm:refined_invariant_one_marked}.
	This latter approach require to much auxiliary notation to be practical, but it is conceptually similar to the proof given above.
\end{remark}

\begin{figure}
	\setlength{\unitlength}{1mm}
	\begin{picture}(140,55)(0,0)
		\thicklines
		%
		\dottedline{1}(5,10)(20,25)\dottedline{1}(114,10)(129,25)
		
		\thinlines
		{\color{red}
			\put(0,10){\line(1,1){5}}\put(5,15){\line(1,0){27}}\put(5,15){\line(0,1){5}}
			\put(5,20){\line(-1,1){5}}\put(5,20){\line(1,0){10}}\put(15,19.5){\line(1,0){7}}\put(15,20.5){\line(1,0){7}}
			\put(22,20){\line(1,0){10}}\put(32,15){\line(1,-1){5}}\put(32,15){\line(0,1){5}}\put(32,20){\line(1,1){5}}
			\put(52,12.5){\line(1,1){5}}\put(52,22.5){\line(1,-1){5}}\put(57,17){\line(1,0){10}}\put(57,18){\line(1,0){27}}
			\put(67,16.7){\line(1,0){7}}\put(67,17.3){\line(1,0){7}}\put(74,17){\line(1,0){10}}
			\put(84,17.5){\line(1,1){5}}\put(84,17.5){\line(1,-1){5}}
			\put(104,10){\line(1,1){5}}\put(109,15){\line(0,1){5}}\put(109,15){\line(1,0){10}}
			\put(109,20){\line(1,0){27}}\put(119,14.5){\line(1,0){7}}\put(119,15.5){\line(1,0){7}}
			\put(126,15){\line(1,0){10}}\put(136,15){\line(1,-1){5}}\put(136,15){\line(0,1){5}}
			\put(136,20){\line(1,1){5}}\put(109,20){\line(-1,1){5}}
			
			\put(2,45){\line(1,1){5}}\put(7,50){\line(1,0){20}}\put(7,50){\line(-1,1){5}}\dashline{1}(27,50)(36,50)
			\put(55,45){\line(1,1){5}}\put(55,55){\line(1,-1){5}}\put(60,49.5){\line(1,0){10}}\put(60,50.5){\line(1,0){20}}
			\put(70,49.2){\line(1,0){10}}\put(70,49.8){\line(1,0){10}}
			\dashline{1}(80,49.2)(88,49.2)\dashline{1}(80,49.8)(88,49.8)\dashline{1}(80,50.5)(88,50.5)
			\put(104,46){\line(1,1){5}}\put(104,56){\line(1,-1){5}}\put(109,50.5){\line(1,0){10}}\put(109,51.5){\line(1,0){27}}
			\put(119,50.2){\line(1,0){7}}\put(119,50.8){\line(1,0){7}}\put(126,50.5){\line(1,0){10}}
			\put(136,51){\line(1,1){5}}\put(136,51){\line(1,-1){5}}
		}
		
		\put(9,14){$\bullet$}\put(14,19){$\bullet$}\put(66,15.8){$\bullet$}\put(66,17.2){$\bullet$}\put(118,14){$\bullet$}
		\put(123,19){$\bullet$}\put(16,49){$\bullet$}\put(69,48.1){$\bullet$}\put(69,49.5){$\bullet$}
		\put(118,49.2){$\bullet$}\put(118,50.5){$\bullet$}
		\put(20,35){(a)}\put(72,35){(b)}\put(120,35){(c)}\put(68,0){(d)}
		\put(42,16.5){$\Longrightarrow$}\put(94,16.5){$\Longrightarrow$}
		\put(10,10){$u^{(\tau)}_i$}\put(7,22){$u_{i+1}$}\put(117,10){$u_{i+1}$}\put(118,22){$u^{(\tau)}_i$}
		\put(16,27){$\{y=\eps x\}$}\put(125,27){$\{y=\eps x\}$}
		
	\end{picture}
	\caption{Proof of Theorem \ref{tcn4}}\label{fcn6}
\end{figure}
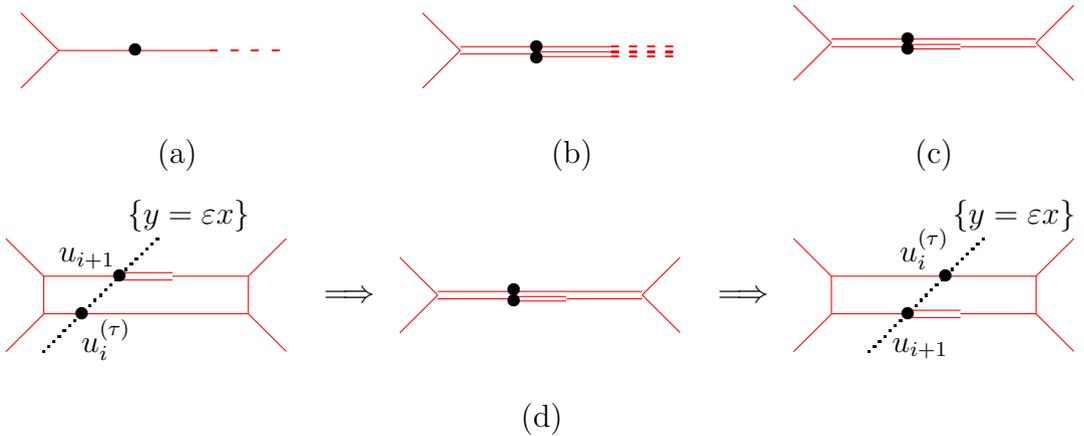

\appendix

\section{Refined invariance in higher genus is non-local}\label{sec:refined_non_local}

In this section we will prove that there is no invariant local refined count of tropical curves extending G\"{o}ttsche-Schroeter count \cite{GS} for non simple curves (see Definition \ref{def:separated_curve}).
By local here we mean a count where the weight of a curve is a product of weights corresponding to the following elements:
\begin{enumerate}
	\item unmarked vertices,
	\item marked vertices,
	\item centrally embedded collinear cycles,
	\item collinear cycles with one 3-valent and one 4-valent vertex
	\item and pairs of collinear cycles meeting in a 5-valent vertex.
\end{enumerate}

Since we are considering counts that extend G\"ottsche-Schroeter, we need the weight of unmarked vertex to be $ [\mu]_y^{-} $ and the weight of marked vertex to be $ [\mu]_y^{+} $ where $ \mu $ is the Mikhalkin weight of the vertex, i.e. the lattice area of the dual triangle.
Denote by $ \varphi^{(0)}_y(w_1, w_2) $ the weight of a centrally embedded collinear cycle with edges of weights $ w_1,w_2 $, by $ \varphi^{(1)}_y(w_1, w_2, \nu) $ the weight of collinear cycle with edges of weights $ w_1, w_2 $ and adjacent to a 4-valent vertex with Mikhalkin weight $ (w_1+w_2) \nu $.
Finally, denote by $ \varphi^{(2)}_y(w_1,w_2,w_1',w_2',\nu) $ the weight corresponding to 2 collinear cycles, one with edges of weights $w_1$ and $w_2$ and the other with edges of weights $w_1'$ and $w_2'$, meeting in a 5-valent vertex with Mikhalkin weight $ (w_1+w_2)(w_1'+w_2')  \nu $.
Note that we do not assume $\varphi^{(0)}$ and $\varphi^{(1)}$ are the functions defined in \eqref{eq:def_varphi_0}, \eqref{eq:def_varphi_1}, but rather we are going to prove they has to be equal to those functions to define an invariant count.

\begin{lemma}\label{lemma:comp_of_phi_1_1}
	For a local refined weight as above, $ \varphi^{(1)}_y(w_1,w_2,1)=0 $.
\end{lemma}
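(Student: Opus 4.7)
The plan is to use invariance of the assumed local refined count under the codimension-one wall crossing of case (5) of Lemma~\ref{lemma:codim1_degenerations_list} (a collinear cycle with one trivalent marked endpoint and one unmarked $4$-valent endpoint, with the marked vertex incident to a single bounded component of $\Gamma \setminus \bs{p}$). Under the locality assumption on the refined count, the combinatorial analysis of Section~\ref{sec:type4_inv_case1} applies verbatim while treating $\varphi^{(0)}_y$ and $\varphi^{(1)}_y$ as unknown symbols, and yields the constraint
\begin{equation*}
[0]^{+}_y [(w_1{+}w_2)\mu_1]^{-}_y \Bigl( \varphi^{(1)}_y(w_1,w_2,\mu_3) + [(w_1{+}w_2)\mu_3]^{-}_y\, \varphi^{(0)}_y(w_1,w_2) \Bigr) = [w_1\mu_3]^{-}_y [w_2\mu_3]^{-}_y \Bigl( [w_1\mu_1]^{-}_y[w_2\mu_1]^{+}_y + [w_1\mu_1]^{+}_y[w_2\mu_1]^{-}_y \Bigr)
\end{equation*}
for every admissible pair $(\mu_1, \mu_3)$.

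I would then specialize this identity at $\mu_3 = 1$ and apply item~(6) of Lemma~\ref{lemma:algebra_of_quantum_weights} to rewrite the right-hand bracket as $[0]^{+}_y [(w_1{+}w_2)\mu_1]^{-}_y$. Canceling the common factor $[0]^{+}_y [(w_1{+}w_2)\mu_1]^{-}_y$ from both sides reduces the identity to the single scalar equation
\begin{equation*}
\varphi^{(1)}_y(w_1, w_2, 1) + [w_1{+}w_2]^{-}_y\, \varphi^{(0)}_y(w_1, w_2) = [w_1]^{-}_y [w_2]^{-}_y .
\end{equation*}

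Finally, invoking the formula $\varphi^{(0)}_y(w_1, w_2) = \frac{[w_1]^{-}_y [w_2]^{-}_y}{[w_1{+}w_2]^{-}_y}$, which is established earlier in the appendix by the analogous locality-plus-invariance argument applied to the case~(4) wall crossing of Section~\ref{sec:refined_invariance_proof_collinear_2marked}, the second term on the left-hand side is exactly $[w_1]^{-}_y [w_2]^{-}_y$, so $\varphi^{(1)}_y(w_1, w_2, 1) = 0$.

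The main obstacle is not so much technical as bookkeeping: one must verify that the wall-crossing identity displayed above is indeed a consequence purely of locality and invariance, without presupposing the specific form of $\varphi^{(1)}_y$. This amounts to rerunning the enumeration of regenerations and the determination of which lie in $\mathcal{A}_{\pm}$ from Section~\ref{sec:type4_inv_case1}, and observing that every ingredient there, apart from the trivalent weights $[\mu]^{\pm}_y$ and the collinear-cycle weights $\varphi^{(0)}_y$, $\varphi^{(1)}_y$, is geometric and independent of the specific form of the weights. Once this is in place, the algebraic reduction via Lemma~\ref{lemma:algebra_of_quantum_weights} and the already established formula for $\varphi^{(0)}_y$ is immediate.
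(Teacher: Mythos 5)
There is a genuine gap: your argument is circular. The case~(5) wall crossing at $\mu_3=1$ gives, after cancelling $[0]^{+}_y[(w_1+w_2)\mu_1]^{-}_y$ exactly as you describe, the single equation
\begin{equation*}
\varphi^{(1)}_y(w_1,w_2,1)+[w_1+w_2]^{-}_y\,\varphi^{(0)}_y(w_1,w_2)=[w_1]^{-}_y[w_2]^{-}_y,
\end{equation*}
which is one relation in the \emph{two} unknowns $\varphi^{(1)}_y(w_1,w_2,1)$ and $\varphi^{(0)}_y(w_1,w_2)$. You close the system by ``invoking'' the formula $\varphi^{(0)}_y(w_1,w_2)=[w_1]^{-}_y[w_2]^{-}_y/[w_1+w_2]^{-}_y$, claiming it comes from the case~(4) wall crossing of Section~\ref{sec:refined_invariance_proof_collinear_2marked}. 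But that wall crossing imposes no constraint at all: its four regenerations pair up into two pairs of equal refined weight distributed one on each side of the wall, so the invariance identity there holds identically, whatever $\varphi^{(0)}_y$ is. In the paper the formula \eqref{eq:def_varphi_0} for $\varphi^{(0)}_y$ is \emph{deduced from} the displayed equation above \emph{together with} Lemma~\ref{lemma:comp_of_phi_1_1} — i.e.\ together with the very statement you are trying to prove. So as written, your proof assumes its conclusion.

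The missing idea is the one the paper actually uses: find a codimension-one wall with only \emph{one} adjacent top-dimensional chamber, so that invariance forces the weight of that unique regeneration to vanish outright. Concretely, take the degeneration whose codimension-one fragment is a single marked vertex of genus $1$ whose dual triangle has lattice area equal to the weight $w_1+w_2$ of the vertical end (so the triangle has lattice width one in the relevant direction). Its only regeneration is a collinear cycle of weights $w_1,w_2$ attached to an unmarked $4$-valent vertex with $\nu=1$, whose local weight is $[0]^{+}_y\varphi^{(1)}_y(w_1,w_2,1)$ times factors that are manifestly nonzero; since one side of the wall is empty, this weight must be $0$, giving the lemma directly and without any prior knowledge of $\varphi^{(0)}_y$.
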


\begin{proof}
	Consider the degeneration depicted on the left of Figure \ref{fig:degeneration_phi_1_1} where the unique vertex in the degeneration have genus $1$ and its Mikhalkin multiplicity is equal to the weight of the vertical end.
	Then the regeneration in the right of Figure \ref{fig:degeneration_phi_1_1} is the only one so its weight has to be 0.
\end{proof}

\begin{figure}
	\begin{tikzpicture}
		\tikzset{
			every path/.style = {line width=0.7pt, red},
			every node/.style = {black}
		}
		\begin{scope}[scale=3, decoration={markings,mark=at position 0.5 with {\arrow{>}}}]
			\draw [] (0.000000, 0.500000) -- node [above, very near start] {\footnotesize $v_1$} (0.500000, 0.500000);
			\draw [postaction={decorate}] (0.500000, 0.500000) -- node [right, very near end] {\footnotesize $v_2$} (0.666667, 1.000000);
			\draw [postaction={decorate}] (0.500000, 0.500000) -- node [right, very near end] {\footnotesize $v_3$} (0.500000, 0.000000);
			\draw [black, fill=black] (0.500000, 0.500000) circle (0.015);
			
		\end{scope}
		\begin{scope}[scale=3, xshift=40, decoration={markings,mark=at position 0.5 with {\arrow{>}}}]
			\draw [postaction={decorate}] (0.000000, 0.750000) -- node [above, very near start] {\footnotesize $v_1$} (0.250000, 0.750000);
			\draw [postaction={decorate}] (0.250000, 0.750000) -- node [right, very near end] {\footnotesize $v_2$} (0.333333, 1.000000);
			\draw (0.26, 0.750000) -- (0.26, 0.250000);
			\draw (0.24, 0.750000) -- (0.24, 0.250000);
			\draw [postaction={decorate}] (0.250000, 0.250000) -- node [right, very near end] {\footnotesize $v_3$} (0.250000, 0.000000);
			\draw [black, fill=black] (0.250000, 0.250000) circle (0.015);
			
		\end{scope}
	\end{tikzpicture}
	\caption{The degeneration (on the left) and regeneration (on the right) in the proof of lemma \ref{lemma:comp_of_phi_1_1}.}\label{fig:degeneration_phi_1_1}
\end{figure}
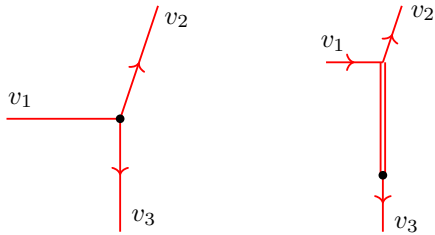

\begin{lemma}
	The count introduced in Definition \ref{def:refined_broccoli_weight} is the only refined count of simple tropical curves without $5$-valent vertices that is local in the sense described in the beginning of this appendix and that extends the count of G\"{o}ttsche and Schroeter.
\end{lemma}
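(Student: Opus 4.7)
The plan is to exploit the local invariance relations derived throughout Section \ref{rbi} --- which any local refined count must also satisfy by the same arguments used there --- to pin down $\varphi^{(0)}_y$ and $\varphi^{(1)}_y$ uniquely. Since the statement excludes $5$-valent vertices, only $\varphi^{(0)}_y$ and $\varphi^{(1)}_y$ are in question; the vertex weights $[\mu]^{-}_y$ and $[\mu]^{+}_y$ are already fixed by the requirement of extending the Block--G\"ottsche count.

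First I would appeal to the codimension-one degeneration in case $(5)$ of Lemma \ref{lemma:codim1_degenerations_list} with $V_1$ adjacent to a single edge in the bounded component, analyzed in Section \ref{sec:type4_inv_case1}. The bijection there between regenerations in $\mathcal{A}_{+}$ and $\mathcal{A}_{-}$ is purely combinatorial, so invariance of any local refined count forces the relation \eqref{eq:type4_case1_inv}:
\begin{align*}
 & [(w_1+w_2)\mu_1]^{-}_y [0]^{+}_y \Bigl(\varphi^{(1)}_y(w_1,w_2,\mu_3) + [(w_1+w_2)\mu_3]^{-}_y \varphi^{(0)}_y(w_1,w_2)\Bigr) \\
 &\qquad = [w_1\mu_3]^{-}_y [w_2\mu_3]^{-}_y \Bigl([w_1\mu_1]^{-}_y[w_2\mu_1]^{+}_y + [w_1\mu_1]^{+}_y[w_2\mu_1]^{-}_y\Bigr).
\end{align*}
Since one can realize tropical curves with the required codimension-one fragment for arbitrary positive integer values of $(w_1,w_2,\mu_1,\mu_3)$ by suitably choosing the dual subdivisions at the surrounding vertices, this must hold as a functional identity in all four parameters.

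Next I would specialize to $\mu_3=1$. By Lemma \ref{lemma:comp_of_phi_1_1}, $\varphi^{(1)}_y(w_1,w_2,1)=0$, so the identity collapses to
\[
 [(w_1+w_2)\mu_1]^{-}_y [0]^{+}_y [w_1+w_2]^{-}_y \varphi^{(0)}_y(w_1,w_2) = [w_1]^{-}_y[w_2]^{-}_y \Bigl([w_1\mu_1]^{-}_y[w_2\mu_1]^{+}_y + [w_1\mu_1]^{+}_y[w_2\mu_1]^{-}_y\Bigr).
\]
By identity $(6)$ of Lemma \ref{lemma:algebra_of_quantum_weights}, the bracket on the right equals $\frac{2}{y^{1/2}+y^{-1/2}}[(w_1+w_2)\mu_1]^{-}_y = [0]^{+}_y [(w_1+w_2)\mu_1]^{-}_y$. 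Canceling the common factor $[(w_1+w_2)\mu_1]^{-}_y [0]^{+}_y$ produces
\[
\varphi^{(0)}_y(w_1,w_2) = \frac{[w_1]^{-}_y[w_2]^{-}_y}{[w_1+w_2]^{-}_y},
\]
which is precisely \eqref{eq:def_varphi_0}. Substituting this formula back into the original relation and again applying Lemma \ref{lemma:algebra_of_quantum_weights}(6) to the right-hand bracket, one solves algebraically for
\[
\varphi^{(1)}_y(w_1,w_2,\mu_3) = [w_1\mu_3]^{-}_y [w_2\mu_3]^{-}_y - \frac{[w_1]^{-}_y [w_2]^{-}_y}{[w_1+w_2]^{-}_y}[(w_1+w_2)\mu_3]^{-}_y,
\]
matching \eqref{eq:def_varphi_1}. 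The only subtlety, which I expect to be the main (minor) obstacle, is ensuring that the codimension-one fragment used really can be embedded in a global tropical curve realizing each tuple $(w_1,w_2,\mu_1,\mu_3)$; this amounts to a routine construction of an ambient subdivision, and the earlier part of Section \ref{rbi} already demonstrates that the formulas above do yield an invariant count, completing the uniqueness statement.
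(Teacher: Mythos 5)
Your proposal is correct and follows essentially the same route as the paper: both derive the relation from the case-$(5)$ degeneration of Section \ref{sec:type4_inv_case1}, cancel the common factor using Lemma \ref{lemma:algebra_of_quantum_weights}(6), specialize to $\mu_3=1$ via Lemma \ref{lemma:comp_of_phi_1_1} to obtain $\varphi^{(0)}_y$, and then solve back for $\varphi^{(1)}_y$. The only difference is that you spell out the intermediate algebra and the realizability caveat explicitly, which the paper leaves implicit.
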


\begin{proof}
	Consider a tropical curve as in case $(5)$ of Lemma \ref{lemma:codim1_degenerations_list}, where neither endpoints of the codimension $1$ fragment are adjacent to a collinear cycle which is not part of the codimension $1$ fragment, and only one edge of the bounded component of $\Gamma\setminus\bs{p}$ is adjacent to the codimension $1$ fragment, see Figure \ref{fig:degen_collinear_1marked_1bounded_no_collinear}(a).
	
	This case was considered in Section \ref{sec:type4_inv_case1}.
	It follows from \eqref{eq:type4_case1_inv} and from Lemma \ref{lemma:algebra_of_quantum_weights} that in order for the count to be invariant it has to satisfy
	\begin{equation*}
		 \varphi^{(1)}_y(w_1, w_2,\mu_3) + [(w_1+w_2)\mu_3]^{-}_y \varphi^{(0)}_y(w_1, w_2)  = [w_1 \mu_3]^{-}_y [w_2\mu_3]^{-}_y.
	\end{equation*}
	Plugging $\mu_3 = 1$ and using Lemma \ref{lemma:comp_of_phi_1_1} gives us \eqref{eq:def_varphi_0}, which readily implies \eqref{eq:def_varphi_1}.
\end{proof}

\begin{proposition}\label{prop:no_local_refined_high_genus}
	Let $ g\ge 2 $ and $n_{ta}\ge 2$. Then there is no local (in the sense of the beginning of the current section) invariant refined count of tropical curves of genus $ g $ with $n_{ta}$ marked vertices that extends G\"{o}ttsche-Schroeter invariants as defined in \cite{GS}.
\end{proposition}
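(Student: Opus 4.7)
My plan is to exhibit a codimension-one tropical configuration that is realizable only when $g\ge 2$ and $n_{ta}\ge 2$ — that is, outside the classification of Lemma \ref{lemma:codim1_degenerations_list} — and to show that no local weighting scheme can satisfy the resulting invariance constraint.

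First, I will construct a tropical curve $(\Gamma,h,\bq,\bg)$ whose codimension-one fragment combines two features forbidden in the rational and elliptic cases: an elliptic marked vertex $V_1$ joined by an edge to a marked endpoint $V_2$ of a collinear cycle $\{E_1,E_2\}$, with the closure of the bounded component of $\Gamma\setminus\bq$ being the union of this edge and the cycle. The elliptic vertex accounts for one unit of genus and requires one marked vertex; the collinear cycle accounts for another unit of genus and requires the second marked vertex. Thus the assumptions $g\ge 2$ and $n_{ta}\ge 2$ are exactly what is needed to realize this configuration, and the wall is of codimension one since both local features are.

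Second, I will enumerate the regenerations of this wall. They fall into two families: (i) the elliptic vertex splits off a collinear cycle (as in Figure \ref{fig:degen_genus_reduction_1edge}(b,c)), paired with either a centrally embedded or non-centrally embedded regeneration of the adjacent cycle (as in Figure \ref{fig:degen_collinear_1marked_1bounded_no_collinear}); (ii) the elliptic vertex's dual triangle subdivides into three (as in Figure \ref{fig:degen_genus_reduction_1edge}(d,e)), again paired with the two options at $V_2$. Using displacement vectors as in Section \ref{sec:refined_invariance_proof_regular_4valent}, I will classify which regenerations lie in $\mathcal{A}_{+}$ and which in $\mathcal{A}_{-}$. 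Under any local weighting scheme, the invariance equation at this wall takes the form
\begin{equation*}
\sum_{\beta \in \mathcal{A}_{+}}\prod_F w(F;\beta)=\sum_{\beta \in \mathcal{A}_{-}}\prod_F w(F;\beta),
\end{equation*}
where $w(F;\beta)$ depends only on the local data of each fragment $F$.

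Third, after cancelling the common global factor — the weight of the part of $\Gamma$ away from $V_1\cup E\cup\{E_1,E_2\}\cup V_2$ — the equation reduces to a convolution-type identity coupling the weight of the elliptic splitting with $\varphi^{(0)}_y$, $\varphi^{(1)}_y$, and the Mikhalkin symbols. Substituting the pinned-down formulas \eqref{eq:def_varphi_0}, \eqref{eq:def_varphi_1} for $\varphi^{(0)}_y$ and $\varphi^{(1)}_y$ from the preceding lemma, and the formula for $\Phi^{(1)}_y$ derived in Section \ref{sec:refined_invariance_proof_smaller_genus}, yields an identity in rational functions of $y^{1/2}$ which I intend to falsify by a single numerical substitution (unit edge weights, a small wedge product $\nu$, and minimal Mikhalkin multiplicities at the rational vertices).

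The main obstacle will be carrying out the enumeration of regenerations in this case, since it is not covered by the codimension-one classification of Lemma \ref{lemma:codim1_degenerations_list}: the regenerations intertwine the two local features at $V_1$ and at $\{E_1,E_2\}$, and the sign analysis via displacement vectors is subtler than in the individual cases treated in Sections \ref{sec:refined_invariance_proof_1marked} and \ref{sec:refined_invariance_proof_smaller_genus}. Once that enumeration is in place, the algebraic verification is routine via Lemma \ref{lemma:algebra_of_quantum_weights}, and the resulting inconsistency contradicts the existence of any local invariant extension of Block--G\"ottsche, proving the proposition.
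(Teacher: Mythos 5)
Your plan breaks down at the very first step: the wall you propose is not of codimension one. An elliptic marked vertex with a bounded component of $\Gamma\setminus\bq$ attached to it is already a codimension-one degeneration (case (8) of Lemma \ref{lemma:codim1_degenerations_list}), and a marked collinear-cycle endpoint whose cycle sits in a bounded part of $\Gamma\setminus\bq$ is another, independent, codimension-one degeneration located elsewhere in the graph. Imposing both simultaneously cuts the moduli cell down by two, not one: replacing your collinear cycle by a single edge drops $\mdim\alpha$ by one while leaving $\dim\openmoduli{\alpha}$ unchanged, and the resulting trivalent curve with an elliptic vertex is itself already of codimension one. A generic path between two generic configurations avoids the $\mathrm{Ev}$-images of codimension-two cells altogether, so no invariance constraint arises from your configuration. (Note also that $E^{\circ}$ and the cycle minus $V_2$ are separated by the removed marked point $V_2$, so they form two bounded components, not one.) If instead you arrange the cycle so that it does not contribute to the codimension, then $V_2$ is still simple in the sense of Definition \ref{def:simple_vertex}, your wall is exactly case (8), and Section \ref{sec:refined_invariance_proof_smaller_genus} already verifies invariance there --- your proposed numerical substitution would confirm the identity rather than falsify it.

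The deeper problem is that your configuration never involves the one fragment whose local weight is actually obstructed. Every weight entering your proposed identity ($[\mu]^{\pm}_y$, $\varphi^{(0)}_y$, $\varphi^{(1)}_y$, and the elliptic-splitting contributions) is already pinned down and already shown to satisfy wall-crossing invariance feature by feature, so recombining them cannot produce a contradiction. The paper instead exhibits a genuinely new codimension-one wall available only when $g\ge2$ and $n_{ta}\ge2$: two collinear cycles meeting at an unmarked $5$-valent vertex, with the two far endpoints of the cycles marked (Figure \ref{fig:non_locality_proof}(a)). Crossing that wall forces the weight $\varphi^{(2)}_y(w_1,w_2,w_1',w_2',\nu_1)$ of the double-cycle fragment to equal an expression which, after dividing by $[(w_1+w_2)\nu_2]^{-}_y$, still depends on $\nu_2$ --- data attached to the edges at the \emph{other} marked vertex, hence external to the fragment. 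That forced dependence on non-local data is what kills locality, and it cannot be detected from any wall assembled out of fragments whose weights are already determined.
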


\begin{proof}
	Consider a tropical curve as in case (4) of Lemma \ref{lemma:codim1_degenerations_list}, where the unmarked end of the codimension $1$ fragment is attached to an additional collinear cycle and only one edge of the bounded component of $\Gamma\setminus\bs{p}$ is adjacent to the codimension $1$ fragment, see Figure \ref{fig:non_locality_proof}(a).
	
	It has $6$ regenerations depicted in Figure \ref{fig:non_locality_proof}(b-g).
	In the notation of Lemma \ref{lemma:local_refined_invariance}, the regenerations depicted in Figure \ref{fig:non_locality_proof}(e,f) lie in $\mathcal{A}_{+}$, while the regenerations depicted in Figure \ref{fig:non_locality_proof}(b,c,d,g) lie in $\mathcal{A}_{-}$.
	Thus, for the count to be invariant it has to satisfy
	\begin{align*}
		\left(\varphi_y^{(2)}(w_1, w_2, w_1', w_2', \nu_1) \right. + & \left. \varphi^{(1)}_y(w_1', w'_2, \nu(w_1+w_2)) \varphi_y^{(0)}(w_1, w_2) \right)\times \\
		\times \cdot\left[(w_1+w_2)\nu_2\right]_y^{-} = &[w_2 w_2' \nu_1]^{-}_y[w_1 w_1' \nu_1]^{-}_y[(w_1w_2'-w_2w_1')\nu_1]^{-}_y[w_2\nu_2]^{-}_y[w_1\nu_2]^{+}_y+\\
		+ & [w_2 w_1' \nu_1]^{-}_y[w_1 w_2' \nu_1]^{-}_y[(w_1w_1'-w_2w_2')\nu_1]^{-}_y[w_2\nu_2]^{-}_y[w_1\nu_2]^{+}_y +\\
		+ & \varphi^{(1)}_y(w_1', w_2', \nu_1w_2)[w_1(w_1'+w_2')\nu_1]^{-}_y[w_2\nu_2]^{-}_y[w_1\nu_2]^{+}_y + \\
		+ & \varphi^{(1)}_y(w_1', w_2', \nu_1w_1)[w_2(w_1'+w_2')\nu_1]^{-}_y[w_1\nu_2]^{-}_y[w_2\nu_2]^{+}_y,
	\end{align*}
	But after dividing both sides by $[(w_1+w_2)\nu_2]^{-}_y$ we get that the left-hand side does not depend on $\nu_2$, while the right-hand side does depend on it.
\end{proof}

\begin{figure}
	\begin{center}
	\begin{scaletikzpicturetowidth}{\textwidth}

	\begin{tikzpicture}
		\tikzset{
			every path/.style = {line width=0.7pt, red},
			every node/.style = {black}
		}
		\begin{scope}[scale=\tikzscale, xshift=0, yshift=-0, decoration={markings,mark=at position 0.5 with {\arrow{>}}}]
			\draw [] (0.736273, 0.727273) -- node [right=-0.1, scale=0.8] {\tiny $w_2$} (0.736273, 0.272727);
			\draw [] (0.718273, 0.727273) -- node [left=-0.1, scale=0.8] {\tiny $w_1$} (0.718273, 0.281727);
			\draw [postaction={decorate}] (0.718273, 0.727273) -- node [above, very near end] {\footnotesize $v_2$} (0.909091, 1.000000);
			\draw [black, fill=black] (0.722773, 0.727273) circle (0.015000);
			\draw [] (0.181818, 1.000000) -- node [above, very near start] {\footnotesize $v_1$} (0.718273, 0.727273);
			\draw [postaction={decorate}] (0.718273, 0.281727) -- node [right, very near end] {\footnotesize $v_4$} (0.931818, -0.000000);
			\draw [] (0.727273, 0.263727) -- node [above=-0.07, scale=0.8] {\tiny $w_1'$} (0.272727, 0.263727);
			\draw [] (0.718273, 0.281727) -- node [below=-0.07, scale=0.8] {\tiny $w_2'$} (0.272727, 0.281727);
			\draw [postaction={decorate}] (0.272727, 0.272727) -- node [above, very near end] {\footnotesize $v_3$} (0.000000, 0.272727);
			\draw [black, fill=black] (0.272727, 0.272727) circle (0.015000);
			
		\end{scope}
		\begin{scope}[scale=\tikzscale, xshift=10, yshift=-0]
			\draw (0,0) node {$ (a) $};
		\end{scope}
	
		\begin{scope}[scale=\tikzscale, xshift=40, yshift=-0, decoration={markings,mark=at position 0.5 with {\arrow{>}}}]
			\draw [] (0.461044, 0.620766) -- (0.727247, 0.620766);
			\draw [] (0.461044, 0.620766) -- node [right=-0.1, scale=0.8] {\tiny $w_2$} (0.461044, 0.461405);
			\draw [postaction={decorate}] (0.000000, 0.851288) -- node [above, very near start] {\footnotesize $v_1$} (0.461044, 0.620766);
			\draw [] (0.727247, 0.620766) -- node [right=-0.1, scale=0.8] {\tiny $w_1$} (0.727247, 0.443405);
			\draw [postaction={decorate}] (0.727247, 0.620766) -- node [above, very near end] {\footnotesize $v_2$} (0.922369, 0.913449);
			\draw [black, fill=black] (0.727247, 0.620766) circle (0.015000);
			\draw [] (0.461044, 0.461405) -- (0.780488, 0.292683);
			\draw [] (0.461044, 0.461405) -- node [above=-0.07, scale=0.8] {\tiny $w_2'$} (0.292683, 0.461405);
			\draw [] (0.780488, 0.292683) -- (0.727247, 0.443405);
			\draw [postaction={decorate}] (0.780488, 0.292683) -- node [right, very near end] {\footnotesize $v_4$} (1.000000, -0.000000);
			\draw [] (0.727247, 0.443405) -- node [below=-0.07, pos=0.7, scale=0.8] {\tiny $w_1'$} (0.292683, 0.443405);
			\draw [postaction={decorate}] (0.292683, 0.452405) -- node [above, very near end] {\footnotesize $v_3$} (0.000000, 0.452405);
			\draw [black, fill=black] (0.292683, 0.452405) circle (0.015000);
			
		\end{scope}
		\begin{scope}[scale=\tikzscale, xshift=50, yshift=-0]
			\draw (0,0) node {$ (b) $};
		\end{scope}
	
		\begin{scope}[scale=\tikzscale, xshift=80, yshift=-0, decoration={markings,mark=at position 0.5 with {\arrow{>}}}]
			\draw [] (0.386364, 0.727273) -- (0.500000, 0.727273);
			\draw [] (0.386364, 0.727273) -- node [right=-0.12, scale=0.8] {\tiny $w_2$} (0.386364, 0.622636);
			\draw [postaction={decorate}] (0.000000, 0.920455) -- node [above, very near start] {\footnotesize $v_1$} (0.386364, 0.727273);
			\draw [] (0.500000, 0.727273) -- node [right=-0.1, scale=0.8] {\tiny $w_1$} (0.500000, 0.604636);
			\draw [postaction={decorate}] (0.500000, 0.727273) -- node [above, very near end] {\footnotesize $v_2$} (0.681818, 1.000000);
			\draw [black, fill=black] (0.500000, 0.727273) circle (0.015000);
			\draw [] (0.386364, 0.622636) -- (0.727273, 0.272727);
			\draw [] (0.386364, 0.622636) -- node [above=-0.07, scale=0.8] {\tiny $w_1'$} (0.272727, 0.622636);
			\draw [] (0.727273, 0.272727) -- (0.500000, 0.604636);
			\draw [postaction={decorate}] (0.727273, 0.272727) -- node [right, very near end] {\footnotesize $v_4$} (0.931818, -0.000000);
			\draw [] (0.500000, 0.604636) -- node [below=-0.07, scale=0.8, pos=0.7] {\tiny $w_2'$} (0.272727, 0.604636);
			\draw [postaction={decorate}] (0.272727, 0.613636) -- node [above, very near end] {\footnotesize $v_3$} (0.000000, 0.613636);
			\draw [black, fill=black] (0.272727, 0.613636) circle (0.015000);
			
		\end{scope}
		\begin{scope}[scale=\tikzscale, xshift=90, yshift=-0]
			\draw (0,0) node {$ (c) $};
		\end{scope}
		\begin{scope}[scale=\tikzscale, xshift=0, yshift=-40, decoration={markings,mark=at position 0.5 with {\arrow{>}}}]
			\draw [] (0.536585, 0.617886) -- (0.780488, 0.617886);
			\draw [] (0.536585, 0.617886) -- node [right=-0.1, scale=0.8] {\tiny $w_2$} (0.536585, 0.364984);
			\draw [postaction={decorate}] (0.000000, 0.886179) -- node [above, very near start] {\footnotesize $v_1$} (0.536585, 0.617886);
			\draw [] (0.780488, 0.617886) -- node [right=-0.1, scale=0.8] {\tiny $w_1$} (0.780488, 0.292683);
			\draw [postaction={decorate}] (0.780488, 0.617886) -- node [above, very near end] {\footnotesize $v_2$} (0.975610, 0.910569);
			\draw [black, fill=black] (0.780488, 0.617886) circle (0.015000);
			\draw [] (0.536585, 0.364984) -- (0.780488, 0.292683);
			\draw [] (0.536585, 0.382984) -- node [above=-0.1, scale=0.8] {\tiny $w_1'$} (0.292683, 0.382984);
			\draw [] (0.536585, 0.364984) -- node [below=-0.1, scale=0.8] {\tiny $w_2'$} (0.292683, 0.364984);
			\draw [postaction={decorate}] (0.780488, 0.292683) -- node [right, very near end] {\footnotesize $v_4$} (1.000000, 0.000000);
			\draw [postaction={decorate}] (0.292683, 0.373984) -- node [above, very near end] {\footnotesize $v_3$} (-0.000000, 0.373984);
			\draw [black, fill=black] (0.292683, 0.373984) circle (0.015000);
			
		\end{scope}
		\begin{scope}[scale=\tikzscale, xshift=10, yshift=-40]
			\draw (0,0) node {$ (d) $};
		\end{scope}
	
		\begin{scope}[scale=\tikzscale, xshift=40, yshift=-40, decoration={markings,mark=at position 0.5 with {\arrow{>}}}]
			\draw [] (0.424242, 0.575758) -- (0.424242, 0.727273);
			\draw [] (0.415242, 0.575758) -- node [left=-0.1, scale=0.8] {\tiny $w_1$} (0.415242, 0.424242);
			\draw [] (0.433242, 0.575758) -- node [right=-0.1, scale=0.8] {\tiny $w_2$} (0.433242, 0.424242);
			\draw [black, fill=black] (0.424242, 0.575758) circle (0.015000);
			\draw [postaction={decorate}] (0.424242, 0.727273) -- node [above, very near end] {\footnotesize $v_2$} (0.606061, 1.000000);
			\draw [postaction={decorate}] (0.000000, 0.939394) -- node [above, very near start] {\footnotesize $v_1$} (0.424242, 0.727273);
			\draw [] (0.424242, 0.424242) -- (0.424242, 0.263727);
			\draw [postaction={decorate}] (0.424242, 0.263727) -- node [right, very near end] {\footnotesize $v_4$} (0.628788, 0.000000);
			\draw [] (0.424242, 0.281727) -- node [above=-0.07, scale=0.8] {\tiny $w_1'$} (0.272727, 0.281727);
			\draw [] (0.424242, 0.263727) -- node [below=-0.07, scale=0.8] {\tiny $w_2'$} (0.272727, 0.263727);
			\draw [postaction={decorate}] (0.272727, 0.272727) -- node [above, very near end] {\footnotesize $v_3$} (0.000000, 0.272727);
			\draw [black, fill=black] (0.272727, 0.272727) circle (0.015000);
			
		\end{scope}
	
		\begin{scope}[scale=\tikzscale, xshift=50, yshift=-40]
			\draw (0,0) node {$ (e) $};
		\end{scope}
	
		\begin{scope}[scale=\tikzscale, xshift=80, yshift=-40, decoration={markings,mark=at position 0.5 with {\arrow{>}}}]
			\draw [] (0.500000, 0.500000) -- (0.500000, 0.727273);
			\draw [] (0.491000, 0.500000) -- node [left=-0.1, scale=0.8, pos=0.3] {\tiny $w_1$} (0.491000, 0.272727);
			\draw [] (0.509000, 0.500000) -- node [right=-0.1, scale=0.8, pos=0.3] {\tiny $w_2$} (0.509000, 0.281727);
			\draw [black, fill=black] (0.500000, 0.500000) circle (0.015000);
			\draw [postaction={decorate}] (0.500000, 0.727273) -- node [above, very near end] {\footnotesize $v_2$} (0.681818, 1.000000);
			\draw [postaction={decorate}] (0.000000, 0.977273) -- node [above, very near start] {\footnotesize $v_1$} (0.500000, 0.727273);
			\draw [postaction={decorate}] (0.509000, 0.281727) -- node [right, very near end] {\footnotesize $v_4$} (0.704545, 0.000000);
			\draw [] (0.500000, 0.263727) -- node [above=-0.07, scale=0.8] {\tiny $w_1'$} (0.272727, 0.263727);
			\draw [] (0.509000, 0.281727) -- node [below=-0.07, scale=0.8] {\tiny $w_2'$} (0.272727, 0.281727);
			\draw [postaction={decorate}] (0.272727, 0.272727) -- node [above, very near end] {\footnotesize $v_3$} (0.000000, 0.272727);
			\draw [black, fill=black] (0.272727, 0.272727) circle (0.015000);
			
		\end{scope}
	
		\begin{scope}[scale=\tikzscale, xshift=90, yshift=-40]
			\draw (0,0) node {$ (f) $};
		\end{scope}
	
		\begin{scope}[scale=\tikzscale, xshift=0, yshift=-80, decoration={markings,mark=at position 0.5 with {\arrow{>}}}]
			\draw [] (0.594139, 0.727273) -- (0.461006, 0.594139);
			\draw [] (0.594139, 0.727273) -- node [right=-0.1, scale=0.8] {\tiny $w_2$} (0.594139, 0.272727);
			\draw [postaction={decorate}] (0.594139, 0.727273) -- node [above, very near end] {\footnotesize $v_2$} (0.775958, 1.000000);
			\draw [black, fill=black] (0.594139, 0.727273) circle (0.015000);
			\draw [] (0.461006, 0.594139) -- node [right=-0.1, scale=0.8] {\tiny $w_1$} (0.461006, 0.396861);
			\draw [postaction={decorate}] (0.000000, 0.824643) -- node [above, very near start] {\footnotesize $v_1$} (0.461006, 0.594139);
			\draw [] (0.594139, 0.272727) -- (0.461006, 0.396861);
			\draw [postaction={decorate}] (0.594139, 0.272727) -- node [right, very near end] {\footnotesize $v_4$} (0.798685, 0.000000);
			\draw [] (0.461006, 0.414861) -- node [above=-0.07, scale=0.8] {\tiny $w_1'$} (0.272727, 0.414861);
			\draw [] (0.461006, 0.396861) -- node [below=-0.07, scale=0.8] {\tiny $w_2'$} (0.272727, 0.396861);
			\draw [postaction={decorate}] (0.272727, 0.405861) -- node [above, very near end] {\footnotesize $v_3$} (0.000000, 0.405861);
			\draw [black, fill=black] (0.272727, 0.405861) circle (0.015000);
			
		\end{scope}
		\begin{scope}[scale=\tikzscale, xshift=10, yshift=-80]
			\draw (0,0) node {$ (g) $};
		\end{scope}
	\end{tikzpicture}
	\end{scaletikzpicturetowidth}
	\end{center}
	\caption{The degeneration (a) as in the proof of Proposition \ref{prop:no_local_refined_high_genus} and all its regenerations.}\label{fig:non_locality_proof}
\end{figure}
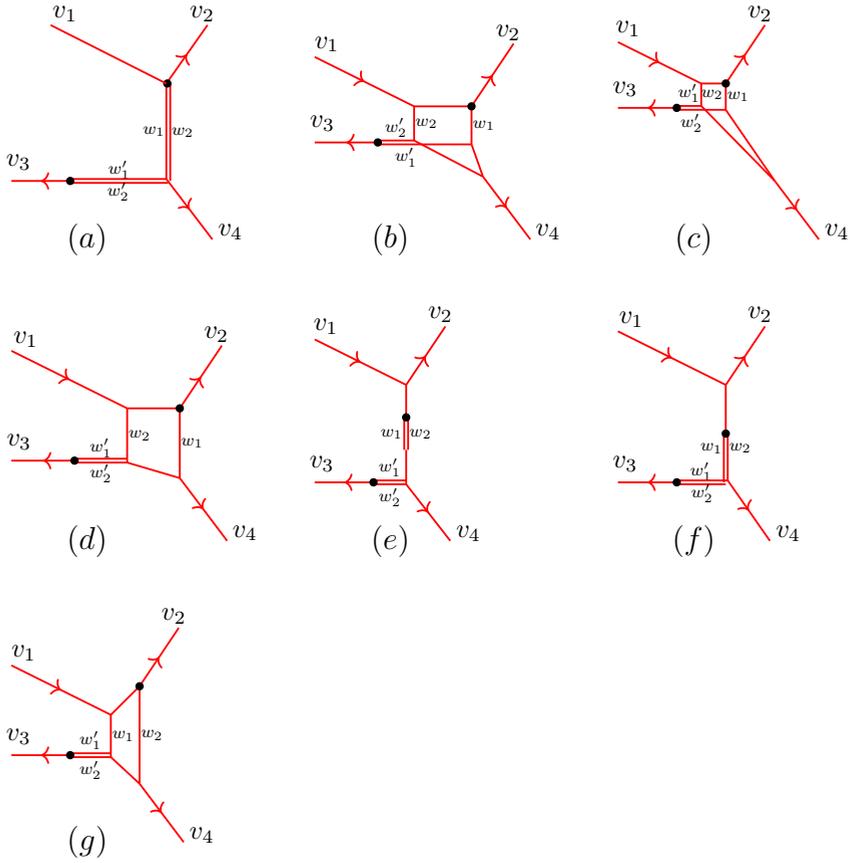

\end{document}